\DeclareRobustCommand{\SkipTocEntry}[5]{}
\definecolor{dblue}{rgb}{0,0,0.70}
\renewcommand{\tocsection}[3]{%
  \indentlabel{\@ifnotempty{#2}{\bfseries\ignorespaces#1 #2\quad}}\bfseries#3}
\renewcommand{\tocsubsection}[3]{%
  \indentlabel{\@ifnotempty{#2}{\ignorespaces#1 #2\quad}}#3}
\renewcommand{\tocsubsubsection}[3]{%
  \indentlabel{\hspace{30pt}\@ifnotempty{#2}{\ignorespaces#1 #2\quad}}#3}
\def\@tocline#1#2#3#4#5#6#7{\relax
  \ifnum #1>\c@tocdepth 
  \else
    \par \addpenalty\@secpenalty\addvspace{#2}%
    \begingroup \hyphenpenalty\@M
    \@ifempty{#4}{%
      \@tempdima\csname r@tocindent\number#1\endcsname\relax
    }{%
      \@tempdima#4\relax
    }%
    \parindent\z@ \leftskip#3\relax \advance\leftskip\@tempdima\relax
    \rightskip\@pnumwidth plus1em \parfillskip-\@pnumwidth
    #5\leavevmode\hskip-\@tempdima{#6}\nobreak
    \leaders\hbox{$\m@th\mkern \@dotsep mu\hbox{.}\mkern \@dotsep mu$}\hfill
    \nobreak
    \hbox to\@pnumwidth{\@tocpagenum{\ifnum#1=1\bfseries\fi#7}}\par
    \nobreak
    \endgroup
  \fi}
\renewcommand\csname r@tocindent0\endcsname{0pt}
\def\l@subsection{\@tocline{2}{0pt}{2.5pc}{5pc}{}}
\newcommand{\CC}{\mathbb{C}}
\newcommand{\RR}{\mathbb{R}}
\newcommand{\PP}{\mathbb{P}}
\newcommand{\QQ}{\mathbb{Q}}
\newcommand{\SSS}{\mathbb{S}}
\newcommand{\BB}{\mathbb{B}}
\newcommand{\HH}{\mathbb{H}}
\newcommand{\id}{\operatorname{id}}
\newcommand{\cof}{\operatorname{cof}}
\newcommand{\pow}{\mathcal{P}}
\newcommand{\p}{\operatorname{p}}
\newcommand{\Card}{{\sf Card}}
\newcommand{\AD}{{\sf AD}}
\newcommand{\ZFC}{{\sf ZFC}}
\newcommand{\ZF}{{\sf ZF}}
\newcommand{\AC}{{\sf AC}}
\newcommand{\DC}{{\sf DC}}
\newcommand{\HOD}{{\sf HOD}}
\newcommand{\Ord}{\mathsf{Ord}}
\newcommand{\FA}{{\sf FA}}
\newcommand{\A}{{\sf{A}}}
\newcommand{\OD}{{\mathsf{OD}}}
\newcommand{\Fun}{\mathrm{Fun}_{<\omega}}
\newcommand{\Func}{\mathrm{Fun}}
\newcommand{\Reg}{\mathsf{Reg}}
\newcommand{\SucCard}{\mathsf{SucCard}}
\newcommand{\ran}{\mathrm{ran}}
\newcommand{\dom}{\mathrm{dom}}
\newcommand{\Add}{{\rm Add}}
\newcommand{\supp}{\mathrm{supp}} 
\newcommand{\Col}{\mathrm{Col}} 
\newcommand{\Lev}{\mathrm{Lev}}
\newcommand{\cC}{\mathcal{C}}
\newcommand{\li}{\leq_i}
\newcommand{\one}{\mathbf{1}}
\newcommand{\rank}{\mathrm{rank}}
\newcommand{\tr}{\mathrm{tr}}
\newcommand{\GA}{\mathsf{A}}
\newcommand{\cc}{\mathbf{c}} 
\newcommand{\jj}{\mathbf{m}} 
\newcommand{\bb}{\mathbf{b}} 
\newcommand{\dd}{\mathbf{d}} 
\newcommand{\pair}{\mathrm{pair}} 
\newcommand{\red}{\mathrm{red}}
\newcommand{\foot}{\mathrm{f}}
\newcommand{\Bore}{$\mathrm{Borel^*}$} 
\newcommand{\WO}{\mathrm{WO}}
\newcommand{\cf}{\mathrm{cof}}
\newcommand{\cfdash}{\mathrm{cof'}}
\newcommand{\otp}{\text{otp}}
\newtheorem{theorem}{Theorem}[section]
\newtheorem{lemma}[theorem]{Lemma}
\newtheorem{corollary}[theorem]{Corollary}
\newtheorem{problem}[theorem]{Problem}
\theoremstyle{definition}
\newtheorem*{claim*}{Claim}
\newtheorem*{subclaim*}{Subclaim}
\newtheorem{definition}[theorem]{Definition}
\theoremstyle{remark}
\newtheorem{remark}[theorem]{Remark}
\newtheorem{notation}[theorem]{Notation}
\newtheorem{assumption}[theorem]{Assumption}
\newtheorem*{case*}{Case}
\newcommand\nnfootnote[1]{%
  \begin{NoHyper}
  \renewcommand\thefootnote{}\footnote{#1}%
  \addtocounter{footnote}{-1}%
  \end{NoHyper}
}
\newenvironment{enumerate-(a)}{\begin{enumerate}[label={\upshape (\alph*)}, leftmargin=2pc]}{\end{enumerate}}
\newenvironment{enumerate-(a)-r}{\begin{enumerate}[label={\upshape (\alph*)}, leftmargin=2pc,resume]}{\end{enumerate}}
\newenvironment{enumerate-(A)}{\begin{enumerate}[label={\upshape (\Alph*)}, leftmargin=2pc]}{\end{enumerate}}
\newenvironment{enumerate-(A)-r}{\begin{enumerate}[label={\upshape (\Alph*)}, leftmargin=2pc,resume]}{\end{enumerate}}
\newenvironment{enumerate-(i)}{\begin{enumerate}[label={\upshape (\roman*)}, leftmargin=2pc]}{\end{enumerate}}
\newenvironment{enumerate-(i)-r}{\begin{enumerate}[label={\upshape (\roman*)}, leftmargin=2pc,resume]}{\end{enumerate}}
\newenvironment{enumerate-(I)}{\begin{enumerate}[label={\upshape (\Roman*)}, leftmargin=2pc]}{\end{enumerate}}
\newenvironment{enumerate-(I)-r}{\begin{enumerate}[label={\upshape (\Roman*)}, leftmargin=2pc,resume]}{\end{enumerate}}
\newenvironment{enumerate-(1)}{\begin{enumerate}[label={\upshape (\arabic*)}, leftmargin=2pc]}{\end{enumerate}}
\newenvironment{enumerate-(1)-r}{\begin{enumerate}[label={\upshape (\arabic*)}, leftmargin=2pc,resume]}{\end{enumerate}}
\newenvironment{itemizenew}{\begin{itemize}[leftmargin=2pc]}{\end{itemize}}
   \def\MR#1{}
\begin{document}

\author{Daisuke Ikegami}
\address[Daisuke Ikegami]{Institute of logic and cognition, department of philosophy, Sun Yat-sen University, Xichang hall 602, 135 Xingang west street, Guangzhou, 510275 China}
\email{ikegami@mail.sysu.edu.cn}


\author{Philipp Schlicht} 
\address[Philipp Schlicht]{Dipartimento di ingegneria dell'informazione e scienze matematiche, Università di Siena, via Roma 56, 53100 Siena, Italy} 
\email{philipp.schlicht@unisi.it}


\title[Forcing over choiceless models and generic absoluteness]{Forcing over choiceless models \\ and generic absoluteness} 


\keywords{Choiceless set theory, forcing, generic absoluteness} 
\subjclass[2020]{(Primary) 03E25; (Secondary) 03E57, 03E35, 03E17}

\makeatletter
\def\blfootnote{\gdef\@thefnmark{}\@footnotetext}
\makeatother

\nnfootnote{The authors are grateful to W. Hugh Woodin for discussions and for permission to include his Theorem \ref{separate random Cohen} and the joint Theorem \ref{abs implies countable}. 
They further thank Arthur Apter 
and Peter Koepke for discussions on set theory without the axiom of choice during initial stages of this project, and Elliot Glazer, Asaf Karagila and John Truss for comments.} 
\nnfootnote{The first-listed author thanks the Japan Society for the Promotion of Science (JSPS) for its generous support through the grants with JSPS KAKENHI Grant Number 15K17586 and 19K03604. He is also grateful to the Sumitomo Foundation for its generous support through a Grant for Basic Science Research. 
This research was funded in whole or in part by EPSRC grant number EP/V009001/1 for the second-listed author and has received funding from the European Union's Horizon 2020 research and innovation programme under his 
Marie Sk\l odowska-Curie grant agreement No 794020. 
He gratefully acknowledges the support of INdAM-GNSAGA. 
For the purpose of open access, the authors have applied a \lq Creative Commons Attribution' (CC BY) public copyright licence to any Author Accepted Manuscript (AAM) version arising from this submission.}

\begin{abstract} 
We develop a toolbox for forcing over arbitrary models of set theory without the axiom of choice. 
In particular, we introduce a variant of the countable chain condition and prove an iteration theorem 
that applies to many classical forcings such as Cohen forcing and random algebras. 
Our approach sidesteps the problem that forcing with the countable chain condition can collapse $\omega_1$ by a recent result of 
Karagila and Schweber. 





Using this, we show that adding many Cohen reals and random reals leads to different theories. 
This result is due to Woodin. 
Thus one can always change the theory of the universe by forcing, just like the continuum hypothesis and its negation can be obtained by forcing over arbitrary models with choice.

We further study principles stipulating that the first-order theory of the universe remains the same in all generic extension by a fixed class of forcings. 
Extending a result of Woodin, we show that even for quite restricted classes such as the class of all finite support products of Cohen forcing or the class of all random algebras, this principle implies that all infinite cardinals have countable cofinality. 
\end{abstract}

\maketitle

\setcounter{tocdepth}{2}
\tableofcontents


\section{Introduction} 


Forcing over choiceless models of set theory appears in the context of forcing over $L(\RR)$ 
in work of Steel, Van Wesep \cite{steel1982two}, Woodin \cite{woodin2013axiom}, Laflamme \cite{laflamme1989forcing}, Di Prisco and Todor\v{c}evi\'c \cite{di1998perfect}. 
The idea of Woodin's $\PP_{\mathrm{max}}$-forcing \cite{woodin2013axiom} and its variants is to obtain consistency results not accessible by forcing over models of choice. 
Blue, Larson and Sargsyan \cite{Nairian-models,failure-of-squares} force over models of determinacy beyond $L(\RR)$ to obtain the failure of square principles. 
Larson and Zapletal \cite{larson2020geometric} build a theory for separating consequences of choice. 
General results about forcing over arbitrary choiceless models appeared in studies of Monro \cite{monro1983generic} and recent work of Karagila, Schlicht \cite{karagila2020have} and Usuba \cite{usuba2018choiceless}. 
Most basic results in forcing such as the forcing theorem go through in $\ZF$ without use of the axiom of choice.\footnote{An exception is maximality or fullness, i.e., the statement: if an existential formula $\exists x\ \varphi(x,\tau)$ is forced, then there exists a name $\sigma$ such that $\varphi(\sigma,\tau)$ is forced.} 
Regarding the preservation of cardinals, there is a serious problem because over choiceless models, 
even simple forcings can collapse cardinals. 
For instance, both $\sigma$-closed forcings and forcings with the countable chain condition can collapse $\omega_1$, the latter by a recent result by Karagila and Schweber \cite{karagila2022choiceless}.\footnote{They find a model and a forcing such that each antichain is countable, but $\omega_1$ is collapsed. 
In the present paper, c.c.c. means the weaker condition that there exist no antichains of size $\omega_1$.} 
While even the the most basic $\sigma$-closed forcings collapse $\omega_1$ if $\omega_1$ is singular, various well-known c.c.c. forcings do preserve cardinals. 
For instance, this is the case for finite support products of Cohen forcing, finite support iterations of random forcing and Hechler forcing and random algebras on all cardinals. 
We show this in this paper. 
We choose a general approach by introducing a variant of the countable chain condition that can be iterated. 
In models of the axiom of choice, this notion is equivalent to the c.c.c. 
The more general versions are called $\theta$-narrow and uniformly $\theta$-narrow, respectively, for any infinite ordinal $\theta$. 
We show in Lemma \ref{narrow pres card} and Theorem \ref{iteration theorem}: 

\begin{theorem}\ 
\begin{enumerate-(1)} 
\item 
Every $\theta$-narrow forcing preserves all cardinals and cofinalities ${>}\theta$. 

\item 
Any uniform iteration of $\theta$-narrow forcings with finite support is again uniformly $\theta$-narrow. 

\end{enumerate-(1)} 
\end{theorem}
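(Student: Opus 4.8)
The plan is to read $\theta$-narrowness as a ground-model covering property: for a $\theta$-narrow forcing $P$, every $P$-name $\dot f$ for a function from an ordinal into $\mathrm{Ord}$, together with a condition $p$, admits a covering function $F\in V$ with $p\Vdash\dot f(\xi)\in\check F(\xi)$ and $|F(\xi)|\le\theta$ for all $\xi$ in the domain, where moreover $F$ carries a ground-model assignment $\xi\mapsto e_\xi$ of injections $e_\xi\colon F(\xi)\hookrightarrow\theta$; the uniform variant is the strengthening in which this covering data can be chosen coherently. Both parts are then organized around producing and composing such covers.

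\textbf{Part (1).} The core is to show that every regular cardinal $\lambda>\theta$ of $V$ stays regular in any generic extension $V[G]$, equivalently that all cofinalities ${>}\theta$ are preserved; preservation of arbitrary cardinals ${>}\theta$ follows by the standard reduction, since a collapsed cardinal $\kappa>\theta$ would acquire a cofinality below $\kappa$, contradicting the regular case directly when $\kappa$ is regular and via the preserved cofinality when $\kappa$ is singular. Assume toward a contradiction that some $p$ forces a name $\dot f$ to be a cofinal map $\check\mu\to\check\lambda$ with $\mu<\lambda$. Narrowness applied to $\dot f$ yields a ground-model cover $F\colon\mu\to[\lambda]^{\le\theta}$, so that $S=\bigcup_{\xi<\mu}F(\xi)$ is cofinal in $\lambda$. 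The one delicate point in $\ZF$ is to bound $|S|$: the uniform injections $e_\xi$ give an injection of $S$ into $\mu\times\theta$ by sending $\zeta$ to $(\xi,e_\xi(\zeta))$ for the least $\xi<\mu$ with $\zeta\in F(\xi)$. Since $\mu,\theta<\lambda$ and $\lambda$ is a cardinal, $|\mu\times\theta|<\lambda$, so $S$ has order type below $\lambda$; being cofinal, this contradicts the regularity of $\lambda$.

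\textbf{Part (2).} Let $\langle P_\alpha,\dot Q_\alpha : \alpha<\delta\rangle$ be a uniform finite-support iteration in which each $\dot Q_\alpha$ is forced to be uniformly $\theta$-narrow; I prove $P_\delta$ is uniformly $\theta$-narrow by induction on $\delta$. The engine at successor stages is a two-step composition lemma: if $P$ is uniformly $\theta$-narrow and $\Vdash_P$ ``$\dot Q$ is uniformly $\theta$-narrow'', then so is $P*\dot Q$. Given a $P*\dot Q$-name $\dot f\colon\gamma\to\mathrm{Ord}$, work in $V^P$ and use narrowness of $\dot Q$ to get a $P$-name $\dot F$ for a cover with uniformly enumerated $\le\theta$-slices; reading $\dot F$ through its enumerations converts it into a $P$-name $\dot g\colon\gamma\times\theta\to\mathrm{Ord}$ whose sections have range containing $\dot F(\xi)$. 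Applying uniform $\theta$-narrowness of $P$ to $\dot g$ produces a ground-model $G\colon\gamma\times\theta\to[\mathrm{Ord}]^{\le\theta}$, and then $H(\xi)=\bigcup_{i<\theta}G(\xi,i)$ covers $\dot f(\xi)$, with uniform enumerations assembled from those of $G$ together with the index $i$ using $|\theta\times\theta|=\theta$. This is exactly where the uniform (rather than bare) narrowness is indispensable, since the composition requires enumerating the intermediate slices.

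At limit stages the finite-support structure is exploited: every value $\dot f(\xi)$ is decided by conditions of finite support, so each value is approximated by the $P_\alpha$-components for $\alpha<\delta$, and the uniformity of the iteration is used to collect these approximations into a single ground-model covering of width $\le\theta$; when $\mathrm{cf}(\delta)>\theta$ one expects the covering to confine to a bounded stage, reducing to the inductive hypothesis, while for $\mathrm{cf}(\delta)\le\theta$ one glues stage-by-stage along a cofinal sequence. This limit step is the main obstacle. In $\ZFC$ one shows finite-support limits of c.c.c.\ forcings are c.c.c.\ via the $\Delta$-system lemma, which is unavailable in $\ZF$; the role of the two uniformity hypotheses—on the iterands and on the iteration—is precisely to furnish enough coherence (uniform enumerations of covering slices, and a uniform selection of covering data along the iteration) to glue the $\le\theta$-many partial covers into one ground-model cover without appealing to choice, thereby replacing the $\Delta$-system argument.
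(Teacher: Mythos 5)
The decisive gap is the limit stage of Part (2), which you correctly identify as the main obstacle but do not actually resolve. For a single partial $\parallel$-homomorphism $f$ on the limit $\PP_\delta$, the conditions in $\dom(f)$ have finite but unboundedly varying supports, so there is no reason why the relevant data should ``confine to a bounded stage'' when $\cof(\delta)>\theta$; that is precisely the assertion that needs proof, and your stage-by-stage gluing only yields an injection of $\ran(f)$ into $\cof(\delta)\times\theta$, which is useless when $\cof(\delta)>\theta$. You also dismiss the $\Delta$-system lemma as unavailable in $\ZF$, but the paper's argument is exactly to rescue it: one canonically assigns to each $\alpha\in\ran(f)$ a least support $s_\alpha$ (via a definable wellorder of $[\Ord]^{<\omega}$), restricts $f$ so that all preimages of $\alpha$ have support $s_\alpha$, and then applies the $\Delta$-system lemma \emph{inside} $\HOD_{\vec{\PP},f}$, which is a model of $\ZFC$ containing all the relevant data. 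The resulting sunflower with finite root $r$ lets one project $f$ to a genuine $\parallel$-homomorphism $g$ on $\PP_\beta$ for $\beta=\max(r)+1<\delta$ with $\ran(g)=\ran(f)$, and the inductive hypothesis applied to $g$ bounds $|\ran(f)|$ in $\HOD_{\vec{\PP},f}$ by $\theta$. Without this (or an equivalent) device your limit case does not close, and this is the heart of the theorem.

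There are two further problems in Part (1). First, your reduction of cardinal preservation to cofinality preservation is a $\ZFC$ argument that fails here: in $\ZF$ successor cardinals need not be regular, and a singular cardinal $\kappa>\theta$ with $\cof(\kappa)\leq\theta$ could in principle be collapsed by a surjection from some $\mu<\kappa$ without changing any cofinality above $\theta$. The paper avoids this entirely by arguing directly that any name for a function $\mu\to\lambda$ has range contained in a set of size $<\lambda$, hence is not surjective; your own covering machinery would give this, but as written you only treat cofinal maps. Second, your ``reading'' of $\theta$-narrowness as a slicewise covering property equipped with a ground-model assignment $\xi\mapsto e_\xi$ of injections $e_\xi\colon F(\xi)\to\theta$ is not the paper's hypothesis for Part (1): plain $\theta$-narrowness there is defined by quantifying over \emph{sequences} of partial $\parallel$-homomorphisms and bounding the cardinality of the union of their ranges in one stroke, precisely so that no uniform injections (and no choice) are needed to control $|\bigcup_\xi F(\xi)|$. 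Your version smuggles in the uniform variant at the one point you call delicate, so as stated your Part (1) only covers uniformly $\theta$-narrow forcings.
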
 

The notion of a uniform iteration of $\theta$-narrow forcings is explained below. 
As an application, we will see that a mix of any of the above forcings can be iterated with finite support while preserving all cardinals and cofinalities. 
In order to apply the previous result to random algebras, we show in Theorem \ref{random complete}: 

\begin{theorem} 
The random algebra on any number of generators is complete. 
\end{theorem}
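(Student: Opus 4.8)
I will realize the random algebra on $\kappa$ generators as the measure algebra $\random=\mathrm{Borel}(2^\kappa)/\mathcal{N}$, where $\mu$ is the product probability measure on $2^\kappa$ and $\mathcal{N}$ is the ideal of $\mu$-null sets, and show that every $\mathcal{A}\subseteq\random$ has a least upper bound. Countable joins are unproblematic: the Borel sets form a $\sigma$-algebra, so a countable union of representatives is again Borel and computes $\bigvee\mathcal{A}_0$ for countable $\mathcal{A}_0$, with no appeal to choice. The entire difficulty lies in suprema of uncountable families, and it is precisely here that the classical argument breaks down: it extracts a countable subfamily whose finite joins have measures converging to $s:=\sup\{\mu(F) : F \text{ a finite join of members of }\mathcal{A}\}$, and selecting such a subfamily uses countable choice. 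My plan is to bypass any selection from $\mathcal{A}$ by encoding the whole family as a single auxiliary measure and recovering its supremum as a density.

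First I would define, for Borel $X\subseteq 2^\kappa$, the quantity $\nu(X)=\sup\{\mu(F\cap X) : F \text{ a finite join of members of }\mathcal{A}\}$. Since the finite joins form an upward directed family, a routine interchange of suprema of reals (choice-free) shows that $\nu$ is monotone and finitely additive; the continuity of $\mu$ along increasing sequences then upgrades this to countable additivity, so $\nu$ is a finite measure. Moreover $\nu\le\mu$, and $\mu(X)=0$ forces $\nu(X)=0$, whence $\nu\ll\mu$. The essential feature is that $\mathcal{A}$ enters only through the real numbers $\mu(F\cap X)$, so this measure is defined with no choices made among members of $\mathcal{A}$.

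Granting a density $f=d\nu/d\mu$ with $0\le f\le 1$ and $\int_X f\,d\mu=\nu(X)$ for all Borel $X$, I would set $b=\{x : f(x)=1\}\in\random$ and check that $b=\sup\mathcal{A}$. For each $a\in\mathcal{A}$ one has $\nu(X)\ge\mu(a\cap X)$, so $\int_X(f-\one_a)\,d\mu\ge 0$ for every $X$ and hence $f\ge\one_a$ almost everywhere; thus $a\le b$ and $b$ is an upper bound. Conversely, if $c$ is any upper bound then every finite join $F$ satisfies $F\le c$, so $\nu(X)\le\mu(c\cap X)$ and therefore $f\le\one_c$ almost everywhere, giving $b\le c$. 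Hence $b$ is the least upper bound, and $\random$ is complete.

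The crux, and the step I expect to be hardest, is constructing the density $f$ without the axiom of choice. The usual routes — Riesz representation on $L^2(\mu)$, or a forward martingale along an exhausting $\omega$-sequence of finite coordinate sets — both secretly use separability or the fact that each measurable set depends on countably many coordinates, and the latter can fail in exactly the choiceless models this paper targets, where $\omega_1$ may be singular and a countable union of countable subsets of $\kappa$ need not be countable. My proposal is instead to build $f$ by transfinite recursion along the well-order of $\kappa$: let $f_\alpha$ be the density of $\nu$ relative to the $\sigma$-algebra $\mathcal{F}_{<\alpha}$ generated by the first $\alpha$ coordinates, a conditional-expectation martingale indexed by the ordinal $\kappa$. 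At stages of uncountable cofinality no limiting is required, since every set in $\mathcal{F}_{<\lambda}$ already lies in some $\mathcal{F}_{<\alpha}$ with $\alpha<\lambda$; the delicate case is a limit $\lambda$ with $\cf(\lambda)=\omega$, where passing to $f_\lambda$ seems to call for a cofinal $\omega$-sequence, and it must be verified that this, together with the attendant almost-everywhere martingale convergence, can be arranged canonically from the well-order alone. Securing this choice-free limit step is the heart of the proof; once the density is in hand, the verification that $b=\sup\mathcal{A}$ above is immediate.
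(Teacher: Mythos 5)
Your overall strategy --- differentiate an auxiliary measure built from the family and take the set where the derivative equals $1$ --- is the same idea the paper uses, but as written there is a genuine gap exactly where you locate it, and it is the heart of the theorem. The existence of the Radon--Nikodym density $f=d\nu/d\mu$ is not available off the shelf in $\ZF$: the Hilbert-space and Hahn-decomposition proofs both make selections, and your substitute (a transfinite conditional-expectation martingale along $\kappa$) is not carried out. At limit stages of countable cofinality you would need an almost-everywhere martingale limit taken along a canonically chosen cofinal $\omega$-sequence, together with an argument that the limit does not depend on that choice; at limits of uncountable cofinality, your claim that every set in $\mathcal{F}_{<\lambda}$ lies in some $\mathcal{F}_{<\alpha}$ amounts to saying that every such set depends on fewer than $\lambda$ coordinates, which in $\ZFC$ is proved by a closure argument using countable choice and can fail in exactly the models at issue. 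A second, independent problem is the ambient object: the paper's random algebra consists of \emph{Borel codes} (with support countable in $L$ of the code), not of arbitrary sets in the $\sigma$-algebra generated by the clopen sets; in $\ZF$ these differ (the paper notes that if $\omega_1$ is singular there are \Bore\ sets with no code), and even your ``unproblematic'' countable joins require choosing representatives. So even granting $f$, you would still owe an argument that $\{x \mid f(x)=1\}$ admits a countably supported code.

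The paper's proof can be read as a concrete, code-level implementation of your plan that sidesteps Radon--Nikodym entirely. For each basic clopen set $N_t$ it records the relative measure $\foot_{A,t}=\mu(A\cap N_t)/\mu(N_t)$, takes the pointwise supremum $\foot_{X,t}=\sup_{A\in X}\foot_{A,t}$ (a supremum of reals, so no selection from $X$ is made), and writes down the set $D(\foot_X)$ of density points of this function as an explicit $\alpha$-Borel code --- this is your $\{x \mid f(x)=1\}$, defined syntactically rather than via an abstract derivative. The verification that $D(\foot_X)$ is a least upper bound is then exported to a generic extension in which $\alpha$ is countable, where the classical Lebesgue density theorem applies and the relevant inequalities between codes are absolute; finally a canonical reduct, computed in $L$ of the code, replaces the $\alpha$-Borel code by an equivalent countably supported one. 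If you want to salvage your write-up, replace the transfinite martingale by this clopen-indexed density-point set and the outer-model verification; the auxiliary measure $\nu$ then becomes unnecessary.
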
 

A special case of the above iteration theorem is that any uniform iteration of $\sigma$-linked forcings preserves all cardinals and cofinalities in Corollary \ref{iteration linked}. 
Here, uniform means that the iteration comes with a sequence of names for linking functions. 

We use these results to study the effect of products and iterations of the above forcings on choiceless models. 
By a Cohen model, we mean an extension by a finite support product of Cohen forcing of length at least $\omega_2$. 
A random model is an extension by a random algebra with at least $\omega_2$ generators. 
We separate Cohen from random models in Theorem \ref{paper failure absoluteness}. 
This result was proved by Woodin. 

\begin{theorem}[Woodin] 
\label{separate random Cohen} 
Cohen and random models have different theories. 
\end{theorem}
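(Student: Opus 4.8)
The plan is to produce a single first-order sentence $\varphi$ in the language of set theory that every Cohen model satisfies and every random model refutes, with both verifications uniform in the (choiceless) ground model $V$. The sentence I would use is
\[
\varphi:\quad\text{there is a function }f\colon\omega_1\to\RR\text{ whose range is non-meager,}
\]
i.e.\ $\mathrm{non}(\mathcal M)=\omega_1$ (recall $\mathrm{non}(\mathcal M)\ge\omega_1$ always, since countable sets are meager). The point of choosing $\mathrm{non}(\mathcal M)$ rather than a covering number is robustness over an \emph{arbitrary} $V$: the \lq small' value of $\varphi$ on the Cohen side comes with an explicit witness manufactured from the generic reals, while the \lq large' value on the random side is forced by a lower bound on $\mathrm{cov}(\mathcal N)$ together with the $\ZF$-provable Cichoń inequality $\mathrm{cov}(\mathcal N)\le\mathrm{non}(\mathcal M)$. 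The seemingly more natural candidates $\mathrm{cov}(\mathcal M)$ or $\mathrm{cov}(\mathcal N)$ are only one-sidedly robust: their small values would require $\RR^V$ to be small, hence some instance of $\CH$ in $V$, which we are not allowed to assume.

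For the Cohen side, fix a Cohen model $V[G]$ obtained from the finite support product of $\kappa\ge\omega_2$ copies of Cohen forcing, and let $\langle c_\alpha:\alpha<\omega_1\rangle$ list the first $\omega_1$ generic reals. First I would show that $X=\{c_\alpha:\alpha<\omega_1\}$ is non-meager. The key sub-fact is a localisation: any meager Borel set of $V[G]$ has a code already in $V[G\restriction a]$ for some countable $a\subseteq\kappa$. Since $a$ is countable there is $\beta<\omega_1$ with $\beta\notin a$, and $c_\beta$ is Cohen-generic over $V[G\restriction a]$, so it avoids every meager Borel set coded there; hence $X$ lies in no meager set. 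As $\omega_1$ is preserved, the map $\alpha\mapsto c_\alpha$ witnesses $\varphi$. That $\omega_1$ and $\omega_2$ remain cardinals, and that a countable $a$ is a bounded subset of the index set, are exactly the conclusions of Lemma \ref{narrow pres card} and Theorem \ref{iteration theorem}.

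For the random side, fix a random model $V[H]$ given by the random algebra on $\kappa\ge\omega_2$ generators, and I would first prove $\mathrm{cov}(\mathcal N)>\omega_1$. Given any $\langle N_\xi:\xi<\omega_1\rangle$ of null Borel sets, each code localises to countably many generators, so the family localises to a set $A\subseteq\kappa$ which is a union of $\omega_1$ countable sets; the narrowness machinery keeps $|A|<\kappa$, so some generator lies outside $A$ and produces a real random over $V[H\restriction A]$ avoiding every $N_\xi$. Thus $\bigcup_\xi N_\xi\ne\RR$, i.e.\ $\mathrm{cov}(\mathcal N)>\omega_1$. If some $f\colon\omega_1\to\RR$ had non-meager range, the Rothberger construction underlying $\mathrm{cov}(\mathcal N)\le\mathrm{non}(\mathcal M)$ would turn that range into a cover of $\RR$ by $\omega_1$ null sets, contradicting $\mathrm{cov}(\mathcal N)>\omega_1$; hence every $\omega_1$-indexed set of reals is meager and $V[H]\models\neg\varphi$. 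The localisation of reals to countable subalgebras and the meaningfulness of $H\restriction A$ rest on the completeness of the random algebra (Theorem \ref{random complete}).

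The main obstacle is precisely the choiceless bookkeeping inside the two \lq escape' arguments. Without choice $\omega_1$ may be singular, a union of $\omega_1$ countable sets need not have size $\omega_1$, and supports are delicate, so the assertions that the relevant support is a proper (indeed bounded) subset of the $\kappa$ generators and that $\omega_1,\omega_2$ survive are not automatic: this is where the theory of uniformly $\theta$-narrow forcings with $\theta=\omega$ does the real work, through Lemma \ref{narrow pres card} and Theorem \ref{iteration theorem}. The two remaining points I would check by hand are that $\mathrm{cov}(\mathcal N)\le\mathrm{non}(\mathcal M)$ is genuinely $\ZF$-provable — its proof is an explicit Borel construction turning a non-meager set into a cover of $\RR$ by equally many null sets, so no well-ordering is invoked — and that in $\ZF$ the random algebra determines each extension real from a countable subalgebra, which is supplied by Theorem \ref{random complete}.
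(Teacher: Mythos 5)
Your argument is correct in outline, but it takes a genuinely different route from the paper's. The paper's switch is the sentence ``for every $A\subseteq\omega_1$ there exists a random real over $L[A]$'', which holds in random models by the localisation lemma (Lemma \ref{random over intermediate}) and fails in Cohen models; the failure rests on Truss's non-mutual-genericity lemma (Lemma \ref{random versus Cohen}), namely that a Cohen real $x$ over $L[y]$ prevents $y$ from being random over $L[x]$. You replace that combinatorial core by the Rothberger inequality $\mathrm{cov}(\mathcal{N})\le\mathrm{non}(\mathcal{M})$, whose proof is a uniform Borel construction and hence $\ZF$-valid, and you correctly identify $\mathrm{non}(\mathcal{M})$ as the one characteristic whose value is robust on both sides over an arbitrary choiceless ground model (the covering numbers alone are not, as you note). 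Both proofs lean on exactly the same capturing and preservation machinery (narrowness, local completeness, Lemma \ref{random over intermediate}), so neither is more elementary there; yours yields a switch expressed purely in classical cardinal-characteristic terms, while the paper's needs only single-real escape arguments rather than escape from $\omega_1$-indexed families. Two points to tighten. First, on the random side a single fresh generator does not produce a random real --- one needs a countably infinite set of fresh coordinates, as in the proof of Lemma \ref{random over intermediate} \ref{random over intermediate 2}. Second, localising the $\omega_1$ null codes one at a time requires a uniform (e.g.\ $\HOD$-least) choice of the countable supports and then the observation that the union of these canonically enumerated countable sets of ordinals has order type below $\omega_2\le\kappa$; it is cleaner to localise the entire sequence at once as a single subset of $\omega_1$ of size ${<}\omega_2$, which is exactly what Lemma \ref{random over intermediate} supplies.
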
 

In particular, there is a first-order sentence that can be forced true or false over any choiceless model, as you like. 
Using this, we study the principle $\A_\cC$ 
which states $V$ is elementarily equivalent to all its generic extensions by forcings in a class $\cC$. 
The previous result shows that $\A_{\cC}$ fails for the class $\cC$ of all random algebras and all finite support products of Cohen forcings. 
In the language of the modal logic of forcing \cite{hamkins2008modal}, we find a \emph{switch} with respect to this class. 
Recall that a switch is a sentence that can be forced both true or false over any generic extension of $V$. 
For example, the continuum hypothesis and the existence of Suslin trees are switches for models of $\ZFC$. 


We then study more restrictive absoluteness principles for each of 
the classes $\CC^*$ of all finite support products of Cohen forcings, $\RR_*$ of all random algebras and $\HH^{(*)}$ of finite support iterations of Hechler forcing of arbitrary length. 
If there exists an uncountable regular cardinal, then within each of these classes the generic extension can detect differences in the the length of the product, number of generators or length of the iteration in the ground model. 
Using this, we will show in Theorem \ref{A implies singular} and Corollary \ref{abs H singular} that each of these principles implies that all infinite cardinals have countable cofinality. 
The next result was proved by Woodin for the class $\CC^{*}$. 

\begin{theorem}[joint with Woodin] 
\label{abs implies countable} 
Let $\A$ denote any of the principles $\A_{\CC^*}$, $\A_{\RR_*}$ or $\A_{\HH^{(*)}}$. 
If $\A$ holds, then all infinite cardinals have countable cofinality. 
\end{theorem}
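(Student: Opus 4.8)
The plan is to prove the contrapositive: assuming the existence of an uncountable regular cardinal, I will show that $\A$ fails. First I would record the purely set-theoretic equivalence that \emph{every infinite cardinal has countable cofinality} if and only if there is \emph{no} uncountable regular cardinal. Indeed, if some infinite cardinal $\mu$ has $\cf(\mu)>\omega$ then $\cf(\mu)$ is a regular uncountable cardinal, and conversely any regular uncountable $\kappa$ is an infinite cardinal of uncountable cofinality. Hence Theorem \ref{abs implies countable} is equivalent to the assertion that $\A$ fails as soon as an uncountable regular cardinal $\kappa$ exists.

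So suppose $\A$ holds and, towards a contradiction, fix a regular uncountable cardinal $\kappa$. Every forcing in the relevant class is $\omega$-narrow: finite support products of Cohen forcing and random algebras have no uncountable antichains, and finite support iterations of Hechler forcing are uniform iterations of $\sigma$-linked forcings and so are $\omega$-narrow by the iteration theorem (Theorem \ref{iteration theorem}) together with Corollary \ref{iteration linked}. By Lemma \ref{narrow pres card} each such forcing therefore preserves all cardinals and cofinalities ${>}\omega$, so $\kappa$ remains a regular uncountable cardinal in every generic extension under consideration. This preservation is exactly what makes it legitimate to refer to $\kappa$ inside the extension, and it is what the detection of lengths relies on.

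Now I invoke the detection result stated above: since $\kappa$ is regular and uncountable, within the class $\cC$ (be it $\CC^*$, $\RR_*$ or $\HH^{(*)}$) one can choose two forcings $P,Q\in\cC$ of different lengths, numbers of generators, or lengths of iteration — for concreteness of lengths $\kappa$ and $\kappa^{+}$ — and a first-order sentence $\varphi$ with $V^{P}\models\varphi$ and $V^{Q}\models\neg\varphi$; in particular $V^{P}\not\equiv V^{Q}$. But $\A=\A_{\cC}$ gives $V\equiv V^{P}$ and $V\equiv V^{Q}$, because $P,Q\in\cC$, and hence $V^{P}\equiv V^{Q}$ by transitivity of elementary equivalence — a contradiction. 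This rules out any uncountable regular cardinal and so yields the conclusion. The three classes are treated uniformly in this scheme; the cases $\CC^{*}$ and $\RR_{*}$ are carried out in Theorem \ref{A implies singular} and the case $\HH^{(*)}$ in Corollary \ref{abs H singular}, and Theorem \ref{abs implies countable} is their common conclusion.

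The entire difficulty is concentrated in the detection result, i.e. in manufacturing from a regular uncountable $\kappa$ a parameter-free first-order sentence that separates extensions of different sizes. The hard point is that in a choiceless universe one cannot simply count the added reals; instead one must define inside $V^{P}$ an object whose internal structure encodes the size of $P$ — for instance a strictly increasing, $\kappa$-cofinal sequence of sets of reals exhibiting a specific regular ``cofinality'' of the reals, or a well-ordered sequence of generic reals whose length is pinned down using $\kappa$ as a ruler — and then verify that its defining property genuinely fails in $V^{Q}$. Carrying this out so that it is first-order, parameter-free, and uniform across Cohen reals, random reals and Hechler reals (where mutual genericity, and for the random case the completeness established in Theorem \ref{random complete}, are used) is where the real work lies, and it is the step I expect to be the main obstacle.
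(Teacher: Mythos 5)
Your outer frame (contrapositive, cardinal/cofinality preservation via narrowness, transitivity of elementary equivalence) matches the paper's, but the proposal has a genuine gap: the entire mathematical content of the theorem is the ``detection result'' that you explicitly defer, and the ideas you sketch for it are not the ones that work. The paper does not separate extensions by ``counting the added reals'' or by exhibiting a well-ordered sequence of generics of a pinned-down length. For $\CC^*$ and $\RR_*$ the switch is built from two cardinal characteristics: $\cc=\sup\{\lambda\in\Card\mid \lambda\leq_i 2^\omega\}$ and the new invariant $\jj$ (the least $\nu$ admitting a \emph{jumbled} subset of $\pow_{\omega_1}(\nu)$, i.e.\ one of size $\cc$ not covered by fewer than $\cc$ countable sets). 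Lemma \ref{covering} shows that for an $\omega$-inaccessible $\kappa$ the forcing $\PP_\kappa$ forces $\cc=\kappa$ and $\jj\geq\cc$, while Lemma \ref{covering fails} shows that for a successor $\lambda\geq\cc^{++}$ the forcing $\PP_\lambda$ forces $\jj\leq\lambda<\cc$; the contradiction with $\GA$ comes from comparing these two, not from comparing $\kappa$ with $\kappa^+$ as you suggest. The proofs of these lemmas are where the regular uncountable cardinal, the niceness of $\PP_\kappa$, and $\omega$-narrowness are actually consumed (narrowness is used to make the sets $h(\alpha,\beta)$ countable in $V$, which is what bounds the covering family). For $\HH^{(*)}$ the mechanism is different again: Theorem \ref{bounding regular} gives $\bb=\dd=\kappa$ after $\HH^{(\kappa)}$ for regular uncountable $\kappa$, while Theorem \ref{bounding} gives $\bb=\omega_1<\dd$ after $\HH^{(\aleph_\omega)}$, so ``$\bb=\dd$'' is the switch. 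None of this is recoverable from your sketch, so the proposal as written does not constitute a proof.

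Two smaller inaccuracies: the claim that random algebras ``have no uncountable antichains'' in $\ZF$ is in fact listed as an open problem at the end of the paper; what is actually established (and what is needed) is that $\RR_\kappa$ is locally complete and locally $\omega^{[+]}$-c.c., hence uniformly narrow. And the sentence produced by the paper's argument is not uniform across the three classes --- the Cohen/random case and the Hechler case use entirely different invariants --- so the ``treated uniformly in this scheme'' claim oversells the situation.
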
 

For random algebras and products of Cohen forcing, this is proved in Section \ref{generic absoluteness} using a new cardinal characteristic $\jj$. 
Regarding Hechler forcing, let $\HH^{(\kappa)}$ denote the finite support iteration of Hechler forcing of length $\kappa$. 
By the above iteration theorem, we know that $\HH^{(\kappa)}$ preserves all cardinals and cofinalities. 
The case for Hechler forcing of the previous theorem uses the next result that is proved in Theorems \ref{bounding regular} and \ref{bounding}: 


\begin{theorem} 
For any uncountable cardinal $\kappa$, $\HH^{(\kappa)}$ forces that the bounding number equals $\max(\cof(\kappa),\omega_1)$. 
\end{theorem}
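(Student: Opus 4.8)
The plan is to compute $\bb$ in $V[G]$, where $G$ is $\HH^{(\kappa)}$-generic over $V$, by bracketing it between $\lambda:=\max(\cof(\kappa),\omega_1)$ from both sides. The basic tool is the \emph{scale} of Hechler reals: writing $d_\alpha\in V[G_{\alpha+1}]$ for the dominating real added at stage $\alpha$, one has that $d_\alpha$ eventually dominates every element of $\omega^\omega\cap V[G_\alpha]$, so $\langle d_\alpha:\alpha<\kappa\rangle$ is $\leq^*$-increasing and each $d_\alpha$ is definable from $G$ (no choice is needed to form the sequence). I would first treat $\kappa$ regular, where $\lambda=\kappa$, and then reduce the general case to it, isolating $\cof(\kappa)=\omega$ as a separate construction; this matches the division into Theorems \ref{bounding regular} and \ref{bounding}.

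For the lower bound $\bb\geq\lambda$ I would argue that no short sequence is unbounded. That $\bb\geq\omega_1$ is the usual $\ZF$ diagonalization: given $\langle f_n:n<\omega\rangle$, the function $k\mapsto\max_{n\leq k}f_n(k)+1$ dominates all $f_n$. For the remaining inequality $\bb\geq\cof(\kappa)$ (relevant when $\cof(\kappa)>\omega$), take any sequence $\langle f_\xi:\xi<\delta\rangle\in V[G]$ with $\delta<\cof(\kappa)$. Using that $\HH^{(\kappa)}$ is uniformly $\omega$-narrow, every real of $V[G]$ has a countable support and hence, as $\cof(\kappa)>\omega$, lies in some $V[G_\alpha]$ with $\alpha<\kappa$; so I can set $\alpha_\xi$ to be the \emph{least} $\alpha$ with $f_\xi\in V[G_\alpha]$, which is definable from $\langle f_\xi\rangle$ and $G$, so no choice is used. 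Since $\cof(\kappa)$ is a regular cardinal and $\delta<\cof(\kappa)$, the definable map $\xi\mapsto\alpha_\xi$ is bounded, say by $\alpha^*<\kappa$; then every $f_\xi$ lies in $V[G_{\alpha^*}]$ and is dominated by $d_{\alpha^*}$, so the sequence is bounded.

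For the upper bound $\bb\leq\lambda$ when $\cof(\kappa)>\omega$, fix a cofinal $\langle\kappa_i:i<\cof(\kappa)\rangle$ and consider $\langle d_{\kappa_i}:i<\cof(\kappa)\rangle$. By the same localization, any $f\in V[G]$ lies in some $V[G_\beta]$ with $\beta<\kappa$, hence is dominated by $d_{\kappa_i}$ for any $\kappa_i>\beta$; thus this family is dominating, and a dominating family can have no $\leq^*$-upper bound (an upper bound $g$ would have $g+1$ dominated by some member $\leq^* g$). So $\bb\leq\cof(\kappa)=\lambda$. Combined with the lower bound this settles every $\kappa$ with $\cof(\kappa)>\omega$, in particular the regular case, where the full scale $\langle d_\alpha:\alpha<\kappa\rangle$ already witnesses $\bb\leq\kappa$.

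The main obstacle is the upper bound when $\cof(\kappa)=\omega$, where $\lambda=\omega_1$ and I must produce an unbounded family of size exactly $\omega_1$. Here localization fails: a real may have cofinal (countable) support, so the scale is no longer dominating. In fact, fixing $\kappa_n\nearrow\kappa$, the single real $k\mapsto\max_{n\leq k}d_{\kappa_n}(k)+1$ dominates every $d_{\kappa_n}$ and hence all of $\bigcup_n\omega^\omega\cap V[G_{\kappa_n}]$, so the entire bounded-stage part of $V[G]$ is itself bounded and the witnesses must be sought among the reals of cofinal support. The plan is to build $\langle f_\xi:\xi<\omega_1\rangle$ from such top reals, arranged so as to read genericity off cofinally many blocks $[\kappa_n,\kappa_{n+1})$ at once, and to show that no single $g$ dominates them: any candidate $g$ carries, at each block, only boundedly much information, against which a suitably generic top real escapes on cofinally many coordinates. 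Making this precise choicelessly---assigning canonical names and supports through the uniform $\sigma$-linked structure rather than by an appeal to nice names and $\AC$---is where the real work lies, and it is the step I expect to be hardest. Assembling the two bounds then yields $\bb=\max(\cof(\kappa),\omega_1)$ in all cases.
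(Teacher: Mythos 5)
Your decomposition into the uncountable-cofinality case and the countable-cofinality case is exactly the paper's (Theorems \ref{bounding regular} and \ref{bounding}), and the easy halves --- $\bb\geq\omega_1$ by diagonalization, and the fact that once every real is captured at a bounded stage the Hechler reals along a cofinal sequence give $\bb=\dd=\cof(\kappa)$ --- are fine. But two steps in your write-up are plans rather than proofs. First, the localization claim that every real of $V[G]$ lies in some $V[G{\upharpoonright}\gamma]$ with $\gamma<\kappa$ does not follow merely from uniform $\omega$-narrowness. The paper proves it by passing to a dense subset of $\HH^{(\kappa)}$ admitting a $\perp$-projection onto $\Func_{<\omega}(\kappa\times\omega,\omega)$ (Lemmas \ref{density of conditions deciding support} and \ref{perp projection}), choosing for each $n$ the \emph{least} maximal antichain deciding $n\in\sigma$ inside $\HOD_{\{\sigma\}}$ --- where choice and the genuine c.c.c.\ are available --- and then checking that $p$ and $p|\gamma$ force the same facts about $\sigma$ once $\gamma$ bounds the supports of these antichains. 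Some such detour through an inner model of choice is needed; the bookkeeping cannot be done in $V$ itself.

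Second, and more seriously, the case $\cof(\kappa)=\omega$, which you correctly identify as the crux, is left open: you describe the intended shape of the witnesses but give no construction and no unboundedness proof. The paper's Theorem \ref{bounding} does this concretely. Fix $\alpha_n\nearrow\kappa$ and an injection $f\colon\kappa\times\omega\to\kappa$ with $f(\alpha,n)\geq\alpha_n$, let $x_\alpha(n)$ record which member of a fixed countable antichain at coordinate $f(\alpha,n)$ enters the generic filter, and show that the trace $\tr(x)=\{\alpha\mid x_\alpha\leq_* x\}$ of any real $x$ is countable. The countability is the real content: one writes $\tr(x)=\bigcup_{n,i}A_n^{(i)}$, where $A_n^{(i)}$ collects those $\alpha$ for which domination from $i$ onward is already forced by a condition in $G{\upharpoonright}\alpha_n$; each $A_n^{(i)}$ lies in $V[G{\upharpoonright}\alpha_n]$, and if it were infinite then the values $x_{\beta_k}(j)$ at $j=\max(i,n)$ for distinct $\beta_k\in A_n^{(i)}$ would come from distinct coordinates $\geq\alpha_n$ and hence be mutually generic over $V[G{\upharpoonright}\alpha_n]$, so they could not all stay below the fixed natural number $x(j)$. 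Then $\langle x_\alpha\mid\alpha<\omega_1\rangle$ is unbounded because $\omega_1$ is preserved. Without this (or an equivalent) argument the theorem is not established for singular $\kappa$ of countable cofinality.
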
 

In a well known model constructed by Gitik, all infinite cardinals have countable cofinality. 
However, Theorem \ref{A fails Gitik} shows that the above principles do not hold there: 

\begin{theorem}
Let $\A$ denote any of the principles $\A_{\CC^*}$, $\A_{\RR_*}$ or $\A_{\HH^{(*)}}$. 
$\A$ fails in Gitik's model from \cite[Theorem I]{MR576462}.  
\end{theorem}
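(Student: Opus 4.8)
The plan is to refute each of the three principles over Gitik's model $V$ by exhibiting, \emph{within the single class in question}, a forcing $\mathbb{Q}$ and a first-order sentence whose truth value differs between $V$ and $V[G]$ for $\mathbb{Q}$-generic $G$. The first point to record is that Theorem~\ref{abs implies countable} offers no shortcut here: its conclusion, that every infinite cardinal has countable cofinality, is exactly the defining property of $V$, so the necessary condition for $\A$ is already met and a positive witness of non-absoluteness has to be produced instead.

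The second point is to explain why the invariants driving the other results are useless in $V$, which sharply narrows the search. Since all three classes preserve cardinals and cofinalities (Lemma~\ref{narrow pres card} and Theorem~\ref{iteration theorem}), every extension of $V$ again has all uncountable cardinals singular of cofinality $\omega$. In such a model any $\omega_1$-indexed family of reals is bounded: a cofinal $\omega$-sequence in $\omega_1$ decomposes the index set into countable intervals, each of which carries its canonical increasing enumeration, so one bounds it by a single function and then diagonalises, using no choice. Consequently the length-type cardinal characteristics, and more generally the detection arguments underlying Theorem~\ref{abs implies countable}---all of which require an uncountable regular cardinal---are neutralised in $V$ and are preserved into every extension, so they cannot separate $V$ from $V[G]$.

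The separating sentence must therefore speak about the non-well-orderable fine structure of $\omega^\omega$ rather than about lengths. For the Cohen and random classes I would start from Woodin's Theorem~\ref{separate random Cohen}: there is a sentence $\psi$ holding in every Cohen model and failing in every random model. Evaluated over $V$ itself this already yields one of the two failures for free, since $\A_{\CC^*}$ would force $V\models\psi$ while $\A_{\RR_*}$ would force $V\models\neg\psi$; whichever value $\psi$ takes in $V$ refutes the corresponding principle at once. To kill the remaining principle I would produce a second sentence of the same meager-versus-null flavour that distinguishes $V$ from the relevant extension, and for $\A_{\HH^{(*)}}$ a sentence recording that a finite-support Hechler iteration adds a real dominating a suitable definable family---again a property not of length type, so as to evade the degeneracy above.

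The hard part is the explicit computation inside Gitik's model. Because $V$ has no uncountable regular cardinal, the within-class switches used elsewhere---which read off the length of the product, the number of generators or the length of the iteration---are unavailable, so one cannot simply compare two extensions of different sizes. Instead each separating sentence must be evaluated in $V$ directly, by tracing the relevant generic reals through the symmetric Prikry-type construction of $V$ from large cardinals and analysing their category, measure or domination behaviour over the definable inner models of $V$. I expect this local analysis of Gitik's forcing, rather than any general absoluteness principle, to be the crux.
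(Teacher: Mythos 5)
Your proposal does not reach a proof, and two of the claims that steer it are false. First, you assert that the ``length-type'' invariants behind Theorem \ref{abs implies countable} are neutralised in Gitik's model because they require an uncountable regular cardinal. This is not so: Theorem \ref{A implies singular} \ref{A implies singular 1} --- that $\GA_{\vec{\PP}}$ forces $\cc>\kappa$ over every $\omega$-strong limit cardinal $\kappa$ --- is proved via the jumbled-covering characteristic $\jj$ and needs no uncountable regular cardinal; only part \ref{A implies singular 2} does. The paper's proof of the present theorem is exactly an application of part \ref{A implies singular 1}: by Lemma \ref{lem:transition} the value of $\cc$ in the $\CC^\kappa$-extension equals $\cc_\kappa=\sup\{\lambda\in\Card\mid \lambda\leq_i\kappa^\omega\}$ computed in $V$, so $\GA_{\CC^*}$ (and likewise $\GA_{\RR_*}$) would give $\cc_\kappa>\kappa$; and Lemma \ref{Gitik c} computes $\cc_\kappa=\kappa$ in Gitik's model, using the factorisation of Lemma \ref{two step} into $\PP_{s\cap\kappa_\xi}*\dot{\QQ}$ where $\dot{\QQ}$ adds no bounded subsets of the strongly compact $\kappa_\xi$. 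The invariant that does the work is precisely a length-type one; what makes Gitik's model special is that these invariants are as small as possible (in particular $\cc_\omega=\omega$, so there is no uncountable wellordered family of reals at all).

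Second, your claim that in every extension of $V$ any $\omega_1$-indexed family of reals is bounded is wrong: decomposing $\omega_1$ into countably many countable intervals does not let you bound the family, because choosing, uniformly in $n$, a bijection of the $n$th interval with $\omega$ is an instance of countable choice, which fails in these models. Theorem \ref{bounding} \ref{bounding 1} establishes the opposite --- the finite-support Hechler iteration adds an unbounded family of size $\omega_1$ even though $\omega_1$ is singular --- and this is exactly how the paper refutes $\GA_{\HH^{(*)}}$: the extension satisfies $\bb=\omega_1$, while in $V$ the bounding number does not exist since $\cc_\omega=\omega$. Finally, the Cohen-versus-random switch of Theorem \ref{separate random Cohen} can refute at most one of $\A_{\CC^*}$ and $\A_{\RR_*}$ (whichever disagrees with the truth value of the switch in $V$); you neither determine that truth value in Gitik's model nor supply the promised second sentence, so the remaining steps of your plan are intentions rather than arguments.
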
 




It is open whether $\A$ is consistent for any of the above choices. 
Our results in Section \ref{cons of abs for ckappa} provide properties that a model of $\A$ must have. 
Regarding its consistency strength, note that Gitik's construction starts from a proper class of strongly compact cardinals. 
Moreover, it follows from the previous theorem and a result of Busche and Schindler \cite[Theorem 1.5]{busche2009strength} that $\A$ has at least the consistency strength of $\AD^{L(\RR)}$. 
It therefore has at least the consistency strength of infinitely many Woodin cardinals.

\section{Preliminaries} 

\subsection{Notation}

Write $A\li B$ if there exists an injective function $f\colon A\rightarrow B$ and $A\leq_s B$ if there exists a surjective function $g\colon B\rightarrow A$ or $A$ is empty.\footnote{The standard notation for $\leq_i$ and $\leq_s$ is $\leq$ and $\leq^*$, respectively. However, this conflicts with other notation in this paper.} 
A \emph{partial function} from $A$ to $B$ is denoted $f\colon A \rightharpoonup B$. 
Basic open sets in Cantor space $2^\omega=\{f \mid f\colon \omega\rightarrow 2\}$ are denoted $N_t=\{x\in 2^{\omega} \mid t\subseteq x\}$, where $t\in 2^{<\omega}$. 

Let $\p\colon \Ord\times \Ord\rightarrow \Ord$ denote the standard pairing function.\footnote{An ordinal $\alpha>0$ is closed under $\p$ if and only if $\alpha$ is multiplicatively closed, i.e., $\beta\cdot \gamma<\alpha$ for all $\beta,\gamma<\alpha$.} 
The \emph{rank} $\rank(x)$ of a set $x$ is the least $\alpha\in\Ord$ with $x\in V_{\alpha+1}$. 
For ordinals $\alpha$ and $\beta$, $\pow_\alpha(\beta)$ denotes the set of subsets of $\beta$ of order type ${<}\alpha$. 
A \emph{cardinal} is an ordinal $\kappa$ that is not the surjective image of any $\alpha<\kappa$. 
$\Card$, $\SucCard$ and $\Reg$ denote the classes of infinite cardinals, infinite successor cardinals and infinite regular cardinals, respectively. 
$\kappa$ and $\lambda$  
denote infinite cardinals, unless stated otherwise. 
$\kappa$ is called a \emph{$\lambda$-strong limit} if for all $\nu<\kappa$, there does not exist a surjection from $\nu^\lambda$ onto $\kappa$. 
$\kappa$ is called \emph{$\lambda$-inaccessible} if it is a $\lambda$-strong limit and $\cof(\kappa)>\lambda$.

A \emph{forcing} is a set $\PP$ with a \emph{quasiorder}\footnote{A quasiorder is a transitive reflexive relation.} $\leq$ on $\PP$ and a largest element $\one_\PP$. 
We often identify $\PP$ and $(\PP,\leq,\one_\PP)$. 
The \emph{discrete partial order} on a set is the one with no relation between distinct elements. 
Conditions $p,q \in \PP$ are \emph{compatible}, denoted $p\parallel q$, if there exists some $r\leq p,q$. 
Otherwise $p$ and $q$ are \emph{incompatible}, denoted $p \perp q$. 
If $(\PP,\leq)$ and $(\QQ,\leq)$ are forcings, a \emph{$\parallel$-homomorphism} from $\PP$ to $\QQ$ is a function $f\colon \PP\rightarrow \QQ$ such that for all $p,q\in \PP$, $p\parallel q$ implies $f(p) \parallel f(q)$.  
The notions of \emph{$\leq$-homomorphism} and \emph{$\perp$-homomorphism} are defined similarly. 

The \emph{Boolean completion} $\BB(\PP)$ of a forcing $\PP$ is the set of all regular open subsets of $\PP$, ordered by inclusion.\footnote{It is easy check that $\BB(\PP)$ is isomorphic to $\BB(\PP_{\mathrm{sep}})$, where $\PP_{\mathrm{sep}}$ denotes the separative quotient of $\PP$.} 
It comes with a canonical $\leq$- and $\perp$-homomorphism $\iota:=\iota_\PP \colon \PP\rightarrow \BB(\PP)$ with dense range in $\BB(\PP)$, where $\iota(p)=\{q\in \PP \mid  \forall r\leq q\ r\leq p\}$. 
We will use the notation $p_\iota:=\iota(p)$. 
For any subset $A$ of $\PP$, we call $\sup(A):=\sup(\iota[A])$ the supremum of $A$. 

For any set $S$, let $\Func_{<\lambda}(S,\kappa)$ denote the set of partial functions $f\colon S\rightarrow\kappa$ of size ${<}\lambda$, partially ordered by reverse inclusion. 
For any ordinal $\alpha$, we write $2^{(\alpha)}$ for $\Fun(\alpha,2)$. 
Let  $\CC^\alpha:=\Func_{<\omega}(\alpha\times\omega,2)$ and $\CC:=\CC^1$. 
$\CC^\alpha$ is isomorphic to the product of $\alpha$ many copies of Cohen forcing $\CC$ with finite support. 
For any cardinal $\kappa$, a forcing is called \emph{$\kappa$-c.c.} if it does not contain antichains of size $\kappa$.

\subsection{Iterated forcing} 

A $2$-step iteration has to be restricted to names of bounded ranks to avoid the use of proper classes, and the same should happen uniformly at every stage of an iterated forcing. 
For instance, one can restrict the names for elements of the next forcing to the least possible $V_\alpha$ in each step of the iteration. 
The next lemma gives a clear account of this by providing names with optimal ranks. 
To state the lemma, we define the \emph{$\PP$-rank} $\rank_\PP(\tau):=\sup\{ \rank_\PP(\sigma)+1 \mid \exists p\ (\sigma,p)\in \tau \} $ of a $\PP$-name $\tau$ by induction on the rank of $\tau$.\footnote{Then $\rank(\sigma) \leq  \rank(\PP)+ 3\cdot \rank_\PP(\sigma)+1$ by induction.} 

In the proof, we will work with a generic filter over $V$ for convenience. 
In more detail, one can run the argument in a Boolean-valued model $V^\BB$, where $\BB=\BB(\PP)$. 
$V^\BB$ believes that it is of the form $V[G]$ for a $\PP$-generic filter $G$ over $V$. 
Every statement claimed in the proof holds in $V^\BB$ with Boolean value $\one_\BB$.\footnote{The usual argument in $\ZFC$ for working with generic filters in $V$ uses countable elementary submodels of some $H_\theta$, but such submodels need not exist in models of $\ZF$.} 

\begin{lemma} 
\label{small names} 
There is a formula defining a class function $F\colon V^3 \rightarrow V$ such that the following 
hold for any forcing $\PP$ 
and any $\PP$-name $\tau$: 
\begin{enumerate-(1)} 
\item 
\label{small names 1}
$F(\PP,q,\tau)$ is a $\PP$-name with 
$q \Vdash_\PP F(\PP,q,\tau)=\tau$. 

\item 
\label{small names 2}
If $\beta\in\Ord$ and $q\Vdash_\PP \rank(\tau)\leq\beta$, 
then 
$\rank_\PP(F(\PP,q,\tau)) \leq \beta$. 

\end{enumerate-(1)} 
\end{lemma}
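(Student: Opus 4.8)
The plan is to define $F$ by $\in$-recursion on the name $\tau$ (equivalently, by recursion on $\rank(\tau)$), so that $F$ is given by a single formula. On inputs not of the form (forcing, condition, name) set $F(\PP,q,\tau)=\emptyset$; otherwise put
\[ F(\PP,q,\tau)=\{(F(\PP,p,\pi),p)\mid p\leq q,\ \exists r\,((\pi,r)\in\tau\wedge p\leq r),\ \exists\gamma\in\Ord\ p\Vdash_\PP\rank(\pi)=\gamma\}. \]
Here $\pi$ ranges over the names occurring in $\tau$ (so $\rank(\pi)<\rank(\tau)$ and the recursion is legitimate) and $p$ ranges over $\PP$; the displayed collection is a set by Separation, and the recursion hypothesis supplies each $F(\PP,p,\pi)$. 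Intuitively I keep every name $\pi$ appearing in $\tau$, but attach to it only the conditions $p\leq q$ that already force $\pi\in\tau$ and, crucially, decide $\rank(\pi)$; and I replace $\pi$ by its recursively reduced version below $p$.

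To verify \ref{small names 1} I would work with a generic $G\ni q$ over the Boolean-valued model $V^\BB$, as explained before the statement, and show $F(\PP,q,\tau)^G=\tau^G$. For $\tau^G\subseteq F(\PP,q,\tau)^G$, take $x=\pi^G\in\tau^G$ witnessed by $(\pi,r)\in\tau$ with $r\in G$; since the conditions deciding $\rank(\pi)$ are dense (see below), genericity and the filter property yield $p\in G$ with $p\leq q,r$ deciding $\rank(\pi)$, so $(F(\PP,p,\pi),p)$ is a generating pair and, by the recursive instance of \ref{small names 1}, contributes $F(\PP,p,\pi)^G=\pi^G=x$. The reverse inclusion is immediate from the definition together with the recursive instance of \ref{small names 1} and upward closure of $G$. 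Hence $q\Vdash_\PP F(\PP,q,\tau)=\tau$.

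For \ref{small names 2}, assume $q\Vdash_\PP\rank(\tau)\leq\beta$ and fix a generating pair $(F(\PP,p,\pi),p)$. Since $p\leq q$ forces $\pi\in\tau$, it forces $\rank(\pi)<\rank(\tau)\leq\beta$; as $p$ also decides $\rank(\pi)=\gamma$, consistency gives $\gamma<\beta$. Applying the recursive instance of \ref{small names 2} to $p\Vdash_\PP\rank(\pi)\leq\gamma$ yields $\rank_\PP(F(\PP,p,\pi))\leq\gamma<\beta$, so $\rank_\PP(F(\PP,p,\pi))+1\leq\beta$; taking the supremum over all generating pairs gives $\rank_\PP(F(\PP,q,\tau))\leq\beta$.

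The main obstacle is precisely what forces the clause requiring $p$ to decide $\rank(\pi)$: without it one only obtains $p\Vdash_\PP\rank(\pi)<\beta$, and at a limit $\beta$ this need not give $p\Vdash_\PP\rank(\pi)\leq\gamma$ for any fixed $\gamma<\beta$, so the naive recursion would only bound the $\PP$-rank by $\beta+1$. This clause is harmless because the decision set is dense: writing $\llbracket\varphi\rrbracket$ for the Boolean value in $\BB=\BB(\PP)$ and using that $\one\Vdash_\PP\rank(\pi)<\rank_\PP(\pi)+1$, the join-infinite distributive law valid in every complete Boolean algebra gives $p_\iota=p_\iota\wedge\bigvee_{\gamma}\llbracket\rank(\pi)=\gamma\rrbracket=\bigvee_{\gamma}(p_\iota\wedge\llbracket\rank(\pi)=\gamma\rrbracket)$, so some $\gamma$ has $p_\iota\wedge\llbracket\rank(\pi)=\gamma\rrbracket>0$ and hence some $p'\leq p$ decides $\rank(\pi)$. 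The same argument below any extension of $p$ gives density. Since both the distributive law and this density argument go through in $\ZF$, no appeal to fullness (which can fail, as noted in the introduction) is required.
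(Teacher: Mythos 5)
Your proposal is correct and follows essentially the same route as the paper: the same recursive definition of $F$ (keeping only conditions below $q$ that force membership and decide the rank of the subname), the same generic-filter verification of \ref{small names 1}, and the same inductive bound for \ref{small names 2}. The only addition is your explicit density argument for rank-deciding conditions, which the paper leaves implicit under an appeal to the forcing theorem.
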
 
\begin{proof} 
We define $F$ by induction on the $\PP$-rank of $\tau$ by letting 
$$F(\PP,q,\tau)=\{(\nu, s)\mid \exists \sigma,r\ (\sigma,r)\in\tau,\ \nu=F(\PP,s,\sigma),\ s\leq  q,r,\ \exists \gamma\ s\Vdash_\PP \rank(\sigma)=\gamma \}.$$ 

\ref{small names 1}: 
It suffices to show $\tau^G = F(\PP,q,\tau)^G$ for any $\PP$-generic filter over $V$ with $q\in G$ by the forcing theorem. 
First, suppose $x\in \tau^G$. 
Then $x=\sigma^G$ for some $(\sigma,r)\in \tau$ with $r\in G$. 
There exist $s\leq q,r$ in $G$ and $\gamma$ with $s\Vdash_\PP \rank(\sigma)=\gamma$ by the forcing theorem. 
Then $x=\sigma^G=F(\PP,s,\sigma)^G\in F(\PP,q,\tau)^G$ by the inductive hypothesis \ref{small names 1}. 
Conversely, suppose $x\in F(\PP,q,\tau)^G$. 
There exist $(\sigma,r)\in\tau$ and $s\leq  q,r$ in $G$ with $x=F(\PP,s,\sigma)^G$. 
Then $x=F(\PP,s,\sigma)^G=\sigma^G\in \tau^G$ by the inductive hypothesis \ref{small names 1}. 

\ref{small names 2}: 
We prove the claim by induction on $\beta$. 
Suppose that $q\Vdash_\PP \rank(\tau)\leq\beta$. 
Take an element $(\nu, s)$ of $F(\PP,q,\tau)$ and witnesses $\sigma$, $r$, $\gamma$ as in the definition of $F(\PP,q,\tau)$. 
It suffices to show $\rank_\PP(\nu)<\beta$. 
Since $s\Vdash \rank(\sigma)=\gamma$ and $s\Vdash_\PP \sigma\in\tau $, we have $\gamma<\beta$. 
Since $\nu=F(\PP,s,\sigma)$, we have $\rank_\PP(\nu)=\rank_\PP(F(\PP,s,\sigma))\leq\gamma$ by the  inductive hypothesis \ref{small names 2}. 
\end{proof} 

We now fix a definition of $2$-step iterations. 

\begin{definition} 
Suppose that $\PP$ is a forcing 
and $\dot{\QQ}$ is a $\PP$-name for a forcing with $\one\Vdash_\PP \rank(\dot{\QQ})\leq \beta+1$. 
Let $\PP*\dot{\QQ}$ denote the 
set of all pairs $(p,\dot{q})$ with $p\Vdash \dot{q}\in \dot{\QQ}$, where $p\in\PP$ and $\dot{q}$ is a $\PP$-name with $\rank_\PP(\dot{q})\leq\beta$. 
\end{definition} 

It follows from Lemma \ref{small names} that the $(\PP*\dot{\QQ})$-generic extensions of $V$ are precisely those of the form $V[G][H]$, where $V[G]$ is a $\PP$-generic extension of $V$ and $V[G][H]$ is a $\dot{\QQ}^G$-generic extension of $V[G]$. 

We define iterated forcing 
by iterating the above definition of $2$-step iterations as in \cite[Section 7]{cummings2010iterated}. 
Every iteration has finite support.\footnote{We do not study iterations with other supports here. One can for example study bounded support iterations of length $\omega_1$, as mentioned after Theorem \ref{char collapse} below.}
We shall write 
$$\vec{\PP}=\langle \langle \PP_\alpha, \leq_\alpha, \one_{\PP_\alpha}\rangle, \langle \dot{\PP}_\alpha, \dot{\leq}_\alpha, \dot{\one}_{\dot{\PP}_\alpha}\rangle, \langle \PP_\gamma, \leq_\gamma, \one_{\PP_\gamma}\rangle \mid \alpha< \gamma\rangle$$ 
for an iteration of length $\gamma$.\footnote{This is standard terminology from \cite[Section 7]{cummings2010iterated} with the tweak of introducing $\dot{\one}_{\dot{\PP}_\alpha}$ due to the absence of the axiom of choice and a minor notational difference. 
Note that a $2$-step iteration has length $1$, since it consists of $\PP_0$ and $\PP_1:=\PP_0 *\dot{\PP}_1$.} 
We will abbreviate this by writing 
$\vec{\PP}=\langle \PP_\alpha,\dot{\PP}_\alpha, \PP_\gamma\mid \alpha< \gamma\rangle$ and sometimes just $\PP_\gamma$. 
For any $\PP_\gamma$-generic filter $G$ over $V$, let $G{\upharpoonright}\alpha:=\{ p{\upharpoonright}\alpha \mid p\in G \}$ for $\alpha\leq\gamma$. Also let $G_\alpha$ denote the filter induced by $G$ on $\dot{\PP}_\alpha^{G{\upharpoonright}\alpha}$ for $\alpha<\gamma$ and let $\dot{G}_\alpha$ be the canonical $\PP_\alpha$-name for $G_\alpha$.  
The \emph{support} $\supp(p)$ of a condition $p\in \PP_\gamma$ is the set of $\alpha<\kappa$ with 
$p(\alpha)\neq \dot{\one}_{\dot{\PP}_\alpha}$. 

\begin{remark} 
\label{small names for classes} 
An iterated forcing is often given by a definition for the forcing at step $\alpha$ in the $\PP_\alpha$-generic extension. 
Without the axiom of choice, we have to provide names for such forcings in a uniform fashion. 
The following suffices. 
If $\varphi$ is a formula with two free variables and $x$ is a set, let 
$A_{\varphi,y}$ 
denote the class $\{x\mid \varphi(x,y)\}$ if this class is a set and $\emptyset$ if it is a proper class. 
We claim that for any formula $\varphi$ with two free variables, there is a formula defining a class function $G\colon V^2 \rightarrow V$ such that in any generic extension of $V$, for any forcing $\PP$ and any $\PP$-name $\sigma$, 
$G(\PP,\sigma)$ is a $\PP$-name with 
$\one \Vdash_\PP G(\PP,\sigma)=A_{\varphi ,\sigma}$. 
%
%
To see this, let $\gamma$ be least with $\one\Vdash_\PP \rank(A_{\varphi,\sigma})\leq\gamma$ and define 
$$G(\PP,\sigma):=\{(\nu, q)\mid \exists \beta<\gamma\ \rank_\PP(\nu)\leq\beta,\ q\Vdash_\PP \nu \in A_{\varphi,\sigma} \}. $$
It suffices to show $\mathbf{1}\Vdash_\PP A_{\varphi,\sigma} \subseteq G(\PP,\sigma)$. 
To see this, suppose that $p\Vdash \mu\in A_{\varphi,\sigma}$. 
Then there is some $q\leq p$ and some $\beta<\gamma$ with $q\Vdash \rank(\mu)=\beta$. 
By Lemma \ref{small names}, $\nu:=F(\PP,q,\mu)$ satisfies $\rank_\PP(\nu)\leq\beta$ 
and $q\Vdash \mu= \nu$. 
Then $(\nu,q)\in G(\PP,\sigma)$ and thus $q\Vdash_\PP \mu \in G(\PP,\sigma)$. 
\end{remark}

\section{Cardinal preserving forcings} 
\label{section card pres} 

In this section, we provide a toolbox for proving that a forcing preserves cardinals. 
The main tool is a condition that strengthens the countable chain condition and implies that all cardinals and cofinalities are preserved.

\subsection{Narrow forcings} 
\label{section narrow forcing}

The following describes a variant of the countable chain condition that we call \emph{narrow}. 
In models of the axiom of choice, the two are equivalent. 
We show that all $\sigma$-linked forcings are narrow at the beginning of Section \ref{subsec:linked forcing} 
and that all random algebras are narrow in Theorem \ref{random complete}. 
We equip $\Ord$ with the discrete partial order where no two distinct elements are comparable.

\begin{definition}
\label{def narrow} 
Suppose that $\PP$ is a forcing and $\theta$, $\nu$ are ordinals, where $\theta$ is infinite. 
\begin{enumerate-(1)} 
\item 
$\PP$ is called \emph{$(\theta,\nu)$-narrow} if for any ordinal $\mu\leq\nu$ and any sequence $\vec{f}=\langle f_i \mid i<\mu \rangle$ of partial $\parallel$-homomorphisms $f_i\colon \PP \rightharpoonup \Ord$,\footnote{Thanks to Asaf Karagila for his observation in December 2022 that $\parallel$-homomorphisms $f\colon \PP \rightharpoonup \Ord$ correspond to wellordered antichains in the Boolean completion of $\PP$. 
Thus $\theta$-narrow is equivalent to the condition that the Boolean completion is $\theta^+$-c.c. 
Note that one could translate the following proofs about narrow forcing and its variants to Boolean completions.} 
$$|\bigcup_{i<\mu} \ran(f_i)|\leq |{\max(\theta,\mu)}|.$$ 

\item 
\label{def narrow 2} 
$\PP$ is called \emph{$\theta$-narrow} if it is $(\theta,\nu)$-narrow for all $\nu\in\Ord$. 
It is called \emph{narrow} if it is $\omega$-narrow. 
\end{enumerate-(1)} 
\end{definition} 

We further call $\PP$ \emph{uniformly $\theta$-narrow} if there exists a function $G$ that sends each partial $\parallel$-homomorphism $f\colon \PP\rightharpoonup \Ord$\footnote{We can assume that $\ran(f)$ is an ordinal.} 
to an injective function $G (f)\colon \ran(f) \rightarrow \theta$. 
It is called \emph{uniformly narrow} if it is uniformly $\omega$-narrow.

It is easy to see that $(\theta,\theta)$-narrow implies $(\theta,\nu)$-narrow for all $\nu$, since the condition in the definition is already satisfied for all $\mu\geq \theta^+$ if the forcing is $(\theta,1)$-narrow. 
Moreover, any uniformly $\theta$-narrow forcing is $\theta$-narrow. 
Note that $(\theta,1)$-narrow is a variant of the $\theta^+$-c.c., 
since it implies that there are no antichains of size $\theta^+$. 
Conversely, it is easy to show that any wellordered $\theta^+$-c.c. forcing $\PP$ is $\theta$-narrow by working in $\HOD_{\PP,\vec{f}}$ for any $\vec{f}$ as above, since $\PP\cap \HOD_{\PP,\vec{f}}$ is $\nu$-c.c. in $\HOD_{\PP,\vec{f}}$ for some $\nu<\theta^+$ if $\theta^+$ is singular in $\HOD_{\PP,\vec{f}}$.\footnote{Thanks to Asaf Karagila for sending us a direct proof that wellordered c.c.c. forcings preserve cardinals in October 2022.} 

Note that if $\theta^+$ is regular, then $(\theta,1)$-narrow implies $\theta$-narrow. 
Moreover, if there exists a sequence of injective functions from all $\alpha<\theta^+$ into $\theta$, then $(\theta,1)$-narrow implies uniformly $\theta$-narrow. 
We do not know if every $(\theta,1)$-narrow forcing is $\theta$-narrow and whether every $\theta$-narrow forcing is uniformly $\theta$-narrow.\footnote{These questions were solved by Elliot Glazer after this paper was submitted, see Section \ref{section: open problems}.} 
Moreover, we do not know if $(\theta,1)$-narrow forcings preserve $\theta^+$ for all $\theta\in\Card$.\footnote{However, this holds for $\theta=\omega$ by an argument of Karagila, Schilhan and the second-listed author.}  
However, $\theta$-narrow forcings preserve all cardinals ${>}\theta$ by the next lemma.


\begin{lemma} 
\label{narrow pres card} \ 
\begin{enumerate-(1)} 
\item 
\label{narrow pres card 1} 
Every $(\theta,1)$-narrow forcing $\PP$ preserves all cardinals and cofinalities ${\geq}\theta^{++}$.\footnote{This can also be proved via a result of Karagila and Schweber \cite[Proposition 5.7]{karagila2022choiceless}} 

\item 
\label{narrow pres card 2} 
Every $\theta$-narrow forcing $\PP$ preserves all cardinals and cofinalities ${\geq}\theta^+$.

\end{enumerate-(1)} 
\end{lemma}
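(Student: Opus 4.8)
The plan is to read off, from any $\PP$-name $\dot f$ for a function with ordinal values, a canonical family of partial $\parallel$-homomorphisms into $\Ord$ that records the possible values of $\dot f$, and then to let the relevant narrowness hypothesis bound the spread of these values inside $V$. Fix a forcing $\PP$, a condition $p$, an ordinal $\gamma$ and a $\PP$-name $\dot f$ with $p\Vdash_\PP\dot f\colon\check\gamma\to\Ord$. Working with a generic filter over $V$ as set up before Lemma \ref{small names}, I would, for each $\xi<\gamma$, define a partial function $f_\xi\colon\PP\rightharpoonup\Ord$ by letting $f_\xi(q)$ be the unique $\beta$ with $q\Vdash_\PP\dot f(\check\xi)=\check\beta$, whenever $q\leq p$ decides $\dot f(\check\xi)$. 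Since a common extension of two compatible conditions cannot force two different values, each $f_\xi$ is a partial $\parallel$-homomorphism into $\Ord$ with its discrete order (so being a $\parallel$-homomorphism just means being constant on compatible conditions), and $\ran(f_\xi)=\{\beta\mid\exists q\leq p\ q\Vdash_\PP\dot f(\check\xi)=\check\beta\}$ is exactly the set of values that $\dot f(\check\xi)$ may take below $p$. In particular $p$ forces $\dot f(\check\xi)$ to lie in $\ran(f_\xi)$ for every $\xi<\gamma$, and the whole sequence $\langle f_\xi\mid\xi<\gamma\rangle$ is defined from $\PP,p,\dot f$, hence lies in $V$.

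For part \ref{narrow pres card 2} I would apply $\theta$-narrowness directly to this sequence (with $\mu=\gamma$), obtaining $|A|\leq|\max(\theta,\gamma)|$ for the set $A:=\bigcup_{\xi<\gamma}\ran(f_\xi)\in V$, which captures all possible values of $\dot f$. For cardinal preservation, suppose a cardinal $\kappa\geq\theta^+$ were collapsed; then below some $p$ there is a name for a surjection $\dot g\colon\check\delta\to\check\kappa$ with $\delta<\kappa$, and, applying the construction to $\dot g$, surjectivity forces $\kappa=\bigcup_{\xi<\delta}\ran(f_\xi)=A$, so $\kappa\li A$ with $|A|\leq|\max(\theta,\delta)|<\kappa$ (as both $\theta<\kappa$ and $\delta<\kappa$), contradicting that $\kappa$ is a cardinal. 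For cofinalities, if $\cof(\kappa)=\lambda\geq\theta^+$ and some $\dot f\colon\check\gamma\to\check\kappa$ with $\gamma<\lambda$ were forced cofinal, then $A\in V$ would be cofinal in $\kappa$ while $|A|\leq|\max(\theta,\gamma)|<\lambda=\cof(\kappa)$, which is impossible.

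For part \ref{narrow pres card 1} only the case $\mu=1$ of narrowness is available, so I can bound each individual range by $|\ran(f_\xi)|\leq|\theta|$ but must reassemble the union by hand. Cofinalities are still easy, since taking $s(\xi):=\sup\ran(f_\xi)$ is choice-free: when $\cof(\kappa)=\lambda\geq\theta^+$, each range has size $\leq|\theta|<\lambda$ and is therefore bounded in $\kappa$, so $s\colon\gamma\to\kappa$ lies in $V$ and $p$ forces $\dot f$ to be pointwise dominated by $s$; if $\dot f$ were cofinal with $\gamma<\lambda$, then $s$ would be a cofinal map in $V$ of length $\gamma<\cof(\kappa)$, a contradiction. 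For cardinal preservation I would instead exploit that each $\ran(f_\xi)$ is a set of ordinals, hence canonically wellordered of order type $\otp(\ran(f_\xi))<\theta^+$. Assuming a cardinal $\kappa\geq\theta^{++}$ is collapsed by a forced surjection $\dot g\colon\check\delta\to\check\kappa$ with $\delta<\kappa$, the map sending $\alpha<\kappa$ to the pair consisting of the least $\xi<\delta$ with $\alpha\in\ran(f_\xi)$ together with the position of $\alpha$ in the canonical wellordering of $\ran(f_\xi)$ is an injection $\kappa\to\delta\times\theta^+$ definable in $V$. Using the pairing function $\p$ to see that $\delta\times\theta^+$ injects into $\nu\times\nu\approx\nu$ for $\nu:=\max(|\delta|,\theta^+)<\kappa$ (here $\theta^+<\kappa$ is exactly where $\kappa\geq\theta^{++}$ is used), I would obtain $\kappa\li\nu$ with $\nu<\kappa$, contradicting that $\kappa$ is a cardinal.

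The main obstacle is this last reassembly step: without the axiom of choice there is no uniform way to enumerate the union $\bigcup_{\xi}\ran(f_\xi)$ out of the separate bounds on the pieces, and this is precisely what full $\theta$-narrowness hands over for free in part \ref{narrow pres card 2}. Under the weaker hypothesis of part \ref{narrow pres card 1} the substitute is to use the canonical wellorderings of the ranges, but this costs an extra factor of $\theta^+$ and hence one cardinal, which is why the bound drops from $\theta^+$ to $\theta^{++}$ and why preservation of $\theta^+$ itself by $(\theta,1)$-narrow forcings, noted before the lemma, is left open.
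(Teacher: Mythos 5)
Your proof is correct and takes essentially the same route as the paper's: both read off from $\dot f$ the partial $\parallel$-homomorphisms $f_\xi$ recording decided values and then apply the relevant narrowness hypothesis to bound $\bigcup_\xi\ran(f_\xi)$, with your explicit injection into $\delta\times\theta^+$ merely spelling out the step the paper compresses into ``$\otp(\ran(f_\alpha))<\theta^+$, hence $|\bigcup_{\alpha<\mu}\ran(f_\alpha)|\leq|\max(\theta^+,\mu)|$''. The only (harmless) deviation is your sup-based cofinality argument in part \ref{narrow pres card 1}, which in fact yields preservation of cofinalities ${\geq}\theta^+$ already from $(\theta,1)$-narrowness.
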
 
\begin{proof} 
\ref{narrow pres card 1}: 
We first show that $\PP$ preserves any cardinal $\lambda\geq\theta^{++}$. 
Suppose that $\mu<\lambda$ is a cardinal, $\dot{f}$ is a $\PP$-name, and $p \Vdash_\PP \dot{f} \colon \mu \rightarrow \lambda$ for some $p\in \PP$. 
For each $\alpha<\mu$, let $D_\alpha$ denote the set of all $q\leq p$ in $\PP$ that decide $\dot{f}(\alpha)$. 
Define $f_\alpha\colon D_\alpha \rightarrow \lambda$ by sending each $q$ to the unique $\beta<\lambda$ with $q \Vdash \dot{f}(\alpha)=\beta$. 
Note that each $f_\alpha$ is a partial $\parallel$-homomorphism on $\PP$.  
Since $\PP$ is $(\theta,1)$-narrow, $\otp(\ran(f_\alpha))<\theta^+$ for each $\alpha<\mu$. 
Hence $|\bigcup_{\alpha<\mu} \ran(f_\alpha)|\leq |{\max(\theta^+,\mu)}|<\lambda$. 
Hence $p$ forces that $\dot{f}$ is not surjective. 

A similar argument works for cofinalities. 
Suppose that $\lambda$ is a cardinal with $\cof(\lambda)\geq\theta^{++}$. 
Suppose that $\mu<\cof(\lambda)$ is a cardinal, $\dot{f}$ is a $\PP$-name, and $p \Vdash_\PP \dot{f} \colon \mu \rightarrow \lambda$ for some $p\in \PP$. 
With the same notation as above, $|\bigcup_{\alpha<\mu} \ran(f_\alpha)|\leq |{\max(\theta^+,\mu)}|<\cof(\lambda)$, so $p$ forces that $\dot{f}$ is not cofinal.

\ref{narrow pres card 2}: 
We first show that $\PP$ preserves $\theta^+$. 
Suppose that $\mu<\theta^+$ is a cardinal, $\dot{f}$ is a $\PP$-name, and $p \Vdash_\PP \dot{f} \colon \mu \rightarrow \theta^+$ for some $p\in \PP$. 
For each $\alpha<\mu$, let $D_\alpha$ denote the set of all $q\leq p$ in $\PP$ that decide $\dot{f}(\alpha)$. 
Define $f_\alpha\colon D_\alpha \rightarrow \theta^+$ by sending each $q$ to the unique $\beta<\theta^+$ with $q \Vdash \dot{f}(\alpha)=\beta$. 
Note that each $f_\alpha$ is a partial $\parallel$-homomorphism on $\PP$.  
Since $\PP$ is $\theta$-narrow, we have $|\bigcup_{\alpha<\mu} \ran(f_\alpha)|\leq |{\max(\theta,\mu)}|<\theta^+$. 
Hence $p$ forces that $\dot{f}$ is not surjective. 

A similar argument works for cofinalities. 
Suppose that $\lambda$ is a cardinal with $\cof(\lambda)=\theta^+$. 
Suppose that $\mu<\cof(\lambda)$ is a cardinal, $\dot{f}$ is a $\PP$-name, and $p \Vdash_\PP \dot{f} \colon \mu \rightarrow \lambda$ for some $p\in \PP$. 
With the same notation as above, $|\bigcup_{\alpha<\mu} \ran(f_\alpha)|\leq |{\max(\theta,\mu)}|<\cof(\lambda)$, so $p$ forces that $\dot{f}$ is not cofinal. 
\end{proof}

%
%

\begin{lemma} 
\label{perp hom narrow} 
Suppose that $\theta$, $\nu$ are cardinals, where $\theta$ is infinite, and $f\colon \PP\rightarrow \QQ$ is a $\perp$-homomorphism. 
\begin{enumerate-(1)} 
\item 
\label{perp hom narrow 1} 
$\QQ$ is $(\theta,\nu)$-narrow, then $\PP$ is $(\theta,\nu)$-narrow. 

\item 
\label{perp hom narrow 2} 
$\QQ$ is uniformly $\theta$-narrow, then $\PP$ is uniformly $\theta$-narrow. 
\end{enumerate-(1)} 
\end{lemma}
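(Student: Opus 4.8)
The plan is to \emph{push forward} partial $\parallel$-homomorphisms along $f$, turning a witness for the failure of narrowness in $\PP$ into one for $\QQ$. The key observation is that a partial $\parallel$-homomorphism $g\colon\PP\rightharpoonup\Ord$ is nothing but a partial function whose fibers are mutually incompatible: since $\Ord$ carries the discrete order, $g(p)\parallel g(p')$ forces $g(p)=g(p')$, so whenever $g(p)\neq g(p')$ we have $p\perp p'$. Because $f$ is a $\perp$-homomorphism, $p\perp p'$ implies $f(p)\perp f(p')$; hence the $f$-images of distinct fibers of $g$ remain mutually incompatible in $\QQ$. This is exactly what is needed to define a $\parallel$-homomorphism on $\QQ$ with the same range.

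Concretely, first I would define $h=h(g)\colon\QQ\rightharpoonup\Ord$ on $f[\dom(g)]$ by setting $h(f(p))=g(p)$. This is well defined with no appeal to choice: if $f(p)=f(p')=:q$ for $p,p'\in\dom(g)$, then $f(p)\parallel f(p')$ (every condition is compatible with itself, since $q\leq q$), so the contrapositive of the $\perp$-homomorphism property gives $p\parallel p'$ and hence $g(p)=g(p')$; thus $h(q)$ is simply the unique value taken by $g$ on $f^{-1}(q)\cap\dom(g)$. The same mutual-incompatibility argument shows $h$ is a partial $\parallel$-homomorphism: if $h(q)\neq h(q')$ then $q,q'$ lie in the $f$-images of distinct fibers of $g$, so $q\perp q'$. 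Finally $\ran(h)=\{g(p)\mid p\in\dom(g)\}=\ran(g)$, and if $\ran(g)$ is an ordinal then so is $\ran(h)$.

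With this construction in hand, \ref{perp hom narrow 1} follows quickly. Given an ordinal $\mu\leq\nu$ and a sequence $\langle g_i\mid i<\mu\rangle$ of partial $\parallel$-homomorphisms $g_i\colon\PP\rightharpoonup\Ord$, the assignment $g\mapsto h(g)$ is uniformly definable, so $\langle h(g_i)\mid i<\mu\rangle$ is again a set, namely a sequence of partial $\parallel$-homomorphisms on $\QQ$. Applying $(\theta,\nu)$-narrowness of $\QQ$ to it and using $\bigcup_{i<\mu}\ran(h(g_i))=\bigcup_{i<\mu}\ran(g_i)$ yields $|\bigcup_{i<\mu}\ran(g_i)|\leq|\max(\theta,\mu)|$, as required. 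For \ref{perp hom narrow 2}, if $G_\QQ$ witnesses that $\QQ$ is uniformly $\theta$-narrow, then $g\mapsto G_\QQ(h(g))$ sends each partial $\parallel$-homomorphism $g\colon\PP\rightharpoonup\Ord$ with $\ran(g)$ an ordinal to an injection $\ran(h(g))=\ran(g)\rightarrow\theta$, and this assignment is a genuine function since both $g\mapsto h(g)$ and $G_\QQ$ are.

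I expect the only real subtlety to be the well-definedness of the pushforward $h(g)$, which is where the $\perp$-homomorphism hypothesis enters in an essential way, through its contrapositive that compatibility of images implies compatibility of preimages. Once this is checked the rest is bookkeeping, and the care taken to keep the construction explicit is precisely what guarantees that no form of choice is needed either to form the sequence $\langle h(g_i)\mid i<\mu\rangle$ or to build the witnessing function in the uniform case.
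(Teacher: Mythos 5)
Your proposal is correct and follows essentially the same route as the paper: both push a partial $\parallel$-homomorphism $g\colon\PP\rightharpoonup\Ord$ forward along $f$ to a partial $\parallel$-homomorphism on $\ran(f)\subseteq\QQ$ with the same range, checking well-definedness via the contrapositive of the $\perp$-homomorphism property combined with $g$ being a $\parallel$-homomorphism into the discretely ordered ordinals, and then apply the narrowness (or the uniform witness) of $\QQ$. Your explicit remark that the pushforward is uniformly definable, so that no choice is needed to form the sequence or the witnessing function, is exactly the point the paper relies on in part (2).
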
 
\begin{proof} 
\ref{perp hom narrow 1}
Suppose that 
$\vec{f}=\langle f_i \mid i<\mu \rangle$ is a sequence of partial $\parallel$-homomorphisms $f_i\colon \PP \rightharpoonup \Ord$. 
Let $D:=\ran(f)$ and define $g_i\colon D\rightarrow \Ord$ as follows. 
Note that for all $p,r\in \PP$ with $f(p)=f(r)$, we have $f_i(p)=f_i(r)$, since $f$ is a $\perp$-homomorphism and $f_i$ is a $\parallel$-homomorphism. 
For $f(p)=q\in D$, we can thus define $g_i(q)=f_i(p)$. 
We claim that each $g_i$ is a partial $\parallel$-homomorphism. 
Suppose that $q,s\in D$ with $f(p)=q$, $f(r)=s$ and $q\parallel s$. 
Since $f$ is a $\perp$-homomorphism, $p\parallel r$. 
Since $f_i$ is a $\parallel$-homomorphism, $g_i(q)=f_i(p)\parallel f_i(r)=g_i(s)$ as desired. 
Since $\ran(f_i)=\ran(g_i)$ for all $i<\mu$ and $\QQ$ is $(\theta,\nu)$-narrow, the statement of the lemma follows. 

\ref{perp hom narrow 2} 
Suppose $G$ witnesses that $\QQ$ is uniformly $\theta$-narrow. 
The proof of \ref{perp hom narrow 1} defines a function $H$ from $G$ that witnesses $\PP$ is uniformly $\theta$-narrow by mapping a partial $\parallel$-homomorphism $f\colon \PP\rightharpoonup \Ord$ 
to a partial $\parallel$-homomorphisms $g$ on $\QQ\rightharpoonup \Ord$ with $\ran(f)=\ran(g)$. 
\end{proof}




We want to avoid collapsing cardinals when iterating $\theta$-narrow forcings. 
However, precisely this will happen if we are not careful. 
To see this, recall that both $\theta$-narrow and uniformly $\theta$-narrow are equivalent to the $\theta^+$-c.c. for wellordered forcings. 
Using this, we argue that an iteration of narrow forcings with finite support can collapse cardinals. 
Suppose that $\omega_1$ is singular and $\vec{\alpha}=\langle \alpha_n \mid n\in\omega \rangle$ is cofinal in $\omega_1$. 
Suppose that $\PP_n$ is the discrete partial order on $\alpha_n$. 
The finite support iteration of the forcings $\PP_n$ has a dense subset isomorphic to the finite support product $\prod_{n\in\omega} \PP_n$. 
It is easy to see that this collapses $\omega_1$. 
We therefore need to take precautions. 
Suppose that $\theta$ is an infinite ordinal. 
A \emph{uniform iteration} of $\theta$-narrow forcings with finite support is a sequence 
$\vec{\PP}=\langle \PP_\alpha,\dot{\PP}_\alpha,\dot{G}_\alpha,\PP_\gamma\mid \alpha< \gamma\rangle$ 
such that 
$\vec{\PP}=\langle \PP_\alpha,\dot{\PP}_\alpha,\PP_\gamma\mid \alpha< \gamma\rangle$ is a finite support iteration and for each $\alpha<\gamma$, $\one_{\PP_\alpha}$ forces that $\dot{\PP}_\alpha$ is uniformly $\theta$-narrow witnessed by $\dot{G}_\alpha$. 


\begin{theorem} 
\label{iteration theorem} 
Suppose that $\theta$ is an infinite ordinal. 
Any uniform iteration of $\theta$-narrow forcings with finite support is again uniformly $\theta$-narrow. 
\end{theorem}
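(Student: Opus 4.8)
The plan is to induct on the length $\gamma$ of the iteration, constructing a witness $G_\gamma$ for uniform $\theta$-narrowness of $\PP_\gamma$ from the data $\langle G_\beta,\dot G_\beta \mid \beta<\gamma\rangle$ supplied by the induction hypothesis and the definition of a uniform iteration. Throughout I fix once and for all a definable injection $\pi\colon\theta\times\theta\to\theta$ (one exists since $\theta$ is infinite); this is what lets me amalgamate two sources of $\theta$-many pieces into a single injection into $\theta$, and handling this amalgamation choice-freely is the whole reason the hypotheses are phrased in the \emph{uniform} form. Given a partial $\parallel$-homomorphism $f\colon\PP_\gamma\rightharpoonup\Ord$, I may assume $\ran(f)=\delta$ is an ordinal, and the goal in each case is to produce, definably from $f$ and the witnesses, an injection $\delta\to\theta$.

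For the successor step $\gamma=\beta+1$, so $\PP_\gamma=\PP_\beta*\dot\PP_\beta$, I would argue as in the proof of Lemma \ref{perp hom narrow}. Working in a $\PP_\beta$-generic extension (equivalently in $V^{\BB(\PP_\beta)}$), the map sending $q\in\dot\PP_\beta$ to $f((p,\dot q))$ for any $(p,\dot q)\in\dom(f)$ with $p$ generic and $\dot q$ evaluating to $q$ is well defined and is a partial $\parallel$-homomorphism $\bar f\colon\dot\PP_\beta\rightharpoonup\Ord$, because compatibility of two such pairs in $\PP_\gamma$ forces equality of their $f$-values. Applying the forced witness $\dot G_\beta$ to $\bar f$ yields a $\PP_\beta$-name $\dot\iota$ for an injection of $\ran(\bar f)$ into $\theta$. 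Now for each $\eta<\theta$ I define a partial $\parallel$-homomorphism $h_\eta\colon\PP_\beta\rightharpoonup\delta$ by putting $h_\eta(p)=\xi$ exactly when $p\Vdash_{\PP_\beta}\xi\in\ran(\bar f)\wedge\dot\iota(\xi)=\eta$; this is single-valued and a $\parallel$-homomorphism because $\dot\iota$ is forced injective, and $\bigcup_{\eta<\theta}\ran(h_\eta)=\delta$ since every $\xi\in\delta$ is witnessed by some $(p,\dot q)\in f^{-1}(\xi)$ that can be extended to decide $\dot\iota(\xi)$. Feeding each member of the $\theta$-indexed family $\langle h_\eta\mid\eta<\theta\rangle$ into the induction-hypothesis witness $G_\beta$ gives injections $\ran(h_\eta)\to\theta$, and the map $\xi\mapsto\pi(\eta_\xi,\,G_\beta(h_{\eta_\xi})(\xi))$, where $\eta_\xi$ is least with $\xi\in\ran(h_\eta)$, is the desired injection $\delta\to\theta$.

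For a limit $\gamma$ I would use finiteness of supports, which gives $\PP_\gamma=\bigcup_{\beta<\gamma}\PP_\beta$ and makes $\beta\mapsto\ran(f\restriction\PP_\beta)$ an increasing sequence with union $\delta$. If $\cof(\gamma)\le\theta$, fix a cofinal sequence of length $\cof(\gamma)$ and split $\PP_\gamma$ as a finite-support iteration of the ${\le}\theta$-many blocks between consecutive points; each block has length $<\gamma$, hence is uniformly $\theta$-narrow by the induction hypothesis, so $\delta$ is a union of ${\le}\theta$ pieces each injecting into $\theta$, and these amalgamate through $\pi$ into a single injection $\delta\to\theta$. The remaining case $\cof(\gamma)>\theta$ is the genuinely hard one: here I would observe that, because supports are finite, no new value appears at a limit stage (any condition supported below a limit $\beta$ is already supported below some $\beta'<\beta$), and then try to reflect the wellordered range $\delta$ to a bounded stage $\beta^*<\gamma$, after which $G_{\beta^*}$ supplies the injection. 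The crucial feature I would exploit is that the range is wellordered and the stage-by-stage injections $G_\beta(f\restriction\PP_\beta)$ are given uniformly, so that the pressing-down can be carried out choice-freely by assigning to each $\xi$ its least stage of appearance together with its $G_\beta$-value, and then arguing that this data cannot spread cofinally in $\gamma$.

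The main obstacle is exactly this last reflection at cofinality ${>}\theta$ (equivalently, at regular lengths $\gamma>\theta$ such as $\theta^+$ when it is regular): the classical proof would invoke the $\Delta$-system lemma to concentrate an antichain on a finite root, but that lemma is unavailable in $\ZF$ and is precisely what can fail when $\theta^+$ is singular. The substitute must replace the $\Delta$-system's implicit choices by selections from the wellordered range $\delta$ (least indices) and by the uniform witnesses $G_\beta$, and the heart of the matter is to show that these witnesses prevent the range from spreading over cofinally many coordinates, so that it reflects below some $\beta^*<\gamma$. A secondary technical point, used silently above, is the existence of a definable injection $\theta\times\theta\to\theta$ for an arbitrary infinite ordinal $\theta$; this is routine but does require an argument when $\theta$ is not closed under the pairing function, whereas in the intended applications $\theta$ is a cardinal and the Gödel pairing works directly.
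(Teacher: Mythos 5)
Your successor step is essentially the paper's: you pass to a name $\bar f$ for a partial $\parallel$-homomorphism on $\dot\PP_\beta$, apply the forced witness, and then decide the resulting injection (the paper decides the inverse surjection $\dot h\colon\theta\to\ran(\dot f)$ rather than the injection itself, but this is cosmetic), finally amalgamating the $\theta$-indexed family $\langle h_\eta\rangle$ through $G_\beta$ and a pairing $\theta\times\theta\to\theta$. The case $\cof(\gamma)\le\theta$ at limits also goes through once you replace your talk of ``blocks'' (to which the induction hypothesis does not apply -- it is stated for initial segments $\PP_\beta$, not quotients) by the observation that finite supports give $\ran(f)=\bigcup_{i<\cof(\gamma)}\ran(f{\upharpoonright}\PP_{\gamma_i})$ and each restriction is a partial $\parallel$-homomorphism on $\PP_{\gamma_i}$.

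The genuine gap is the limit case with $\cof(\gamma)>\theta$, which you explicitly leave as an aspiration (``try to reflect the wellordered range to a bounded stage''). The missing idea is that the $\Delta$-system lemma is not ``unavailable'': it holds inside the inner model $\HOD_{\vec\PP,f}$, which satisfies $\ZFC$ and contains everything needed. Concretely, the paper fixes a parameter-free definable wellorder of $[\Ord]^{<\omega}$, assigns to each $\alpha\in\ran(f)$ the least support $s_\alpha$ of a condition mapping to $\alpha$, and restricts $f$ so that all preimages of $\alpha$ have support $s_\alpha$; the sequence $\vec s$ then lies in $\HOD_{\vec\PP,f}$, where one applies the sunflower lemma to thin $\ran(f)$ (assumed of $\HOD$-size $\theta^{+\HOD}$ toward a contradiction) to a sunflower with finite root $r$. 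Setting $\beta=\max(r)+1$, the projection $p\mapsto p{\upharpoonright}\beta$ induces a well-defined partial $\parallel$-homomorphism $g$ on $\PP_\beta$ with $\ran(g)=\ran(f)$ -- well-definedness and the $\parallel$-homomorphism property both follow from the sunflower structure, since two conditions whose supports meet only in $r$ are compatible iff their restrictions to $\beta$ are. The induction hypothesis $G_\beta$ then bounds $|\ran(g)|^{\HOD_{\vec\PP,f}}$ by $\theta$, a contradiction; one takes $G_\gamma(f)$ to be the $\HOD_{\vec\PP,f}$-least injection $\ran(f)\to\theta$. Note the paper does not case-split on $\cof(\gamma)$ at all: this argument handles every limit uniformly, so your separate treatment of $\cof(\gamma)\le\theta$ is unnecessary. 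Your instinct that the wellorderedness of $\ran(f)$ and the uniformity of the witnesses must substitute for the choices in the classical argument is correct, but the reflection you hope for is achieved by relativising the classical combinatorics to a definable $\ZFC$ inner model rather than by a pressing-down argument in $V$.
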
 
\begin{proof} 
We can assume $\theta\in \Card$. 
Suppose that $\vec{\PP}=\langle \PP_\alpha,\dot{\PP}_\alpha, \dot{g}_\alpha, \PP_\delta \mid \alpha<\delta\rangle$ 
is such an iteration. 
We construct a sequence $\vec{G}=\langle G_\gamma \mid \gamma\leq\delta\rangle$ of functions  by induction on $\gamma\leq\delta$, where $G_\gamma$ witnesses that $\PP_\gamma$ is uniformly $\theta$-narrow. 
$\vec{G}$ will be defined from $\vec{\PP}$ and $\theta$ by recursion. 
We will assume that $\ran (f)$ is an ordinal for any input $f$ of $G_{\gamma}$ 
to ensure that $G_\gamma$ is a set function. 

First suppose $\gamma=\beta+1$. 
The following provides a construction of $G_{\beta+1}$ from $G_\beta$ that works uniformly for all $\beta<\delta$. 
Fix a partial $\parallel$-homomorphism $f\colon \PP_\beta*\dot{\PP}_\beta\rightharpoonup \Ord$. 
For $\PP_{\beta}$-names $\sigma$ and $\tau$, let $\pair(\sigma,\tau)$ denote the canonical $\PP_{\beta}$-name for the ordered pair $(\sigma,\tau)$. 
Let 
$$ \dot{f} := \{ (\pair(\dot{q},\check{\alpha}), p) \mid f(p,\dot{q})=\alpha \}. $$ 

\begin{claim*} 
$\one_{\PP_{\beta}}$ forces that $\dot{f}$ is a partial $\parallel$-homomorphism on $\dot{\PP}_\beta$. 
\end{claim*} 
\begin{proof} 
Suppose that $G$ is $\PP_{\beta}$-generic over $V$. 
In $V[G]$, take $q_0, q_1\in \dot{\PP}_{\beta}^G$ with $\dot{f}^G (q_i)=\alpha_i$ for $i<2$ and $\alpha_0\neq\alpha_1$. 
We claim $q_0\perp q_1$. 
There exist $\dot{q}_i$ with $\dot{q}_i^G=q_i$ and $p_i \in G$ with $(\mathrm{pair}(\dot{q}_i,\check{\alpha}_i), p_i)\in \dot{f}$   for $i<2$. 
If $q_0$ and $q_1$ were compatible, then some $p\in G$ forces that $\dot{q}_0$ and $\dot{q}_1$ are compatible.  
Since we can assume $p\leq p_0,p_1$, then $(p_0,\dot{q}_0)$ and $(p_1,\dot{q}_1)$ would be compatible. 
But $f(p_0,\dot{q}_0)=\alpha_0 \neq \alpha_1=f(p_1,\dot{q}_1)$ and $f$ is a $\parallel$-homomorphism. 
\end{proof} 

By the previous claim, $\one_{\PP_\beta}$ forces that $\dot{g}_\beta(\dot{f})$ is an injective
function from $\ran(\dot{f})$ into $\theta$. 
This induces a $\PP_\beta$-name $\dot{h}$ for a surjection from $\theta$ onto $\ran(\dot{f})$.\footnote{The inverse of $\dot{g}_\beta(\dot{f})$ is partial function from $\theta$ onto $\ran(\dot{f})$. 
One can take $\dot{h}$ to be a nice name for a total function extending it.} 
For each $\alpha<\theta$, let $D_\alpha$ denote the set of all $p\in \PP_\beta$ that decide $\dot{h} (\alpha)$. 
For each $\alpha<\theta$, define a $\parallel$-homomorphism $h_\alpha\colon D_\alpha\rightarrow \Ord$ by letting $h_\alpha(p)$ be the unique $\delta$ such that $p$ forces $\dot{h}(\alpha)=\delta$. 
Since $G_{\beta}$ witnesses that $\PP_{\beta}$ is uniformly $\theta$-narrow, $G_{\beta} (h_{\alpha}) \colon \ran (h_{\alpha}) \to \theta$ is injective for each $\alpha < \theta$, and the sequence $(G_{\beta}(h_{\alpha}) \mid \alpha < \theta)$ induces an injection from $\bigcup_{\alpha<\theta} \ran(h_\alpha)$ to $\theta$.
Since $\one_\PP$ forces $\ran(\dot{f})\subseteq \bigcup_{\alpha<\theta} \ran(h_\alpha)$, we  have $\ran(f)\subseteq \bigcup_{\alpha<\theta} \ran(h_\alpha)$ by the definition of $\dot{f}$. 
Thus 
$G_{\beta}$ induces an injective map $G_\gamma(f)\colon \ran(f) \rightarrow \theta$ uniformly in $f$, and $G_{\gamma}$ is definable from $\vec{\PP}$, $G_\beta$, and ordinals. 




Now suppose $\gamma$ is a limit ordinal. 
Suppose that $f\colon \PP_\gamma\rightharpoonup \Ord$ is a partial $\parallel$-homomorphism. 
\begin{claim*} 
$|\ran(f)|^{\HOD_{\vec{\PP},f}}\leq \theta$. 
\end{claim*} 
\begin{proof} 
Fix a wellorder $\leq^*$ of $[\Ord]^{<\omega}$ that is definable without parameters. 
Define $\vec{s}=\langle s_\alpha \mid \alpha \in \ran(f)\rangle$ as follows. 
For each $\alpha \in \ran(f)$, let $s_\alpha$ be the $\leq^*$-least element $s$ of $[\gamma]^{<\omega}$ such that there exists some $p\in \PP_\gamma$ with support $s$ and $f(p)=\alpha$. 
We can assume all $p$ with $f(p)=\alpha$ have support $s_\alpha$ for each $\alpha\in \ran(f)$ by restricting $f$ while keeping its range intact. 

Towards a contradiction, suppose the claim fails for $f$. 
We can assume $|\ran(f)|^{\HOD_{\vec{\PP},f}}=\theta^{+\HOD_{\vec{\PP},f}}$.
By applying the infinite sunflower lemma\footnote{
We apply Lemma \cite[Theorem 1.6]{Kunen}, also known as the \emph{$\Delta$-system lemma}, to $\theta^{+\HOD_{\vec{\PP},f}}$. 
Recall that a \emph{sunflower} or \emph{$\Delta$-system} is a collection $S$ of sets such that the intersection $a\cap b=c$, the \emph{centre}, is the same for all $a,b\in S$ with $a \neq b$.}  in $\HOD_{\vec{\PP},f}$ and restricting $f$, we can assume $\vec{s}$ is a sunflower system with centre $r$. 
Let $\beta=\max(r)+1<\gamma$ if $r\neq\emptyset$ and $\beta=0$ otherwise. 
Let $A=\{ p{\upharpoonright}\beta \mid p\in \dom(f) \}$ denote the projection of $\dom(f)$ to $\beta$. 
Define $g\colon A\rightarrow \Ord$ as follows. 
For each $p\in A$, let 
$$ g(p)=\alpha :\Longleftrightarrow  \exists s\in \dom(f)\ \ (s{\upharpoonright}\beta=p \wedge f(s)=\alpha). $$ 
To see that $g$ is well-defined, 
note that for any $p\in A$ and $t,u\in \dom(f)$ with $t{\upharpoonright}\beta=u{\upharpoonright}\beta=p$, we have $f(t)=f(u)$. 
Otherwise let $f(t)=\zeta$ and $f(u)=\xi$ with $\zeta\neq\xi$. 
Since $\supp(t)=s_\zeta$, $\supp(u)=s_\xi$, $s_\zeta\cap s_\xi= \supp (p)$ and $\vec{s}$ is a sunflower system with centre $r$, $t$ and $u$ are compatible. 
But $f$ is a $\parallel$-homomorphism and $f(t)\neq f(u)$. 

\begin{subclaim*} 
$g$ is a $\parallel$-homomorphism on $\PP_\beta$ with $\ran(f)=\ran(g)$. 
\end{subclaim*} 
\begin{proof} 
We first show that $g$ is a $\parallel$-homomorphism. 
Suppose that $p,q \in A$ with $g(p)\neq g(q)$. 
By the definition of $g$, there exist $t, u\in \dom(f)$ with $t{\upharpoonright}\beta=p$, $\supp(t)=s_\zeta$, $u{\upharpoonright}\beta=q$, $\supp(u)=s_\xi$ and $\zeta\neq \xi$. 
Since $f(t)\neq f(u)$ and $f$ is a $\parallel$-homomorphism, $t$ and $u$ are incompatible. 
Since $\vec{s}$ is a sunflower with centre $r$, this implies $p=t{\upharpoonright}\beta$ and $q=u{\upharpoonright}\beta$ are incompatible. 

To see that $\ran(f)=\ran(g)$, take any $\alpha \in \ran(f)$. 
Pick any $p$ with support $s_\alpha$ and $f(p)=\alpha$. 
Then $p{\upharpoonright}\beta\in A$ and $g(p{\upharpoonright}\beta)=\alpha$ by the definition of $A$ and $g$. 
\end{proof} 

By the inductive hypothesis, $G_\beta$ witnesses that $\PP_\beta$ is uniformly $\theta$-narrow. 
The previous subclaim yields an injective function $G_\beta(g)\colon \ran(g)\rightarrow \theta$. 
Since $G_\beta$ and $g$ are definable from $\vec{\PP}$, $f$, and ordinals, 
we have $|\ran(f)|^{\HOD_{\vec{\PP},f}}=|\ran(g)|^{\HOD_{\vec{\PP},f}}\leq \theta$, contradicting the assumption. 
\end{proof} 

Using the previous claim, let $G_{\gamma} (f)$ be the least injective function $h \colon \ran(f) \rightarrow \theta$ in $\HOD_{\vec{\PP},f}$. 
Then the definition of $G_{\gamma} (f)$ is uniform in $f$ and $G_{\gamma}$ is definable from $\vec{\PP}$ and ordinals. 
\end{proof}

\subsection{Linked forcings}\label{subsec:linked forcing} 

A forcing $\PP$ is called \emph{$\QQ$-linked} if there is a $\perp$-homomorphism from $\PP$ to $\QQ$. 
We equip each ordinal $\theta$ with the discrete partial order. 
Clearly $\theta$ is uniformly $\theta$-narrow. 
Every $\theta$-linked forcing is uniformly $\theta$-narrow by Lemma \ref{perp hom narrow}. 

Suppose that $\theta$ is an ordinal. 
A \emph{uniform iteration} of $\theta$-linked forcings with finite support is a sequence 
$\vec{\PP}=\langle \PP_\alpha,\dot{\PP}_\alpha,\dot{f}_\alpha,\PP_\gamma\mid \alpha< \gamma\rangle$ 
such that 
$\vec{\PP}=\langle \PP_\alpha,\dot{\PP}_\alpha,\PP_\gamma\mid \alpha< \gamma\rangle$ is a finite support iteration and for each $\alpha<\gamma$, $\one_{\PP_\alpha}$ forces 
that $\dot{\PP}_\alpha$ is $\theta$-linked witnessed by $\dot{f}_\alpha$. 
Uniform products of $\theta$-linked forcings with finite support are defined similarly. 
Given a uniform iteration of $\theta$-linked forcings, the proof of Lemma \ref{perp hom narrow} provides a map that turns a function witnessing $\theta$-linked into a function witnessing $\theta$-narrow. 
This supplies us with a sequence $\vec{G}=\langle \dot{G}_\alpha \mid \alpha<\gamma\rangle$ such that $\vec{\PP}=\langle \PP_\alpha,\dot{\PP}_\alpha,\dot{G}_\alpha,\PP_\gamma\mid \alpha< \gamma\rangle$ is a uniform iteration of $\theta$-narrow forcings. 
Note that any uniform product of $\theta$-linked forcings with finite support can also be understood as a uniform iteration of $\theta$-linked forcings. 
Lemma \ref{narrow pres card} and Theorem~\ref{iteration theorem} then have the following corollary. 

\begin{corollary} 
\label{iteration linked} 
Uniform iterations and products of $\theta$-linked forcings with finite support preserve all cardinals and cofinalities ${>}\theta$. 
\end{corollary}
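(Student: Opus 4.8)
The plan is to reduce the claim entirely to the narrow-forcing toolbox assembled above, so that the only genuine content is that the passage from linking witnesses to narrowness witnesses can be carried out uniformly, hence without any appeal to choice. First I would reduce to the case $\theta\in\Card$: composing a $\perp$-homomorphism $\PP\to\theta$ with a fixed injection of $\theta$ into $|\theta|$ (a $\perp$-homomorphism of the discrete orders) shows that every $\theta$-linked forcing is $|\theta|$-linked, and the cardinals and cofinalities ${>}\theta$ are exactly those ${>}|\theta|$. Recall from the discussion preceding the corollary that $\theta$ with its discrete order is uniformly $\theta$-narrow, so by Lemma \ref{perp hom narrow}\ref{perp hom narrow 2} every $\theta$-linked forcing is uniformly $\theta$-narrow; the point I would stress is that the proof of that lemma produces the witnessing function by an explicit construction $H$ applied to the canonical witness for $\theta$ and to the given $\perp$-homomorphism.

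Given a uniform iteration $\vec{\PP}=\langle \PP_\alpha,\dot{\PP}_\alpha,\dot{f}_\alpha,\PP_\gamma\mid \alpha<\gamma\rangle$ of $\theta$-linked forcings with finite support, I would apply this inside each intermediate model: $\one_{\PP_\alpha}$ forces that $\dot{f}_\alpha$ is a $\perp$-homomorphism from $\dot{\PP}_\alpha$ into $\theta$, so the construction $H$ produces a $\PP_\alpha$-name $\dot{G}_\alpha$ that $\one_{\PP_\alpha}$ forces to witness uniform $\theta$-narrowness of $\dot{\PP}_\alpha$. As $H$ is given by a single formula, the sequence $\langle \dot{G}_\alpha\mid \alpha<\gamma\rangle$ is a set and turns $\vec{\PP}$ into a uniform iteration of $\theta$-narrow forcings. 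Theorem \ref{iteration theorem} then gives that $\PP_\gamma$ is uniformly $\theta$-narrow, hence $\theta$-narrow, and Lemma \ref{narrow pres card}\ref{narrow pres card 2} yields that $\PP_\gamma$ preserves all cardinals and cofinalities $\geq\theta^+$, i.e.\ those ${>}\theta$. For products I would simply invoke the observation, already noted above, that a uniform finite-support product of $\theta$-linked forcings is literally a uniform finite-support iteration of $\theta$-linked forcings (with the factors supplied as check-names), so the identical argument applies.

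The only step demanding attention — and hence the main obstacle — is the uniformity of the assignment $\dot{f}_\alpha\mapsto\dot{G}_\alpha$: without choice one cannot merely select witnesses stage by stage, since the resulting sequence might fail to be a set and Theorem \ref{iteration theorem} would then be inapplicable. This is exactly why it matters that the construction $H$ in the proof of Lemma \ref{perp hom narrow} is definable rather than a choice; granting that, the corollary is a matter of chaining the three cited results, and no further computation is required.
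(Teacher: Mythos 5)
Your proposal is correct and follows essentially the same route as the paper: the text preceding the corollary observes that the proof of Lemma \ref{perp hom narrow} gives a \emph{definable} map turning each linking witness $\dot{f}_\alpha$ into a narrowness witness $\dot{G}_\alpha$, so that a uniform iteration of $\theta$-linked forcings becomes a uniform iteration of $\theta$-narrow forcings, and then Theorem \ref{iteration theorem} and Lemma \ref{narrow pres card} finish the argument, with products absorbed into iterations exactly as you describe. Your emphasis on the uniformity of the assignment $\dot{f}_\alpha\mapsto\dot{G}_\alpha$ being the only point where choice could otherwise sneak in matches the paper's own remark.
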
 

For instance, this works for mixed finite support products and iterations in any order of Cohen forcing, Hechler forcing and eventually different forcing. 
These forcings are $\omega$-linked. 
To see that products and iterations are uniform, it suffices by Remark \ref{small names for classes} that there is a definition for a linking function that works uniformly in all models. 

\begin{remark} 
One can obtain a direct proof that any uniform iteration $\vec{\PP}$ of $\theta$-linked forcings preserves cardinals and cofinalities ${>}\theta$ without going through $\theta$-narrow. 
Using the above notation, let $\QQ$ denote the set of $p\in \PP_\gamma$ such that for all $\alpha\in \supp(p)$, $p{\upharpoonright}\alpha$ decides $\dot{f}_\alpha(p(\alpha))$. 
We will show in Lemma \ref{density of conditions deciding support} below that $\QQ$ is dense in $\PP_\gamma$. 
One then constructs a $\perp$-homomorphism from $\QQ$ to 
$\Fun(\gamma,\theta)$. 
This can be extended to a $\perp$-homomorphism on $\PP_\gamma$. 
It is now easy to show directly that the existence of a $\perp$-homomorphism from $\PP$ to a wellordered $\theta^+$-c.c. forcing implies that $\PP$ preserves cardinals ${>}\theta$. 
Alternatively we can argue that $\Fun(\gamma,\theta)$ is $\theta$-narrow, since it is wellordered and 
$\theta^+$-c.c., so $\PP_\gamma$ is $\theta$-narrow by Lemma \ref{perp hom narrow} \ref{perp hom narrow 2} and it thus preserves cardinals and cofinalities ${>}\theta$ by Lemma \ref{narrow pres card}. 
\end{remark} 

\subsection{Locally complete forcings} 

We present criteria for showing that a forcing is $\theta$-narrow or uniformly $\theta$-narrow that may be of independent interest. 
This is useful to show that random algebras are uniformly narrow and thus preserve all cardinals and cofinalities. 
The notions introduced here are also used to study absoluteness principles in Section \ref{generic absoluteness}. 
The next definition is motivated by the property of random forcing that inner models are correct about predense sets.



\begin{lemma} 
\label{lc equiv} 
The following conditions are equivalent for a forcing $\PP$ and a finite set $x$ that contains $\PP$: 
\begin{enumerate-(a)} 
\item 
\label{lc equiv 1} 
For any regular open $A\subseteq \PP$,\footnote{A subset $A$ of $\PP$ is called \emph{regular open} if 
$A=\{ p \mid p_\iota\leq \sup(A)\}$.}    
$\sup(A\cap \HOD_{x\cup \{ A\}})=\sup(A)$. 

\item 
\label{lc equiv 2} 
For any $B\subseteq \PP$ with $B\neq\emptyset$, 
there is some $p\in \PP\cap \HOD_{x\cup \{ B\}}$ with $p_\iota\leq \sup(B)$.

\item 
\label{lc equiv 3} 
The same as \ref{lc equiv 2}, but only for regular open subsets $B$ of $\PP$. 

\end{enumerate-(a)} 
\end{lemma}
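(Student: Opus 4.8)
The plan is to establish the cycle $\ref{lc equiv 1}\Rightarrow\ref{lc equiv 2}\Rightarrow\ref{lc equiv 3}\Rightarrow\ref{lc equiv 1}$. The implication $\ref{lc equiv 2}\Rightarrow\ref{lc equiv 3}$ is immediate, as $\ref{lc equiv 3}$ is merely the restriction of $\ref{lc equiv 2}$ to regular open sets. So the content lies in the remaining two implications, both of which reduce to one bookkeeping fact about $\HOD$ with parameters: if $Y\in\OD_{x\cup\{Z\}}$, i.e.\ $Y$ is definable from ordinals together with parameters in $x\cup\{Z\}$, then $\HOD_{x\cup\{Y\}}\subseteq \HOD_{x\cup\{Z\}}$. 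I would use this to move between the parameter sets $x\cup\{A\}$ and $x\cup\{B\}$ whenever $A$ and $B$ are built from one another, noting throughout that $\PP$, $\iota$, $\sup$, $\BB(\PP)$ and its Boolean operations are all definable from $\PP\in x$.

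For $\ref{lc equiv 1}\Rightarrow\ref{lc equiv 2}$, given a nonempty $B\subseteq\PP$ I would pass to its regularization $A:=\{q\in\PP\mid q_\iota\leq \sup(B)\}$. Density of $\iota[\PP]$ in $\BB(\PP)$ makes $A$ regular open with $\sup(A)=\sup(B)$, and $A$ is definable from $B$, so $\HOD_{x\cup\{A\}}\subseteq \HOD_{x\cup\{B\}}$. Applying $\ref{lc equiv 1}$ to $A$ gives $\sup(A\cap \HOD_{x\cup\{A\}})=\sup(A)=\sup(B)$, which is nonzero because $B\neq\emptyset$ forces $\sup(B)\geq q_\iota>0$ for any $q\in B$. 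Hence $A\cap \HOD_{x\cup\{A\}}$ is nonempty, and any $p$ in it lies in $\PP\cap \HOD_{x\cup\{B\}}$ and satisfies $p_\iota\leq \sup(A)=\sup(B)$, as required.

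For $\ref{lc equiv 3}\Rightarrow\ref{lc equiv 1}$, let $A$ be regular open and suppose toward a contradiction that $c:=\sup(A\cap \HOD_{x\cup\{A\}})$ lies strictly below $b:=\sup(A)$ in $\BB(\PP)$. I would then work with the nonzero element $d:=b\wedge\neg c$ and its regular open set $B:=\{q\in\PP\mid q_\iota\leq d\}$, which is nonempty by density and has $\sup(B)=d$. Since $b$, the class $\HOD_{x\cup\{A\}}$, hence $c$, and the Boolean operations are all definable from $x\cup\{A\}$, so is $B$, giving $\HOD_{x\cup\{B\}}\subseteq \HOD_{x\cup\{A\}}$. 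Applying $\ref{lc equiv 3}$ to $B$ yields $p\in\PP\cap \HOD_{x\cup\{B\}}$ with $p_\iota\leq d$. Now $p_\iota\leq d\leq b=\sup(A)$ places $p$ in the regular open set $A$, while $p\in \HOD_{x\cup\{B\}}\subseteq \HOD_{x\cup\{A\}}$ places $p$ in $A\cap \HOD_{x\cup\{A\}}$, so $p_\iota\leq c$; together with $p_\iota\leq \neg c$ this forces $p_\iota=0$, contradicting $p\in\iota(p)$.

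The main obstacle I anticipate is not any of these arguments individually but the careful verification of the definability bookkeeping for $\HOD$: one must check that the regularization in the first implication and the Boolean combination $b\wedge\neg c$ in the second are genuinely ordinal definable from the permitted parameters, so that the inclusions between the relevant $\HOD_{x\cup\{\cdot\}}$ go through. Everything else — regular openness of the sets $\{q\mid q_\iota\leq d\}$, the identity $\sup\{q\mid q_\iota\leq d\}=d$, and the nonvanishing of $p_\iota$ — follows routinely from the density of $\iota[\PP]$ in $\BB(\PP)$.
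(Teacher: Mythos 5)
Your proposal is correct and follows essentially the same route as the paper: the cycle \ref{lc equiv 1}$\Rightarrow$\ref{lc equiv 2}$\Rightarrow$\ref{lc equiv 3}$\Rightarrow$\ref{lc equiv 1}, with the regularization $\{q\mid q_\iota\leq\sup(B)\}$ in the first implication and the set $\{q\mid q_\iota\leq \sup(A)-\sup(A\cap\HOD_{x\cup\{A\}})\}$ in the last, together with the same $\HOD$-parameter bookkeeping justified by $\PP\in x$. The only difference is cosmetic: you close the final contradiction via $p_\iota\leq c$ and $p_\iota\leq -c$ forcing $p_\iota=\mathbf{0}$, where the paper says $p_\iota$ is incompatible with all elements of $A\cap\HOD_{x\cup\{A\}}$.
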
 
\begin{proof} 
\ref{lc equiv 1}$\Rightarrow$\ref{lc equiv 2}: 
Let $A:=\{p \mid p_\iota\leq \sup(B)\}\neq\emptyset$. 
Since $A$ is regular open, $\sup(A\cap \HOD_{x\cup \{ A\}})=\sup(A)\neq\mathbf{0}_{\BB (\PP)}$ by \ref{lc equiv 1}. 
Thus $A\cap \HOD_{x\cup \{ A\}}\subseteq A\cap \HOD_{x\cup \{ B\}}$ is nonempty. 
The inclusion holds since $\PP\in x$. 
Any element of $A\cap \HOD_{x\cup \{ A\}}$ witnesses \ref{lc equiv 2}. 

\ref{lc equiv 2}$\Rightarrow$\ref{lc equiv 3} is obvious. 

\ref{lc equiv 3}$\Rightarrow$\ref{lc equiv 1}: 
We can assume that $A$ is nonempty. 
Towards a contradiction, suppose $a:=\sup(A)>b:=\sup(A\cap \HOD_{x\cup \{ A\}})$ in $\BB(\PP)$. 
$B:=\{ p\in \PP \mid p_\iota\leq a -b\}\neq\emptyset$ is regular open. 
Since $A$ is regular open, we have $B\subseteq A$. 
There is some $p\in \PP\cap \HOD_{x\cup\{B\}}\subseteq \HOD_{x\cup\{A\}}$ with $p_\iota\leq \sup(B)=a-b$ by \ref{lc equiv 3}. 
The inclusion holds since $\PP\in x$. 
Since $B$ is regular open, $p\in B\subseteq A$. 
In particular, $p\in A \cap \HOD_{x\cup\{A\}}$. 
However, $p_\iota\leq a-b$ is incompatible with all elements of $A\cap \HOD_{x\cup \{ A\}}$. 
\end{proof} 

Note that these conditions also hold for any finite superset of $x$. 
Any wellorderable forcing $\PP$ satisfies them, since one can pick any finite set $x$ that contains a wellorder of $\PP$ and thus $\PP\in \HOD_x$. 
Note that \ref{lc equiv 2} is a variant of the existence of suprema. 
A useful way of proving \ref{lc equiv 2} is to find some $p\in \PP\cap \HOD_x$ that is equivalent to $\sup(B)$. 
We shall do this for random algebras in Section \ref{section random complete}. 

\begin{definition} 
A forcing $\PP$ is called \emph{locally complete} if there exists a finite set $x$ containing $\PP$ such that each of the equivalent conditions \ref{lc equiv 1}-\ref{lc equiv 3} in Lemma \ref{lc equiv} holds. 
\end{definition} 

We now show that for any locally complete forcing $\PP$ and $x$ as above, any $\PP$-generic filter over $V$ is $(\PP\cap \HOD_x)$-generic over $\HOD_x$. 

\begin{lemma} 
\label{cons of lc} 
Suppose that $\PP$ is locally complete witnessed by $x$ and $y\supseteq x$ is finite. 
\begin{enumerate-(1)} 
\item 
\label{cons of lc 1} 
$\HOD_y$ is correct about compatibility of elements of $\PP\cap \HOD_y$. 
\item 
\label{cons of lc 2} 
Every predense subset $A\in \HOD_y$ of $\PP\cap \HOD_y$ is predense in $\PP$. 
\end{enumerate-(1)} 
\end{lemma} 
\begin{proof} 
\ref{cons of lc 1} 
Suppose that $p,q\in \PP\cap \HOD_y$ with $p \parallel q$. 
Then $A:=\{ r\in \PP \mid r \leq p, q \}\neq\emptyset$ is regular open. 
By \ref{lc equiv 3}, there is some $r \in \HOD_{x\cup\{A\}}\subseteq \HOD_y$ with $r_\iota\leq \sup(A)\leq p_\iota,q_\iota$. 
$r$ witnesses that $p$ and $q$ are compatible in $\HOD_y$. 

\ref{cons of lc 2} 
Towards a contradiction, suppose that $A\in \HOD_y$ is predense in $\PP\cap \HOD_y$, but not predense in $\PP$. 
Then $\sup(A)\neq \one_{\BB(\PP)}$. 
Then $B:=\{ p\in \PP \mid p_\iota\leq - \sup(A)\}\neq\emptyset$ is regular open. 
By \ref{lc equiv 3}, there is some $p\in \HOD_{x\cup\{B\}}\subseteq\HOD_y$ with $p_\iota\leq \sup(B)=-\sup(A)$. 
This contradicts the assumption that $A$ is predense in $\PP\cap \HOD_y$. 
\end{proof}

We now study locally complete $\theta^+$-c.c. forcings and a stronger variant. 
This is motivated by the fact that random forcing is c.c.c. in any inner model $\ZFC$. 
We say that a property holds for \emph{sufficiently large finite} $y$ if there exists a finite set $x$ such that it holds for all finite sets $y\supseteq x$. 
Note that a forcing $\PP$ is $\theta^+$-c.c. if and only if for sufficiently large finite $x$, 
any antichain $A\in \HOD_x$ in $\PP$ satisfies $|A|^{\HOD_x}< \theta^+$. 
The following defines a variant where the successor of $\theta$ is calculated in $\HOD_x$, 
as indicated by the notation $\theta^{[+]}$. 


\begin{definition} 
\label{def lc} 
For any infinite ordinal $\theta$, a forcing $\PP$ is called \emph{locally $\theta^{[+]}$-c.c.} if for sufficiently large finite $x$, any antichain $A\in \HOD_x$ in $\PP$ satisfies $|A|^{\HOD_x}\leq \theta$. 
\end{definition} 




Together with local completeness, this notion suffices to show that a forcing is uniformly $\theta$-narrow. 
This will allow us to show, for example, that random algebras can be iterated without collapsing cardinals. 

\begin{lemma} 
\label{cc lc imply narrow} 
Suppose $\theta$ is an infinite ordinal and $\PP$ is a locally complete forcing.  
\begin{enumerate-(1)} 
\item 
\label{cc lc imply narrow 1} 
If $\PP$ is $\theta^+$-c.c., then it is $\theta$-narrow. 

\item 
\label{cc lc imply narrow 2} 
If $\PP$ is locally $\theta^{[+]}$-c.c., then it is uniformly $\theta$-narrow. 
\end{enumerate-(1)} 
\end{lemma}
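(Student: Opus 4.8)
The plan is to prove both parts by passing to inner models of the form $\HOD_y$ with $y$ a finite superset of the set $x$ witnessing local completeness, so that the axiom of choice becomes available; the subtlety throughout is that $\PP$ itself is typically \emph{not} a subset of $\HOD_y$, and it is precisely local completeness (Lemma \ref{lc equiv}\ref{lc equiv 2}) that lets me locate conditions of $\PP$ inside these models, while Lemma \ref{cons of lc} keeps compatibility and predensity absolute between them and $V$. The common core is the following single-homomorphism fact: if $\QQ$ is locally complete and $\lambda^+$-c.c.\ and $g\colon\QQ\rightharpoonup\Ord$ is a partial $\parallel$-homomorphism, then $|\ran(g)|\le\lambda$. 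To prove it, I would work in $M=\HOD_{x\cup\{g\}}$; for each $\alpha\in\ran(g)$ the set $B_\alpha=g^{-1}(\{\alpha\})$ is ordinal definable from $g$, so Lemma \ref{lc equiv}\ref{lc equiv 2} yields $p_\alpha\in\QQ\cap\HOD_{x\cup\{B_\alpha\}}\subseteq M$ with $(p_\alpha)_\iota\le\sup(B_\alpha)$. Since distinct values of $g$ have incompatible preimages (the target carries the discrete order), the suprema $\sup(B_\alpha)$ are pairwise disjoint in $\BB(\QQ)$, so $\alpha\mapsto p_\alpha$ is injective with pairwise incompatible image. Choosing the $p_\alpha$ uniformly by the axiom of choice inside $M$ gives a bijection of $\ran(g)$ onto an antichain of $\QQ$ that lies in $M\subseteq V$; as this is a genuine antichain in $V$, the $\lambda^+$-c.c.\ bounds its size, whence $|\ran(g)|\le\lambda$.

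For part \ref{cc lc imply narrow 1} I would reduce the general sequence case to this fact by a lottery sum. Given $\vec f=\langle f_i\mid i<\mu\rangle$ with each $f_i\colon\PP\rightharpoonup\Ord$ a partial $\parallel$-homomorphism, let $\QQ=\bigsqcup_{i<\mu}\PP$ be the disjoint union of $\mu$ copies of $\PP$ with a top element adjoined, and define $F\colon\QQ\rightharpoonup\Ord$ by $F(i,p)=f_i(p)$. Then $F$ is a $\parallel$-homomorphism with $\ran(F)=\bigcup_{i<\mu}\ran(f_i)$. A wellordered antichain of $\QQ$, being a disjoint union of $\mu$ wellordered antichains of $\PP$ each of size $\le\theta$ by the $\theta^+$-c.c., has size $\le\max(\theta,\mu)$, so $\QQ$ is $\max(\theta,\mu)^+$-c.c.; moreover local completeness of $\PP$ passes to $\QQ$ by treating each summand separately. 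Applying the core fact to $\QQ$ and $F$ gives $|\bigcup_{i<\mu}\ran(f_i)|=|\ran(F)|\le\max(\theta,\mu)$, which is exactly $\theta$-narrowness.

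For part \ref{cc lc imply narrow 2} I would run the same antichain construction for a single $f\colon\PP\rightharpoonup\Ord$ inside $M_f=\HOD_{x\cup\{f\}}$, obtaining an antichain $A_f\in M_f$ of $\PP\cap M_f$ together with a bijection $\ran(f)\cong A_f$ in $M_f$. Incompatibility in $V$ descends to $M_f$, and by Lemma \ref{cons of lc}\ref{cons of lc 1} the two notions of antichain agree, so $A_f$ is an antichain of $\PP$ as computed in $M_f$. Since $\PP$ is locally $\theta^{[+]}$-c.c.\ (Definition \ref{def lc}) and $x$ may be taken sufficiently large, this forces $|A_f|^{M_f}\le\theta$, hence an injection $\ran(f)\to\theta$ inside $M_f$. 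Letting $G(f)$ be the ${<}_{M_f}$-least such injection, where ${<}_{M_f}$ is the canonical wellorder of $\HOD_{x\cup\{f\}}$, defines a function uniformly from the parameter $x$ that witnesses uniform $\theta$-narrowness.

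The step I expect to be the main obstacle is the absence of choice when forming uniform bijections and unions: in $\ZF$ a union of $\mu$ sets of size $\le\theta$ need not have size $\le\max(\theta,\mu)$. The lottery sum in part \ref{cc lc imply narrow 1} is designed precisely to avoid this, repackaging the whole sequence as a single homomorphism on one forcing whose chain condition I control; in part \ref{cc lc imply narrow 2} the stronger hypothesis lets me carry out all the counting inside one choiceful model $M_f$ and then read off a canonical witness. The remaining delicate point is the ordinal-definability bookkeeping, namely checking that the auxiliary objects ($B_\alpha$, the chosen antichains, and the selecting sequences) genuinely lie in the intended $\HOD$ models, and that local completeness, via Lemma \ref{cons of lc}, indeed keeps compatibility and predensity absolute between these models and $V$.
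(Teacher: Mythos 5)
Your proposal is correct, and while part \ref{cc lc imply narrow 2} follows the paper's proof almost verbatim (build the antichain of canonical conditions below $\sup(B_\alpha)$ inside $\HOD_{x\cup\{f\}}$, invoke the local $\theta^{[+]}$-c.c.\ there, and take the $<_{\HOD_{x\cup\{f\}}}$-least injection), your treatment of part \ref{cc lc imply narrow 1} takes a genuinely different route. The paper keeps the sequence $\vec f$ as is: it forms the doubly-indexed antichain $\langle p_{i,\alpha}\rangle$ inside $H=\HOD_{x\cup\{\vec A\}}$ and then bounds $|\bigcup_{i<\mu}\ran(f_i)|$ by a case split on whether $\theta^+$ is regular in $H$, using in the singular case the Erd\H{o}s--Tarski fact that the chain condition of $\PP\cap H$ is attained at a regular cardinal of $H$ together with Lemma \ref{cons of lc}\ref{cons of lc 1}. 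You instead repackage $\vec f$ as a single partial $\parallel$-homomorphism $F$ on the lottery sum of $\mu$ copies of $\PP$, verify that the sum is $\max(\theta,\mu)^+$-c.c.\ and locally complete, and apply the single-homomorphism bound once; this eliminates both the case split and the choiceless union-counting issue, since the whole of $\bigcup_i\ran(f_i)$ is realised as one wellordered antichain of the sum. Two small points deserve explicit care in a write-up: (i) local completeness of the sum is not quite ``summand by summand'' --- given $B\subseteq\QQ$ you must canonically select a summand meeting $B$ (take the least index $i<\mu$ with $B\cap(\{i\}\times\PP)\neq\emptyset$, which is fine since $\mu$ is an ordinal) so that the chosen condition lies in $\HOD_{x\cup\{\QQ\}\cup\{B\}}$; and (ii) the claim that a wellordered antichain of the sum has size at most $\max(\theta,\mu)$ should be justified by using the given wellorder of the whole antichain to uniformise the enumerations of the pieces, yielding an injection into $\mu\times\theta$ --- in $\ZF$ this is exactly where an unordered union of $\mu$ sets of size $\theta$ would fail. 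With those details supplied, your argument is complete and arguably more self-contained than the paper's, at the cost of introducing the auxiliary forcing.
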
 
\begin{proof} 
\ref{cc lc imply narrow 1} 
Suppose that $\vec{f}=\langle f_i \mid i<\mu \rangle$ is a sequence of partial $\parallel$-homomorphisms from $\PP$ to $\Ord$ for some ordinal $\mu$. 
Let $A_i:=\dom(f_i)$ for each $i<\mu$. 
We partition $A_i$ into $A_{i,\alpha}:=\{ p\in A_i \mid f_i(p)=\alpha\}$ for $\alpha\in \ran(f_i)$. 
Let $\vec{A}=\langle A_{i,\alpha}\mid i<\mu,\ \alpha \in \ran(f_i)\rangle$. 
Take some finite $x$ witnessing $\PP$ is locally complete. 
Let $H:=\HOD_{x\cup\{\vec{A}\}}$. 
For each $i<\mu$ and $\alpha\in \ran(f_i)$, there exists some $p\in \PP\cap H$ with $p\leq \sup(A_{i,\alpha})$. 
Let $p_{i,\alpha}$ be be least such $p$ in the canonical wellorder of $H$. 

\begin{claim*} 
$p_{i,\alpha}\perp p_{i,\beta}$ for any $i<\mu$ and $\alpha\neq\beta$ in $\ran(f_i)$. 
\end{claim*} 
\begin{proof} 
Towards a contradiction, suppose that there exists some $r\leq p_{i,\alpha}, p_{i,\beta}$. 
Since $r\leq \sup(A_{i,\alpha})$, there exist $s\in A_{i,\alpha}$ and $t\leq r, s$. 
Since $t\leq \sup(A_{i,\beta})$, $t$ is compatible with some $u\in A_{i,\beta}$. 
Thus $s\parallel u$. 
Since $f_i$ is a $\parallel$-homomorphism, $\alpha=f_i(s)=f_i(u)=\beta$. 
But we assumed $\alpha\neq\beta$. 
\end{proof} 

First suppose $\theta^+$ is regular in $H$. 
By the $\theta^+$-c.c., we have $|\ran(f_i)|^H<\theta^+$ for each $i<\mu$. 
Since $\theta^+$ is regular in $H$, we have $|\bigcup_{i<\mu}\ran(f_i)|^H< \theta^+$ if $\mu < \theta^+$. 
Thus  $|\bigcup_{i<\mu}\ran(f_i)|\leq |\theta|$ if $\mu < \theta^+$ and $|\bigcup_{i<\mu}\ran(f_i)|\leq |\mu|$ if $\mu \ge \theta^+$. 

Next suppose $\theta^+$ is singular in $H$. 
Let $\lambda\leq\theta^+$ be the chain condition of $\PP\cap H$ in $H$. 
By a standard fact, the chain condition of a forcing is regular in any model of $\ZFC$. 
Thus $\lambda$ is regular in $H$ and $\lambda<\theta^+$. 
Since $H$ is correct about compatiblity by Lemma \ref{cons of lc} \ref{cons of lc 1}, $|\ran(f_i)|^H<\lambda$ for each $i<\mu$. 
Thus $|\bigcup_{i<\mu}\ran(f_i)|\leq |\theta|$ if $\lambda \geq \mu$ 
and $|\bigcup_{i<\mu}\ran(f_i)|\leq |\mu|$ if $\lambda < \mu$. 

\ref{cc lc imply narrow 2} 
We proceed as in \ref{cc lc imply narrow 1}, but replace $\vec{f}$ by a single $\parallel$-homomorphism $f$ and $x$ by a finite superset that witnesses locally-$\theta^{[+]}$-c.c. 
The construction in \ref{cc lc imply narrow 1} works as above. 
Since the $\theta^{+H}$-c.c. holds in $H$ by the choice of $x$, 
we have $|\ran(f)|^H\leq|\theta|^H$. 
Let $G(f)$ be the least injective function $F\colon \ran(f) \rightarrow \theta$  in $H$. 
\end{proof} 



The following is an alternative way of showing cardinal preservation. 
It uses a notion of capturing similar to the ones studied in \cite{castiblanco2021preserving,schschsch}. 

\begin{definition} 
\label{def captured} 
Suppose $\theta$ is an infinite ordinal. 
A forcing $\PP$ is called \emph{$\theta$-captured} if for any set $A$ of ordinals in any $\PP$-generic extension $V[H]$, there are a transitive class model $U\subseteq V$ of $\ZFC$ 
and a generic filter $G\in V[H]$ over $U$ such that: 
\begin{enumerate-(a)}
\item 
\label{def captured 1} 
$A\in U[G]$. 
\item 
\label{def captured 2} 
$U$ and $U[G]$ agree on cardinals and cofinalities ${>}\theta$. 
\end{enumerate-(a)}
We further say that $\PP$ is $\theta$-captured \emph{by $\PP$} if $G=H\cap U$ is $(\PP\cap U)$-generic over $U$ in \ref{def captured 1}. 
\end{definition} 



\begin{lemma}
\label{lc capturing} 
Suppose that $\theta$ is an infinite ordinal and $\PP$ is a forcing. 
\begin{enumerate-(1)} 
\item 
\label{lc capturing 1} 
If $\PP$ is locally complete and $\theta^+$-c.c., then it is $\theta$-captured by $\PP$. 

\item 
\label{lc capturing 2} 
If $\PP$ is $\theta$-captured, then it preserves cardinals and cofinalities ${>}\theta$. 

\end{enumerate-(1)} 
\end{lemma}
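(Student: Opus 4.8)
The plan is to prove \ref{lc capturing 2} first, since it is a clean absoluteness argument, and then to build the model $U$ for \ref{lc capturing 1} as a relativized $\HOD$ and verify the three requirements of Definition \ref{def captured} in turn.

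For \ref{lc capturing 2}, suppose $\PP$ is $\theta$-captured and, towards a contradiction, that a cardinal $\lambda>\theta$ of $V$ is collapsed, or that the cofinality of some $\lambda$ with $\cof^V(\lambda)>\theta$ is changed, in a $\PP$-generic extension $V[H]$. Then there are $\mu<\lambda$ and a map $f\colon\mu\rightarrow\lambda$ in $V[H]$ that is surjective (first case) or cofinal with $\mu<\cof^V(\lambda)$ (second case). I would code the graph of $f$ as a set $A$ of ordinals via the pairing function $\p$, apply $\theta$-capturing to $A$ to obtain $U\models\ZFC$ and a $U$-generic $G\in V[H]$ with $A\in U[G]$, and decode $A$ inside $U[G]\models\ZFC$ to recover $f\in U[G]$. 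Since $U\subseteq V$, the ordinal $\lambda$ is still a cardinal of $U$ in the first case, while $\cof^U(\lambda)\geq\cof^V(\lambda)>\theta$ in the second; by Definition \ref{def captured}\ref{def captured 2} these values are the same in $U[G]$. But $f\in U[G]$ witnesses that $\lambda$ is collapsed, respectively that $\cof^{U[G]}(\lambda)\leq\mu<\cof^U(\lambda)$, contradicting this agreement.

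For \ref{lc capturing 1}, fix a $\PP$-generic $V[H]$, a set $A\subseteq\lambda$ of ordinals in $V[H]$, and a name $\dot A$ with $A=\dot A^H$. Let $x$ witness local completeness, put $b_\alpha:=\sup\{p\in\PP\mid p\Vdash_\PP\check\alpha\in\dot A\}\in\BB(\PP)$ for $\alpha<\lambda$, and set $\vec b:=\langle b_\alpha\mid\alpha<\lambda\rangle$, $y:=x\cup\{\vec b\}$, $U:=\HOD_y$ and $G:=H\cap U$. First I would check that $G$ is $(\PP\cap U)$-generic over $U$: every predense $D\in U$ of $\PP\cap U$ is predense in $\PP$ by Lemma \ref{cons of lc}\ref{cons of lc 2}, so $H$, and hence $G$, meets it; and combined with Lemma \ref{cons of lc}\ref{cons of lc 1} this shows $G$ is a filter, since for $p,q\in G$ the set of $r\in\PP\cap U$ with $r\leq p,q$ or $r\perp p$ or $r\perp q$ is predense and lies in $U$, so the element of $G$ it supplies must lie below both $p$ and $q$. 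Next I would produce a name for $A$ in $U$. Although $\vec b\notin U$ in general, it may be used as a parameter, so
$$\sigma:=\{(\check\alpha,p)\mid\alpha<\lambda,\ p\in\PP\cap U,\ p_\iota\leq b_\alpha\}$$
is ordinal definable from $y$ and has all its members in $\HOD_y$, whence $\sigma\in U$. To see $\sigma^G=A$, for each $\alpha$ I consider $D_\alpha:=\{p\in\PP\cap U\mid p_\iota\leq b_\alpha\text{ or }p_\iota\leq -b_\alpha\}$; applying condition \ref{lc equiv 2} of Lemma \ref{lc equiv} to regular open sets of the form $\{p\mid p_\iota\leq r_\iota\cdot b_\alpha\}$ shows that every element of $\PP\cap U$ is compatible with a member of $D_\alpha$, so $D_\alpha\in U$ is predense and $G$ meets it; the element it provides decides $\check\alpha\in\dot A$ correctly, giving $\alpha\in A\Leftrightarrow\alpha\in\sigma^G$. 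Hence $A=\sigma^G\in U[G]$, which is Definition \ref{def captured}\ref{def captured 1}.

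Finally, for Definition \ref{def captured}\ref{def captured 2} I would bound antichains: if $B\in U$ is an antichain of $\PP\cap U$, then by Lemma \ref{cons of lc}\ref{cons of lc 1} it is an antichain of $\PP$, and enumerating it by its $U$-ordertype and invoking the $\theta^+$-c.c. of $\PP$ forces $\otp(B)<\theta^+$. Thus the chain condition $\kappa^*$ of $\PP\cap U$ computed in $U$ is a regular $U$-cardinal with $\kappa^*\leq\theta^+$, and one then wants to conclude that $\PP\cap U$ preserves the cardinals and cofinalities of $U$ above $\theta$, so that $U$ and $U[G]$ agree there. The main obstacle is exactly this conversion: when $\theta^+$ is singular or is computed incorrectly in $U$, an antichain could have $U$-cardinality strictly between $\theta$ and $\theta^+$, so one must argue as in Lemma \ref{cc lc imply narrow} that $\kappa^*$ does not exceed the least $U$-cardinal above $\theta$, ensuring that the $\kappa^*$-c.c. of $\PP\cap U$ in $U$ genuinely yields preservation of all $U$-cardinals and cofinalities $>\theta$; this is the step where local completeness, through the correctness of compatibility, is essential.
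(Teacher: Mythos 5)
Your proof is correct and follows the same overall strategy as the paper: part \ref{lc capturing 2} is the identical absoluteness argument, and part \ref{lc capturing 1} takes $U$ to be a $\HOD$ relativized to the local-completeness witness together with data extracted from a name for $A$, gets genericity of $H\cap U$ from Lemma \ref{cons of lc}, and transfers the $\theta^+$-c.c.\ to $U$ via correctness of compatibility. The one place you genuinely diverge is the construction of the name in $U$: the paper builds, by transfinite recursion inside $\HOD_{x\cup\{\sigma\}}$ using Lemma \ref{lc equiv}\ref{lc equiv 3}, antichains $B_\alpha$ with $\sup(B_\alpha)=\sup(A_\alpha)$ and takes the induced nice name, whereas you put the Boolean values $b_\alpha=\llbracket\check\alpha\in\dot A\rrbracket$ directly into the $\HOD$ parameter and take the maximal name $\{(\check\alpha,p)\mid p\in\PP\cap U,\ p_\iota\leq b_\alpha\}$, checking $\sigma^G=A$ via the predense decision sets $D_\alpha$. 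Both work; yours avoids the recursion, while the paper's yields an explicitly nice name, and you are also more explicit than the paper about why $H\cap U$ is a filter. On your final concern: the repair you propose --- showing that the chain condition $\kappa^*$ of $\PP\cap U$ computed in $U$ is at most the least $U$-cardinal above $\theta$ --- is not available in general, since an antichain of $V$-size ${\leq}\theta$ can have large $U$-cardinality (e.g.\ when $\omega_1^U<\omega_1^V$); what is true, and what the paper implicitly uses, is only that every antichain in $U$ has order type ${<}\theta^{+V}$, so $\kappa^*\leq\theta^{+V}$. This suffices, because part \ref{lc capturing 2} only ever needs agreement between $U$ and $U[G]$ at $V$-cardinals ${>}\theta$ and at ordinals of $V$-cofinality ${>}\theta$, all of which are ${\geq}\theta^{+V}\geq\kappa^*$; Definition \ref{def captured}\ref{def captured 2} should be read with that application in mind.
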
 
\begin{proof} 
\ref{lc capturing 1} 
Suppose that $\PP$ is locally complete witnessed by $x$. 
Recall that a \emph{nice name} for a subset of $\lambda$ has elements of the form $(\check{\alpha},p)$ with $\alpha<\lambda$ and $p\in\PP$. 
Suppose that $\sigma$ is a nice name for a subset of $\lambda$. 
Let $A_\alpha:=\{ p\in \PP \mid (\check{\alpha},p)\in \sigma\}$ for each $\alpha<\lambda$ and set $H:=\HOD_{x\cup \{\sigma\}}$. 

\begin{claim*} 
There exists a name $\dot{g}\in H$ with $\one_\PP \Vdash \dot{f}=\dot{g}$. 
\end{claim*} 
\begin{proof} 
We construct a sequence $(B_\alpha \mid \alpha < \lambda ) \in H$ of antichains with $\sup(A_\alpha)=\sup(B_\alpha)$ for each $\alpha<\lambda$. 
Then $\dot{g}=\{ (\check{\alpha},p) \mid p\in B_\alpha \}$ is as required. 
We construct $B_\alpha$ by induction. 
Let $p_0 \in H$ with $(p_0)_\iota\leq \sup(A_\alpha)$ by \ref{lc equiv 3}. 
Suppose that $\vec{p}=\langle p_i \mid i<\eta \rangle$ has been constructed. 
If $\sup(\vec{p})<\sup(A_\alpha)$, then $B:=\{ p\in \PP \mid p_\iota\leq \sup(A_\alpha)-\sup(\vec{p}) \}\neq\emptyset$ is regular open. 
Let $p_\eta \in \HOD_{x\cup\{B\}}\subseteq H$ with $(p_\eta)_\iota \leq \sup(A_\alpha)-\sup(\vec{p})$ by \ref{lc equiv 3}. 
If $\sup(\vec{p}) = \sup(A_\alpha)$, let $B_\alpha=\ran(\vec{p})$. 
Then $B_\alpha$ is as required. 
\end{proof} 

Take $\dot{g}$ as in the previous claim and suppose that $G$ is a $\PP$-generic filter over $V$. 
Then $\dot{f}^G=\dot{g}^G$. 
Moreover, $G \cap H$ is $(\PP\cap H)$-generic over $H$ by Lemma \ref{cons of lc} \ref{cons of lc 2}. 
This suffices, since $\PP\cap H$ is $\theta^+$-c.c. in $H$. 

\ref{lc capturing 2} 
Suppose that $\lambda>\theta$ is a cardinal, $\gamma<\lambda$ and $f \colon \gamma \to \lambda$ is a function in some $\PP$-generic extension $V[H]$. 
By assumption, $f$ is contained in a model $U[G]$ as above. 
Since $\lambda>\theta$ is a cardinal in $U$ and therefore in $U[G]$, $f$ is not surjective. 
A similar argument works for cofinalities. 
Suppose that $\lambda\in\Ord$ with $\cof(\lambda)>\theta$, $\gamma<\cof(\lambda)$ and $f \colon \gamma \to \lambda$ is a function in $V[H]$. 
Then $f$ is contained in a model $U[G]$ as above. 
Since $\gamma<\cof(\lambda)\leq\cof(\lambda)^U=\cof(\lambda)^{U[G]}$, $f$ is not cofinal. 
\end{proof} 

Recall that $\CC^\alpha =\Func_{<\omega}(\alpha\times\omega,2)$ is the forcing adding $\alpha$-many Cohen reals with finite conditions for any ordinal $\alpha$. Notice that $\CC^\kappa$ is wellorderable and locally $\omega^{[+]}$-c.c. for any cardinal $\kappa$. Therefore it is uniformly narrow. 
We will see in the next section that random algebras are locally complete and locally $\omega^{[+]}$-c.c.
So they are uniformly narrow as well.

\subsection{Random algebras} 
\label{section random forcing}


Fix a multiplicatively closed ordinal $\alpha$.\footnote{We make this assumption so that an $\alpha$-Borel code for a subset of $2^\alpha$ is a subset of $\alpha$.} 
$2^\alpha$ is equipped with the product topology. 
Recall that for any ordinal $\alpha$, we write $2^{(\alpha)}$ for $\Fun(\alpha,2)$. The basic open subsets of $2^\alpha$ are of the form $N_t=\{ x\in 2^\alpha \mid t\subseteq x \}$ for 
$t\in 2^{(\alpha)}$.

\subsubsection{Borel codes}  

An \emph{$\alpha$-Borel code} for a subset of $2^\alpha$ is a subset of $\alpha$ that codes a set formed from basic open subsets of $2^\alpha$ via complements and countable unions of length at most $\alpha$. 
Its \emph{rank} is the number of steps of this construction. 
We may fix any standard definition of $\alpha$-Borel codes such as the one at the end of \cite[Section 25]{Jech} where $\omega$ is replaced with $\alpha$, using the pairing function $\p\colon \Ord\times\Ord\rightarrow\Ord$. 
An \emph{$\alpha$-Borel} subset of $2^\alpha$ is one with an $\alpha$-Borel code. 
For $\alpha$-Borel codes $A_0$ and $A_1$, let $A_0\vee A_1$, $A_0\wedge A_1$, $A_0 - A_1$, $-A_0$ denote canonical codes for their union, intersection, difference and complement. 
The \emph{support} $\supp(A)$ of an $\alpha$-Borel code $A$ is the union of all $\dom(t)$, where $N_t$ occurs in $A$. 
\emph{Borel codes} are by definition $\alpha$-Borel codes with countable support. 
A \emph{Borel subset} $B$ of $2^\alpha$ is one with a Borel code. 
A \emph{support} of $B$ is by definition a support of a code for $B$. 

\begin{remark} 
\label{least support} 
Any Borel subset $B$ of $2^\alpha$ has a least support $S$. 
We define $S$ as follows. 
For any $i<\kappa$, let $i\in S$ if there exist some $x\in B$ and $y\notin B$ 
with $x(j)= y(j)$ for all $j\neq i$. 
Clearly $S$ is contained in any support of $B$. 
To see that $S$ is a support for $B$, suppose that $A$ is a Borel code for $B$. 
Define a Borel code $A{\upharpoonright} S$ by replacing $N_t$ by $N_{t{\upharpoonright} S}$ for all $t\in 2^{(\alpha)}$. 
By the definition of $S$, $A{\upharpoonright} S$ codes the same set as $A$. 
\end{remark} 

Let $\mu$ denote the product measure on $2^\alpha$. 
We sometimes identify a Borel code with the coded set. 
For instance, we write $\mu(A)$ for $\mu(B)$ if $A$ is a Borel code for the set $B$. 
For Borel codes $A_0$ and $A_1$, we write $A_0\leq A_1$ if $\mu(A_0-A_1)=0$ and $A_0=_\mu A_1$ if $A_0\leq A_1 \wedge A_0\leq A_1$. 
Note that for any $x\in 2^\alpha$ and a Borel code $A$ for a set $B$, the statement ``$x\in B$'' is absolute to any inner model of $\ZFC$ that contains $x$ and $A$.

\subsubsection{Completeness}  
\label{section random complete}

$\bar{\RR}_\alpha$ denotes the forcing that consists of all Borel codes for subsets of $2^\alpha$ ordered by $\leq$. 
The quotient of $\bar{\RR}_\alpha$ by $=_\mu$ with the operations induced by $\vee$, $\wedge$ and $-$ is a Boolean algebra.

A 
forcing is called \emph{complete} if every subset has a supremum. 
We will show that $\bar{\RR}_\alpha$ is complete. 
To this end, we associate to every $A\in \bar{\RR}_\alpha$ its \emph{footprint} 
$\foot_A=\langle \foot_{A,t} \mid t\in 2^{(\alpha)}\rangle$, where 
$$\foot_{A,t}=\frac{\mu(A\cap N_t)}{\mu(N_t)}$$ 
denotes the relative measure. 
Let $\foot_A \leq \foot_B$ if $\foot_{A,t} \leq \foot_{B,t}$ for all $t\in 2^{(\alpha)}$. 
Note that $A\leq B$ if and only if $\foot_A \leq \foot_B$. 
If $A\leq B$, then clearly $\foot_{A,t}\leq \foot_{B,t}$ for all $t\in 2^{(\alpha)}$. 
If conversely $A\not\leq B$, then for any $\epsilon>0$ there exists some $t\in 2^{(\alpha)}$ with $\foot_{A,t}>1-\epsilon$ and $\foot_{B,t}<\epsilon$ by Lebesgue's density theorem. 
Thus $\foot_{A,t} > \foot_{B,t}$. 


We next define density points of subsets of $2^\alpha$. 
For countable ordinals, this definition agrees with the usual one. 
The quantifiers in the next definition range over $[\alpha]^{<\omega}$. 

\begin{definition} 
\label{def density} 
Suppose that $\vec{\foot}=\langle \foot_s \mid s\in 2^{(\alpha)} \rangle$ is a sequence in $\RR$ and 
 $x\in 2^\alpha$. 
\begin{enumerate-(1)} 
\item 
\label{def density 1} 
For any $\epsilon>0$, $x$ is called an \emph{$\epsilon$-density point} of $\foot$ if $$\exists s\ \forall t\supseteq s\ 
\foot_t>1-\epsilon.$$ 

\item 
\label{def density 2} 
$x$ is called a \emph{density point} of $\foot$ if it is an $\epsilon$-density point of $\foot$ for all $\epsilon\in \QQ^+$. 

\end{enumerate-(1)} 
The $\alpha$-Borel code for the set of density points of $\foot$ induced by \ref{def density 1} and \ref{def density 2} is denoted $D(\foot)$. 
\end{definition} 

Any $\alpha$-Borel code can be reduced to a Borel code as follows. 

\begin{definition} 
The \emph{reduct} $\red(A)$ of an $\alpha$-Borel code $A$ is the following Borel code, defined by induction on the rank: 
\begin{enumerate-(1)} 
\item 
If $A$ is a code of rank $0$, then $\red(A)=A$. 

\item 
If $A_0$ is the canonical code for the complement of $A_1$, then $\red(A_0)$ is the canonical code for the complement of $\red(A_1)$. 

\item 
If $A$ is the canonical code for the union of $\vec{A}=\langle A_i \mid i<\alpha\rangle$, 
work in $L[A]=L[\vec{A}]$. 
Let $I$ be the $L[A]$-least countable subset of $\alpha$ such that for all $j<\alpha$: 
$$\mu \bigl(\bigcup_{i\in I} \red (A_i) \bigr)=\mu\bigl(\red (A_j) \cup \bigcup_{i\in I} \red (A_i) \bigr).$$ 
$\red(A)$ is the canonical code for $\bigcup_{i\in I} \red (A_i)$. 

\end{enumerate-(1)} 
\end{definition} 

Note that $\red(A)\in L[A]$. 
By induction on the rank, we have $\red(A)=_\mu A$ in any outer model where $\alpha$ is countable, for any $\alpha$-Borel set $A$. 
This is used in the next construction. 
Suppose that $X$ is a subset of $\bar{\RR}_\alpha$. 
To construct a least upper bound, we first form the least upper bound of the footprints: 
let $\foot_{X,t}=\sup_{A\in X} \foot_{A,t}$ for each $t\in 2^{(\alpha)}$ and 
$$\foot_X=\langle \foot_{X,t} \mid t\in 2^{(\alpha)}\rangle.$$ 

\begin{lemma}\ 
\label{random supremum} 
\begin{enumerate-(1)} 
\item 
\label{random supremum 1}
In any outer model $W$ of $V$ such that 
$\alpha$ is countable in $W$, 
$D(\mathrm{f}_X)$ 
is a least upper bound for $X$. 

\item 
\label{random supremum 2}
$\bar{\RR}_\alpha$ is complete. 
More precisely, for any subset $X$ of $\bar{\RR}_\alpha$ the reduct of $D(\foot_X)$ is a least upper bound for $X$. 

\end{enumerate-(1)} 
\end{lemma}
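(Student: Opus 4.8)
The plan is to prove \ref{random supremum 1} by classical Lebesgue density theory in the outer model $W$, and then to derive \ref{random supremum 2} from it by an absoluteness argument, passing to an outer model where $\alpha$ is countable via the collapse of $\alpha$.

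For \ref{random supremum 1}, I work throughout in $W$, where $\alpha$ is countable, so that $2^\alpha$ is a standard probability space, every $\alpha$-Borel code has countable support and hence lies in $\bar{\RR}_\alpha$, and the Lebesgue density theorem applies in the form $D(\foot_A)=_\mu A$ for every Borel set $A$ (this is the sense, noted before Definition \ref{def density}, in which the notion of density point agrees with the usual one). The only two ingredients needed are this density theorem and the monotonicity $\foot\leq\foot'\Rightarrow D(\foot)\subseteq D(\foot')$, which is immediate from Definition \ref{def density}, since enlarging the relative measures preserves every witness to the $\epsilon$-density condition. To see that $D(\foot_X)$ is an upper bound, fix $A\in X$; by construction $\foot_A\leq\foot_X$ pointwise, so monotonicity gives $D(\foot_A)\subseteq D(\foot_X)$, and since $A=_\mu D(\foot_A)$ by the density theorem we get $A\leq D(\foot_X)$. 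For leastness, let $C$ be any upper bound of $X$; then $\foot_A\leq\foot_C$ for every $A\in X$ by the equivalence $A\leq C\Leftrightarrow\foot_A\leq\foot_C$ recorded before Definition \ref{def density}, hence $\foot_X=\sup_{A\in X}\foot_A\leq\foot_C$ pointwise, and monotonicity together with $D(\foot_C)=_\mu C$ gives $D(\foot_X)\subseteq D(\foot_C)=_\mu C$, that is, $D(\foot_X)\leq C$.

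For \ref{random supremum 2}, set $Y:=\red(D(\foot_X))$; this is a genuine Borel code (it is computed in $L[\foot_X]$, hence has countable support and lies in $\bar{\RR}_\alpha$), and it is the same in $V$ and in any outer model, as are $X$ and $\foot_X$, since the relative measures $\foot_{A,t}$ and their supremum are absolute. I would then pass to an outer model $W$ in which $\alpha$ is countable, concretely by running the argument in the Boolean-valued model $V^{\BB}$ for $\BB=\BB(\Col(\omega,\alpha))$, which believes it is such a $W$, exactly as in the discussion before Lemma \ref{small names}. In $W$, part \ref{random supremum 1} shows $D(\foot_X)$ is a least upper bound for $X$, and $\red(D(\foot_X))=_\mu D(\foot_X)$ by the reduct property recorded after the definition of $\red$, so $Y$ is a least upper bound for $X$ in $W$. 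It remains to transfer this to $V$. All statements in play have the form $\mu(E)=0$ for a Borel code $E$ canonically built from $X$, $Y$ and a ground-model condition, and the measure of a Borel code is computed recursively from the code and so is absolute between $V$ and $W$. Thus $\mu(A-Y)=0$ for each $A\in X$ transfers down, giving $A\leq Y$ in $V$; and for any upper bound $C\in\bar{\RR}_\alpha$ of $X$ in $V$, the relations $\mu(A-C)=0$ are absolute, so $C$ remains an upper bound of $X$ in $W$, whence $\mu(Y-C)=0$ holds in $W$ by leastness there and transfers back to give $Y\leq C$ in $V$. As $X$ was arbitrary, $\bar{\RR}_\alpha$ is complete.

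The main obstacle is the passage to $W$ in \ref{random supremum 2} together with the care demanded by the choiceless setting: I must justify collapsing $\alpha$ over an arbitrary model of $\ZF$ (legitimate here since $\Col(\omega,\alpha)$ is wellorderable, so the Boolean-valued machinery of Lemma \ref{small names} applies) and verify that every measure-theoretic quantity is genuinely absolute, so that \emph{least} upper bound survives the transfer even though $\bar{\RR}_\alpha^{W}$ properly contains $\bar{\RR}_\alpha^{V}$ and new upper bounds appear in $W$. This is handled by observing that leastness in $V$ need only be tested against ground-model conditions $C$, each of which stays an upper bound in $W$, and that all the relevant $\mu(\cdot)=0$ assertions are absolute. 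A secondary point to check is that the net-style density condition of Definition \ref{def density} indeed coincides, for countable $\alpha$, with the classical almost-everywhere statement $D(\foot_A)=_\mu A$; I would justify this through the martingale convergence theorem for the filtration generated by finite sets of coordinates.
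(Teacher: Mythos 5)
Your proposal is correct and follows essentially the same route as the paper: part \ref{random supremum 1} via Lebesgue density ($D(\foot_A)=_\mu A$) plus monotonicity of $D$ in the footprint, and part \ref{random supremum 2} by passing to an outer model where $\alpha$ is countable and transferring back via absoluteness of the relevant $\mu(\cdot)=0$ statements. The paper's proof of \ref{random supremum 2} is much terser, leaving the absoluteness transfer and the point that leastness need only be tested against ground-model conditions implicit, which your write-up usefully makes explicit.
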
  
\begin{proof} 
\ref{random supremum 1}: 
We work in $W$. 
Since $\alpha$ is countable, $D(\foot_A)$ is Borel. 
To see that it is an upper bound, suppose that $A\in X$. 
We have $D(\foot_A)=_\mu A$ by Lebesgue's density theorem. 
Since $\foot_A \leq \foot_X$, $D(\foot_A)\leq D(\foot_X)$ as required. 
To see that $D(\foot_X)$ is least, suppose $B\in \bar{\RR}_\alpha$ is an upper bound of $X$. 
Again $D(\foot_B)=_\mu B$ by Lebesgue's density theorem. 
We have $\foot_A \leq \foot_B$ for all $A\in X$, since $A\leq B$. 
Thus $\foot_X\leq \foot_B$ and therefore, $D(\foot_X)\leq D(\foot_B)$ as required. 

\ref{random supremum 2}: 
Let $A$ denote the reduct of $D(\foot_X)$. 
In any outer model $W$ where $\alpha$ is countable, $A=_\mu D(\foot_X)$. 
Since $D(\foot_X)$ is a least upper bound for $X$ in $W$ by \ref{random supremum 1}, $A$ is a least upper bound for $X$ in $V$. 
\end{proof}

\begin{remark}\ 
\begin{enumerate-(1)} 
\item 
If $X$ is closed under finite unions, then the footprint of the least upper bound of $X$ is precisely $\foot_X$. 
This is clear if $X$ is countable. 
In general, this can be seen by passing to a generic extension where $X$ is countable. 

\item 
One can avoid the use of outer models in the proof of Lemma \ref{random supremum} 
by a direct calculation that the reduct of $D(\foot_X)$ is a least upper bound for $X$. 
If $A$ and $B$ are codes for unions $\bigcup_{i<\alpha} A_i$ and $\bigcup_{i<\alpha} B_i$ with $\red(A_i)\leq \red(B_i)$ for all $i<\alpha$, one shows $\red(\bigcup_{i<\alpha}A_i)\leq \red(\bigcup_{i<\alpha}B_i)$ and a similar statement for intersections.  

\end{enumerate-(1)} 
\end{remark} 

We now pass to a subforcing $\RR_\alpha$ of $\bar{\RR}_\alpha$ whose definition is absolute to inner models. 
This is not the case for $\bar{\RR}_\alpha$. 
This absoluteness is used to show that $\RR_\alpha$ is $\omega^{[+]}$-c.c. 

\begin{definition} 
The \emph{random algebra} $\RR_\alpha$ on $\alpha$ generators is the subforcing of $\bar{\RR}_\alpha$ that consists of those $A$ such that $\supp(A)$ is countable in $L[A]$. 
\end{definition} 


We also write $\RR$ for \emph{random forcing} $\RR_\omega$. 
The next lemma shows that $\RR_\alpha$ and $\bar{\RR}_\alpha$ are essentially the same forcings, since $\RR_\alpha$ meets every equivalence class in $\bar{\RR}_\alpha$ with respect to $=_\mu$. 

\begin{lemma} 
\label{representatives for random algebras} 
There is an $\OD$ function $F\colon \bar{\RR}_\alpha\rightarrow \RR_\alpha$ that picks a representative in each equivalence class in $\bar{\RR}_\alpha$. 
\end{lemma}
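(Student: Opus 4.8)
The plan is to send each code to the reduct of the density-point code of its footprint, exploiting that the footprint is both a complete $=_\mu$-invariant and recoverable from any representative. Concretely, I would define, for a code $A\in\bar{\RR}_\alpha$,
$$ F(A) := \red(D(\foot_A)). $$
Since $A\leq B$ if and only if $\foot_A\leq\foot_B$, the footprint $\foot_A$ depends only on the $=_\mu$-class of $A$, so $F$ is constant on classes; and as $\foot_A$, $D$ and $\red$ are all given by fixed formulas (using the pairing function and, for $\red$, the $L[\cdot]$-least choice), $F$ is $\OD$. Applying Lemma \ref{random supremum} \ref{random supremum 2} to the singleton $X=\{A\}$, for which $\foot_X=\foot_A$, shows that $\red(D(\foot_A))$ is a least upper bound of $\{A\}$, i.e. $F(A)=_\mu A$. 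Moreover $F(A)$ is a Borel code, hence an element of $\bar{\RR}_\alpha$. It remains to verify $F(A)\in\RR_\alpha$, that is, that $\supp(F(A))$ is countable in $L[F(A)]$.

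The key step is the identity $L[F(A)]=L[\foot_A]$. On one hand, $D(\foot_A)\in L[\foot_A]$ since it is defined from $\foot_A$, and the reduct of an $\alpha$-Borel code lies in $L$ of that code, so $F(A)=\red(D(\foot_A))\in L[\foot_A]$ and therefore $L[F(A)]\subseteq L[\foot_A]$. On the other hand, writing $C:=F(A)$, for each $t\in 2^{(\alpha)}$ the quantity $\foot_{C,t}=\mu(C\cap N_t)/\mu(N_t)$ is the relative measure of a Borel set whose code is computable from $C$; since measures of Borel sets are absolute to inner models, $L[C]$ computes the entire set function $\foot_C$. As $C=_\mu A$ we have $\foot_C=\foot_A$, whence $\foot_A\in L[C]$ and $L[\foot_A]\subseteq L[C]$. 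Combining the two inclusions yields $L[C]=L[\foot_A]$.

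Finally I would argue inside $L[\foot_A]$, which is a model of $\ZFC$. There the code $C=\red(D(\foot_A))$ is built by recursion on rank: basic codes have finite support, complements preserve support, and at a union the reduct is formed over the $L[\foot_A]$-least \emph{countable} index set, so at each stage the support is a countable union of countable sets and hence countable. Thus $\supp(C)$ is countable in $L[\foot_A]=L[C]$, witnessing $C\in\RR_\alpha$. The main obstacle is precisely the subtlety that countability of the support must hold in $L[C]$ rather than in $V$, where a countable-in-$V$ set of coordinates can have order type $\geq\omega_1^{L[C]}$; the identity $L[C]=L[\foot_A]$ is what resolves this, by placing both the construction of $C$ and the witness to the countability of its support in one and the same model of $\ZFC$. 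Since $F$ factors through the class invariant $\foot_A$ and lands in $\RR_\alpha$ with $F(A)=_\mu A$, it is the desired $\OD$ choice of representatives.
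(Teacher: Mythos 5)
Your definition $F(A)=\red(D(\foot_A))$, together with the verification that $F$ is $\OD$, constant on $=_\mu$-classes, and satisfies $F(A)=_\mu A$ via Lemma \ref{random supremum}, matches the first step of the paper's proof (the paper uses $\foot_{[A]}$, which equals $\foot_A$ since all members of a class have the same footprint). The divergence, and the gap, lies in your claim that $L[F(A)]=L[\foot_A]$. The inclusion $L[F(A)]\subseteq L[\foot_A]$ is fine, but the reverse inclusion rests on the assertion that $L[C]$, for $C:=F(A)$, correctly computes $\foot_{C,t}=\mu(C\cap N_t)/\mu(N_t)$ for every $t$. Absoluteness of measures of Borel codes to an inner model $M$ is only available when the code is a genuine Borel code from the point of view of $M$, that is, when its support and the index sets of its unions are countable \emph{in $M$}; otherwise $M$ sees an $\alpha$-Borel code with uncountable unions, whose coded set need not even lie in $M$'s product $\sigma$-algebra. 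But whether $\supp(C)$ is countable in $L[C]$ is precisely the statement $C\in\RR_\alpha$ you are trying to prove, so the argument is circular. The scenario you need to exclude is $\omega_1^{L[C]}\leq\otp(\supp(C))<\omega_1^{L[\foot_A]}$: then $\supp(C)$ is countable in $L[\foot_A]$, where the reduct was computed, yet uncountable in $L[C]$. Nothing in your argument rules this out; note that although $D(\foot_A)$ records $\foot_A$ syntactically, the reduct discards all but countably many of the relevant basic clopen sets, so $\foot_A$ cannot be read off $C$ without the measure computation you are not yet entitled to.

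The paper closes exactly this gap by iterating the construction rather than applying it once: with $G(A):=\red(D(\foot_{[A]}))$ and $A_{i+1}:=G(A_i)$, one has $A_{i+1}\in L[A_i]$ and hence $\omega_1^{L[A_{i+1}]}\leq\omega_1^{L[A_i]}$; this non-increasing sequence of ordinals stabilises at some finite $n$, and then $\supp(A_{n+1})$, which is countable in $L[A_n]$ by construction, has order type below $\omega_1^{L[A_n]}=\omega_1^{L[A_{n+1}]}$ and is therefore countable in $L[A_{n+1}]$, giving $A_{n+1}\in\RR_\alpha$. You should replace your single application of $F$ by this finite iteration, or else supply a non-circular proof that $\foot_A\in L[F(A)]$.
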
 
\begin{proof} 
Let $A \in \bar{\RR}_{\alpha}$ and $[A]$ denote the equivalence class of $A$. 
Let $G\colon \bar{\RR}_\alpha \rightarrow \bar{\RR}_\alpha$ denote the function that sends $A$ to the reduct of $D(\foot_{[A]})$. 
There is a formula that defines $G(A)$ in $L[A]$ from $A$ and $\alpha$ by the definition of $G$. 
$G(A)$ is equivalent to $A$, 
since $G(A)$ is a least upper bound for $[A]$ by Lemma \ref{random supremum} \ref{random supremum 2}. 
Let $A_i:=G^i(A)$ for each $i\in\omega$. 
We have $\omega_1^{L[A_i]}\geq\omega_1^{L[A_{i+1}]}$, since $A_{i+1}\in L[A_i]$. 
Let $n\in\omega$ be least with $\omega_1^{L[A_n]}=\omega_1^{L[A_{n+1}]}$. 
By the definition of $G$, the support of $A_{n+1}=G(A_n)$ is countable in $L[A_n]$ and thus in $L[A_{n+1}]$. 
Hence $A_{n+1}\in \RR_\alpha$ as required. 
\end{proof}



%
%

\begin{theorem}\ 
\label{random complete} 
\begin{enumerate-(1)} 
\item 
\label{random complete 1} 
$\RR_\alpha$ is complete.

\item 
\label{random complete 2} 
$\RR_\alpha$ is locally complete. 

\item 
\label{random complete 3} 
$\RR_\alpha$ is locally $\omega^{[+]}$-c.c. 

\item 
\label{random complete 4} 
$\RR_\alpha$ is uniformly narrow. 

\end{enumerate-(1)} 
\end{theorem}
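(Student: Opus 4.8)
The plan is to build on the completeness of $\bar{\RR}_\alpha$ (Lemma \ref{random supremum}) and the $\OD$ representative function $F\colon \bar{\RR}_\alpha\rightarrow \RR_\alpha$ (Lemma \ref{representatives for random algebras}), and to deduce \ref{random complete 4} from \ref{random complete 2} and \ref{random complete 3} via Lemma \ref{cc lc imply narrow} \ref{cc lc imply narrow 2}. First I would record that the Boolean completion $\BB(\RR_\alpha)$ is the measure algebra $\bar{\RR}_\alpha/{=_\mu}$: the separative quotient of $\RR_\alpha$ is exactly $\RR_\alpha/{=_\mu}$, since $A$ lies below $B$ in the separative order iff $\mu(A-B)=0$; this embeds densely into $\bar{\RR}_\alpha/{=_\mu}$ because $F$ meets every ${=_\mu}$-class; and $\bar{\RR}_\alpha/{=_\mu}$ is complete by Lemma \ref{random supremum} \ref{random supremum 2}, hence it is the completion. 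Under this identification, $\sup(X)$ for $X\subseteq \RR_\alpha$ is the ${=_\mu}$-class of $\red(D(\foot_X))$, which is definable from $X$ and $\alpha$.

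For \ref{random complete 1}, given $X\subseteq \RR_\alpha$ I would set $s:=\red(D(\foot_X))$, a least upper bound of $X$ in $\bar{\RR}_\alpha$, and $p:=F(s)\in \RR_\alpha$. Since $p=_\mu s$ and $\RR_\alpha$ inherits its order from $\bar{\RR}_\alpha$, $p$ is an upper bound of $X$ in $\RR_\alpha$; and any $\RR_\alpha$-upper bound $B$ of $X$ satisfies $s\leq B$, hence $p\leq B$, so $p$ is a supremum. For \ref{random complete 2} I would fix $x:=\{\RR_\alpha,\alpha\}$ and verify condition \ref{lc equiv 2} of Lemma \ref{lc equiv}: given nonempty $B\subseteq \RR_\alpha$, the element $p:=F(\red(D(\foot_B)))$ lies in $\RR_\alpha$, is $\OD$ from $B$ and $\alpha$ and hence in $\HOD_{x\cup\{B\}}$, and satisfies $p_\iota=\sup(B)$, so in particular $p_\iota\leq \sup(B)$. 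This is the strategy flagged after Lemma \ref{lc equiv}: produce some $p\in \PP\cap \HOD_x$ equivalent to $\sup(B)$.

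The heart of the argument is \ref{random complete 3}, where I would exploit that the definition of $\RR_\alpha$ --- an $\alpha$-Borel code $A$ whose support is countable \emph{in $L[A]$} --- is absolute to inner models, which is exactly why $\RR_\alpha$ rather than $\bar{\RR}_\alpha$ was introduced. Fix $x:=\{\RR_\alpha,\alpha\}$ and a finite $y\supseteq x$, so that $H:=\HOD_y$ is a model of $\ZFC$. Since $L[A]$ and its internal notion of countability are absolute, $\RR_\alpha\cap H=\RR_\alpha^{H}$, i.e.\ $\RR_\alpha\cap H$ is precisely the random algebra computed inside $H$. Moreover $\mu(A)$ is computed by an absolute recursion on the code, so the compatibility relation $\mu(A\wedge A')>0$ is absolute between $V$ and $H$ in both directions. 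Hence any antichain $A\in H$ of $\RR_\alpha$ consists of elements of $\RR_\alpha^{H}$ and is an antichain there as computed in $H$; by the standard $\ZFC$ proof that measure algebras satisfy the c.c.c., $|A|^{H}\leq\omega$. This yields locally $\omega^{[+]}$-c.c.\ in the sense of Definition \ref{def lc}.

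Finally, \ref{random complete 4} follows by combining \ref{random complete 2} and \ref{random complete 3} with Lemma \ref{cc lc imply narrow} \ref{cc lc imply narrow 2} applied with $\theta=\omega$. I expect the main obstacle to be the absoluteness bookkeeping in \ref{random complete 3}: checking both that $\RR_\alpha\cap\HOD_y=\RR_\alpha^{\HOD_y}$ (which hinges on the ``$L[A]$-countable support'' clause together with absoluteness of $L[A]$) and that incompatibility is absolute in both directions (which hinges on absoluteness of the Borel-code measure recursion), and finally that $\HOD_y\models\ZFC$ for finite $y$ so that the classical c.c.c.\ argument is available there. A secondary delicate point is the identification of $\BB(\RR_\alpha)$ with the measure algebra in the choiceless setting, needed so that suprema of arbitrary subsets are genuinely represented by the footprint construction and remain $\OD$ in the relevant parameters.
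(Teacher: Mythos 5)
Your proposal is correct and follows essentially the same route as the paper: completeness via Lemma \ref{random supremum} together with the representative function of Lemma \ref{representatives for random algebras}, local completeness via condition \ref{lc equiv 2} of Lemma \ref{lc equiv} by producing an $\OD$ element equivalent to the supremum, the locally $\omega^{[+]}$-c.c.\ from the absoluteness of the definition of $\RR_\alpha$ to inner models plus the $\ZFC$ c.c.c., and uniform narrowness from Lemma \ref{cc lc imply narrow}. The paper's proof is just a terser version of exactly these four steps, so the extra bookkeeping you supply (e.g.\ the identification of $\BB(\RR_\alpha)$ with the measure algebra and the verification that $\RR_\alpha\cap\HOD_y=\RR_\alpha^{\HOD_y}$) is a faithful elaboration rather than a different argument.
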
 
\begin{proof} 
\ref{random complete 1}: 
By Lemmas \ref{random supremum} and \ref{representatives for random algebras}. 

\ref{random complete 2}: 
The condition in Lemma \ref{lc equiv} \ref{lc equiv 2} holds by completeness of $\RR_\alpha$ and Lemma \ref{representatives for random algebras}. 

\ref{random complete 3}: 
$\RR_\alpha$ is provably c.c.c. in $\ZFC$. 
Since the definition of $\RR_\alpha$ is absolute to inner models, it is $\omega^{[+]}$-c.c. 

\ref{random complete 4}: 
By Lemma \ref{cc lc imply narrow}. 
\end{proof}

\subsubsection{Borel codes versus sets} 
\label{section - Borel codes versus Borel sets} 

We call a subset of $2^\omega$ \Bore\ if is contained in the $\sigma$-algebra generated by basic open sets. In models of $\DC$, the usual definition of random forcing via \Bore\ sets is isomorphic to ours, since every \Bore\ set has a Borel code. 
However, in the model in \cite[Theorem 10.6]{MR0396271} and in Gitik's model from \cite[Theorem I]{MR576462}, every set of reals is \Bore. 
Thus there exist \Bore\ sets without Borel codes. 
We now show that it suffices for this that $\omega_1$ is singular. 

\begin{remark} 
If $\omega_1$ is singular, then there exists a \Bore\ set without a Borel code. 
Towards a contradiction, suppose that 
every \Bore\ set has a Borel code. 
Fix a cofinal sequence $\vec{\alpha}=\langle \alpha_n\mid n\in\omega\rangle$ in $\omega_1$. 
It can be shown by induction that for all countable $\alpha$, the set $\WO_\alpha$ of codes for $\alpha$ is \Bore. 
Since 
$$ B =\{\langle 0\rangle^n{}^\smallfrown \langle1\rangle^\smallfrown x\mid n\in\omega,\  x\in \WO_{\alpha_n}\}$$ 
is \Bore, it has a Borel code $A$ by assumption. 
One can then construct a sequence $\vec{A}=\langle A_n \mid n\in\omega \rangle$ of Borel codes for the sets $\WO_{\alpha_n}$ from $A$. 

There exists a function that sends each Borel code $A'$ for a Borel set $B'$ to a subtree $T$ of $2^{<\omega}\times \omega^{<\omega}$ with $p[T]=B'$. 
For instance, take the tree that searches for an assignment of {\tt true} and {\tt false} to each location in the Borel code. 
We may assume $T$ is pruned by successively removing nodes without successors. 
The leftmost branch $(x,y)$ of $T$ yields an element $x$ of $B'$. 

By applying this to $\vec{A}$, we obtain a sequence $\vec{x}=\langle x_n \mid n\in\omega\rangle$ with $x_n \in \WO_{\alpha_n}$ for all $n\in\omega$. 
But this would provide a surjection from $\omega$ to $\omega_1$. 
\end{remark} 

A similar argument shows that the existence of Borel codes for all \Bore\ sets is equivalent to $\AC_\omega$ for the set of those \Bore\ sets with a Borel code.

\subsection{Closed forcings} 
\label{section closed forcings} 

A forcing $\PP$ is called \emph{${<}\kappa$-closed} if every decreasing sequence $\vec{p}=\langle p_i \mid i<\alpha\rangle$ in $\PP$ with $\alpha<\kappa$ has a lower bound in $\PP$. 
In this section, we analyse some ${<}\kappa$-closed forcings and the influence of fragments of the axiom of choice on their properties. 
As an application, we will see that the forcing $\CC_{\omega_1} = \Col(\omega_1 ,2 )$ 
that adds a Cohen subset of $\omega_1$ 
collapses $\omega_1$ if $\DC(2^\omega)$ fails, and therefore virtually all bounded support iterations of uncountable length collapse $\omega_1$.


\subsubsection{Dependent choices} 

In this section, we call a class $R$ a \emph{relation} on a class $A$ if $R$ is a subclass of $A^{{<}\Ord}\times A$. 
An \emph{$\alpha$-chain} in $R$ is a sequence $\vec{x}=\langle x_i \mid i<\alpha\rangle$ with $(\vec{x}{\upharpoonright}i,x_i)\in R$ for all $i<\alpha$, 
and $R$ is called \emph{${<}\gamma$-extendible} if any $\alpha$-chain in $R$ for any $\alpha<\gamma$ has a proper end extension. 

\begin{definition} 
\label{definition DC} 
Suppose that $A$ is a class and $\gamma\in\Ord$. 
\begin{enumerate-(1)} 
\item 
$\DC_\gamma(A)$ denotes the statement: 
\begin{center} 
\begin{quote} 
Any ${<}\gamma$-extendible binary relation $R$ on $A$ contains a $\gamma$-chain. 
\end{quote} 
\end{center} 

\item 
For $\delta\leq\Ord$, $\DC_{{<}\delta}(A)$ denotes the conjunction of $\DC_\alpha(A)$ for all $\alpha<\delta$. 
$\DC_{{\leq}\gamma}(A)$ is defined similarly. 

\item 
$\DC(A)$ denotes $\DC_\omega(A)$. 

\item 
$\DC_\gamma$ denotes $\DC_\gamma(V)$. 
$\DC_{{<}\gamma}$, $\DC_{{\leq}\gamma}$ and $\DC$ are defined similarly. 

\end{enumerate-(1)} 
\end{definition} 

The next lemma collects some useful facts about variants of $\DC$. 
In particular, the last claim shows that all axioms are first-order expressible. 

\begin{lemma} \ 
\label{equiv DC} 
Suppose $A$ and $B$ are classes and $\gamma\in\Ord$. 

\begin{enumerate-(1)}  
\item 
\label{equiv DC basic} 

\begin{enumerate-(i)} 
\item 
\label{equiv DC basic 1} 
$\DC_{\gamma}(A)$ $\Rightarrow$ $\DC_{{\leq}\gamma}(A)$. 

\item 
\label{equiv DC basic 2} 
If $B\leq_s A$, then $\DC_\gamma(A)$ $\Rightarrow$ $\DC_\gamma(B)$. 

\item 
\label{equiv DC basic 3} 
If $A^\gamma\leq_s A$, then 
$\DC_\gamma(A)$ $\Rightarrow$ $\DC_{{<}\gamma^+}(A)$. 

\end{enumerate-(i)}

\item 
\label{equiv DC 1} 
$\DC_{{<}\Ord}$ is equivalent to the 
axiom of choice. 

\item 
\label{equiv DC 2} 
$\DC_\gamma(A)$ follows from $\DC_\gamma(x)$ for all sets $x\subseteq A$. 

\end{enumerate-(1)} 
\end{lemma}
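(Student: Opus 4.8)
The plan is to prove the four claims roughly in order of difficulty, feeding earlier parts into later ones. For \ref{equiv DC basic 1} I would fix $\alpha\le\gamma$ and a $<\alpha$-extendible relation $R$ on $A$ (the case $\alpha=0$ being trivial, and $\alpha>0$ forcing $A\neq\emptyset$), and define a relation $R'$ on $A$ that agrees with $R$ on sequences of length $<\alpha$ and allows an arbitrary next element on sequences of length $\ge\alpha$; then $R'$ is $<\gamma$-extendible, and the restriction to $\alpha$ of any $\gamma$-chain supplied by $\DC_\gamma(A)$ is an $R$-chain. For \ref{equiv DC basic 2} I would fix a surjection $g\colon A\to B$ (the case $B=\emptyset$ is trivial) and pull a $<\gamma$-extendible $R$ on $B$ back to the relation $R^{*}$ on $A$ given by $(\vec a,a')\in R^{*}\iff(g\circ\vec a,g(a'))\in R$; surjectivity of $g$ makes $R^{*}$ be $<\gamma$-extendible, and $g$ pushes a $\gamma$-chain for $R^{*}$ forward to one for $R$. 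Part \ref{equiv DC 1} is classical: \AC\ gives every $\DC_\gamma$ by choosing witnesses along a global well-ordering, and conversely, to well-order a set $X$ I would apply $\DC_{\aleph(X)}$ (at the Hartogs number) to the relation ``$\vec x$ lists distinct elements of $X$ and $y\in X\setminus\ran(\vec x)$,'' which cannot be $<\aleph(X)$-extendible (else $\aleph(X)$ injects into $X$), so some proper initial chain already enumerates $X$.

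For \ref{equiv DC basic 3} I would first reduce, via \ref{equiv DC basic 1} (which gives $\DC_{|\gamma|}(A)$ since $|\gamma|\le\gamma$) and the bijection $\gamma\cong|\gamma|$ (which gives $A^{|\gamma|}\le_s A$), to the case that $\gamma=\kappa$ is an infinite cardinal, and then prove $\DC_\alpha(A)$ for all $\alpha<\kappa^{+}$ by induction on $\alpha$. The cases $\alpha\le\kappa$ are \ref{equiv DC basic 1}, so I may assume $\kappa<\alpha<\kappa^{+}$, whence $|\alpha|=\kappa$ and $\mu:=\cf(\alpha)\le\kappa$. I would first record an \emph{extension lemma}: any $R$-chain of length $<\alpha$ prolongs to an $R$-chain of any prescribed length $<\alpha$; this follows from the inductive hypothesis ($\DC_\eta(A)$ for $\eta<\alpha$) applied to the shifted relation $R_p$ defined by $(\vec x,y)\in R_p\iff(p{}^{\smallfrown}\vec x,y)\in R$, which is extendible far enough because the total length $\leng(p)+\beta$ stays below $\alpha$. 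Next I would check that the set $B$ of $R$-chains of length $<\alpha$ satisfies $B\le_s A$: from $A^\kappa\le_s A$ and $|\beta|\le\kappa$ for $\beta<\alpha$ one gets $A^\beta\le_s A^\kappa\le_s A$, and combining the $\le\kappa$ many $A^\beta$ with $\kappa\times A^\kappa\le_s A^\kappa$ (using $\kappa\cong\kappa+\kappa$ and a definable surjection $A^\kappa\twoheadrightarrow\kappa$ once $|A|\ge2$; the cases $|A|\le1$ are trivial) yields $A^{<\alpha}\le_s A$, hence $B\le_s A$. Finally, fixing a continuous increasing cofinal sequence $\langle\alpha_i\mid i<\mu\rangle$ in $\alpha$, I would define a relation $S$ on $B$ requiring the $i$-th entry to be an $R$-chain of length $\alpha_i$ end-extending its predecessors; the extension lemma makes $S$ be $<\mu$-extendible, while $\mu=\cf(\alpha)$ keeps the unions below $\alpha$ at limit stages. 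Since $\DC_\mu(A)$ holds by \ref{equiv DC basic 1} and hence $\DC_\mu(B)$ by \ref{equiv DC basic 2}, a $\mu$-chain in $S$ unions to an $R$-chain of length $\sup_i\alpha_i=\alpha$.

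For \ref{equiv DC 2} the idea is to find a single set $x\subseteq A$ that is \emph{$R$-closed} --- every $R$-chain of length $<\gamma$ with entries in $x$ has an $R$-successor in $x$ --- and apply $\DC_\gamma(x)$ to the restricted relation. I would build $x$ by recursion, setting $x_{\xi+1}=x_\xi\cup\{\text{all minimal-rank }R\text{-successors of all }{<}\gamma\text{-chains in }x_\xi\}$ (a set by Replacement and Scott's trick) and taking unions at limits. If $\gamma$ is regular, then $x:=\bigcup_{\xi<\gamma}x_\xi$ is already $R$-closed, because any $<\gamma$-chain in it has fewer than $\gamma$ entries and so first appears at some stage $<\gamma$; this uses only the hypothesis. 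If $\gamma$ is singular I would reduce to the regular case by inducting on $\gamma$: by \ref{equiv DC basic 1} the hypothesis yields $\DC_{\gamma'}(x)$ for all sets $x\subseteq A$ and all $\gamma'<\gamma$, so --- taking $A=V$, the case relevant to the stated purpose of first-order expressibility --- the inductive hypothesis provides the class form of $\DC_{\cf(\gamma)}$, and I would assemble the $\gamma$-chain in $\cf(\gamma)$ blocks exactly as in \ref{equiv DC basic 3}, using the extension lemma at the class level.

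The main obstacle, shared by \ref{equiv DC basic 3} and \ref{equiv DC 2}, is cofinality: one application of $\DC_\gamma$ produces a chain of length exactly $\gamma$, so reaching an ordinal $\alpha>\gamma$, or closing a set under all $<\gamma$-sequences, cannot be accomplished in a single step and must be routed through $\DC_{\cf}$ together with the extension lemma. Indeed an $R$-closed set need not exist at all when $\gamma$ is singular with pathological cofinalities --- precisely the situation, as in Gitik's model with $\gamma=\omega_1$, in which the hypothesis of \ref{equiv DC 2} already fails (witnessed by the set $\omega_1$ with its order relation) --- so the induction on $\gamma$ and the block reduction to the regular case are genuinely needed rather than mere bookkeeping.
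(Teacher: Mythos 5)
Parts (1)(i), (1)(ii) and (1)(iii) are correct. Your (i) and (ii) match the paper's arguments, and your treatment of (iii) --- reducing to $\gamma=\kappa$ a cardinal, inducting on $\alpha<\kappa^+$, and isolating an explicit extension lemma via the shifted relation $R_p$ --- is in fact more careful than the paper's, which compresses the same block decomposition into ``translate the restriction of the tree to a cofinal sequence of levels to a relation on $A^{<\alpha}$'' and leaves the extendibility of that relation (your extension lemma) implicit. For (2) the paper simply cites Jech; your converse via $\DC_{\aleph(X)}$ is the standard argument, and the forward direction needs only the routine caveat that $\AC$ does not supply a global well-order (one reduces to sets first, e.g.\ via (3)).

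The genuine gap is in (3) for singular $\gamma$. Your regular-case closure argument is fine, but the singular case is only carried out for $A=V$: the block reduction needs $\DC_{\cf(\gamma)}(B)$ for the class $B$ of $R$-chains of length ${<}\gamma$, and while $B\subseteq V$ makes this available when $A=V$, for a general class $A$ the hypothesis only provides $\DC$ for sets $x\subseteq A$, and there is no surjection from $A$ (or from such an $x$) onto $B$; so the lemma as stated is not proved. You also cannot repair the element-wise closure by running it for a regular number of stages ${\geq}\gamma$, since in $\ZF$ no regular ordinal ${\geq}\gamma$ need exist (in Gitik's model every infinite cardinal has countable cofinality). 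The paper sidesteps both problems by closing off \emph{level by level along the tree of $R$-chains}: it builds $\langle \alpha_j\mid j<\gamma\rangle$ so that every branch through the already-restricted levels ${<}j$ has an extension at level $j$ inside $V_{\alpha_j}$ --- one application of Replacement per level, with no regularity hypothesis --- and then applies $\DC_\gamma(A\cap V_{\sup_j\alpha_j})$ once; because the restriction is indexed by chain length rather than by iterating a successor-adding operation, chains whose entries are spread across stages never arise. Finally, your closing claim that an $R$-closed set ``need not exist'' for singular $\gamma$, witnessed by $\omega_1$ with its order relation in Gitik's model, does not work: there $\cof(\omega_1)=\omega$, so that relation is not even ${<}\omega_1$-extendible (a cofinal $\omega$-sequence has no successor), and it witnesses neither the failure of the hypothesis of (3) nor the nonexistence of closed sets. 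That paragraph should be dropped and the singular case rerouted through the level-by-level closure.
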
 
\begin{proof} 
\ref{equiv DC basic}\ref{equiv DC basic 1}: 
Let $\alpha\leq\gamma$ be least such that $\DC_{\alpha}(A)$ fails for a ${<}\alpha$-extendible relation $R$ on $A$. 
Note that $\alpha$ is a limit ordinal. 
Since there are no $\alpha$-sequences in $R$, the relation is ${<}\gamma$-extendible. 
Applying $\DC_{\gamma}(A)$ yields a contradiction. 

\ref{equiv DC basic 2}: 
Apply $\DC_\gamma(A)$ to $F^{-1}(R)$, where $R$ is a relation on $B$ and $F\colon A\rightarrow B$ is surjective. 

\ref{equiv DC basic 3}: 
For any $\alpha<\gamma^+$ and any ${<}\alpha$-extendible relation $R$ on $A$, let $T$ denote the tree of $\alpha$-sequences on $A$ in $R$. 
$T$ is closed at all levels ${<}\alpha$ in the sense that any sequence $\vec{t}=\langle t_i \mid i<j\rangle$ for $j<\alpha$ and $t_i \in \Lev_i(T)$ has an upper bound in $T$, where $\Lev_i(T)$ denotes the $i$th level of $T$ for $i\in\Ord$. 
Take any cofinal $\cof(\alpha)$-sequence of levels in $T$. 
We can translate the restriction of the tree to these levels to a relation on $A^{<\alpha}$. 
Note that $A^{<\alpha}\leq_s A^\gamma$. 
We thus obtain an $\alpha$-sequence in $T$ from $\DC_\gamma(A^{<\alpha})$ using \ref{equiv DC basic 1} and \ref{equiv DC basic 2}. 
Hence there is an $\alpha$-sequence in $R$. 

\ref{equiv DC 1}: 
See \cite[Theorem 8.1]{MR0396271}. 
 
\ref{equiv DC 2}: 
Suppose $\gamma$ is least such that $\DC_\gamma(A)$ fails for some ${<}\gamma$-extendible relation $R$ on $A$. 
We can replace $R$ by a tree $T$ that is closed at all levels ${<}\gamma$ as in \ref{equiv DC basic}\ref{equiv DC basic 3}. 
We construct $\vec{\alpha}=\langle \alpha_j \mid j<\gamma \rangle$ by induction 
letting $\alpha_j$ be least such that $\Lev_j(T)\cap V_{\alpha_i}$ extends all branches in $T_{<j}:=\bigcup_{i<j}\Lev_i(T)\cap V_{\alpha_j}$ using $\DC_j(A\cap V_{\sup_{i<j}\alpha_i})$. 
$\DC_{\gamma}(A\cap V_{\sup_{i<\gamma}\alpha_i})$ yields an element of $\Lev_\gamma(T)$ and thus a $\gamma$-sequence in $R$. 
\end{proof}

\subsubsection{Cohen subsets and collapses} 
\label{section Cohen and collapses} 

We study the forcing $\CC_\kappa= \Col(\kappa,2)$ for adding a Cohen subset to $\kappa$. 
This is the special case of the standard collapse forcing $\Col(\kappa,\lambda)$ for $\lambda=2$. 
Since $\Col(\kappa,\lambda)$ is not ${<}\kappa$-closed if $\kappa$ is singular, we introduce the following variant. 

\begin{definition} \ 
\begin{enumerate-(1)} 
\item 
$\Col(\kappa,\lambda):=\{p\colon \alpha\rightarrow \lambda\mid  \alpha<\kappa\}$. 
\item 
$\Col_*(\kappa,\lambda):=\{(f,g) \mid f\in \Col(\kappa,\lambda),\ g\colon \dom(f)\rightarrow |\dom(f)| \text{ is bijective}\}$. 
\end{enumerate-(1)} 
$\Col(\kappa,\lambda)$ is ordered by reverse inclusion, while 
$\Col_*(\kappa,\lambda)$ is ordered by reverse inclusion in the first coordinate.\footnote{The second coordinate is irrelevant for the order.} 
\end{definition} 

Any $\Col(\kappa,\lambda)$-generic filter over $V$ induces a $\Col_*(\kappa,\lambda)$-generic filter and conversely, with identical generic extensions. 
However, $\Col_*(\kappa,\lambda)$ is ${<}\kappa$-closed for any successor cardinal $\kappa=\nu^+$. 
To see this, suppose $\vec{p}=\langle (f_\alpha,g_\alpha) \mid \alpha<\lambda\rangle$ is a decreasing sequence in $\Col_*(\kappa,\lambda)$ for some $\lambda\leq \nu$. 
Let $f=\bigcup_{\alpha<\lambda} f_\alpha$. 
Since $\langle g_\alpha\mid \alpha<\lambda\rangle$ yields a bijection $\dom(f)\rightarrow \mu$ for some $\mu\leq\nu$, $\dom(f)< \kappa$. 
Thus $(f,g)$ is a lower bound for $\vec{p}$. 

Recall that a forcing $\PP$ is called \emph{$\lambda$-distributive} if for any sequence $\vec{U}=\langle U_{\alpha}\mid \alpha<\lambda\rangle\in V$ of dense open subsets, $\bigcap_{\alpha<\lambda}U_{\alpha}$ is dense.\footnote{In models of $\ZFC$, a forcing is $\lambda$-distributive if and only if it does not add new $\lambda$-sequences of element of $V$. 
However, this equivalence can fail in models of $\ZF$ by a result of Karagila and Schilhan \cite{karsch2021}.} 
$\PP$ is called  \emph{${<}\kappa$-distributive} if it is  \emph{$\lambda$-distributive} for all $\lambda<\kappa$. 
We will characterise ${<}\kappa$-distributivity of $\CC_{\kappa}= \Col(\kappa,2)$ for successor cardinals $\kappa$.
We first provide new criteria for $\lambda$-distributivity via properties of the generic filter. 
A characterisation via games in the ground model is known \cite[Section 6]{holschturwel2022}. 
For an infinite cardinal $\lambda$, we say that a filter $G$ on $\PP$ is \emph{$(\PP,\lambda)$-generic} over $V$ if for any sequence  $\vec{U}=\langle U_{\alpha}\mid \alpha<\lambda\rangle\in V$ of dense open sets, $G\cap \bigcap_{\alpha<\lambda}U_{\alpha}\neq \emptyset$.


\begin{lemma} 
\label{char distributive} 
Suppose $\PP$ is a forcing and $\lambda\in \Card$. 
The following conditions are equivalent, 
where only \ref{char distributive 3}$\Rightarrow$\ref{char distributive 2} and therefore also \ref{char distributive 3}$\Rightarrow$\ref{char distributive 1} use the additional assumption that $G$ is wellorderable in $V[G]$. \begin{enumerate-(a)} 
\item 
\label{char distributive 1} 
$\PP$ is $\lambda$-distributive. 

\item 
\label{char distributive 3} 
Any $\PP$-generic filter $G$ over $V$ is $\lambda$-closed in $V[G]$. 

\item 
\label{char distributive 2} 
Any $\PP$-generic filter $G$ over $V$ is $(\PP,\lambda)$-generic over $V$. 

\end{enumerate-(a)} 
\end{lemma}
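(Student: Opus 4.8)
The plan is to prove the cycle $\ref{char distributive 1}\Rightarrow\ref{char distributive 3}\Rightarrow\ref{char distributive 2}\Rightarrow\ref{char distributive 1}$, arranged so that the wellorderability of $G$ in $V[G]$ is invoked only in the step $\ref{char distributive 3}\Rightarrow\ref{char distributive 2}$; the derived implication $\ref{char distributive 3}\Rightarrow\ref{char distributive 1}$ then inherits that assumption through this one step, which matches the proviso in the statement.

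For $\ref{char distributive 2}\Rightarrow\ref{char distributive 1}$ I would argue contrapositively. If $\bigcap_{\alpha<\lambda}U_\alpha$ fails to be dense for some $\vec U=\langle U_\alpha\mid\alpha<\lambda\rangle\in V$ of dense open sets, fix $p\in\PP$ with no extension in the intersection and pass to a generic $G$ with $p\in G$, working below $p$ in $V^{\BB(\PP)}$, which is legitimate since $p_\iota\neq\mathbf{0}_{\BB(\PP)}$. By $\ref{char distributive 2}$ there is $r\in G\cap\bigcap_\alpha U_\alpha$; since $p,r\in G$ they have a common extension $q\in G$, and $q\leq r$ forces $q\in U_\alpha$ for every $\alpha$ by openness, giving $q\leq p$ in the intersection and contradicting the choice of $p$. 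No choice principle is needed here.

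For $\ref{char distributive 1}\Rightarrow\ref{char distributive 3}$, let $\langle p_i\mid i<\lambda\rangle$ be a decreasing sequence in $G$ lying in $V[G]$, with name $\dot f$ and a condition $p\in G$ forcing that $\dot f$ enumerates a decreasing sequence of elements of $\dot G$. The key observation is that, although the sequence itself need not lie in $V$, the family $\langle D_i\mid i<\lambda\rangle$ of dense open sets $D_i=\{q\mid q\perp p\}\cup\{q\leq p\mid\exists r\ (q\leq r\wedge q\Vdash\dot f(i)=\check r)\}$ is definable from $\dot f,p,\PP$ and hence belongs to $V$. Distributivity then makes $\bigcap_{i<\lambda}D_i$ dense, so $G$ meets it in some $q\leq p$; for each $i$ this $q$ satisfies $q\leq r_i$ where $r_i=p_i=\dot f(i)^G$, so $q$ is a lower bound of the sequence in $G$. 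Again no choice is required, precisely because the relevant dense sets come from a single ground-model name; the same argument with padding handles decreasing sequences of length ${<}\lambda$.

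The heart of the matter, and the step where wellorderability enters, is $\ref{char distributive 3}\Rightarrow\ref{char distributive 2}$. Given $\vec U\in V$ I would use a wellorder of $G$ in $V[G]$ to carry out a recursion of length $\lambda$ producing a decreasing sequence $\langle q_\alpha\mid\alpha<\lambda\rangle$ in $G$ with $q_\alpha\in U_\alpha$: at each successor stage genericity supplies some element of $G\cap U_\alpha$ (as $U_\alpha\in V$ is dense open), the filter property combines it with the running lower bound, at limit stages $\lambda$-closedness supplies a lower bound of the part built so far, and the wellorder lets me always take the \emph{least} suitable element, so that the whole sequence is uniquely determined and is a genuine set rather than the output of $\lambda$ independent choices. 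A final application of $\lambda$-closedness yields $q\in G$ below every $q_\alpha$, whence $q\in U_\alpha$ for all $\alpha$ and $q\in G\cap\bigcap_\alpha U_\alpha$. The main obstacle is exactly this construction: in $\ZF$ the naive recursion would demand a form of dependent choice of length $\lambda$, and the wellorder of $G$ is what replaces it. I will also have to be explicit that $\lambda$-closedness is applied both to the initial segments of length ${<}\lambda$ and to the full sequence of length $\lambda$, so I will check against the precise definition of a $\lambda$-closed filter that both uses are covered.
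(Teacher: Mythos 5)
Your proof is correct and follows essentially the same route as the paper: the cycle \ref{char distributive 1}$\Rightarrow$\ref{char distributive 3}$\Rightarrow$\ref{char distributive 2}$\Rightarrow$\ref{char distributive 1}, with wellorderability of $G$ in $V[G]$ invoked only in the step \ref{char distributive 3}$\Rightarrow$\ref{char distributive 2}, where a wellorder of $G$ replaces dependent choice in the recursion of length $\lambda$. The only cosmetic difference is in \ref{char distributive 1}$\Rightarrow$\ref{char distributive 3}: the paper first observes that a $\lambda$-distributive forcing adds no new $\lambda$-sequences, so $\vec{p}\in V$, and then uses the ground-model dense sets $D_i=\{q\in\PP\mid q\leq p_i \vee q\perp p_i\}$, whereas you inline that observation by defining the dense sets directly from a name for the sequence --- both arguments are sound.
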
 
\begin{proof} 
\ref{char distributive 1}$\Rightarrow$\ref{char distributive 3}: 
We work in $V[G]$ and show that $G$ is $\lambda$-closed. 
It is easy to see that $\lambda$-distributive forcings do not add new $\lambda$-sequences. 
If $\vec{p}=\langle p_i \mid i< \lambda \rangle$ is any sequence in $G$, then $\vec{p}\in V$. 
Since $D_i :=\{q \in \PP \mid  q \leq p_i \vee q \perp p_i \}$ is dense in $\PP$ for each $i < \lambda$ and the sequence $\langle D_i \mid i < \lambda \rangle$ is in $V$, the set $\bigcap_{i< \lambda} D_i$ is dense in $V$. Since $G$ is $\PP$-generic, 
$G$ contains some $p$ in $\bigcap_{i < \lambda} D_i$ and such a $p$ satisfies that $p\leq p_i$ for all $i<\lambda$, as required. 

\ref{char distributive 3}$\Rightarrow$\ref{char distributive 2}: 
Suppose $\vec{U}=\langle U_i\mid i<\lambda\rangle\in V$ is a sequence of dense open subsets of $\PP$ and $G$ is a $\PP$-generic filter over $V$.  
Since $G$ is wellorderable in $V[G]$, we can construct a decreasing sequence $\langle p_i\mid i<\lambda \rangle$ with $p_i\in G\cap U_i$ in $V[G]$. 
By assumption, there exists some $p\in G$ with $p\leq p_i$ for all $i<\lambda$. 
Then $p\in G\cap \bigcap_{i<\lambda} U_i$ as required. 

\ref{char distributive 2}$\Rightarrow$\ref{char distributive 1}: 
Suppose $\vec{U}=\langle U_\alpha\mid \alpha<\lambda\rangle$ is a sequence of dense open subsets of $\PP$. 
For any $p\in \PP$, let $G$ be a $\PP$-generic filter over $V$ that contains $p$. 
Fix a condition $q\in G\cap \bigcap_{\alpha<\lambda}U_\alpha$ by assumption. 
Since $p\parallel q$, let $r\leq p,q$. 
Since each $U_\alpha$ is open, $r\in \bigcap_{\alpha<\lambda}U_\alpha$. 

Note that \ref{char distributive 1}$\Rightarrow$\ref{char distributive 2} is clear and does not need the extra assumption. 
\end{proof}

For instance, if $T$ is a pruned\footnote{I.e. $T_s=\{ t\in T \mid s\subseteq t \vee t\subseteq s \}$ has height $\lambda$ for all $s\in T$.} $\kappa$-Aronszajn tree and the associated forcing $\PP_T$\footnote{$T$ with its reverse order.} preserves regularity of $\kappa$, then $T$ is ${<}\kappa$-distributive by \ref{char distributive 3}$\Rightarrow$\ref{char distributive 1}. 

%
%

\begin{theorem} 
\label{char collapse} 
Suppose that $A$ is any set with $|A|\geq2$, $\lambda\in \Card$ and 
$\PP=\Col(\lambda^+,A)$. 
The following conditions are equivalent: 
\begin{enumerate-(a)} 
\item 
\label{char collapse 1} 
$\DC_\lambda(A^\lambda)$. 

\item 
\label{char collapse 2} 
$\PP$ is $\lambda$-distributive. 

\item 
\label{char collapse 3} 
$\PP$ does not change $V^\lambda$. 

\item 
\label{char collapse 4} 
$\PP$ preserves size and cofinality of all ordinals $\alpha\leq \lambda^+$. 

\item 
\label{char collapse 6} 
$\PP$ preserves $\lambda^+$ as a cardinal. 

\item 
\label{char collapse 5} 
$\PP$ forces that $\lambda^+$ is regular. 

\end{enumerate-(a)} 
The same equivalences hold for $\Col_*(\lambda^+,A)$. 
In both cases, $A$ may be replaced by any set $B$ with $A\leq_s B \leq_s A^{<\lambda^+}$. 
\end{theorem}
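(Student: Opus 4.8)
The plan is to prove that all six conditions are equivalent by running the cycle (a)$\Rightarrow$(b)$\Rightarrow$(c)$\Rightarrow$(d)$\Rightarrow$(e)$\Rightarrow$(f) and closing it with (f)$\Rightarrow$(b)$\Rightarrow$(a). The conceptual key, which I would isolate first, is that the generic object is essentially a surjection $F\colon\lambda^+\to A$ (the union of the first coordinates of the conditions), so that $A$, and hence the whole poset and the generic filter $G$, become \emph{wellorderable} in $V[G]$. This makes the wellorderability hypothesis of Lemma \ref{char distributive} automatic for $\PP=\Col(\lambda^+,A)$ and for $\Col_*(\lambda^+,A)$. I would work throughout with $\Col_*(\lambda^+,A)$, which has the same generic extensions as $\Col(\lambda^+,A)$ and is ${<}\lambda^+$-closed in $\ZF$ by the argument given before the theorem; its bijection witnesses are used crucially below. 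I would also record the standard facts $\lambda^+\leq_s 2^\lambda\leq_s A^\lambda$ (wellorder codes), and $A^{\leq\lambda},(\lambda^+)^{\leq\lambda}\leq_s A^\lambda$, which let me transfer instances of $\DC_\lambda(A^\lambda)$ to the relevant auxiliary sets via Lemma \ref{equiv DC}\ref{equiv DC basic 2}.

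The forward chain is the routine part. For (b)$\Rightarrow$(c): this is the $\ZF$-valid direction of the usual equivalence. Given a name for a $\lambda$-sequence of ground-model elements, the sets of conditions deciding each coordinate are dense open, so by $\lambda$-distributivity a single condition in $G$ decides all coordinates, and the decided sequence is then \emph{defined} in $V$, hence lies in $V$. (Only the converse of this can fail in $\ZF$, as the footnote warns.) For (c)$\Rightarrow$(d): if some $\alpha\leq\lambda^+$ were collapsed, or had its cofinality lowered, this would produce a surjection, respectively a cofinal map, of length $\leq\lambda$ onto, respectively into, $\alpha$; either is a new ${\leq}\lambda$-sequence of ordinals, contradicting (c). (d)$\Rightarrow$(e) is immediate since (d) preserves the size of $\lambda^+$. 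For (b)$\Leftrightarrow$(f): by Lemma \ref{char distributive} (legitimately applied, as $G$ is wellorderable in $V[G]$), $\lambda$-distributivity is equivalent to every $G$ being $\lambda$-closed in $V[G]$; and for this collapse poset a decreasing $\lambda$-sequence in $G$ has a lower bound in $G$ exactly when the supremum of the domains stays below $\lambda^+$, so $G$ is $\lambda$-closed iff $\cof^{V[G]}(\lambda^+)>\lambda$, i.e. iff $\lambda^+$ is regular in $V[G]$. This gives both (f)$\Rightarrow$(b) and (b)$\Rightarrow$(f).

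The genuinely new step in the reversal is (e)$\Rightarrow$(f), which I would prove in contrapositive form: if $\lambda^+$ is singular in $V[G]$ then it is collapsed. Fix a cofinal $h\colon\mu\to\lambda^+$ with $\mu\leq\lambda$ in $V[G]$. For each $\gamma<\lambda^+$, by density some condition in $G$ has domain exceeding $\gamma$, and in $\Col_*$ that condition carries a bijection of its (ordinal) domain with an ordinal ${\leq}\lambda$; choosing the $G$-least such condition (possible with no appeal to choice, since $G$ is wellorderable in $V[G]$) yields a surjection $\lambda\to\gamma$, uniformly in $\gamma$. Composing these with $h$ produces a surjection $\mu\times\lambda\cong\lambda\to\lambda^+$, so $\lambda^+$ is not a cardinal in $V[G]$ and (e) fails. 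This is precisely where the bijection witnesses of $\Col_*$ are indispensable: no canonical surjection $\lambda\to\gamma$ is available in $V$, so the argument would not go through with bare $\Col$ and no generic witnesses.

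Finally I would treat the $\DC$ condition. For (b)$\Rightarrow$(a): by (b)$\Rightarrow$(c) the forcing adds no new ${<}\lambda$-sequences, so any ${<}\lambda$-extendible $R\in V$ on $A^\lambda$ remains ${<}\lambda$-extendible in $V[G]$; since $A^\lambda$ is wellorderable there, one builds a $\lambda$-chain through $R$ in $V[G]$ (limit stages are automatic, as a chain of length ${<}\lambda$ is itself a chain and hence extendible), and reshaping it gives an element of $A^\lambda$, which by (c) already lies in $V$. The hard direction, and what I expect to be the \textbf{main obstacle}, is (a)$\Rightarrow$(b). The difficulty is that the poset has size $|A^{<\lambda^+}|$ and is not wellorderable in $V$, so one cannot directly feed a construction of a decreasing $\lambda$-sequence of conditions into $\DC_\lambda(A^\lambda)$. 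I would overcome this by representing a partial condition not by its raw graph but by a pair consisting of a reshaped value function in $A^{\leq\lambda}$ together with an explicit enumeration in $(\lambda^+)^{\leq\lambda}$ of its domain; this pair lives in a set $\leq_s A^\lambda$. The relation asserting ``extend the current partial condition into the next dense open set $U_i$, extending the domain enumeration accordingly'' is then ${<}\lambda$-extendible by density, and at limit stages the unions of the enumerations certify that the domain has size ${\leq}\lambda$, hence stays below $\lambda^+$ (this internalizes the ${<}\lambda^+$-closure of $\Col_*$ into the chain, again using the bijection data). Applying $\DC_\lambda(A^\lambda)$ produces a $\lambda$-chain whose union is a single condition in $\bigcap_{i<\lambda}U_i$ below any prescribed condition, giving $\lambda$-distributivity. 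The $\Col_*$ version of the theorem follows since $\Col$ and $\Col_*$ have identical extensions, so (a)--(f) are literally the same statements; and the replacement of $A$ by any $B$ with $A\leq_s B\leq_s A^{<\lambda^+}$ I would handle by checking that $\Col(\lambda^+,B)$ and $\Col(\lambda^+,A)$ are inter-definable collapses and that $\DC_\lambda(B^\lambda)$ is equivalent to $\DC_\lambda(A^\lambda)$ under this surjective sandwiching.
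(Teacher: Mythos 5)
Your argument is correct and, up to a reordering of the cycle of implications, it is essentially the paper's proof: the same appeal to Lemma \ref{char distributive} (legitimised by the generically added wellorder of $A$, hence of $\PP$ and $G$), and the same $\DC$-on-a-tree-of-decreasing-conditions argument for (a)$\Rightarrow$(b), where your reshaped pairs in $A^{\leq\lambda}\times(\lambda^+)^{\leq\lambda}$ play exactly the role of the paper's surjection of $A^\lambda$ onto $A^{<\lambda^+}$ together with its observation that $\DC_{\leq\lambda}(A^{<\lambda^+})$ already forces $\lambda^+$ to be regular in $V$ (which is what supplies the lower bound of the $\lambda$-branch). The one substantive divergence is (e)$\Rightarrow$(f). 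You extract the surjections $\lambda\to\gamma$ from the bijection components of $\Col_*$-conditions in $G$ and assert that these witnesses are \emph{indispensable}, i.e.\ that the argument cannot be run for bare $\Col(\lambda^+,A)$. That side claim is false: the paper runs the same contrapositive for $\Col(\lambda^+,A)$ directly, using that the generic filter adds a wellorder of $\pow(\lambda)^V$ (via $\lambda^+\leq_s\pow(\lambda\times\lambda)\leq_s 2^\lambda\leq_s A^\lambda\leq_s A^{<\lambda^+}$ and the generic enumeration of $A^{<\lambda^+}$), so that one can uniformly choose, for each term $\alpha_i$ of a short cofinal sequence, the least code in $V$ for a surjection $\lambda\to\alpha_i$ and then amalgamate. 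Your proof is nonetheless complete, because you correctly note that (a) and (c)--(f) depend only on the class of generic extensions, which $\Col$ and $\Col_*$ share, and your (b)$\Leftrightarrow$(f) via Lemma \ref{char distributive} goes through verbatim for either poset; so only the commentary about $\Col$ is inaccurate, not the logic. For the final clause about $B$, your appeal to ``inter-definable collapses'' is vaguer than what is needed; the paper makes this precise via $\DC_\lambda(A^\lambda)\Leftrightarrow\DC_\lambda(B^\lambda)\Leftrightarrow\DC_\lambda(A^{<\lambda^+})$ and explicit projections $\Col(\lambda^+,A^{<\lambda^+})\to\Col(\lambda^+,B^\lambda)\to\Col(\lambda^+,A^\lambda)$, which is the cleaner way to transfer the equivalences.
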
 
\begin{proof}
The following arguments prove the equivalence of \ref{char collapse 1}-\ref{char collapse 5} for both $\Col(\lambda^+,A)$ and $\Col_*(\lambda^+,A)$. 

\ref{char collapse 1} $\Rightarrow$ \ref{char collapse 2}: 
Since $A^{<\lambda^+}\leq_s A^\lambda$, we have $\DC_{\leq\lambda}(A^{<\lambda^+})$ by Lemma \ref{equiv DC}\ref{equiv DC basic}\ref{equiv DC basic 2} and \ref{equiv DC basic 1}. 
Suppose $\vec{U}=\langle U_i \mid i<\lambda\rangle$ is a sequence of open dense subsets of $\PP$. 
For any $p\in \PP$, let $T$ denote the tree of decreasing sequences $\vec{p}=\langle p_i \mid i<\alpha \rangle$ in $\PP$ with $\alpha<\lambda$, $p_0\leq p$ and $p_i \in U_i$ for all $i<\alpha$. 
By $\DC_{\leq\lambda}(A^{<\lambda^+})$, $T$ is ${<}\lambda$-extendible and has a branch of length $\lambda$. 
Since $\lambda^+$ is regular by $\DC_{\leq\lambda}(A^{<\lambda^+})$, this branch has a lower bound $q\in \PP$. 
Then $q\in \bigcap_{i<\lambda} U_i$. 

\ref{char collapse 2} $\Rightarrow$ \ref{char collapse 3}: 
This is a standard argument. 

\ref{char collapse 3} $\Rightarrow$ \ref{char collapse 1}: 
It is easy to see that $\PP$ adds a wellorder of $A^{<\lambda^+}$. 
One can thus find the required $\lambda$-chain in the generic extension. 
Since $\PP$ does not change $(A^{<\lambda^+})^\lambda$, it exists in $V$. 

\ref{char collapse 3} $\Rightarrow$ \ref{char collapse 4} $\Rightarrow$ \ref{char collapse 6}: 
These implications are clear. 


\ref{char collapse 6} $\Rightarrow$ \ref{char collapse 5}: 
Let $G$ be $\PP$-generic over $V$ and work in $V[G]$. 
Suppose that 
$\vec{\alpha}=\langle \alpha_i\mid i<\lambda\rangle$ is cofinal in $\lambda^{+V}$ for some $\nu\leq\lambda$. 
Since $\PP$ adds a wellorder of $\pow(\lambda)^V$, 
we obtain a sequence $\langle f_i\mid i<\nu\rangle$ of surjections $f_i\colon \lambda\rightarrow \alpha_i$ for $i<\nu$. 
These can be combined to a surjection $f\colon \lambda\rightarrow\lambda^{+V}$, contradicting the assumption. 

\ref{char collapse 5} $\Rightarrow$ \ref{char collapse 2}: 
Let $G$ be $\PP$-generic over $V$ and work in $V[G]$. 
Since $\PP$ adds a wellorder of $A^{<\lambda^+}$, 
both $\Col(\lambda^+,A)$ and $\Col_*(\lambda^+,A)$ are wellorderable in $V[G]$. 
Since $\lambda^{+V}$ is regular in $V[G]$ by assumption, $G$ is ${<}\lambda^{+V}$-closed. 
Then $\PP$ is $\lambda$-distributive by Lemma \ref{char distributive}. 

For the additional claim, note that $A^{<\lambda^+}\leq_s A^\lambda$. 
Thus  $\DC_\lambda(A^\lambda)$, $\DC_\lambda(B^\lambda)$ and $\DC_\lambda(A^{<\lambda^+})$ are equivalent by Lemma \ref{equiv DC}\ref{equiv DC basic}\ref{equiv DC basic 2}. 
The equivalence of \ref{char collapse 1}-\ref{char collapse 5} holds for $\DC_\lambda(A^{<\lambda^+})$ and $\PP=\Col(\lambda^+,A^{<\lambda^+})$. 
Since $A\leq_s B \leq_s A^{<\lambda^+}$, there exist projections $\Col(\lambda^+,A^{<\lambda^+}) \rightarrow \Col(\lambda^+,B^\lambda) \rightarrow \Col(\lambda^+,A^\lambda)$. 
We thus obtain equivalences of \ref{char collapse 1}-\ref{char collapse 5} for $\DC_\lambda(B^\lambda)$ and $\PP=\Col(\lambda^+,B^\lambda)$ from the previous ones. 
\end{proof} 

For $\lambda=\omega$, regularity of $\omega_1$ is not sufficient to obtain the above conditions. 
For instance, in Cohen's first model $\omega_1$ is regular while $\DC(2^\omega)$ fails. 
By Theorem \ref{char collapse}, virtually any bounded support iteration of length $\omega_1$ collapses $\omega_1$ over models where $\DC(2^\omega)$ fails. 
Note that the result does not have an analogue for singular limit cardinals, 
since $\Col(\aleph_\omega,2)$ forces $\aleph_\omega^V$ to be countable in $\ZFC$. 
We finally add a further characterisation via forcing axioms to Theorem \ref{char collapse}. 


\begin{remark} 
\label{char FA} 
$\DC_\kappa$ can be characterised via the forcing axiom $\FA_\kappa({<}\kappa\text{-closed})$ for any $\kappa\in \Card$. 
This axiom states that for any sequence $\vec{D}=\langle D_i\mid i<\kappa\rangle$ of predense subsets of a ${<}\kappa$-cosed forcing $\PP$, there exists a 
filter $g$ on $\PP$ with $g\cap D_i \neq\emptyset$ for all $i<\kappa$. 
$\FA_\kappa({<}\kappa\text{-closed},A)$ for a set $A$ denotes $\FA_\kappa({<}\kappa\text{-closed})$ restricted to forcings $\PP\subseteq A$. 
One can show the following equivalences for any set $A$ with $A^{<\kappa}\leq_s A$: 
\begin{enumerate-(1)} 
\item 
\label{DC and FA 1} 
$\DC_\kappa(A)$ $\Longleftrightarrow$ 
$\DC_{{<}\kappa}(A)$ $+$ $\FA_\kappa({<}\kappa\text{-closed},A)$ 
\item 
\label{DC and FA 2} 
$\AC$ $\Longleftrightarrow$ $\forall \lambda\in (\SucCard \cup \Reg)\ \FA_\lambda({<}\lambda\text{-closed})$ 
\end{enumerate-(1)} 
For instance, $\DC(2^\omega)$ is equivalent to $\FA_\omega(\sigma\text{-closed},2^\omega)$ and thus to the remaining conditions in Theorem \ref{char collapse} for $A=2$ and $\lambda=\omega$. 
Viale proved a related result in \cite[Theorem 1.8]{viale2016useful}. 

\ref{DC and FA 1}: 
Suppose that $\DC_\kappa(A)$ holds. 
To show $\FA_\kappa({<}\kappa\text{-closed},A)$, suppose that $\vec{D}=\langle D_i\mid i<\kappa\rangle$ is a sequence of predense subsets of a ${<}\kappa$-closed forcing $\PP\subseteq A$. 
Let $T\subseteq A^{<\kappa}$ be the tree of sequences $\langle p_i \mid i<\alpha\rangle$ in $\PP$ with $\alpha<\kappa$, $p_i\in D_i$ and $p_j\leq p_i$ for all $i<j<\alpha$. 
An application of $\DC_\kappa(T)$ yields a sequence of length $\kappa$ and thus a filter on $\PP$ as required. 

Conversely, suppose that $\DC_{{<}\kappa}A)$ and $\FA_\kappa({<}\kappa\text{-closed},A)$ hold. 
Suppose that $R$ is a ${<}\kappa$-extendible relation on $A$. 
Let $T\subseteq A^{<\kappa}$ denote the tree of ${<}\kappa$-chains in $R$ ordered by end extension. 
$T$ is ${<}\kappa$-closed by $\DC_{{<}\kappa}(T)$. 
For any $\alpha<\kappa$, let $D_\alpha$ denote the set of chains in $R$ of length at least $\alpha$. 
$\FA_\kappa(T)$ yields a 
branch in $T$ that meets each $D_\alpha$, inducing a $\kappa$-chain in $R$.  

\ref{DC and FA 2}: 
Since $\DC_{{<}\kappa}$ implies $\DC_\kappa$ at singular limits, the claim follows from \ref{DC and FA 1} by induction. 
\end{remark}


\section{Absoluteness principles} 


We begin with a definition of the absoluteness principles described in the introduction. 
Let $M\equiv N$ denote that $M$ and $N$ are elementarily equivalent. 

\begin{definition}\label{GA(C)}
The 
\emph{unrestricted absoluteness principle} $\GA_\cC$ for a class $\mathcal{C}$ of forcings 
states that $V\equiv V[G]$ for any generic extension of $V$ by a forcing in $\cC$. 
\end{definition}

More precisely, $\GA_\cC$ is the scheme of all formulas $\forall \PP\in \cC\ (\varphi \Longleftrightarrow\one \Vdash_\PP \varphi)$, where $\varphi$ ranges over all sentences. 
Our main goal is to understand the consequences of unrestricted absoluteness for various classes $\cC$. 
Note that $\GA_{\CC}$ holds in any Cohen extension and 
$\GA_{\RR}$ holds in any random extension of a model of $\ZFC$. 
Recall $\CC^*$, $\RR_*$ and $\HH^{(*)}$ denote the class of finite support products of Cohen forcings, random algebras, and finite support iterations of Hechler forcings, respectively. 

\subsection{Cohen versus random models} 
\label{switches} 
 
We show that for any sufficiently large cardinal $\kappa$, extensions by $\CC^\kappa$ and $\RR_\kappa$ have different theories. 
The argument is due to Woodin. 

\begin{lemma}[Truss \cite{truss1983noncommutativity}] 
\label{random versus Cohen} 
If $x$ is a Cohen real over $L[y]$ where $y$ is a real, then $y$ is not a random real over $L[x]$. 
\end{lemma}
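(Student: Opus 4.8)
The plan is to exhibit a single Borel set $R\subseteq 2^\omega\times 2^\omega$, with a code lying in $L$, whose sections witness the measure--category duality: every section obtained by fixing the first coordinate is null, while every section obtained by fixing the second coordinate is comeager. Granting such an $R$, the lemma follows at once. Suppose towards a contradiction that $x$ is Cohen over $L[y]$ and $y$ is random over $L[x]$. The section $R^y:=\{x' \mid (x',y)\in R\}$ is a comeager Borel set whose code lies in $L[y]$, so the Cohen real $x$ meets it, giving $(x,y)\in R$. On the other hand, the section $R_x:=\{y' \mid (x,y')\in R\}$ is a null Borel set whose code lies in $L[x]$, so the random real $y$ avoids it, giving $(x,y)\notin R$ --- a contradiction.

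To construct $R$, I would fix a recursive partition of $\omega$ into consecutive finite intervals $\langle I_n \mid n\in\omega\rangle$ with $\sum_{n} 2^{-|I_n|}<\infty$ (for instance $|I_n|=n+1$), and set
$$R=\{(x,y)\in 2^\omega\times 2^\omega \mid \exists^\infty n\ \ x{\upharpoonright} I_n = y{\upharpoonright} I_n\}.$$
This is a Borel set with an arithmetic, hence absolute, code. For a fixed $x$, the set $\{y \mid y{\upharpoonright}I_n=x{\upharpoonright}I_n\}$ has measure $2^{-|I_n|}$, so by the Borel--Cantelli lemma the section $R_x$ is null. For a fixed $y$ and each $N$, the set $\bigcup_{n\geq N}\{x \mid x{\upharpoonright}I_n=y{\upharpoonright}I_n\}$ is open and dense --- any basic open $N_s$ can be extended to agree with $y$ on a sufficiently late interval $I_n$ --- so the section $R^y$ is a countable intersection of dense open sets and is therefore comeager.

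The two invocations of genericity are the standard characterizations: a Cohen real over a model $M$ lies in every comeager Borel set coded in $M$, and a random real over $M$ avoids every null Borel set coded in $M$. These apply because $L[x]$ and $L[y]$ are inner models of $\ZFC$, where the classical measure and Baire category theory are available and where membership of a real in a Borel set is absolute once both the code and the real are present; the relevant section codes descend to $L[y]$ and $L[x]$ respectively since the code of $R$ already lies in $L$. The only genuine content of the argument is the construction of $R$, and I expect the main point to verify to be precisely the simultaneous null/comeager behaviour of its two families of fibres, which is the concrete form of the measure--category duality that makes Cohen and random genericity incompatible in this configuration.
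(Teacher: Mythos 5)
Your proof is correct and is essentially the paper's argument in a symmetric packaging: the paper likewise uses the Cohen real $x$ to produce a Borel code in $L[x]$ for a null set of the form $\bigcap_{n}\bigcup_{k\ge n}(\text{small basic open sets})$ that, by a density argument over $L[y]$, contains every real of $L[y]$ and in particular $y$. Your section $R_x$ is exactly such a set (indeed it too contains all reals of $L[y]$), so the only difference is that you encode the density argument as \lq\lq{}$R^y$ is comeager and coded in $L[y]$\rq\rq{} rather than verifying it by hand.
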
 
\begin{proof} 
The proof relies on the argument showing that a Cohen real $x$ adds a Borel code $A$ for a null set containing all ground model reals. 
It suffices to show $A\in L[x]$, since this implies ground model reals are not random over $L[x]$. 
To see this, let $\vec{B}=\langle B^k_m \mid k,m < \omega\rangle$ be a constructible list 
of codes for all basic open sets with measure at most $2^{-k}$ for each $k\in\omega$. 
Suppose $x$ is Cohen generic over $L[y]$. 
Let $z\in \omega^\omega$ list the distances of successive $n\in \omega$ with $x(n)=1$.\footnote{Thus $\sum_{i<k} z(i)$ is the $k$th $n\in\omega$ with $x(n)=1$.}
Let 
$C_n$ be a Borel code for  $\bigcup_{k\ge n} B^k_{z(k)}$ 
for each $n\in\omega$. 
Since $\mu(C_n)\leq 2^{-(n-1)}$, 
$\bigcap_{n\in \omega} C_n$ is a null set. 
Let $A$ be a Borel code for $\bigcap_{n\in \omega} C_n$ in $L[x]$. 
Since $x$ is Cohen generic over $L[y]$, it is easy to show that for every $n\in\omega$, every real $w\in L[y]$ is an element of $C_n$ as required. 
\end{proof}

We will use that a random real over $V$ is also random over any inner model $M$. 
To see this, note that every maximal antichain $B\in M$ of 
$\RR\cap M$ is maximal in $\RR$, since $B$ is maximal if and only if $\mu(\bigcup B)=1$. 
Note that this holds for $\RR_\kappa$ as well. 
For any $\CC^\kappa$-generic filter $G$ over $V$ and subset $S$ of $\kappa$, let $G_S$ denote the set of all $p\in G$ with $\supp(p)\subseteq S$. 
We use the same notation for $\RR_\kappa$. 

\begin{lemma} 
Suppose $\lambda\leq \kappa$ are infinite cardinals. 
\label{random over intermediate} 
\begin{enumerate-(1)} 
\item (Woodin) 
\label{random over intermediate 1} 
If $H$ is $\CC^\kappa$-generic over $V$ then in $V[H]$, for any subset $A$ of $\kappa$ of size ${<}\lambda$, $H_\lambda$ adds a Cohen real over $L[A]$. 
\item 
\label{random over intermediate 2} 
As in \ref{random over intermediate 1}, but for $\RR_\kappa$ and random reals. 

\end{enumerate-(1)} 
\end{lemma}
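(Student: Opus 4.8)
The plan is to combine two ingredients that are both available in $\ZF$: a \emph{support} computation showing that a set $A$ of size ${<}\lambda$ already lives in a small piece of the generic object, and a \emph{mutual genericity} argument producing a fresh generic real over that piece. I would run the two parts in parallel, writing $c_\xi$ for the Cohen real on coordinate $\xi<\kappa$ in part \ref{random over intermediate 1}, and $r\in 2^\kappa$ for the random point added by $H$ with $r{\upharpoonright}E$ its projection to a coordinate set $E$ in part \ref{random over intermediate 2}.

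First I would establish the support claim: if $A\subseteq\kappa$ has size $\mu<\lambda$ in $V[H]$, then there is $S\subseteq\kappa$ with $|S|<\lambda$ and $A\in V[H_S]$, whence $L[A]\subseteq V[H_S]$. In part \ref{random over intermediate 1}, since $\CC^\kappa=\Func_{<\omega}(\kappa\times\omega,2)$ is wellorderable in $V$, I fix such a wellorder and, for each $\beta\in A$, let $p_\beta$ be the least $p\in H$ with $p\Vdash\check\beta\in\dot A$; then $S=\bigcup_{\beta\in A}\supp(p_\beta)$ satisfies $|S|\le\max(\mu,\omega)<\lambda$ because each support is finite, and the identity $A=\{\beta<\kappa\mid\exists p\in H_S\ p\Vdash_{\CC^\kappa}\check\beta\in\dot A\}$ shows $A\in V[H_S]$. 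In part \ref{random over intermediate 2} I would instead use the canonical least support of the Boolean value $b_\beta:=\|\check\beta\in\dot A\|$ from Remark \ref{least support}, which avoids wellordering $\RR_\kappa$; when $\mu$ is finite one just notes that a finite set of ordinals already lies in $V$, so $L[A]\subseteq V$ and no support is needed.

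Next I would choose a fresh block of coordinates inside $\lambda$ disjoint from $S$: a single $\xi\in\lambda\setminus S$ in part \ref{random over intermediate 1} and a countable $E\subseteq\lambda\setminus S$ in part \ref{random over intermediate 2}, which exist since $|S\cap\lambda|<\lambda=|\lambda|$. The product lemma then gives that $c_\xi$ is Cohen generic over $V[H_{\kappa\setminus\{\xi\}}]$, and the Fubini factorization of the measure algebra $\RR_{S\cup E}\cong\RR_S\otimes\RR_E$ gives that $r{\upharpoonright}E$ is random generic over $V[H_{\kappa\setminus E}]$. As $L[A]\subseteq V[H_S]$ is contained in $V[H_{\kappa\setminus\{\xi\}}]$ (resp.\ $V[H_{\kappa\setminus E}]$), genericity descends to $L[A]$: for Cohen forcing this is immediate because $2^{<\omega}$ and its dense sets are absolute, and for random forcing it is exactly the fact recorded before the lemma that a random real over a model is random over any inner model, since maximal antichains are detected by $\mu(\bigcup B)=1$, an absolute condition. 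Finally $c_\xi\in V[H_{\{\xi\}}]\subseteq V[H_\lambda]$ and $r{\upharpoonright}E\in V[H_\lambda]$, so these are the desired reals.

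The main obstacle is the support bound $|S|<\lambda$ in the choiceless part \ref{random over intermediate 2}. Since random conditions have countable, not finite, support, $S$ is a union of $\mu$ many countable sets of ordinals, and in $\ZF$---where $\AC_\omega$ may fail and $\omega_1$ may be singular---such a union need not be small; the delicate case is $\lambda=\omega_1$ with $A$ countably infinite. Enumerating each least support along its order type yields only a surjection from a subset of $A\times\omega_1$ onto $S$, hence $|S|\le\max(\mu,\aleph_1)$, which settles every case except $\lambda=\omega_1$. To close that gap I would pass, via the capturing property of Lemma \ref{lc capturing} (applicable since $\RR_\kappa$ is locally complete and $\omega^{[+]}$-c.c.\ by Theorem \ref{random complete}), to a transitive $U\models\ZFC$ with $U\subseteq V$ and $A\in U[H\cap U]$, and carry out the support computation inside the $\ZFC$ model, where a small union of small sets is small, before transferring back. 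A secondary point to justify carefully is the measure-algebra product lemma for $\RR_\kappa$ in $\ZF$, which rests on Fubini for the product measure on $2^{S\cup E}=2^S\times 2^E$ developed in Section \ref{section random forcing}.
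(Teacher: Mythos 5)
Your proposal is correct, and its essential mechanism --- locate $A$ inside a small piece $H_S$ of the generic object, pick fresh coordinates in $\lambda\setminus S$, and push genericity down to $L[A]\subseteq$ (the model computed from $H_S$) --- is exactly the paper's. The difference is organisational: the paper applies the capturing strategy \emph{uniformly} to both parts, first passing to a transitive class model $U\models\ZFC$ (via wellorderability of $\CC^\kappa$ in part \ref{random over intermediate 1}, via Lemmas \ref{lc capturing} and \ref{random complete} in part \ref{random over intermediate 2}) with $A\in U[G]$, and only then invoking the $\omega_1$-c.c.\ \emph{inside $U$} to find $S\in U$ with $|S|^U<\lambda$ and $A\in U[G_S]$; the fresh coordinates then give a Cohen, respectively random, real over $U[G_S]$ by the standard $\ZFC$ arguments carried out in $U$, and this descends to the inner model $L[A]$. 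This one move dispenses simultaneously with both of the difficulties you flag at the end: the union-of-countably-many-countable-supports problem (a non-issue inside a $\ZFC$ model) and the Fubini/product factorisation for $\RR_\kappa$ over a $\ZF$ ground model (the paper only ever needs the factorisation over $U$, citing the Bartoszy\'nski--Judah lemmas there, plus the recorded fact that a random real over a model is random over its inner models). Your direct-over-$V$ variant for part \ref{random over intermediate 1} is fine as it stands, since supports are finite and the wellorder of $\CC^\kappa$ makes all choices canonical; but for part \ref{random over intermediate 2} with $\lambda>\omega_1$ the direct route still leaves open precisely the two points you acknowledge, so you should simply run the capturing argument in all cases rather than reserving it for $\lambda=\omega_1$.
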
 
\begin{proof} 
We can assume $\lambda\geq\omega_1$. 
We have shown above that $\CC^\kappa$ and $\RR_\kappa$ preserve cardinals. 

\ref{random over intermediate 1}: 
Since $\CC^\kappa$ is wellorderable, there is a transitive class model $U\subseteq V$ of $\ZFC$ 
such that $H$ is $\CC^\kappa$-generic over $U$ and 
$A\in U[G]$. 
By the $\omega_1$-c.c. of $\CC^\kappa$ in $U$, $A\in U[H_S]$ for some subset $S\in U$ of $\kappa$ with $|S|^U<\lambda$. 
Any coordinate in $\lambda\setminus S$ induces a Cohen real $x$ over $U[H_S]$. 
Since $A\in U[H_S]$, $x$ is a Cohen real over $L[A]$. 

\ref{random over intermediate 2}: 
By Lemmas \ref{lc capturing} and \ref{random complete}, there is a transitive class model $U\subseteq V$ of $\ZFC$ 
such that $G:=H\cap U$ is $(\RR_\kappa\cap U)$-generic over $U$ and 
$A\in U[G]$. 
By the $\omega_1$-c.c. of $\RR_\kappa\cap U$ in $U$, $A\in U[G_S]$ for some subset $S\in U$ of $\kappa$ with $|S|^U<\lambda$. 
Any countably infinite set of coordinates in $\lambda\setminus S$ induces a random real $x$ over $U[G_S]$ by the proofs of \cite[Lemmas 3.1.5, 2.1.6 \& 3.2.8]{Bartoszynski-Judah}.\footnote{These results are formulated for $2^\omega$ but work as well for $2^\kappa$.} 
Since $A\in U[G_S]$, $x$ is a random real over $L[A]$. 
\end{proof} 



\begin{lemma} 
\label{no random over Cohen} 
Suppose $\kappa$ is an uncountable cardinal. 
\begin{enumerate-(1)} 
\item (Woodin) 
\label{no random over Cohen 1} 
If $H$ is $\CC^\kappa$-generic over $V$ then in $V[H]$, there exists an subset $A$ of $\omega_1$ such that there exists no random real over $L[A]$. 

\item 
\label{no random over Cohen 2} 
As in \ref{no random over Cohen 1}, but for $\RR_\kappa$ and Cohen reals. 

\end{enumerate-(1)} 
\end{lemma}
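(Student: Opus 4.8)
The plan is to prove both parts by the same template: code $\omega_1$-many mutually generic reals into a single set $A\subseteq\omega_1$, and then show that every real of the extension lies inside a null (respectively meager) Borel set coded in $L[A]$, using Truss's Lemma \ref{random versus Cohen} (respectively its dual). The coding step uses the pairing function $\p$ to fold an $\omega_1$-sequence of reals into a subset of $\omega_1$, and the key point in each case is that the relevant Borel cover of $z$ is coded in $L$ from a single generic real alone, so its code already lives in $L[A]$.

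For \ref{no random over Cohen 1}, write the $\CC^\kappa$-generic as $c\colon\kappa\times\omega\to 2$ and let $c_\alpha=c(\alpha,\cdot)$ be the $\alpha$-th Cohen real. Using $\p$, let $A\subseteq\omega_1$ code the sequence $\langle c_\alpha\mid\alpha<\omega_1\rangle$, so that $c_\alpha\in L[A]$ for every $\alpha<\omega_1$. I claim no real of $V[H]$ is random over $L[A]$. Fix a real $z\in V[H]$. Since $\CC^\kappa$ is wellorderable, exactly as in Lemma \ref{random over intermediate}\ref{random over intermediate 1} there is a transitive $U\subseteq V$ with $U\models\ZFC$ over which $H$ is generic and $z\in U[H]$; by the $\omega_1$-c.c. of $\CC^\kappa$ in $U$ there is $S\in U$ with $S\subseteq\kappa$ countable in $U$, hence in $V$, and $z\in U[H_S]$. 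As $\CC^\kappa$ preserves $\omega_1$ and $S$ is countable, pick $\beta\in\omega_1\setminus S$. By the product structure, $c_\beta$ is Cohen over $V[H_{\kappa\setminus\{\beta\}}]\supseteq U[H_S]\supseteq L[z]$, hence Cohen over $L[z]$. Truss's Lemma \ref{random versus Cohen} then gives that $z$ is not random over $L[c_\beta]$, and since $c_\beta\in L[A]$ the witnessing null set is coded in $L[A]$, so $z$ is not random over $L[A]$.

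For \ref{no random over Cohen 2}, fix a partition of $\omega_1$ into $\omega_1$-many disjoint countably infinite blocks $\langle I_\alpha\mid\alpha<\omega_1\rangle$ and let $r_\alpha$ be the random real read off the coordinates in $I_\alpha$ from $H$; let $A\subseteq\omega_1$ code $\langle r_\alpha\mid\alpha<\omega_1\rangle$, so $r_\alpha\in L[A]$. Given a real $z\in V[H]$, use local completeness and the $\omega^{[+]}$-c.c. of $\RR_\kappa$ (Lemmas \ref{lc capturing} and \ref{random complete}, as in Lemma \ref{random over intermediate}\ref{random over intermediate 2}) to obtain $U\models\ZFC$ with $G:=H\cap U$ being $(\RR_\kappa\cap U)$-generic over $U$, $z\in U[G]$, and a countable $S\subseteq\kappa$ with $z\in U[G_S]$. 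Since the $I_\alpha$ are disjoint and $S$ is countable, choose $\alpha<\omega_1$ with $I_\alpha\cap S=\emptyset$; then the countably infinite block $I_\alpha$ disjoint from $S$ yields a random real $r_\alpha$ over $U[G_S]\supseteq L[z]$, hence over $L[z]$. By the dual of Truss's lemma — a random real $r$ over a $\ZFC$-model $M$ adds a meager Borel set, coded in $L[r]$, that covers all reals of $M$ (the counterpart of the null-cover fact used in the proof of Lemma \ref{random versus Cohen}; see \cite{Bartoszynski-Judah}) — the real $z$ lies in a meager set coded in $L[r_\alpha]\subseteq L[A]$, so $z$ is not Cohen over $L[A]$. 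As $z$ was arbitrary, there is no Cohen real over $L[A]$.

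The main obstacle is twofold: making the inner-model capturing step work without the axiom of choice (handled by passing to the $\ZFC$-model $U$ via wellorderability of $\CC^\kappa$ in part \ref{no random over Cohen 1} and via Lemmas \ref{lc capturing}, \ref{random complete} in part \ref{no random over Cohen 2}, so that every real appears at a countable stage), and supplying the dual of Truss's lemma with the meager cover coded in $L[r_\alpha]$ rather than in $L[z][r_\alpha]$. Exactly as in the original construction, the resolution is that the cover is built from the generic real alone over $L$, so its code lands in $L[r_\alpha]$, while genericity over $L[z]$ guarantees it swallows all reals of $L[z]$, in particular $z$ itself.
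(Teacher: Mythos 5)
Your proposal is correct, and part \ref{no random over Cohen 1} is essentially the paper's argument: the paper also takes $A$ to be (a subset of $\omega_1$ coding) $H_{\omega_1}$ and, for an arbitrary real $y$, applies Lemma \ref{random over intermediate}\ref{random over intermediate 1} to get a Cohen real over $L[y]$ living in $L[A]$, then concludes via Lemma \ref{random versus Cohen}; you have merely unpacked the capturing step of that lemma inline. For part \ref{no random over Cohen 2} your architecture again matches the paper's (capture $z$ at a countable set $S$ of coordinates, find a block of coordinates disjoint from $S$ yielding a random real $r$ over a $\ZFC$-model containing $z$), but your final step differs: you import the ``dual'' covering fact that a random real $r$ over $M$ produces a meager Borel set, coded in $L[r]$, containing all reals of $M$, citing \cite{Bartoszynski-Judah}. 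That fact is true and your use of it is sound, but it is an unnecessary external ingredient: the paper instead simply contraposes its own Lemma \ref{random versus Cohen} --- after renaming variables it reads ``if $x$ is random over $L[y]$ then $y$ is not Cohen over $L[x]$'' --- which applied to $x=r$ and $y=z$ gives directly that $z$ is not Cohen over $L[r]\subseteq L[A]$. So the paper's route is self-contained (no new lemma beyond what is already proved), while yours buys nothing extra at the cost of an additional standard fact; if you prefer to keep your write-up, you could delete the meager-cover discussion entirely and replace it with the one-line contrapositive.
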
 
\begin{proof} 
\ref{no random over Cohen 1}: 
Suppose that $y\in V[H]$ is a random real over $L[H_{\omega_1}]$. 
Since $y$ is a countable subset of $\kappa$ in $V[H]$, by Lemma~\ref{random over intermediate}~\ref{random over intermediate 1}, $H_{\omega_1}$ adds a Cohen real $x$ over $L[y]$. 
By Lemma \ref{random versus Cohen}, $y$ is not random over $L[x]$. 
Therefore $y$ cannot be a random real over $L[H_{\omega_1}]$. 

\ref{no random over Cohen 2}: 
Suppose that $y\in V[H]$ is a Cohen real over $L[H_{\omega_1}]$. 
Since $y$ is a countable subset of $\kappa$ in $V[H]$, by Lemma~\ref{random over intermediate}~\ref{random over intermediate 2}, $H_{\omega_1}$ adds a random real $x$ over $L[y]$.
By Lemma \ref{random versus Cohen}, $y$ is not a Cohen real over $L[x]$. 
Therefore $y$ cannot be a Cohen real over $L[H_{\omega_1}]$, 
\end{proof} 

In fact, the proof of \ref{no random over Cohen 1} shows that in $V[H]$, for any cardinal $\lambda$ with $\omega_1\leq\lambda\leq\kappa$, there exists an subset $A$ of $\lambda$ of size $\lambda$ such that there exists no random real over $L[A]$. 
A similar claim holds for \ref{no random over Cohen 2}. 


In the next theorem, let $\cC$ denote the class of all forcings of the form $\CC^\kappa$ or $\RR_\kappa$ for any $\kappa\in \Card$. 

\begin{theorem}[Woodin] 
\label{paper failure absoluteness} 
$\A_{\cC}$ fails. 
In fact, there is a single switch 
that works for all models of $\ZF$. 
\end{theorem}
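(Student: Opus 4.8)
The plan is to produce an explicit first-order sentence $\varphi$ together with two forcings in $\cC$, one forcing $\varphi$ and one forcing $\neg\varphi$ over an arbitrary ground model, which simultaneously shows that $\A_\cC$ fails and that $\varphi$ is the promised switch. I would take
\[
\varphi:\quad \forall A\subseteq\omega_1\ \exists x\in 2^\omega\ \bigl(x\text{ is random over }L[A]\bigr),
\]
where ``$x$ is random over $L[A]$'' abbreviates the statement that $x$ belongs to every measure-one Borel set whose code lies in $L[A]$. Since $\omega_1$ is definable and $A\mapsto L[A]$ is a definable class operation, $\varphi$ is a single sentence of the language of set theory; pinning down this expressibility is the first thing to do.

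Next I would fix an arbitrary model $M\models\ZF$ and a cardinal $\kappa\geq\omega_2$, and verify the two directions using the lemmas already established. For the Cohen side, let $H$ be $\CC^\kappa$-generic over $M$; Lemma \ref{no random over Cohen} \ref{no random over Cohen 1} yields a subset $A$ of $\omega_1$ in $M[H]$ admitting no random real over $L[A]$, so $M[H]\models\neg\varphi$. For the random side, let $H$ be $\RR_\kappa$-generic over $M$ and work in $M[H]$; given any $A\subseteq\omega_1$ we have $|A|\leq\omega_1<\omega_2\leq\kappa$, so applying Lemma \ref{random over intermediate} \ref{random over intermediate 2} with $\lambda=\omega_2$ produces a random real over $L[A]$, namely one added by $H_{\omega_2}$. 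As $A$ is arbitrary, $M[H]\models\varphi$. Here I would note that $\CC^\kappa$ and $\RR_\kappa$ preserve $\omega_1$ and $\omega_2$ by Lemma \ref{narrow pres card} and Theorem \ref{random complete}, so the cardinals named in $\varphi$ are computed correctly in both extensions.

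Finally, since $\kappa\geq\omega_2$ makes both $\CC^\kappa$ and $\RR_\kappa$ members of $\cC$, the two facts above show that over every $M\models\ZF$ the sentence $\varphi$ is forced false by a finite support Cohen product and true by a random algebra; in particular $M$ is not elementarily equivalent to both extensions, so $\A_\cC$ fails, and because the argument is uniform in the ground model, $\varphi$ is a switch valid over every model of $\ZF$ (hence over every generic extension of $V$). The substantive work is carried entirely by Lemmas \ref{no random over Cohen} and \ref{random over intermediate}, so I expect the main obstacle to be not a new argument but the bookkeeping of the second paragraph: confirming that $\varphi$ is genuinely first-order, and, crucially, that both lemmas are phrased for an \emph{arbitrary} ground model, so that the construction really switches $\varphi$ over every model of $\ZF$ rather than only over $V$ itself.
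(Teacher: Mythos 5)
Your proposal is correct and follows essentially the same route as the paper: the switch is the same sentence ``every $A\subseteq\omega_1$ admits a random real over $L[A]$,'' forced true by $\RR_\kappa$ via Lemma \ref{random over intermediate} \ref{random over intermediate 2} and false by $\CC^\kappa$ via Lemma \ref{no random over Cohen} \ref{no random over Cohen 1}. The paper additionally remarks that the dual sentence (with Cohen reals in place of random reals) gives an alternative switch via the other halves of those two lemmas, but this is only a symmetric variant of the argument you give.
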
 
\begin{proof} 
After forcing with $\RR_\kappa$ for any $\kappa\geq\omega_2$, for any subset $A$ of $\omega_1$ there exists a random real over $L[A]$ by Lemma \ref{random over intermediate} \ref{random over intermediate 2}. 
However, this statement is false after forcing with $\CC^\lambda$ for any $\lambda\geq\omega_1$ by Lemma \ref{no random over Cohen} \ref{no random over Cohen 1}. 
An alternative proof works for $\CC^\kappa$ and $\RR_\lambda$ using Lemmas \ref{random over intermediate} \ref{random over intermediate 1} and \ref{no random over Cohen} \ref{no random over Cohen 2}. 
\end{proof} 

%



\subsection{Hartogs numbers} 
\label{generic absoluteness}

Recall that $\CC^*$ denotes the class of finite support products of Cohen forcings and $\RR_*$ denotes the class of all random algebras.
The previous section suggests to study the classes $\CC^*$ and $\RR_*$ separately. 
We will see that each of $\GA_{\CC^*}$ and $\GA_{\RR_*}$ implies that all limit ordinals have countable cofinality. 
To this end, we analyse the Hartogs number $\aleph = \aleph (2^{\omega})$ in generic extensions. 
Recall that the \emph{Hartogs number} $\aleph(x)$ of a set $x$ is defined as the least ordinal $\alpha$ such that $\alpha \not\leq_i x$. We write $\aleph$ for the Hartogs number $\aleph (2^{\omega})$. So $\aleph = \sup \{ \alpha \in \Ord \mid \alpha \le_i 2^{\omega} \}$. 
For a cardinal $\kappa$, let $\kappa^-$ be the cardinal $\kappa$ if $\kappa$ is a limit cardinal and otherwise the cardinal predecessor of $\kappa$. Then $\aleph^- = \sup \{ \lambda \in \Card \mid \lambda \le_i 2^{\omega} \}$.  
Woodin proved Theorem \ref{A implies singular} \ref{A implies singular 2} for the class $\CC^*$ in response to the authors' Remark \ref{original remark}. 
The proofs in this section are extensions of Woodin's argument. 

\begin{remark} 
\label{original remark} 
$\GA_{\CC^*}$ implies that there cannot exist two distinct uncountable regular cardinals. 
To see this, suppose that $\kappa<\lambda$ are the first two uncountable regular cardinals. 
We claim that $\CC^\nu$ forces $\aleph^-=\nu$ for any $\omega$-strong limit cardinal $\nu$ of uncountable cofinality. 
Then we would have $\cof(\aleph^-)=\kappa$, the first uncountable regular cardinal, in generic extensions by $\CC^\nu$ when $\nu$ is an $\omega$-strong limit cardinal of cofinality $\kappa$ while we would have $\cof(\aleph^-)=\lambda$, the second uncountable regular cardinal, in generic estensions by $\CC^\nu$ when $\nu$ is an $\omega$-strong limit cardinal of cofinality $\lambda$, contradicting $\GA_{\CC^*}$. 
If the claim fails, then one can obtain $\nu^+ \leq_i \nu^{\omega}$ by taking a name $\dot{f}$ for an injective function from $\nu^+$ to $2^\omega$ and picking a sequence of nice names for reals in $\HOD_{\dot{f}}$.
Since $\cof(\nu)\geq\omega_1$, 
we have $\nu^{\omega} = \bigcup_{\mu < \nu} \mu^{\omega}$, and then $\nu \leq_i \mu^{\omega}$ for some $\mu<\nu$. 
But this contradicts the fact that $\nu$ is an $\omega$-strong limit. 

Moreover, note that at least two uncountable regular cardinals exist if there exists at least one and $\GA_\cC$ holds for the class $\cC$ of all forcings of the form $\Col(\omega,\kappa)$ or $\Col(\omega,{<}\kappa)$, where $\kappa\in\Card$. 
If $\kappa$ is an uncountable regular cardinal, then $\Col(\omega,{<}\kappa)$ forces that $\omega_1$ is regular and thus $\omega_1$ is regular in $V$ by $\GA_\cC$. 
Then any infinite successor cardinal $\lambda^+$ is regular by $\GA_\cC$, since otherwise $\Col(\omega,\lambda)$ would force that $\omega_1$ is singular. 
\end{remark}

We now proceed with a more general argument 
that works for instance for the classes $\CC^*$ and $\RR_*$. 
This is based on a proof of Woodin for $\CC^*$. 



\begin{definition} 
\label{def nice} 
$\PP$ is called \emph{nice} 
if for all ordinals $\nu$, if $p\Vdash_\PP \nu\leq_i 2^\omega$ for some $p\in \PP$ then $\nu \leq_i \PP^\omega$. 
\end{definition} 

The idea for this definition is that the required function $\nu \rightarrow \PP^\omega$ sends each $\alpha<\nu$ to a nice name for the $\alpha$th real as in the proof of the next lemma. 


\begin{lemma} 
\label{narrow small} 
Every locally complete locally $\omega^{[+]}$-c.c. forcing $\PP$ 
is nice.
\end{lemma} 
\begin{proof} 
Suppose $p\Vdash_\PP \lambda \leq_i 2^\omega$. 
Then there exists some $q\leq p$ and a sequence $\vec{\sigma}=\langle \sigma_\alpha \mid \alpha<\lambda\rangle$ of $\PP$-names for reals with $q \Vdash \sigma_\alpha \neq \sigma_\beta$ for all $\alpha<\beta<\lambda$. 
Suppose $x$ witnesses that $\PP$ is locally complete. 
We can assume $x$ also witnesses that $\PP$ is locally $\omega^{[+]}$-c.c. and contains $\PP$, $q$ and $\vec{\sigma}$. 
For each $\alpha<\lambda$ and $n<\omega$, let $A_{\alpha,n}$ denote the set of all conditions forcing $n\in \sigma_\alpha$. 
Since $\sup(A_{\alpha,n}\cap \HOD_x)=\sup(A_{\alpha,n})$ by \ref{lc equiv 1} in Lemma \ref{lc equiv}, there exists an antichain $A'_{\alpha,n}$ in $\HOD_x$ with supremum $\sup(A_{\alpha,n})$. 
Let $A'_{\alpha,n}$ be least in $\HOD_x$. 
It is countable in $\HOD_x$ by the $\omega^{[+]}$-c.c. 
Let $\vec{p}_{\alpha,n}=\langle p_{\alpha,n,i} \mid i\in\omega  \rangle$ be the least enumeration in $\HOD_x$ of  $A'_{\alpha,n}$ of order type $\omega$. 
Let 
$$ \tau_\alpha = \{ (\check{n}, p_{\alpha,n,i}) \mid n,i <\omega \}. $$ 
Then $ q \Vdash_\PP \sigma_\alpha=\tau_\alpha$. 
Therefore, the map sending $\alpha<\lambda$ to 
$\vec{p}_\alpha:=\langle p_{\alpha,n,i} \mid n,i\in\omega  \rangle \in \PP^{\omega\times\omega}$ is injective as required. 
\end{proof}



\begin{assumption} 
\label{cond seq} 
$\vec{\PP}=\langle \PP_\kappa \mid \kappa\in \Card\rangle$ denotes a sequence of forcings with the properties: 
\begin{enumerate-(a)} 
\item 
\label{cond seq 1} 
$\PP_\kappa \leq_i \kappa^{\omega}$. 

\item 
\label{cond seq 2} 
$\PP_\kappa$ adds a $\kappa$-sequence of distinct reals. 

\item 
\label{cond seq 3} 
$\PP_\kappa$ is nice. 

\item 
\label{cond seq 4} 
$\PP_\kappa$ is uniformly $\omega$-narrow. 

\end{enumerate-(a)} 
\end{assumption} 

Conditions \ref{cond seq 3} and \ref{cond seq 4} hold if $\PP_\kappa$ is locally complete and locally $\omega^{[+]}$-c.c. by Lemmas \ref{cc lc imply narrow} \ref{cc lc imply narrow 2} and \ref{narrow small}. 
For example, all conditions hold for $\CC^\kappa$ and $\RR_\kappa$. 
Notice that $\PP_\kappa$ preserves all cardinals and cofinalities by \ref{cond seq 4} and Lemma~\ref{narrow pres card}.



\begin{definition} Let $\nu \in \Card$ and $A, B$ be subsets of $\nu^{\omega}$. 
\begin{enumerate-(1)} 
\item 
$B$ \emph{covers} $A$ if for each $x\in A$, there exists some $y\in B$ with $\ran(x)\subseteq \ran(y)$. 

\item 
A subset $B$ of $\nu^{\omega}$ of size $\aleph^-$ is called \emph{minimal} if it is not covered by any subset $A$ of $\nu^{\omega}$ of size ${<}\aleph^-$. 
 
\item 
$\jj$ denotes the least cardinal $\nu$ such that there exists a minimal subset of $\nu^{\omega}$, if this exists. 
\end{enumerate-(1)} 
\end{definition} 

Note that if $\aleph^- = \omega$, then $\jj$ does not exist. On the other hand, if $\aleph^- \ge \omega_1$, then $\jj$ exists and $\omega_1 \le \jj \le \aleph^-$. In particular, if $\aleph^- = \omega_1$, then $\jj = \omega_1$. 


We will analyze the circumstances in which $\jj\geq\aleph^-$ holds in $\PP_\kappa$-generic extensions. 
Note that for any uncountable regular cardinal $\lambda$, there exist arbitrarily large $\omega$-inaccessible cardinals of cofinality $\lambda$.\footnote{Lemma \ref{covering} and thus Theorem \ref{A implies singular}\ref{A implies singular 1} use a weaker condition than $\kappa$ being an $\omega$-strong limit, namely for each $\nu<\kappa$ there exists no injection from $\kappa$ into $\nu^{\omega}$.} 
Woodin proved the next lemma for $\CC^\kappa$. 

\begin{lemma}\ 
\label{covering} 
\begin{enumerate-(1)} 
\item 
\label{covering 1} 
$\one_{\PP_\kappa} \Vdash \aleph=\kappa^+$ for any $\omega$-inaccessible cardinal $\kappa$. 
 
\item 
\label{covering 2} 
$\one_{\PP_\kappa} \Vdash (\aleph=\kappa^+ \Rightarrow \jj\geq\aleph^-)$ for any $\omega$-strong limit cardinal $\kappa$. 


\end{enumerate-(1)} 
\end{lemma}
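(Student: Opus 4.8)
The plan is to treat the two parts separately, in both cases passing to the generic extension $V[G]$ by $\PP_\kappa$ and using that $\PP_\kappa$ preserves cardinals, so that $\kappa$, $\kappa^+$ and $\omega_1$ are absolute (Assumption \ref{cond seq}\ref{cond seq 5} together with Lemma \ref{narrow pres card}\ref{narrow pres card 2}). For part \ref{covering 1}, the lower bound $\one_{\PP_\kappa}\Vdash\cc\geq\kappa$ is immediate: by \ref{cond seq 2} there is an injection $\kappa\to 2^\omega$ in $V[G]$ and $\kappa$ remains a cardinal, so $\kappa$ lies in the supremum defining $\cc$.

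For the upper bound it suffices to force $\kappa^+\not\leq_i 2^\omega$. Assume some condition forces $\kappa^+\leq_i 2^\omega$; by niceness \ref{cond seq 3} this yields $\kappa^+\leq_i \PP_\kappa^\omega$ in $V$, and then by \ref{cond seq 1} together with the remark that $\pow_{\omega^{[+]}}(\kappa)^\omega\leq_i\pow_{\omega^{[+]}}(\kappa)$ (valid since $\omega\cdot\kappa=\kappa$) I get an injection $h\colon\kappa^+\to\pow_{\omega^{[+]}}(\kappa)$ in $V$. Since $\cof(\kappa)>\omega$, every countable subset of $\kappa$ is bounded, so $\pow_{\omega^{[+]}}(\kappa)=\bigcup_{\nu<\kappa}\pow_{\omega^{[+]}}(\nu)$. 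I would then fiber $h$: for $\alpha<\kappa^+$ let $\nu(\alpha)<\kappa$ be least with $h(\alpha)\in\pow_{\omega^{[+]}}(\nu(\alpha))$ and set $A_\nu=\{\alpha\mid\nu(\alpha)=\nu\}$. Each $A_\nu$ injects into $\pow_{\omega_1}(\nu)$; and since $\kappa$ is an $\omega$-strong limit there is a surjection $\nu^\omega\to\pow_{\omega_1}(\nu)$ but none onto $\kappa$, so $\kappa\not\leq_i\pow_{\omega_1}(\nu)$ and hence $\otp(A_\nu)<\kappa$. The map $\alpha\mapsto(\nu(\alpha),\otp(A_{\nu(\alpha)}\cap\alpha))$ then injects $\kappa^+$ into $\kappa\times\kappa\cong\kappa$, a contradiction.

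For part \ref{covering 2} I work in $V[G]$, assume $\cc=\kappa$, and suppose for contradiction that $\jj<\kappa$, so there are $\nu<\kappa$ and a jumbled $J\subseteq\pow_{\omega_1}(\nu)$ of size $\kappa$. The first step is a covering lemma from $\omega$-narrowness \ref{cond seq 4}: every countable subset of $\nu$ in $V[G]$ is contained in a ground-model countable subset of $\nu$. To see this I take a name $\dot e$ for an enumeration $\omega\to\nu$ of such a set (in $V^{\BB(\PP_\kappa)}$, using fullness), let $e_n$ be the $\parallel$-homomorphism deciding $\dot e(n)$, and apply $\omega$-narrowness to $\langle e_n\mid n<\omega\rangle$ to conclude that $\bigcup_n\ran(e_n)$ is a ground-model countable superset; hence $\pow_{\omega_1}(\nu)^V$ covers $J$. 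The second step is to replace this by a cover of size $<\kappa$, and here I would descend to an inner model of $\ZFC$: coding $J$ by a set of ordinals $A$ and applying the capturing property (Lemma \ref{lc capturing}, which holds for $\CC^\kappa$ since it is wellorderable and for $\RR_\kappa$ by Theorem \ref{random complete}), I obtain a transitive $U\models\ZFC$ with $U\subseteq V$, $A\in U[G']$, and $U$, $U[G']$ agreeing on cardinals and cofinalities $>\omega$. In $U$ the cardinal $\kappa$ is still an $\omega$-strong limit (an inner model adds no new surjections), so $\nu^\omega<\kappa$ and hence $\lvert\pow_{\omega_1}(\nu)^U\rvert<\kappa$; moreover the forcing is c.c.c.\ over $U$, so $\pow_{\omega_1}(\nu)^U$ covers every countable subset of $\nu$ in $U[G']$. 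This furnishes a cover of $J$ of size $<\kappa$, contradicting jumbledness.

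The main obstacle is the bookkeeping in this second step without choice: c.c.c.\ covering inside $U[G']$ only applies to sets that $U[G']$ itself recognises as countable, whereas a priori the members of $J$ are only known to be countable in $V[G]$. The crux is therefore to arrange $\omega_1^{U[G']}=\omega_1^{V[G]}$ — equivalently, to make the members of $J$ genuinely countable in $U[G']$ — which I would secure by enlarging the captured parameter so that $U$ absorbs $\omega$-enumerations of the relevant countable ordinals, noting that these can be chosen canonically via order type for locally countable sets and that $\PP_\kappa$ preserves $\omega_1$. Part \ref{covering 1} is essentially routine once the niceness and strong-limit inputs are assembled; the genuine work lies in this choiceless descent argument for part \ref{covering 2}.
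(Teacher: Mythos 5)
Part \ref{covering 1} of your proposal is correct and follows the paper's argument: niceness plus \ref{cond seq 1} give $\kappa^+\leq_i \pow_{\omega_1}(\kappa)$, the assumption $\cof(\kappa)>\omega$ gives $\pow_{\omega_1}(\kappa)=\bigcup_{\nu<\kappa}\pow_{\omega_1}(\nu)$, and fibering the injection contradicts the $\omega$-strong-limit property; your map $\alpha\mapsto(\nu(\alpha),\otp(A_{\nu(\alpha)}\cap\alpha))$ merely spells out the final step.

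Part \ref{covering 2} has a genuine gap, and you have located it yourself: the inner-model detour does not close. First, capturing and the c.c.c.\ over inner models are not among the hypotheses of Assumption \ref{cond seq}, so an argument routed through Lemma \ref{lc capturing} only establishes the special cases $\CC^\kappa$ and $\RR_\kappa$, not the lemma as stated for the abstract sequence $\vec{\PP}$. Second, and more seriously, the c.c.c.\ covering over $U$ only applies to sets that $U[G']$ recognises as countable, and nothing guarantees $\omega_1^{U[G']}=\omega_1^{V[G]}$. Your proposed repair --- capturing a parameter from which $U$ can recover $\omega$-enumerations of all the order types occurring in $J$ --- presupposes that $V$ contains a simultaneous sequence of surjections $e_\beta\colon\omega\rightarrow\beta$ for possibly uncountably many countable $\beta$; this is a fragment of choice (roughly $\omega_1\leq_i {}^{\omega}\omega_1$) that need not hold in $\ZF$, so the fix is not available. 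The appeal to fullness in your first step is a symptom of the same problem: the paper explicitly notes that fullness can fail without choice.

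The paper's proof never leaves $V$ and covers all of $J$ at once, which is exactly the step your proposal lacks. Fix the bijection $f\colon\kappa\rightarrow J$ in $V[G]$ and a name $\dot g$ for the function sending $(\alpha,\beta)\in\kappa\times\omega_1$ to the $\beta$th element of $f(\alpha)$ in its increasing enumeration; this is definable from $f$, so no choice of enumerations and no fullness is needed. The conditions deciding $\dot g(\alpha,\beta)$ form a partial $\parallel$-homomorphism $g_{\alpha,\beta}$ into $\nu$, and $\omega$-narrowness applied to the countable sequence $\langle g_{\alpha,\beta'}\mid\beta'<\beta\rangle$ --- this is precisely where the sequence form of Definition \ref{def narrow} with $\mu=\beta<\omega_1$ is needed, not just single homomorphisms --- shows that $h(\alpha,\beta):=\bigcup_{\beta'<\beta}\ran(g_{\alpha,\beta'})$ is countable in $V$ and contains $f(\alpha)$ when $\otp(f(\alpha))=\beta$. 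The cover $A:=\ran(h)$ is then automatically a wellorderable ground-model subset of $\pow_{\omega_1}(\nu)$, being the range of a function on $\kappa\times\omega_1$, and the $\omega$-strong-limit property immediately gives $|A|<\kappa=\cc$. The small cover thus comes for free from indexing the covering sets by $\kappa\times\omega_1$; there is nothing to shrink afterwards.
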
 
\begin{proof} 
\ref{covering 1}: 
Otherwise we have $p \Vdash_{\PP_\kappa} \aleph^->\kappa$ for some $p\in \PP_\kappa$ by \ref{cond seq 2}. 
Since $\PP_\kappa$ is nice by \ref{cond seq 3}, $\kappa^+\leq_i \PP_\kappa^\omega$. 
Since $\PP_\kappa^\omega \leq_i (\kappa^{\omega})^{\omega} \leq_i \kappa^{\omega}$ by \ref{cond seq 1}, $\kappa^+\leq_i \kappa^{\omega}$. 
Since $\cof(\kappa)>\omega$ by assumption, we have $\kappa^{\omega} = \bigcup_{\nu < \kappa} \nu^{\omega}$ and hence $\kappa \leq_i \nu^{\omega}$ for some $\nu<\kappa$, contradicting that $\kappa$ is an $\omega$-strong limit. 

\ref{covering 2}:  
Let $V[G]$ be a $\PP_\kappa$-generic extension  of $V$. 
We work in $V[G]$. 
Suppose that $\aleph = \kappa^+$. Then $\aleph^- = \kappa$. 
Note that $\jj$ exists since $\aleph^-\geq\omega_1$. 
Let $\nu<\kappa=\aleph^-$ and $B$ be a subset of $\nu^{\omega}$ of size $\kappa$. 
We claim that $B$ is not minimal. 
It suffices to find a wellorderable subset $A\in V$ of $\nu^{\omega}$ that covers $B$. 
Since $\kappa$ is an $\omega$-strong limit in $V$, $|A|<\kappa$ follows. 
Fix a bijective function $f\colon \kappa\rightarrow B$. 
Let $\dot{g}$ be a $\PP_\kappa$-name for the function $g\colon \kappa \times \omega\rightarrow \nu$ that sends $(\alpha,n)$ to $f(\alpha) (n)$. 
Let $\dot{f}$ be a $\PP$-name for the bijection $f$ and $p\in \PP$ force that $\dot{g}$ satisfies the definition of $g$ with $\dot{f}$ described in the last sentence.
For each $(\alpha,n) \in \kappa \times \omega$, 
let $D_{\alpha,n}$ denote the set of all conditions ${\leq} p$ in $\PP_\kappa$ that decide $\dot{g}(\alpha)(n)$. 
Define $g_{\alpha,n}\colon D_{\alpha,n} \rightarrow \nu$ such that 
$r\Vdash \dot{g}(\alpha)(n) = g_{\alpha,n}(r)$ for each $r \in D_{\alpha , n}$. 
Then $g_{\alpha,n}$ is a $\parallel$-homomorphism. 
Since $\PP_{\kappa}$ is uniformly $\omega$-narrow by \ref{cond seq 4}, there is a function $h\colon \kappa\times \omega \rightarrow \nu^{\omega}$ such that $h(\alpha ,n)$ enumerates $\ran \bigl( g (\alpha ,n)\bigr)$ for each $(\alpha , n) \in \kappa \times \omega$. Let $\bar{h} \colon \kappa \to (\nu^{\omega})^{\omega}$ be such that $\bar{h} (\alpha) (n) = h (\alpha , n)$. Using a bijection between $\omega$ and $\omega \times \omega$, we may assume $\bar{h} \colon \kappa \to \nu^{\omega}$. 
%
Finally, let $A=\ran(\bar{h})$. Then $A$ is wellorderable and $A$ covers $B = \ran(f)$, since $\ran\bigl(f(\alpha)\bigr)\subseteq \bigcup_{n \in \omega} \ran \bigl(h(\alpha,n )\bigr) = \ran \bigl(\bar{h} (\alpha)\bigr)$ for all $\alpha<\kappa$.  
\end{proof} 


The next lemma is a stronger version of a lemma of Woodin for $\CC^\kappa$. 

\begin{lemma} 
\label{covering fails} 
If $\nu\in\Card$, $p\in \PP_\nu$ forces that $\aleph$ is a successor cardinal and $\aleph> (\aleph^V)^+$, then $p \Vdash_{\PP_\nu} \jj\leq \nu$. 
\end{lemma}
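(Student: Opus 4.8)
The plan is to produce, in every $\PP_\nu$-generic extension $V[G]$ with $p\in G$, a jumbled subset of $\pow_{\omega_1}(\nu)$ of size $\cc$, which by definition of $\jj$ witnesses $\jj\leq\nu$ in $V[G]$ and hence gives $p\Vdash_{\PP_\nu}\jj\leq\nu$. Write $\cc^V$ for the ground-model value of $\cc$ and $\lambda:=\cc^{\PP_\nu/p}$, so that $p\Vdash\cc=\lambda$ and, by hypothesis, $\lambda\geq(\cc^V)^{++}$. Since $p$ forces that $\cc$ is attained, $p\Vdash\lambda\leq_i 2^\omega$, so niceness (\ref{cond seq 3}, via Definition \ref{def nice} and Lemma \ref{narrow small}) yields $\lambda\leq_i\PP_\nu^\omega$ in $V$; combining this with $\PP_\nu\leq_i\pow_{\omega^{[+]}}(\nu)$ from \ref{cond seq 1} and $\pow_{\omega^{[+]}}(\nu)^\omega\leq_i\pow_{\omega^{[+]}}(\nu)$, I obtain an injection $R\colon\lambda\to\pow_{\omega^{[+]}}(\nu)\subseteq\pow_{\omega_1}(\nu)$ in $V$. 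Setting $J:=\range(R)$, the bijection $R\in V\subseteq V[G]$ shows that $J$ has size $\lambda=\cc$ in $V[G]$, so it remains to prove that $J$ is jumbled there.

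Toward a contradiction I would fix $I\in V[G]$ with $I\subseteq\pow_{\omega_1}(\nu)$, $|I|=\mu<\cc=\lambda$, covering $J$, and reflect $I$ into $V$ by the narrowness argument of Lemma \ref{covering}. Enumerating $I=\{x_\delta\mid\delta<\mu\}$ and letting $g(\delta,\beta)$ be the $\beta$-th element of $x_\delta$, the functions $g_{\delta,\beta}$ on $\PP_\nu$ reading off the forced values of the coordinates of $\dot g$ are partial $\parallel$-homomorphisms into $\Ord$; since $\PP_\nu$ is $\omega$-narrow by \ref{cond seq 4}, each $h(\delta,\beta):=\bigcup_{\beta'<\beta}\range(g_{\delta,\beta'})$ is countable and lies in $V$, and $x_\delta\subseteq h(\delta,\otp(x_\delta))$. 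Thus $A:=\{h(\delta,\beta)\mid\delta<\mu,\ \beta<\omega_1\}\in V$ is a wellorderable family covering $I$, with $A\leq_i\mu\times\omega_1$ in $V$; here $\omega_1^{V[G]}=\omega_1^V$ because $\omega$-narrow forcings preserve $\omega_1$ by Lemma \ref{narrow pres card}. By transitivity of $\subseteq$, $A$ covers $J$, and as $A,J\in V$ this is absolute, so $A$ covers $J$ already in $V$.

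The decisive count is then performed in $V$. For each $a\in A$ the set $J\cap\pow(a)=\{y\in J\mid y\subseteq a\}$ consists of subsets of the countable set $a$, hence injects into $2^\omega$; being moreover wellordered by $R$, it has order type $<(\cc^V)^+$, and this order isomorphism is uniformly definable. Assigning to each $y\in J$ the $<_A$-least $a\in A$ covering it together with its rank in $J\cap\pow(a)$ gives an injection $J\to A\times(\cc^V)^+\leq_i(\mu\times\omega_1)\times(\cc^V)^+$ in $V$, which persists to $V[G]$. Now $\mu<\lambda$, and $(\cc^V)^+<(\cc^V)^{++}\leq\lambda$, while $\omega_1\leq(\cc^V)^+<\lambda$; hence in $V[G]$ the cardinality of this product is $\max(\mu,(\cc^V)^+)<\lambda$, so $|J|^{V[G]}<\lambda$, contradicting $|J|^{V[G]}=\lambda$. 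Therefore $J$ is jumbled and $\jj\leq\nu$ in $V[G]$, as required.

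I expect the main obstacle to be conceptual rather than computational: recognising that the reflected covering family lets one bound $J$ by $\mu\times\omega_1\times(\cc^V)^+$, and that the hypothesis $\lambda\geq(\cc^V)^{++}$ is precisely the slack needed to keep all three factors below $\lambda$ — the single successor $(\cc^V)^+$ is forced by the ``$\cc^V$-many $V$-subsets of one countable set'' phenomenon, and it is the double successor that defeats it. The one point demanding care is the bookkeeping between $V$ and $V[G]$: one must check that $A$ and the counting injection genuinely live in $V$ so that absoluteness of $\subseteq$ and persistence of injections apply, and that $\omega_1$ is unambiguous, which is exactly what $\omega$-narrowness secures through Lemma \ref{narrow pres card}.
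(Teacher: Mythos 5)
Your proposal is correct and follows essentially the same route as the paper's proof: niceness plus $\PP_\nu\leq_i\pow_{\omega^{[+]}}(\nu)$ gives a $\lambda$-sized subset of $\pow_{\omega_1}(\nu)$ in $V$, a putative small covering family is pulled back to a wellorderable family of countable sets in $V$ via $\omega$-narrowness, and the count $\max(\mu,\omega_1,\cc^{+})<\lambda$ (using $\lambda\geq\cc^{++}$ and the Hartogs bound on wellordered subsets of $2^\omega$) yields the contradiction. The only cosmetic difference is that you index by elements of $J\cap\pow(a)$ where the paper indexes by the ordinal sets $A_{\alpha,\beta}\subseteq\lambda$, and your uniform bound $\otp<(\cc^V)^{+}$ absorbs the paper's case split on whether $\cc$ is attained in $V$.
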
 
\begin{proof} 
We may assume that $p$ forces that $\lambda = \aleph^-$ for some $\lambda \in \Card$. 
Since $p$ forces that $\aleph$ is a successor cardinal and $\PP$ is nice by \ref{cond seq 3}, $\lambda \leq_i \PP_\nu^\omega$. 
Since $\PP_\nu^\omega \leq_i ( \nu^{\omega})^{\omega} \leq_i \nu^{\omega}$ by \ref{cond seq 1}, 
$\lambda\leq_i \nu^{\omega}$. 
We claim that any subset of $\nu^{\omega}$ of size $\lambda$ in $V$ is minimal in $V[G]$ for any $\PP_{\kappa}$-generic $G$ over $V$ with $p \in G$. 
To see this, fix an injective function $f \colon \lambda \rightarrow \nu^{\omega}$ in $V$. 
If $\ran(f)$ is not minimal, 
then there exists some cardinal $\mu<\lambda$, a $\PP_\nu$-name $\dot{g}$ for a function $\dot{g}\colon \mu \rightarrow \nu^{\omega}$ such that some $q\leq p$ forces that $\ran(\dot{g})$ covers $\ran(f)$. 
The next step is similar to the proof of Lemma \ref{covering} \ref{covering 2}. 
For each $(\alpha,n) \in \mu \times \omega$, 
let $D_{\alpha,n}$ denote the set of all conditions ${\leq} q$ in $\PP_\kappa$ that decide $\dot{g}(\alpha)(n)$. 
Define $g_{\alpha,n}\colon D_{\alpha,n} \rightarrow \nu$ such that 
$r\Vdash \dot{g}(\alpha)(n) = g_{\alpha,n}(r)$ for each $r \in D_{\alpha , n}$. 
Then $g_{\alpha,n}$ is a $\parallel$-homomorphism. 
Since $\PP_{\kappa}$ is uniformly $\omega$-narrow by \ref{cond seq 4}, there is a function $h\colon \kappa\times \omega \rightarrow \nu^{\omega}$ such that $h(\alpha ,n)$ enumerates $\ran \bigl( g (\alpha ,n)\bigr)$ for each $(\alpha , n) \in \kappa \times \omega$. Let $\bar{h} \colon \kappa \to (\nu^{\omega})^{\omega}$ be such that $\bar{h} (\alpha) (n) = h (\alpha , n)$. Using a bijection between $\omega$ and $\omega \times \omega$, we may assume $\bar{h} \colon \kappa \to \nu^{\omega}$. 
Let $\vec{A}=\langle A_{\alpha} \mid \alpha<\mu\rangle$, where 
$$A_{\alpha}=\{ \gamma < \lambda \mid \ran\bigl(f (\gamma) \bigr)\subseteq \ran \bigl(\bar{h}(\alpha)\bigr) \}.$$ 
Since $\ran \bigl(\bar{h}(\alpha)\bigr)$ is countable, $\otp(A_{\alpha})<\aleph^V$ for all $\alpha< \mu$. 
Hence $|\bigcup \vec{A}| \le \max (\aleph^V,\mu)$. 
Since $\bigcup \vec{A}=\lambda$ by the choice of $\dot{g}$, we have 
$\lambda\le  \max (\aleph^V,\mu)$. 
However, by assumption, $p$ forces that $\lambda = \aleph^-$ and $\aleph > (\aleph^V)^+$, so $\lambda > \aleph^V$. 
Since $\mu < \lambda$, we now have $\lambda > \max (\aleph^V,\mu)$, contradicting $\lambda\le  \max (\aleph^V,\mu)$.  
\end{proof}

The next theorem was proved by Woodin for the class $\CC^{*}$. 

\begin{theorem} 
\label{A implies singular} 
Suppose $\GA_{\vec{\PP}}$ holds. 
\begin{enumerate-(1)} 
\item 
\label{A implies singular 1} 
$\one_{\PP_\kappa} \Vdash \aleph >\kappa^+$ for any $\omega$-strong limit cardinal $\kappa$. 

\item 
\label{A implies singular 2} 
All infinite cardinals have countable cofinality. 

\end{enumerate-(1)} 
\end{theorem}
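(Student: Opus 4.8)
The strategy is to derive \ref{A implies singular 2} from \ref{A implies singular 1}, and to prove \ref{A implies singular 1} by reflecting a parameter-free sentence about $\cc$ and $\jj$ from a $\PP_\kappa$-extension down to $V$ via $\GA_{\vec{\PP}}$ and then refuting it by forcing again. For \ref{A implies singular 2} it suffices to show that no uncountable regular cardinal exists: if some infinite cardinal had uncountable cofinality, the least such $\mu$ would satisfy $\cof(\mu)=\mu$, since $\cof(\cof(\mu))=\cof(\mu)>\omega$ and a strictly smaller cardinal of uncountable cofinality would contradict minimality, so $\mu$ would be uncountable regular. Now suppose towards a contradiction that $\lambda$ is uncountable regular. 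As noted before Lemma~\ref{covering}, there are arbitrarily large $\omega$-inaccessible cardinals of cofinality $\lambda$; fix one, $\kappa$. Being $\omega$-inaccessible it is an $\omega$-strong limit, so \ref{A implies singular 1} gives $\one_{\PP_\kappa}\Vdash \cc>\kappa$, whereas Lemma~\ref{covering}~\ref{covering 1} gives $\one_{\PP_\kappa}\Vdash \cc=\kappa$. This contradiction shows that every infinite cardinal has countable cofinality.

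For \ref{A implies singular 1}, fix an $\omega$-strong limit cardinal $\kappa$. By Assumption~\ref{cond seq}~\ref{cond seq 2} the forcing $\PP_\kappa$ adds a $\kappa$-sequence of distinct reals, so $\one_{\PP_\kappa}\Vdash \cc\geq\kappa$, and that same sequence witnesses that $\cc$ is attained whenever $\cc=\kappa$. If $\one_{\PP_\kappa}\not\Vdash \cc>\kappa$, then some $p\in\PP_\kappa$ forces $\cc=\kappa$. Lemma~\ref{covering}~\ref{covering 2} then yields $p\Vdash \jj\geq\cc$, and combined with $\jj\leq\max(\cc,\omega_1)=\cc$ we get $p\Vdash$ ``$\cc$ is attained and $\jj=\cc$''. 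This is a sentence without parameters, so $\GA_{\vec{\PP}}$, which makes $V$ elementarily equivalent to $V[G]$ for any $\PP_\kappa$-generic $G\ni p$, gives $V\models$ ``$\cc$ is attained and $\jj=\cc$''. Write $c:=\cc^V$.

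To reach a contradiction I would force over $V$ with $\PP_\mu$ for an $\omega$-strong limit cardinal $\mu$ of countable cofinality chosen with $\mu\geq c^{++}$. Fixing a cofinal $\omega$-sequence in $\mu$ together with the generic $\mu$-sequence of distinct reals, one codes each locally countable $s\subseteq\mu$ injectively into $2^\omega$ by the real enumerating $\langle x_\alpha\mid \alpha\in s\rangle$ along the canonical enumeration of $s$ in $\HOD_{\{s\}}$, so $\pow_{\omega^{[+]}}(\mu)\leq_i 2^\omega$; set $\lambda:=|\pow_{\omega^{[+]}}(\mu)|$, so $\lambda\leq_i 2^\omega$. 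Since $\PP_\mu$ is nice (Definition~\ref{def nice}, via Assumption~\ref{cond seq}~\ref{cond seq 3}), the argument of Lemma~\ref{covering fails} runs with this explicit $\lambda$ in place of an attained value of $\cc$: niceness gives $\lambda\leq_i \PP_\mu^\omega\leq_i \pow_{\omega_1}(\mu)$, and the $\omega$-narrowness counting then shows that any size-$\lambda$ subset of $\pow_{\omega_1}(\mu)$ from $V$ is jumbled in the extension, so $\one_{\PP_\mu}\Vdash \jj\leq\mu$. Provided $\lambda>\mu$, we have $\cc\geq\lambda>\mu\geq\jj$ in the extension, i.e. $\one_{\PP_\mu}\Vdash \jj<\cc$; a second application of $\GA_{\vec{\PP}}$ forces $V\models \jj<\cc$, contradicting $V\models \jj=\cc$. (This also dispenses with the attainment hypothesis of Lemma~\ref{covering fails}, since $\lambda$ is produced directly.)

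The main obstacle is the inequality $\lambda>\mu$, i.e. $\pow_{\omega^{[+]}}(\mu)\not\leq_i\mu$ for an $\omega$-strong limit $\mu$ of countable cofinality, which is the genuine ``strict jump'' of the continuum and is exactly where the countable-cofinality hypothesis is used; the computation in Lemma~\ref{covering}~\ref{covering 1} shows it fails for $\omega$-inaccessible $\mu$, so countable cofinality is essential. Making this precise is delicate without the axiom of choice: it is a König-type statement $\prod_n \mu_n>\mu$ for a cofinal sequence $\langle \mu_n\rangle$, together with the injective coding $\pow_{\omega^{[+]}}(\mu)\leq_i 2^\omega$ above, and both must be carried out in $\ZF$ using the decodability of $\alpha$ from $x_\alpha$ and the canonical $\HOD$-enumerations. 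Securing these two $\ZF$ facts, and checking that suitable $\mu\geq c^{++}$ of countable cofinality exist arbitrarily high as limits of $\omega$-strong limit cardinals, is the technical heart of the proof.
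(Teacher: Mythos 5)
Your reduction of \ref{A implies singular 2} to \ref{A implies singular 1} and the opening of \ref{A implies singular 1} (assume some $p\Vdash_{\PP_\kappa}\cc=\kappa$, apply Lemma \ref{covering}~\ref{covering 2}, and transfer ``$\cc$ is attained and $\jj\geq\cc$'' to $V$ via $\GA_{\vec{\PP}}$) match the paper. The gap is in the second half. To exhibit a forcing in the class that forces $\jj<\cc$ you need $\cc>\mu$ in the $\PP_\mu$-extension, and you propose to get this outright in $\ZF$ by setting $\lambda:=|\pow_{\omega^{[+]}}(\mu)|$ and arguing $\lambda\leq_i 2^\omega$ and $\lambda>\mu$ by a K\"onig-type computation. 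But $\cc$ is a supremum of \emph{cardinals} (wellordered types) injecting into $2^\omega$, and $\pow_{\omega^{[+]}}(\mu)$ need not be wellorderable in the extension, so $|\pow_{\omega^{[+]}}(\mu)|$ is not an ordinal and ``$\lambda>\mu$'' has no meaning. The $\ZF$-provable K\"onig statement $\pow_{\omega^{[+]}}(\mu)\not\leq_i\mu$ does not yield a wellordered subset of $2^\omega$ of size ${>}\mu$: a set can fail to inject into $\mu$ while every wellordered subset of it has size ${\leq}\mu$. Whether $\cc$ jumps strictly above a singular $\mu$ after forcing with $\PP_\mu$ is precisely the content of \ref{A implies singular 1} itself, so the step you flag as ``the technical heart'' is circular and cannot be secured in $\ZF$ alone. (A secondary issue: to conclude $\jj\leq\mu$ you need a jumbled set, which by definition has size exactly $\cc$ of the extension, so an arbitrary $\lambda$ with $\lambda\leq_i\pow_{\omega_1}(\mu)$ would not suffice anyway.)

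The paper circumvents all of this with a second, different use of $\GA_{\vec{\PP}}$: since $\PP_\kappa$ preserves cardinals ${\leq}\kappa$ and $\kappa$ is a limit cardinal, $p$ also forces ``$\cc$ is a limit cardinal'', and by $\GA_{\vec{\PP}}$ this sentence holds in $V$ and in every $\PP_\lambda$-extension. Choosing $\lambda$ to be a \emph{successor} cardinal with $\lambda\geq\cc^{++}$ (computed in $V$), one gets $\cc\geq\lambda$ in the $\PP_\lambda$-extension from Assumption \ref{cond seq}~\ref{cond seq 2}, and $\cc\neq\lambda$ there because $\lambda$ remains a successor cardinal while $\cc$ is a limit cardinal; hence $\cc>\lambda\geq\cc^{++}$ and Lemma \ref{covering fails} applies verbatim to give $\one_{\PP_\lambda}\Vdash\jj\leq\lambda<\cc$, contradicting the transferred statement $\jj\geq\cc$. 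No choiceless K\"onig argument is needed; replacing your $\PP_\mu$-step by this elementarity argument repairs the proof.
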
 
\begin{proof} 
\ref{A implies singular 1}: 
Towards a contradiction, suppose that there exists an $\omega$-strong limit cardinal $\kappa$ with $p \Vdash_{\PP_\kappa} \aleph =\kappa^+$ for some $p\in \PP_\kappa$.  
By Lemma \ref{covering} \ref{covering 2}, $p \Vdash_{\PP_\kappa} \jj\geq\aleph^-$. 
It suffices that $\jj<\aleph^-$ holds in some $\PP_\lambda$-generic extension for some $\lambda\in \Card$, as this would contradict $\GA_{\vec{\PP}}$. 
To see this, pick any successor cardinal $\lambda\ge \aleph^{+}$. 
Since $\PP_\kappa$ preserves all cardinals by \ref{cond seq 4} and Lemma~\ref{narrow pres card}, $p$ forces that $\aleph$ is a successor cardinal and $\aleph^-$ is a limit cardinal. 
By $\GA_{\vec{\PP}}$, the same holds for $\PP_\lambda$. 
Since $\lambda$ is a successor cardinal and $\PP_\lambda$ preserves all cardinals, $\one_{\PP_\lambda} \Vdash \aleph > \aleph^->\lambda \ge (\aleph^V)^+$. 
Since $\one_{\PP_\lambda}$ forces that $\aleph$ is a successor cardinal, $\one_{\PP_\lambda}$ forces $\jj\leq\lambda<\aleph^-$ by Lemma \ref{covering fails} as required. 

\ref{A implies singular 2}: 
Otherwise there exists an $\omega$-inaccessible cardinal. 
Then \ref{A implies singular 1} and Lemma \ref{covering} \ref{covering 1} provide contradictory conclusions. 
\end{proof} 

\begin{remark} 
Theorem \ref{A implies singular} works if in Assumption \ref{cond seq}, \ref{cond seq 1}  is replaced by 
$\PP_\kappa \leq_i \pow_{\omega^{[+]}}(\kappa)$ and \ref{cond seq 4}  by the statement that $\PP_\kappa$ is $\omega$-narrow. 
This avoids the assumption that $\PP_\kappa$ is uniformly narrow. 
The above proof can be modified to this setting by replacing $\nu^\omega$ by $\pow_{\omega_1}(\nu)$ in the definition of $\jj$ and adapting the proofs of Lemmas \ref{covering} and \ref{covering fails}. 
\end{remark} 


\begin{corollary} 
\label{no embeddings} 
There exists some $\kappa\in\Card$ such that there is no elementary embedding $j\colon V \rightarrow V[G]$ in any outer model of $V[G]$ for any $\PP_\kappa$-generic filter $G$ over $V$. 
\end{corollary}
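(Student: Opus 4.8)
I would argue by contradiction, assuming the conclusion fails: for every $\kappa\in\Card$ there are a $\PP_\kappa$-generic filter $G_\kappa$ over $V$ and an elementary embedding $j_\kappa\colon V\to V[G_\kappa]$ lying in some outer model of $V[G_\kappa]$. The first step is to extract $\GA_{\vec\PP}$. Elementarity of $j_\kappa$ on sentences yields $V\equiv V[G_\kappa]$. Since the forcings in question, $\CC^\kappa$ and $\RR_\kappa$, are weakly homogeneous, and weak homogeneity is provable in $\ZF$, the truth value in a $\PP_\kappa$-extension of any sentence is decided by $\one_{\PP_\kappa}$; hence $V\equiv V[G_\kappa]$ upgrades to $V\equiv V[G]$ for every $\PP_\kappa$-generic $G$. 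Ranging over $\kappa$, this is exactly $\GA_{\vec\PP}$. By Theorem \ref{A implies singular}\ref{A implies singular 2}, all infinite cardinals of $V$ then have countable cofinality; in particular $V$ has no uncountable regular cardinal.

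Next I would use the definable cardinal $\cc$ to force $j_\kappa$ to move an ordinal. Fix an $\omega$-strong limit cardinal $\kappa>\cc$; such cardinals exist cofinally in $\Ord$, for instance as suprema of $\omega$-iterates of surjective Hartogs numbers. Having established $\GA_{\vec\PP}$, Theorem \ref{A implies singular}\ref{A implies singular 1} gives $\cc^{V[G_\kappa]}>\kappa$. As $\cc$ is a definable ordinal and $j_\kappa$ is elementary, $j_\kappa(\cc^V)=\cc^{V[G_\kappa]}>\kappa>\cc^V$. Thus $j_\kappa$ moves the ordinal $\cc^V$, so it is nontrivial and its critical point $\mu:=\mathrm{crit}(j_\kappa)$ exists and satisfies $\mu\le\cc^V$.

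Finally I would verify, by the usual pullback arguments carried out inside the outer model hosting $j_\kappa$, that $\mu$ is an uncountable regular cardinal of $V$. Indeed $j_\kappa$ fixes every $n\le\omega$, so $\mu>\omega$; if some $g\in V$ were a bijection of $\lambda<\mu$ onto $\mu$, then $j_\kappa(g)$ would agree with $g$ pointwise and hence be a bijection of $\lambda$ onto $\mu\neq j_\kappa(\mu)$, contradicting elementarity, so $\mu$ is a cardinal; and if some $f\in V$ were cofinal from $\lambda<\mu$ into $\mu$, then $j_\kappa(f)$ would again agree with $f$ and so be bounded below $j_\kappa(\mu)$, contradicting that it is cofinal in $j_\kappa(\mu)$, so $\mu$ is regular. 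A cardinal $\mu>\omega$ is uncountable, and this contradicts the conclusion of the first step that $V$ carries no uncountable regular cardinal. This contradiction establishes the corollary.

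The crux, and the reason I route the argument through $\cc$ rather than simply quoting that nontrivial elementary embeddings have critical points, is to guarantee that $j_\kappa$ genuinely moves some ordinal. In $\ZF$ one cannot code arbitrary sets by sets of ordinals, so an elementary embedding $V\to V[G]$ that fixes every ordinal need not be the identity, and such a possibly nontrivial embedding would have no critical point to exploit. Producing the strict inequality $j_\kappa(\cc^V)>\cc^V$ from Theorem \ref{A implies singular}\ref{A implies singular 1} is precisely what rules out this pathology, so that the critical point argument becomes available.
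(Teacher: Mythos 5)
Your proof is correct and follows essentially the same route as the paper's (very terse) argument: derive $\GA_{\vec{\PP}}$ from the assumed embeddings, observe that $j$ must move the definable ordinal $\cc^V$ once $\kappa>\cc^V$, and conclude that $\mathrm{crit}(j)$ is an uncountable regular cardinal of $V$, contradicting Theorem \ref{A implies singular}\ref{A implies singular 2}. The only notable differences are that the paper obtains $\cc^{V[G]}\geq\kappa>\cc^V$ directly from Assumption \ref{cond seq}\ref{cond seq 2} and \ref{cond seq 5} rather than via Theorem \ref{A implies singular}\ref{A implies singular 1} (so $\kappa$ need not be an $\omega$-strong limit), and that your explicit appeal to weak homogeneity to upgrade \lq\lq some elementarily equivalent extension for each $\kappa$'' to the full scheme $\GA_{\vec{\PP}}$ makes precise a step the paper leaves implicit, at the cost of restricting to homogeneous instances such as $\CC^\kappa$ and $\RR_\kappa$ rather than an arbitrary sequence satisfying Assumption \ref{cond seq}.
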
 
\begin{proof} 
It the claim fails, then $\GA_{\vec{\PP}}$ holds in $V$. 
Moreover, $j$ necessarily moves an ordinal by \ref{cond seq 2} if $\kappa>\aleph^-$. 
Thus $\mathrm{crit}(j)$ is regular in $V$, contradicting Theorem \ref{A implies singular} \ref{A implies singular 2}. 
\end{proof}

\subsection{The bounding and dominating numbers} 
\label{section bounding} 

The \emph{bounding number} $\bb$ and \emph{dominating number} $\dd$ are defined as the least cardinal $\kappa$ such that there exists an unbounded, respectively dominating, family in $\omega^\omega$ of size $\kappa$. 
They need not exist in choiceless models. 

If $p\in \PP$ and $\sigma$ is a $\PP$-name, we write $p\vdash \sigma$ if $p$ decides the value of $\sigma$, i.e., $p\Vdash \sigma=\check{x}$ for some $x\in V$. 

\begin{lemma} 
\label{density of conditions deciding support} 
Suppose $\vec{\PP}=\langle \PP_\alpha,\dot{\PP}_\alpha,\PP_\gamma\mid \alpha< \gamma\rangle$ is a finite support iteration and $\vec{f}=\langle \dot{f}_\alpha \mid \alpha<\gamma\rangle$ is a sequence of $\PP_\alpha$-names with $\one_{\PP_\alpha}\Vdash \dot{f}_\alpha\colon \dot{\PP}_\alpha \rightarrow \check{V}$ for all $\alpha<\gamma$. 
Then
$$  \QQ:= \{ p\in \PP_\gamma \mid \forall \alpha \in \supp(p)\  p{\upharpoonright}\alpha \vdash \dot{f}_\alpha(p(\alpha)) \} $$ 
is dense in $\PP_\gamma$. 
\end{lemma}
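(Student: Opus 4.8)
The plan is to reduce, given any $p_0\in\PP_\gamma$, to a stronger condition lying in $\QQ$ by repeatedly deciding the value $\dot f_\alpha(p(\alpha))$ at the \emph{largest} coordinate where it is not yet decided. For a condition $p$ write $U(p):=\{\alpha\in\supp(p)\mid p\upharpoonright\alpha\not\vdash\dot f_\alpha(p(\alpha))\}$ for its set of undecided coordinates; since the iteration has finite support, $U(p)$ is finite, and $p\in\QQ$ precisely when $U(p)=\emptyset$. First I would record the basic step: if $U(p)\neq\emptyset$ and $\alpha:=\max U(p)$, then since $p\in\PP_\gamma$ gives $p\upharpoonright\alpha\Vdash p(\alpha)\in\dot{\PP}_\alpha$ and $\one_{\PP_\alpha}\Vdash\dot f_\alpha\colon\dot{\PP}_\alpha\to\check V$, the name $\dot f_\alpha(p(\alpha))$ is forced below $p\upharpoonright\alpha$ to lie in $\check V$. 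By the standard density of conditions deciding a name that is forced into the ground model (which needs no choice and can be read off the Boolean completion as in the proof of Lemma \ref{small names}), there is some $r\leq p\upharpoonright\alpha$ in $\PP_\alpha$ with $r\vdash\dot f_\alpha(p(\alpha))$. Setting $p':=r{}^\frown(p\upharpoonright[\alpha,\gamma))$ yields a condition $p'\leq p$, since $r\leq p\upharpoonright\alpha$ and the tail of $p$ is unchanged.

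The heart of the argument is the monotonicity claim $U(p')\subseteq\alpha$, equivalently that every coordinate ${\geq}\alpha$ is decided in $p'$. For $\beta=\alpha$ this holds by the choice of $r$, as $p'\upharpoonright\alpha=r$ and $p'(\alpha)=p(\alpha)$. For $\beta>\alpha$ we have $\beta\notin U(p)$ because $\alpha=\max U(p)$, so $p\upharpoonright\beta\vdash\dot f_\beta(p(\beta))$; since $p'\upharpoonright\beta\leq p\upharpoonright\beta$ differs from it only below $\alpha<\beta$ while $p'(\beta)=p(\beta)$, the decision is preserved under strengthening and $\beta\notin U(p')$. Hence $\max U(p')<\alpha$ whenever $U(p')\neq\emptyset$.

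With this step in hand, I would iterate it starting from $p_0$ to build a descending chain $p_0\geq p_1\geq\cdots$ together with ordinals $\alpha_0>\alpha_1>\cdots$, where $\alpha_n=\max U(p_n)$ and $p_{n+1}$ is obtained from $p_n$ as above. The monotonicity claim forces $\alpha_{n+1}<\alpha_n$, so the ordinals strictly decrease; by well-foundedness of $\Ord$ the process terminates, which can only occur at some $p_k$ with $U(p_k)=\emptyset$, that is $p_k\in\QQ$ and $p_k\leq p_0$. This establishes density. The one point requiring care, and the only genuine obstacle, is the monotonicity claim: it is precisely the insistence on always treating the \emph{topmost} undecided coordinate that guarantees each strengthening happens strictly below all previously decided coordinates, so that no earlier decision is destroyed and the construction cannot cycle. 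Processing coordinates in any other order could reintroduce undecided coordinates above the current one and break termination.
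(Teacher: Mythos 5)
Your proof is correct and follows essentially the same route as the paper's: both repeatedly decide $\dot f_\alpha(p(\alpha))$ at the topmost undecided coordinate of the support and exploit the fact that strengthening $p{\upharpoonright}\alpha$ introduces new support (hence new undecided coordinates) only strictly below $\alpha$, so the maxima decrease and the process must stop. The only difference is bookkeeping: the paper fixes a wellorder of $[\gamma]^{<\omega}$ and carries the set of all candidate conditions at each stage so that no arbitrary choices are made, whereas your chain of existential steps is most cleanly justified in $\ZF$ by induction on $\max U(p)$ (or a minimal-counterexample argument) rather than by literally constructing the descending sequence.
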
 
\begin{proof} 
Fix a wellorder $\leq^*$ of $[\gamma]^{<\omega}$ and $p_0\in \PP_\gamma$. 
We construct the following for some $k\in\omega$: 
\begin{enumerate-(i)} 
\item 
$\vec{s}=\langle s_n\mid n\leq k\rangle$ with $s_n\in [\gamma]^{<\omega}$. 

\item 
$\vec{P}=\langle P_n\mid n<k \rangle$ with $P_n\subseteq \PP_\gamma^{(p_0)}:=\{p\in \PP_\gamma\mid p\leq p_0\}$. 

\item 
$\vec{\alpha}=\langle \alpha_n \mid n<k\rangle$ strictly decreasing. 
\end{enumerate-(i)} 
For all $n<k$, we will have $\max(s_n)=\alpha_n$ and for all $p\in P_n$: 
 
\begin{enumerate-(a)} 
\item 
$p\leq p_0$ 
\item 
$\supp(p)=s_n\cup \{\alpha_i\mid i<n\}$ 
\item 
$p{\upharpoonright}\alpha_i$ decides $\dot{f}_{\alpha_i}(p(\alpha_i))$ for all $i< n$. 
\end{enumerate-(a)} 
Let $P_0=\{p_0\}$, $s_0=\supp(p_0)$ and $\alpha_0=\max(s_0)$. 
In the successor step, suppose that $s_n$, $P_n$ and $\alpha_n$ have been constructed. 
Let $s_{n+1}$ be the $\leq^*$-least support of a condition $r\leq q{\upharpoonright}\alpha_n$ in $\PP{\upharpoonright}\alpha_n$ deciding $\dot{f}_{\alpha_n}(q(\alpha_n))$ for some $q\in P_n$. 
If $s_{n+1}\neq \emptyset$, let $\alpha_{n+1}=\max(s_{n+1})$ and let $P_{n+1}$ be the set of conditions $q\in \PP_\gamma$ such that $\supp(q)\cap \alpha_n=s_{n+1}$, $q{\upharpoonright}\alpha_n$ decides $\dot{f}_{\alpha_n}(q(\alpha_n))$ and there is some $p\in P_n$ with $q{\upharpoonright}\alpha_n \leq p{\upharpoonright}\alpha_n$, 
$\supp(p)\setminus \alpha_n = \supp(q)\setminus \alpha_n$ and 
$p(\alpha)=q(\alpha)$ for all $\alpha\geq\alpha_n$. 
Since $\vec{\alpha}$ is strictly decreasing, there is some $n\in\omega$ with $s_{n+1}=\emptyset$. 
Let $k=n+1$. 
There is some $q \in P_n$ with support $\{\alpha_0,\dots,\alpha_n\}$ such that $q{\upharpoonright}\alpha_i$ decides $\dot{f}_{\alpha_i}(q(\alpha_i))$ for all $i\leq n$. 
Thus $q\in \QQ$ and $q\leq p_0$. 
\end{proof} 

Suppose that $\PP$ and $\SSS$ are forcings. 
Recall that a $\leq$-homomorphism $g\colon \PP\rightarrow \SSS$ is called a \emph{projection} if $\ran(g)$ is dense in $\SSS$ and for all $p\in \PP$ and all $t\leq g(p)$, there exists some $p'\leq p$ with $g(p')\leq t$. 
We call a function $g\colon \PP\rightarrow \SSS$ a \emph{$\perp$-projection} if $g$ is simultaneously a $\perp$-homomorphism and projection. 
For the next lemma, suppose $\vec{\PP}=\langle \PP_\alpha,\dot{\PP}_\alpha, \dot{f}_\alpha, \PP_\delta  \mid \alpha< \delta\rangle$ is a sequence such that $\vec{\PP}=\langle \PP_\alpha,\dot{\PP}_\alpha, \PP_\delta\mid \alpha< \delta\rangle$ is a finite support iteration and $\one_{\PP_\alpha}$ forces ``$\dot{f}_\alpha\colon \dot{\PP}_\alpha\rightarrow \check{\SSS}$ is a $\perp$-projection'' for all $\alpha<\delta$. 

\begin{lemma} 
\label{perp projection} 
For each $\gamma\leq\delta$, there exists a dense subset $\QQ_\gamma$ of $\PP_\gamma$ and a $\perp$-projection $g_\gamma\colon \QQ_\gamma \rightarrow \SSS^\gamma$, where $\SSS^{\gamma}$ is the finite support product of $\SSS$ of length $\gamma$. 
\end{lemma}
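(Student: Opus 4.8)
The plan is to take $\QQ_\gamma$ to be exactly the dense set produced by Lemma~\ref{density of conditions deciding support} applied to the sequence $\vec{f}$, namely $\QQ_\gamma=\{p\in\PP_\gamma\mid\forall\alpha\in\supp(p)\ p{\upharpoonright}\alpha\vdash\dot{f}_\alpha(p(\alpha))\}$, and to define $g_\gamma$ coordinatewise. For $p\in\QQ_\gamma$ and $\alpha<\gamma$, let $g_\gamma(p)(\alpha)$ be the unique $s\in\SSS$ with $p{\upharpoonright}\alpha\Vdash\dot{f}_\alpha(p(\alpha))=\check{s}$ when $\alpha\in\supp(p)$, and $g_\gamma(p)(\alpha)=\one_\SSS$ otherwise. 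Since conditions in $\PP_\gamma$ have finite support, $g_\gamma(p)$ lands in the finite support product $\SSS^\gamma$. Assigning $\one_\SSS$ off the support keeps $g_\gamma(p)$ of finite support and is harmless; I will verify this is compatible with the ordering when checking the order-homomorphism clause.

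It then remains to verify the four defining clauses of a $\perp$-projection, which I would do by induction on $\gamma$ following the iteration structure, using $\PP_{\beta+1}=\PP_\beta*\dot{\PP}_\beta$ with the identification $\SSS^{\beta+1}\cong\SSS^\beta\times\SSS$ at successors and the finite support direct limit at limits. The $\leq$-homomorphism clause is immediate: if $p\leq q$ then $p{\upharpoonright}\alpha\leq q{\upharpoonright}\alpha$ and $p{\upharpoonright}\alpha\Vdash p(\alpha)\leq q(\alpha)$, and since $\dot{f}_\alpha$ is forced to be a $\leq$-homomorphism, $p{\upharpoonright}\alpha\Vdash\dot{f}_\alpha(p(\alpha))\leq\dot{f}_\alpha(q(\alpha))$; as both sides are checks this yields $g_\gamma(p)(\alpha)\leq g_\gamma(q)(\alpha)$, a coordinate on which $q$ acts trivially being forced trivial already. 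For density of the range, given $t\in\SSS^\gamma$ with support contained in some $\beta<\gamma$, I would combine the inductive density of $\ran(g_\beta)$ with the forced density of $\ran(\dot{f}_\alpha)$ to build, over the finitely many relevant $\alpha$, a condition $p\in\QQ_\gamma$ with $g_\gamma(p)\leq t$; at each coordinate the forcing theorem lets me extend a condition to decide a name $\dot{q}_\alpha$ witnessing $\dot{f}_\alpha(\dot{q}_\alpha)\leq\check{t(\alpha)}$, so no appeal to fullness is needed. The lifting clause is analogous: given $t\leq g_\gamma(p)$, I lift below each of the finitely many coordinates where $t$ drops strictly below $g_\gamma(p)(\alpha)$, extending $p$ there by the projection property of $\dot{f}_\alpha$ and then extending further to decide the new image, producing $p'\leq p$ in $\QQ_\gamma$ with $g_\gamma(p')\leq t$.

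The main obstacle is the $\perp$-homomorphism clause, i.e.\ that $g_\gamma$ \emph{reflects} compatibility: if $g_\gamma(p_0)$ and $g_\gamma(p_1)$ are compatible in $\SSS^\gamma$, then $p_0$ and $p_1$ are compatible in $\PP_\gamma$. Since compatibility in a finite support product is coordinatewise, the hypothesis gives $g_\gamma(p_0)(\alpha)\parallel g_\gamma(p_1)(\alpha)$ for every $\alpha$ in the finite union of supports. I would amalgamate a common extension $r\leq p_0,p_1$ by recursion on these coordinates: at a coordinate in both supports the stem $r{\upharpoonright}\alpha$ decides both images to compatible values, and because $\dot{f}_\alpha$ is forced to be a $\perp$-homomorphism it reflects this compatibility, so below the common part built so far there is a name for a common refinement of $p_0(\alpha)$ and $p_1(\alpha)$; at a coordinate in only one support the other entry is $\dot{\one}$ and refinement is automatic. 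Threading this amalgamation through the two-step iteration at successors and through the finite support union at limits—while keeping the intermediate conditions inside $\QQ_\gamma$ by repeatedly deciding values via the forcing theorem—is the delicate bookkeeping, but each individual step is exactly the reflecting-compatibility property of a single forced $\perp$-projection $\dot{f}_\alpha$ together with the inductive hypothesis for $g_\beta$.
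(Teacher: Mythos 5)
Your proposal is correct and follows essentially the same route as the paper: the same dense set $\QQ_\gamma$ from Lemma \ref{density of conditions deciding support}, the same coordinatewise definition of $g_\gamma$ via decided values of the $\dot{f}_\alpha$, and the same verifications (amalgamating a common extension coordinate by coordinate using that each $\dot{f}_\alpha$ reflects compatibility, and lifting/density via the forced projection property together with the forcing theorem in place of fullness). The only cosmetic difference is that you wrap the argument in an induction on $\gamma$ through successors and limits, whereas the paper runs a single finite recursion over the coordinates in the (finite) union of supports, which is the same computation.
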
 
\begin{proof} 
For $\gamma\leq\delta$, let $\QQ_\gamma$ denote the set of $p\in \PP_\gamma$ such that $p{\upharpoonright}\alpha$ decides $\dot{f}_{\alpha}(p(\alpha))$ for all $\alpha\in \supp(p)$. 
$\QQ_\gamma$ is dense in $\PP_\gamma$ by Lemma \ref{density of conditions deciding support}. 
Let 
$g\colon \QQ_\gamma\rightarrow \SSS^\gamma$ with $g(p)=t$, where $\dom(t)=\supp(p)$ and 
$\forall \alpha\in \supp(p)\ \ 
p{\upharpoonright}\alpha \Vdash_{\PP_\alpha} \dot{f}_\alpha(p(\alpha))=\check{t}(\alpha)$. 
Clearly, $g$ is a $\leq$-homomorphism. 

\begin{claim*} 
$g$ is a $\perp$-homomorphism. 
\end{claim*} 
\begin{proof} 
Suppose that $p,q\in \QQ_\gamma$ with $g(p)\parallel g(q)$. 
Let $t=g(p)$ and $u=g(q)$. 
Let $\alpha_0,\dots,\alpha_{n-1}$ enumerate $\supp(p)\cup\supp(q)$ in increasing order and let $\alpha_n:=\gamma$. 
It suffices to show $p \parallel q$. 
To this end, we define a sequence $\langle s_i\mid i\leq n\rangle$ with 
$\dom(s_i)=\alpha_i$, $s_i\leq p{\upharpoonright}\alpha_i, q{\upharpoonright}\alpha_i$ and $s_j{\upharpoonright}\alpha_i \leq s_i$ for $i\leq j\leq n$. 
Then $s_n\leq p,q$ witnesses $p\parallel q$. 
Let $s_0=\mathbf{1}{\upharpoonright}\alpha_0$. 
Suppose $s_i$ is defined, where $i<n$. 
If $\alpha_i\in \supp(p)\setminus \supp(q)$, let $s_{i+1}=s_i^\smallfrown \langle p(\alpha_i)\rangle^\smallfrown \mathbf{1}^{(\alpha_i,\alpha_{i+1})}$. 
The case $\alpha_i\in \supp(q)\setminus \supp(p)$ is similar. 
Now suppose $\alpha_i\in \supp(p)\cap \supp(q)$. 
By the inductive hypothesis, 
$s_i \Vdash_{\PP_{\alpha_i}} \dot{f}_{\alpha_i}(p(\alpha_i))=\check{t}(\alpha_i)=\check{u}(\alpha_i)=\dot{f}_{\alpha_i}(q(\alpha_i))$. 
Hence $s_i\Vdash_{\PP_{\alpha_i}}p(\alpha_i) \parallel q(\alpha_i)$. 
Pick a $\PP_{\alpha_i}$-name $\sigma$ and some $s\leq s_i$ with $s\Vdash_{\PP_{\alpha_i}} \sigma\leq p(\alpha_i), q(\alpha_i)$. 
Then $s_{i+1}=s^\smallfrown \langle\sigma\rangle^\smallfrown \mathbf{1}^{(\alpha_i,\alpha_{i+1})}$ is as required. 
\end{proof} 

\begin{claim*}
$g$ is a projection. 
\end{claim*} 
\begin{proof} 
To see that $g$ is a projection, let $g(p)=t$ and $s\leq t$. 
Let $p_0:=p$ and $\alpha:=\min(\dom(s))$. 
Since $\one_{\PP_\alpha}$ forces that $\dot{f}_\alpha$ is a projection, there exists $q_\alpha\leq p_0{\upharpoonright}\alpha$ in $\QQ_\alpha$ and a $\PP_\alpha$-name $\sigma$ with $q_\alpha\Vdash_{\PP_\alpha} ``\sigma \leq p(\alpha)$ and $\dot{f}_\alpha(\sigma) \le s(\alpha)$'' and $q_{\alpha} \vdash \dot{f}_{\alpha} (\sigma)$. 
Let $p_\alpha=q_\alpha^\smallfrown\langle\sigma\rangle^\smallfrown \langle p_0 (\beta) \mid \alpha<\beta<\gamma \rangle \in \QQ_\gamma$. 
Repeating this process for all other $\alpha\in \dom(s)$ up to $\beta:=\max(\dom(s))$ yields some $p_\beta\leq p$ with $g(p_\beta) \leq s$. 

It remains to show that $\ran(g)$ is dense in $\SSS^\gamma$. 
To see this, take any $t\in \SSS^\gamma$ and let $\alpha:=\min(\dom(t))$. 
Pick some $q_\alpha\in \QQ_\alpha$ and a $\PP_\alpha$-name $\sigma_\alpha$ with $q_\alpha \Vdash \dot{f}_\alpha(\sigma_\alpha)\leq t(\alpha)$ and $q_{\alpha} \vdash \dot{f}_{\alpha} (\sigma_{\alpha})$. 
Let $p_\alpha:=q_\alpha^\smallfrown\langle\sigma_{\alpha} \rangle^\smallfrown \one^{(\alpha+1,\gamma)}\in\QQ_\gamma $. 
Repeating this process for all other $\alpha\in \dom(t)$ up to $\beta:=\max(\dom(t))$ yields some $p_\beta\in\QQ_\gamma$ with $g(p_\beta) \leq t$. 
\end{proof}

Thus $g\colon \PP_\gamma\rightarrow \SSS^\gamma$ is a $\perp$-projection as required. 
\end{proof} 

If $\kappa>\omega$ is regular, then one can force $\bb=\dd=\kappa$: 

\begin{theorem} 
\label{bounding regular} 
Suppose that $\kappa$ is a cardinal of uncountable cofinality. 
Then $\HH^{(\kappa)}$ forces $\bb=\dd=\cof(\kappa)$. 
\end{theorem}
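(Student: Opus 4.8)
The plan is to establish $\cof(\kappa)\le\bb\le\dd\le\cof(\kappa)$, using the trivial $\ZF$-inequality $\bb\le\dd$ (a dominating family is unbounded, since a bound for it would fail to dominate its successor) together with the two bounds $\dd\le\cof(\kappa)$ and $\bb\ge\cof(\kappa)$. Write $\PP_\kappa=\HH^{(\kappa)}$ and fix a $\PP_\kappa$-generic filter $G$. As $\PP_\kappa$ is a finite support iteration of $\omega$-linked forcings, Corollary \ref{iteration linked} shows it preserves the uncountable regular cardinal $\cof(\kappa)$, so the target value is computed the same in $V$ and in $V[G]$. I will use repeatedly that the Hechler real $d_\alpha$ added at stage $\alpha<\kappa$ dominates every real of $V[G{\upharpoonright}\alpha]$, i.e. $f\le^* d_\alpha$ for all $f\in V[G{\upharpoonright}\alpha]\cap\omega^\omega$; in particular $\langle d_\alpha\mid\alpha<\kappa\rangle$ is $\le^*$-increasing.

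The engine of the proof is a capturing lemma: every real of $V[G]$ lies in $V[G{\upharpoonright}\beta]$ for some $\beta<\kappa$. First I would present $\PP_\kappa$ as a uniform iteration in which each $\dot\PP_\alpha$ carries the stem $\perp$-projection onto the fixed forcing $\SSS=\omega^{<\omega}$, so that Lemma \ref{perp projection} yields a dense $\QQ_\kappa\subseteq\PP_\kappa$ and a $\perp$-projection $g\colon\QQ_\kappa\to\SSS^\kappa$ satisfying $\dom(g(p))=\supp(p)$, where $\SSS^\kappa$ is a \emph{wellorderable} forcing that is provably ccc and hence $\omega$-narrow with countable antichains. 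Given a name $\dot x$ for a real and a condition $q\in\QQ_\kappa$, for each $n$ the set $D_n$ of conditions in $\QQ_\kappa$ below $q$ deciding $\dot x(n)$ is dense (Lemma \ref{density of conditions deciding support}), so $g[D_n]$ is predense below $g(q)$. Because $g$ is a $\perp$-homomorphism, $g(p)=g(p')$ forces $p\parallel p'$, which has two consequences: $g$ is injective on every antichain, and any two conditions of $D_n$ with the same image decide the same value of $\dot x(n)$. Using the wellorder of $\SSS^\kappa$ I extract a maximal antichain $C_n\subseteq g[D_n]$; by the $\omega$-chain condition it is countable and comes with an explicit enumeration.

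The whole support computation now takes place on the wellorderable side: since $\supp(p)=\dom(g(p))$, the coordinates on which $\dot x$ depends are contained in $S=\bigcup_n\bigcup_{c\in C_n}\dom(c)$, a countable union of finite sets of ordinals, each canonically enumerated, hence countable with no appeal to choice. As $\cof(\kappa)>\omega$, the set $S$ is bounded by some $\beta<\kappa$; and since the value of each $\dot x(n)$ is a function of the $\SSS^\kappa$-generic restricted to $S\subseteq\beta$, we obtain $x\in V[G{\upharpoonright}\beta]$. Because the least such $\beta$ is definable from $x$ and $G$, this assignment is uniform in $x$.

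With capturing in hand both bounds are short. For $\dd\le\cof(\kappa)$, fix a cofinal sequence $\langle\kappa_i\mid i<\cof(\kappa)\rangle$ in $\kappa$; given any real $x$, capturing puts $x\in V[G{\upharpoonright}\beta]$ for some $\beta<\kappa$, and then $x\le^* d_{\kappa_i}$ for any $\kappa_i>\beta$, so $\{d_{\kappa_i}\mid i<\cof(\kappa)\}$ is dominating of size $\le\cof(\kappa)$. For $\bb\ge\cof(\kappa)$, let $F$ be a family of reals with $|F|=\mu<\cof(\kappa)$; via a bijection $\mu\to F$ and the uniform least-$\beta$ assignment I get a sequence $\langle\beta_j\mid j<\mu\rangle$ of ordinals below $\kappa$, whose supremum $\beta$ is $<\kappa$ by $\mu<\cof(\kappa)$, so $F\subseteq V[G{\upharpoonright}\beta]$ and $d_\beta$ dominates all of $F$; hence no family of size $<\cof(\kappa)$ is unbounded. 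The one delicate point, and the main obstacle, is precisely the $\ZF$ support argument inside the capturing lemma: in the absence of choice one cannot freely pick a maximal antichain of conditions or choose witnessing conditions, and a countable union of finite sets need not be countable. Routing everything through the wellorderable forcing $\SSS^\kappa$, so that antichains become automatically countable and well-ordered, supports are read off as the domains of images, and decided values depend only on those images, is exactly what makes these steps go through.
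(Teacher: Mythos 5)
Your proof is correct and follows essentially the same route as the paper: reduce both inequalities to the claim that every real of $V[G]$ appears in $V[G{\upharpoonright}\beta]$ for some $\beta<\kappa$, and prove that claim by pushing the decision sets through the stem $\perp$-projection of Lemma \ref{perp projection} onto the wellorderable c.c.c.\ forcing $\Func_{<\omega}(\kappa\times\omega,\omega)$, where countable maximal antichains canonically bound the relevant supports. The only cosmetic differences are that the paper selects its antichains inside $\HOD_{\{\sigma\}}$ and finishes via the claim that $p\Vdash\theta_n$ implies $p|\gamma\Vdash\theta_n$, whereas you read the values off the induced $\SSS^\kappa$-generic restricted to $S$; these are interchangeable.
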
 
\begin{proof} 
Suppose that $G$ is $\HH^{(\kappa)}$-generic over $V$. 
$\HH^{(\kappa)}$ does not change the value of $\cof(\kappa)$ 
by Corollary \ref{iteration linked}. 
Since the iteration adds a dominating real in each step, it suffices to show that every real $V[G]$ is an element of $V[G{\upharpoonright}\gamma]$ for some $\gamma<\kappa$. 

Work in $V$ and note that the usual linking function for Hechler forcing sending a tree to its stem is a $\perp$-projection to the forcing $\Func_{<\omega}(\omega,\omega)$ . 
Let $\dot{f}_\alpha$ denote the canonical $\HH^{(\alpha)}$-name for this linking function. 
Let $\QQ_\kappa$ denote the dense subset of $\HH^{(\kappa)}$ and $g\colon \QQ_\kappa\rightarrow \Func_{<\omega}( \kappa \times \omega,\omega)$ the $\perp$-projection given by Lemma \ref{perp projection}. 

Let $\sigma$ be a $\QQ_\kappa$-name for a real. 
We will find some $\gamma<\kappa$ with $\sigma^G\in V[G{\upharpoonright}\gamma]$. 
For each $n<\omega$, let $D_n$ denote the dense set of all $p\in \QQ_\kappa$ that decide whether $n\in\sigma$. 
Since $g$ is a projection, $g[D_n]$ is a dense subset of $\CC^\kappa$. 
Let $A_n$ be the least maximal antichain in $g[D_n]$ in $\HOD_{\{\sigma\}}$ for each $n<\omega$. 
Since $g$ is a $\perp$-homomorphism, $\bar{A}_n:=D_n\cap g^{-1}[A_n]$ is predense in $\QQ_\kappa$. 
By the c.c.c. of $\Func_{<\omega}( \kappa \times \omega,\omega)$ in $\HOD_{\{\sigma\}}$, there exists some $\gamma<\kappa$ with 
$$\bigcup_{n\in\omega,\ p\in \bar{A}_n} \supp(p)=\bigcup_{n\in\omega,\ t\in A_n} \dom(t)\subseteq \gamma.$$ 


For any $p\in \HH^{(\kappa)}$, 
write $p|\gamma:= (p{\upharpoonright}\gamma)^\smallfrown 1^{(\gamma,\kappa)}$. 
Fix $n\in\omega$ and let $\theta_n$ denote any of the formulas $n\in \sigma$ and $n\notin \sigma$.

\begin{claim*} 
If $p\in \QQ_\kappa$ and $p\Vdash_{\HH^{(\kappa)}} \theta_n$, then $p|\gamma\Vdash_{\HH^{(\kappa)}} \theta_n$. 
\end{claim*} 
\begin{proof} 
Otherwise some $q\leq p|\gamma$ in $\QQ_\kappa$ forces $\neg\theta_n$. 
Since $\bar{A}_n$ is predense in $\QQ_\kappa$, there exists some $r\in \bar{A}_n$ with $g(q)\parallel g(r)$. 
Since $g$ is a $\perp$-homomorphism, $q \parallel r$. 
Putting this together, we have $r\Vdash \neg\theta_n$ since $q\Vdash \neg\theta_n$, $r$ decides $\theta_n$ and $q \parallel r$. 
Moreover $p\parallel r$, since $\supp(r)\subseteq\gamma$, $q\leq p|\gamma$ and $q\parallel r$. 
We now obtain $r\Vdash \theta_n$, since $p\Vdash \theta_n$, $r$ decides $\theta_n$ and $p\parallel r$. 
This is a plain contradiction. 
\end{proof} 
The previous claim yields $\sigma^G\in V[G{\upharpoonright}\gamma]$, as desired. 
%
%
%
\end{proof}



The next result shows that if all uncountable cardinals are singular,  
then any iteration of Hechler forcing of uncountable cardinal length 
forces $\bb=\omega_1$. 
By a \emph{uniform iteration of nontrivial forcings}, we mean a sequence 
$\vec{\PP}=\langle \PP_\alpha,\dot{\PP}_\alpha, \dot{p}_{\alpha,i}, \PP_\gamma  \mid \alpha< \gamma,\ i<\omega\rangle$ such that $\vec{\PP}=\langle \PP_\alpha,\dot{\PP}_\alpha, \PP_\gamma\mid \alpha< \gamma\rangle$ is an iteration and $\one_{\PP_\alpha}\Vdash \dot{p}_{\alpha,i} \perp_{\dot{\PP}_\alpha} \dot{p}_{\alpha,i}$ for all $\alpha<\gamma$ and $i<j<\omega$. 

\begin{theorem} 
\label{bounding} 
Suppose $\nu\geq\omega_1$ is multiplicatively closed and has countable cofinality. 
Any uniform iteration $\PP_\nu$ of nontrivial forcings with finite support of length $\nu$ forces: 
\begin{enumerate-(1)} 
\item 
\label{bounding 1} 
$\bb=\omega_1$ if $\PP_\nu$ preserves $\omega_1$. 
\item 
\label{bounding 2} 
$\dd\geq |\nu|$ if $\PP_\nu$ preserves $|\nu|$ and $\dd$ exists in the extension.  
\end{enumerate-(1)} 
\end{theorem}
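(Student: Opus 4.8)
The plan is to treat the two clauses separately, after recording two facts provable in $\ZF$: any countable family $\langle f_n\mid n<\omega\rangle$ in $\omega^\omega$ is bounded, for instance by the diagonal function $k\mapsto 1+\max_{n\le k}f_n(k)$, so both $\bb$ and $\dd$ are ${\geq}\omega_1$ whenever they exist. Thus for \ref{bounding 1} it remains to produce an unbounded family of size $\omega_1$, and for \ref{bounding 2} to rule out a dominating family of size ${<}|\nu|$. The common engine is a construction of \emph{Cohen reals along cofinal coordinate sequences}. Fix a cofinal sequence $\langle\nu_n\mid n<\omega\rangle$ in $\nu$. For any increasing $\vec\beta=\langle\beta_n\mid n<\omega\rangle$ cofinal in $\nu$, define a $\PP_\nu$-name $\dot c_{\vec\beta}$ for an element of $\omega^\omega$ by reading off the designated antichains: $\dot c_{\vec\beta}(n)=i$ iff $\dot p_{\beta_n,i}$ lies in the generic at coordinate $\beta_n$, and $\dot c_{\vec\beta}(n)=0$ if none does (well defined since the $\dot p_{\beta_n,i}$ are pairwise incompatible). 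The point is a freeness lemma: if $p$ has finite support and $\beta_n\notin\supp(p)$, then $p(\beta_n)=\one$, so $p$ can be extended by placing $\dot p_{\beta_n,i}$ at coordinate $\beta_n$ for any $i$, forcing $\dot c_{\vec\beta}(n)=i$. Since only finitely many $\beta_n$ meet $\supp(p)$, this freedom at the remaining coordinates forces $\dot c_{\vec\beta}$ to be Cohen generic over $V$, and more generally over any inner model that does not constrain the coordinates $\{\beta_n\}$; in particular $\dot c_{\vec\beta}$ is then unbounded over that model.

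For \ref{bounding 1}, note $\nu>\omega_1$ because $\cof(\nu)=\omega\neq\cof(\omega_1)$. As everything in sight is a set of ordinals, I can canonically fix pairwise disjoint increasing cofinal sequences $\vec\beta^{\,\xi}=\langle\beta^\xi_n\mid n<\omega\rangle$ in $\nu$ for $\xi<\omega_1$ (cofinally many intervals $[\nu_n,\nu_{n+1})$ have cardinality ${\geq}\aleph_1$, by a wellordered counting argument), and set $c_\xi:=\dot c_{\vec\beta^{\,\xi}}$. I claim $\langle c_\xi\mid\xi<\omega_1\rangle$ is forced to be unbounded. Suppose $p_0$ forced a name $\dot g$ to dominate every $c_\xi$. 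The basic move against $g$ at a single level is: given $r\le p_0$ and $m$, first extend $r$ to decide $\dot g(n)=k$ for some $n\ge m$; since that condition has finite support and the coordinates $\beta^\xi_n$ $(\xi<\omega_1)$ are distinct, choose $\xi$ with $\beta^\xi_n$ outside that support and extend by $\dot p_{\beta^\xi_n,i}$ with $i>k$, forcing $\dot c_\xi(n)>\dot g(n)$ (the value of $\dot g(n)$ is already frozen, so the new coordinate cannot disturb it). To upgrade this to a \emph{single} $\xi$ beaten infinitely often, I use that a nice name for the real $g$ is captured by a countable set $S$ of coordinates; then $S$ meets only countably many of the disjoint sequences $\vec\beta^{\,\xi}$, so—$\omega_1$ being preserved—there is $\xi$ with $\{\beta^\xi_n\}\cap S=\emptyset$, whence $g\in V[G{\upharpoonright}S]$ while $c_\xi$ is Cohen, hence unbounded, over $V[G{\upharpoonright}S]$, contradicting $c_\xi\leq^*g$. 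This yields $\bb\le\omega_1$.

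For \ref{bounding 2}, recall $|\nu|$ is a limit cardinal of cofinality $\omega$. If $\mathcal D$ were a dominating family with $|\mathcal D|<|\nu|$, let $S$ be the union of the countable supports of nice names for its members; then $|S|\le|\mathcal D|\cdot\aleph_0<|\nu|$, and since $|\nu|$ is preserved we have $|\nu\setminus S|=|\nu|$, so $\nu\setminus S$ is cofinal in $\nu$ and I can canonically extract an increasing cofinal sequence $\vec\beta$ inside it. Then $c_{\vec\beta}$ is Cohen, hence unbounded, over $V[G{\upharpoonright}S]\supseteq\mathcal D$, so no member of $\mathcal D$ dominates it—contradicting that $\mathcal D$ dominates. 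Hence every dominating family has size ${\geq}|\nu|$, that is $\dd\ge|\nu|$ (existence of $\dd$ being assumed).

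The main obstacle is the passage from ``for each challenge we can beat $g$ with \emph{some} $\xi$'' to ``a \emph{single} $\xi$ is beaten infinitely often'' (and its analogue for all of $\mathcal D$ in \ref{bounding 2}). A level-by-level pigeonhole genuinely fails here: generically, at each fixed level uncountably many $c_\xi$ do sit below $g$, so no contradiction arises from a single coordinate. The argument must instead capture the name of $g$, respectively the names of the members of $\mathcal D$, on a small set of coordinates, and this is exactly where the countable-support (narrowness/capturing) behaviour of the iteration enters; the preservation hypotheses are used precisely to guarantee that the reservoir of unused cofinal coordinate sequences remains of size $\omega_1$, respectively $|\nu|$.
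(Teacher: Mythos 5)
Your candidate unbounded family is essentially the paper's: both constructions read off an $\omega$-valued ``Cohen-like'' real from the designated antichains $\dot p_{\beta,i}$ along pairwise disjoint coordinate sequences spread cofinally through $\nu$. The gap is in how you prove unboundedness. You reduce everything to: capture a name for the challenger $g$ (resp.\ for all members of $\mathcal D$) on a countable (resp.\ small) set $S$ of coordinates, pick a reserved coordinate sequence disjoint from $S$, and invoke genericity of $c_{\vec\beta}$ over $V[G{\upharpoonright}S]$. Neither half of this is available. First, the iterands are \emph{arbitrary} nontrivial forcings --- the theorem assumes no chain condition, only that $\PP_\nu$ preserves $\omega_1$ (resp.\ $|\nu|$) --- so a real of the extension need not be captured by countably many coordinates, and even where a chain condition holds, choosing the antichains of a nice name is itself a choice principle that the paper's machinery elsewhere is specifically designed to avoid. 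Second, for an \emph{iteration} (as opposed to a product), $G{\upharpoonright}S$ for a sparse, non-initial-segment $S\subseteq\nu$ does not yield an intermediate model over which the remaining coordinates are mutually generic: the forcing at coordinate $\beta$ is a $\PP_\beta$-name that may depend on the generic at every earlier coordinate, including those of your reserved sequences, so ``$c_\xi$ is Cohen over $V[G{\upharpoonright}S]$'' has no meaning here. You correctly identify this passage as the main obstacle, but the ``countable-support/capturing behaviour'' you appeal to is not among the hypotheses.

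The paper's proof avoids capturing altogether. For a real $x$ of the extension it considers the trace $\tr(x)=\{\alpha<\nu\mid x_\alpha\le_* x\}$ and decomposes it as $\bigcup_{n,i}A_n^{(i)}$, where $A_n^{(i)}$ consists of those $\alpha$ for which some condition of $G{\upharpoonright}\alpha_n$ (an \emph{initial} segment, hence a bona fide complete subforcing) already forces the domination $x_\alpha\le_* x$ with threshold $i$; every $\alpha\in\tr(x)$ lands in some $A_n^{(i)}$ simply because individual conditions have finite support. Each $A_n^{(i)}$ belongs to $V[G{\upharpoonright}\alpha_n]$ and is finite: were it infinite, one could choose an increasing $\omega$-sequence $\langle\beta_k\rangle$ in it inside $V[G{\upharpoonright}\alpha_n]$, and the values $x_{\beta_k}(j)$ for $j=\max(i,n)$, being read off infinitely many distinct coordinates $\ge\alpha_n$, are unbounded in $\omega$ by a density argument in the quotient --- contradicting that they are all bounded by $x(j)$. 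Hence $\tr(x)$ is countable, and preservation of $\omega_1$ (resp.\ of $|\nu|$, together with the uniform enumeration of traces) finishes \ref{bounding 1} (resp.\ \ref{bounding 2}). If you replace your capturing step by this trace decomposition, the rest of your construction can stand.
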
 
\begin{proof} 
\ref{bounding 1}: 
We will construct an unbounded family of size $\omega_1$. 
Let $\vec{\PP}=\langle \PP_\alpha,\dot{\PP}_\alpha, \dot{p}_{\alpha,i}, \PP_{\nu} \mid \alpha< \nu,\ i<\omega\rangle$ denote the iteration. 
We can assume that for each $\alpha<\nu$, $\one_{\PP_\alpha}$ forces that $\langle \dot{p}_{\alpha,i} \mid i<\omega \rangle$ is not maximal by omitting a condition. 
Fix a cofinal strictly increasing sequence $\vec{\alpha}=\langle \alpha_n\mid n\in\omega\rangle$ in $\nu$. 
Fix an injective function $f\colon \nu\times \omega \rightarrow \nu$ such that $f(\alpha,n)\geq \alpha_n$ for all $\alpha<\nu$ and $n<\omega$. 
Such a function can be obtained by thinning out 
$\p{\upharpoonright} (\nu\times\nu) \colon \nu\times \nu\rightarrow\nu$, using that $\nu$ is multiplicatively closed. 
One can easily write down a sequence $\langle \dot{x}_\alpha\mid \alpha<\nu\rangle$ of $\PP_\nu$-names such that 
$\one_{\PP_\alpha}\Vdash \dot{x}_\alpha(n)=i+1$ if $i<\omega$ is unique with $\dot{p}_{f(\alpha,n),i}\in \dot{G}_{f(\alpha,n)}$ 
and 
$\one_{\PP_\nu} \Vdash \dot{x}_\alpha(n)=0$ if no such $i<\omega$ exists. 
Suppose that $G$ is $\PP_\nu$-generic over $V$ and work in $V[G]$. 
Let $x_\alpha=\dot{x}_\alpha^G$ for all $\alpha<\nu$. 
Write $y\leq_* z$ if $\exists m\ \forall n\geq m\ y(n)\leq z(n)$ and define the \emph{trace} of $x\in \omega^\omega$ as
$$\tr(x):=\{\alpha<\nu\mid x_\alpha\leq_* x\}.$$ 
If $\tr(x)$ is countable, then $x$ does not bound $\langle x_\alpha \mid \alpha<\nu \rangle$. 
Since $\PP_\nu$ preserves $\omega_1$, the next claim shows that $\langle x_\alpha \mid \alpha<\omega_1 \rangle$ is unbounded. 

\begin{claim*} 
$\tr(x)$ is countable for all $x\in \omega^\omega$. 
\end{claim*} 
\begin{proof} 
We will partition $\tr(x)$. 
Let $\dot{x}$ be a $\PP_\nu$-name with $\dot{x}^G=x$. 
It is easy to write down a $\PP_\nu$-name $\dot{g}$ such that $\one_{\PP_\nu}$ forces that 
$\dot{g}(\alpha)$ is the least $i<\omega$ with $\forall j\geq i\ \dot{x}_\alpha (j)\leq \dot{x}(j)$. 
For all $n,i<\omega$, let 
$$A_n^{(i)}:= \{ \alpha<\nu \mid \exists p\in G{\upharpoonright} \alpha_n\ \ p^\smallfrown \mathbf{1}^\nu\Vdash_{\PP_\nu} \dot{g}(\alpha)=i \}.$$ 
Let $A_n=\bigcup_{i<\omega} A_n^{(i)}$ and note that $\tr(x)=\bigcup_{n<\omega} A_n$. 

It suffices to show that each $A_n^{(i)}$ is finite. 
Towards a contradiction, suppose that there exist $n,i<\omega$ such that $A_n^{(i)}$ is infinite.  
Work in $V[G{\upharpoonright}\alpha_n]$ and note that $A_n^{(i)}\in V[G{\upharpoonright}\alpha_n]$. 
Fix a strictly increasing sequence $\langle \beta_k\mid k<\omega\rangle$ in $A_n^{(i)}$. 
Work in $V[G]$. 
Since $\beta_k\in A_n^{(i)}$, we have $\dot{g}^G(\beta_k)= i$ and 
hence $x_{\beta_k}(i')\leq x(i')$ for all $k<\omega$ and all $i'\geq i$. 
Let $j:=\max(i,n)$. 
Since $f(\beta_k,j)\geq \alpha_j \geq \alpha_n$ for all $k<\omega$ as $j\geq n$, 
$\langle x_{\beta_k}(j)\mid k<\omega\rangle$ is $\Col(\omega,\omega)$-generic over $V[G{\upharpoonright}\alpha_n]$. 
Hence $x_{\beta_k}(j)> x(j)$ for some $k<\omega$. 
This contradicts the fact that $x_\alpha(j)\leq x(j)$ for all $\alpha\in A^{(i)}_n$ as $j\geq i$. 
\end{proof} 

\ref{bounding 2}: 
Suppose $p\in \PP_\nu$ forces $\langle \dot{y}_\alpha \mid \alpha< \mu \rangle$ 
is a dominating family in $\omega^\omega$ for some cardinal $\mu<|\nu|$. 
Let $G$ be $\PP_\nu$-generic over $V$ with $p\in G$ and work in $V[G]$. 
Define $\langle x_\alpha \mid \alpha<\nu\rangle$ as in the proof of \ref{bounding 1}.  
The  proof of \ref{bounding 1} provides a function that sends each name $\dot{y}$ for an element of $ \omega^\omega$ to an enumeration of $\tr(\dot{y}^G)$ with order type at most $\omega$. 
Hence $|\bigcup_{\alpha<\mu}\tr(\dot{y}_\alpha^G)|\leq \mu<|\nu|$. 
Since $\PP_\nu$ preserves $|\nu|$, not every $x_\alpha$ is dominated by $\langle \dot{y}_\alpha^G \mid \alpha<\mu \rangle$. 
But this contradicts our assumption. 
\end{proof} 

Note that any iteration of Hechler forcing of length $\omega_1$ with finite support preserves $\omega_1$ by Corollary \ref{iteration linked}. 
If $\omega_1$ is singular, then the previous theorem shows that $\HH^{(\omega_1)}$ forces $\bb=\omega_1$. 
This contrasts the fact that the bounding number is regular in $\ZFC$. 
If $\omega_1$ is regular, then $\HH^{(\omega_1)}$ forces $\bb=\omega_1$ by Theorem \ref{bounding regular}. 
Note that the previous theorem for $\kappa\geq\omega_2$ separates the bounding and dominating numbers. 
In this case, $\bb=\omega_1$ and $\dd$ is either at least $\kappa$ or does not exist. 

Recall that $\HH^{(*)}$ denotes the class of finite support iterations of Hechler forcing whose length is any infinite cardinal. 

\begin{corollary} 
\label{abs H singular}
$\GA_{\HH^{(*)}}$ implies that all infinite cardinals have countable cofinality. 
\end{corollary}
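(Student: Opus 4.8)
The plan is to derive a contradiction from $\GA_{\HH^{(*)}}$ together with the negation of the conclusion, by exhibiting two iterations in the class $\HH^{(*)}$ whose generic extensions disagree on a single first-order sentence about the cardinal characteristics $\bb$ and $\dd$. Suppose towards a contradiction that $\GA_{\HH^{(*)}}$ holds but some infinite cardinal fails to have countable cofinality. Passing to the cofinality of such a cardinal (which is regular in $\ZF$ and uncountable), I obtain an uncountable regular cardinal $\lambda$.

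The first forcing is $\HH^{(\lambda)}$. Since $\lambda$ is regular and uncountable it has uncountable cofinality, so Theorem \ref{bounding regular} applies and shows that $\HH^{(\lambda)}$ forces $\bb=\dd=\cof(\lambda)=\lambda$; in particular both characteristics exist and coincide in that extension. For the second forcing I would use a cardinal of countable cofinality that is available outright in $\ZF$, namely $\aleph_\omega$: it is multiplicatively closed, satisfies $\aleph_\omega\geq\omega_2>\omega_1$, and has $\cof(\aleph_\omega)=\omega$ because the definable sequence $\langle \aleph_n \mid n<\omega\rangle$ is cofinal in it. The iteration $\HH^{(\aleph_\omega)}$ is a uniform finite support iteration of the nontrivial $\omega$-linked Hechler forcings, so by Corollary \ref{iteration linked} it preserves $\omega_1$ and $|\aleph_\omega|=\aleph_\omega$. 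Theorem \ref{bounding} then yields that $\HH^{(\aleph_\omega)}$ forces $\bb=\omega_1$, and forces $\dd\geq\aleph_\omega$ whenever $\dd$ exists.

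Finally I would consider the sentence $\psi$ asserting ``$\bb$ and $\dd$ both exist and $\bb=\dd$''. It holds in the $\HH^{(\lambda)}$-extension, where $\bb=\dd=\lambda$. In the $\HH^{(\aleph_\omega)}$-extension, $\bb=\omega_1$ exists, while either $\dd$ fails to exist or $\dd\geq\aleph_\omega>\omega_1=\bb$; in both cases $\psi$ fails there. By $\GA_{\HH^{(*)}}$ both extensions are elementarily equivalent to $V$, hence to each other, so they cannot disagree on $\psi$, a contradiction. This establishes that every infinite cardinal has countable cofinality.

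I expect the only delicate point to be the bookkeeping around the possible nonexistence of $\dd$ in the $\HH^{(\aleph_\omega)}$-extension, which is precisely why $\psi$ is phrased to demand that both characteristics exist; phrased this way, $\psi$ fails uniformly in the second extension regardless of whether $\dd$ is defined there. The remaining verifications, that $\aleph_\omega$ is multiplicatively closed with countable cofinality in $\ZF$ and that Hechler forcing witnesses the nontriviality and $\omega$-linkedness uniformly across all models, are routine and feed directly into the hypotheses of Theorems \ref{bounding regular} and \ref{bounding} and of Corollary \ref{iteration linked}.
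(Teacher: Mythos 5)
Your proposal is correct and follows essentially the same route as the paper: contrast $\HH^{(\lambda)}$ for an uncountable regular $\lambda$ (where Theorem \ref{bounding regular} gives $\bb=\dd$) with $\HH^{(\aleph_\omega)}$ (where Theorem \ref{bounding} and Corollary \ref{iteration linked} give $\bb=\omega_1$ while $\dd$, if it exists, is at least $\aleph_\omega$), contradicting $\GA_{\HH^{(*)}}$. Your explicit care about the possible nonexistence of $\dd$ is a welcome refinement of the paper's terser statement but does not change the argument.
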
 
\begin{proof} 
Suppose there exists an uncountable regular cardinal $\kappa$.
Then $\HH^{(\kappa)}$ forces $\bb=\dd$ by Theorem \ref{bounding regular}. 
On the other hand, since $\HH^{(\aleph_\omega)}$ preserves cardinals, $\HH^{(\aleph_\omega)}$ forces $\bb \neq \dd$ by Theorem \ref{bounding}, which contradicts $\GA_{\HH^{(*)}}$.
\end{proof} 


\subsection{Gitik's model} 
\label{section Gitiks model}

In this section, we show that the principles $\GA_{\CC^*}$, $\GA_{\RR_*}$ and $\GA_{\HH^{(*)}}$ fail in Gitik's model from \cite{MR576462} where all infinite cardinals have countable cofinality. 

\subsubsection{Review of Gitik's construction}\label{Gitik}
All results of this subsection are due to Gitik \cite{MR576462}, while some notation is taken from \cite{MR3274975}.\footnote{The version using \cite{MR3274975} yields the same model as Gitik's. 
The two forcings have isomorphic dense subclasses.} 
We assume that in the ground model $V$, $\ZFC$ holds and there is a proper class of strongly compact cardinals. 
We further assume that there is no regular limit of strongly compact cardinals.\footnote{This additional assumption do not increase the consistency strength of the axiom system.}. 
We further assume there is a predicate for a global wellorder on $V$ with order type $\Ord$. 
This can be added by the pretame class forcing $\Add(\Ord,1)$ without adding new sets.\footnote{This produces a model of Bernays-G\"odel class theory $\mathsf{BG}$. 
More formally, the following proofs can be translated to statements in $V$ about $\Add(\Ord,1)$-names forced by $\one_{\Add(\Ord,1)}$.} 
Let $\vec{\kappa}=\langle \kappa_\xi \mid \xi \in \Ord \rangle$ be the increasing enumeration of all strongly compact cardinals. 
In the resulting model, the closure of $\vec{\kappa}$ will equal the class of all uncountable cardinals. 

For each inaccessible cardinal $\alpha$ and each regular cardinal $\kappa<\alpha$, fix a bijection $ \pow_\kappa(\alpha) \rightarrow \alpha$. 
Let 
$\iota_{\kappa,\alpha}$ 
denote the induced bijection between their power sets. 

\begin{notation}
We distinguish the following types of $\alpha \in \text{Reg}$: 
\begin{itemizenew}
\item[(0)] 
If $\alpha \in [\omega, \kappa_0 )$, 
let $\cfdash (\alpha):=\alpha$.
\item[(1)] 
If $\alpha \geq \kappa_0$ and there is a largest $\kappa_\xi \leq \alpha$,\footnote{I.e., $\alpha \in [\kappa_\xi, \kappa_{\xi+1})$}
let $\cfdash (\alpha):=\alpha$.

\begin{enumerate-(a)}
\item
If $\alpha > \kappa_\xi$ is inaccessible, let $\Psi_\alpha$ be any fine ultrafilter on $\pow_{\kappa_{\xi}}(\alpha)$ and define 
$\Phi_\alpha :=\iota_{\kappa_\xi,\alpha}[\Psi_\alpha]$. 

\item 
If $\alpha > \kappa_\xi$ is accessible, let $\Phi_\alpha$ be any $\kappa_\xi$-complete uniform ultrafilter on $\alpha$. 
\end{enumerate-(a)}

\item[(2)] 
If $\alpha\geq\kappa_0$ and there is no largest strongly compact cardinal ${\leq}\alpha$, let $\alpha'$ denote the largest (singular) limit of strongly compacts ${<}\alpha$ and 
$\cfdash (\alpha):= \cf (\alpha')$. 
Fix a strictly increasing sequence 
$\langle \kappa^\alpha_\nu \mid \nu <\cfdash (\alpha) \rangle$ 
of strongly compact cardinals ${>}\cfdash(\alpha)$ with supremum $\alpha'$. 

\begin{enumerate-(a)}
\item 
If $\alpha$ is inaccessible, let $\Psi_{\alpha, \nu}$ be a fine ultrafilter on $\pow_{\kappa^\alpha_\nu}(\alpha)$ for each $\nu < \cfdash (\alpha)$ and 
$\Phi_{\alpha, \nu}:=\iota_{\kappa^\alpha_\nu,\alpha}[\Psi_{\alpha, \nu}]$. 

\item 
If $\alpha$ is accessible, let  $\Phi_{\alpha, \nu}$ be any $\kappa^\alpha_\nu$-complete uniform ultrafilter on $\alpha$ for each $\nu<\cfdash (\alpha) $. 

\end{enumerate-(a)}
\end{itemizenew}
\end{notation}

We will define the forcing $\PP$ successively via the next definitions. 
For $t \subseteq \text{Reg} \times \omega \times \text{Ord}$, write 
$
\dom(t) := \{ \alpha \in \text{Reg} \mid \exists m\ \exists \gamma \, (\alpha ,m , \gamma) \in t \}
$. 
First, let $P_1$ denote the set of all finite subsets $t$ of $\text{Reg} \times \omega \times \text{Ord}$ such that for every $\alpha \in \text{dom}(t)$, $t(\alpha ) := \{ (m,\beta) \mid (\alpha , m , \beta) \in t \}$ is an injective function from a finite subset of $\omega$ to $\alpha$.  

\begin{definition}
\label{def P2} 
Let $P_2$ denote the set of all $t \in P_1$ such that: 
\begin{enumerate-(a)}
\item 
For every $\alpha \in \text{dom}(t)$, $\cfdash \alpha \in \text{dom}(t)$ and $\text{dom} \bigl(t (\cfdash \alpha ) \bigr) \supseteq \text{dom}\bigl(t(\alpha)\bigr)$. 

\item 
\label{def P2 2} 
If $(\alpha_0, \ldots , \alpha_{n-1})$ is the increasing sequence enumeration of $\text{dom}(t) \setminus \kappa_0$, then there exist $m \ge 1$ and  $j \le n-1$ such that: 
\begin{enumerate-(i)}
\item 
For each $k < j$, $\text{dom}\bigl( t (\alpha_k) \bigr) = m+1$. 

\item 
For each $k$ with $j\leq k<n$,  $\dom \bigl( t(\alpha_k)\bigr) = m$.
\end{enumerate-(i)}
\end{enumerate-(a)}
\end{definition}
The values $m$ and $\alpha_j$ in \ref{def P2 2} are unique for $t$ and can thus be denoted $m (t):= m $ and $\alpha (t) := \alpha_j$. 
Note that $\bigl(\alpha (t), m(t)\bigr)$ is the point that needs to be filled in next in order to extend $t$. 
For any $t\in T\subseteq P_2$, let 
$$\mathrm{Suc}_T (t) := \{ \beta \mid t \cup \{ \bigl( \alpha (t), m (t), \beta \bigr)\} \in T\}.$$ 

\begin{definition}
Let $P_3$ denote the set of all pairs $(s, T)$ such that $s\in T \subseteq P_2$ and: 
\begin{enumerate-(a)}
\item 
For every $t \in T$: 
\begin{enumerate-(i)}
\item 
Either $t \supseteq s$ or $t \subseteq s$. 

\item 
$\text{dom}(t) = \text{dom}(s)$. 

\item 
\emph{(tree-like)}
If $t = r \cup \{ \bigl( \alpha (r) , m (r) , \beta) \}$, 
then $r \in T$. 
\end{enumerate-(i)}

\item 
For every $t \in T$ with $s \subseteq t$: 
\begin{enumerate-(i)} 
\item 
If $\alpha (t)$ is of type 1, 
then 
$
\mathrm{Suc}_T (t) \in \Phi_{\alpha (t)}. 
$

\item 
If $\alpha (t)$ is of type 2 
and $m (t) \in \text{dom} \bigl( t(\cfdash (\alpha (t)) ) \bigr)$, then 
$$
\text{Suc}_T (t) \in \Phi_{\alpha (t), t\bigl( \cfdash (\alpha (t)) \bigr) \, (m(t))}.
$$
\end{enumerate-(i)} 

\end{enumerate-(a)}
$s$ is called the \emph{stem} of $(s,T)$. 
\end{definition}

Let $\PP$ denote $P_3$ with the following partial order. 
For $(r, R), (t,T)\in P_3$, let $(r, R) \le (t,T)$ if 
$r {\upharpoonright} \kappa_0 \supseteq t {\upharpoonright} \kappa_0$, 
$R {\upharpoonright} \bigl( \text{dom}(t) \setminus \kappa_0 \bigr) \subseteq T$ 
and $\dom(r) \supseteq \dom(t)$.  
Let $I$ denote the class of finite subsets of $\Reg$ closed under $\cfdash$. 
For any $s\in I$, let 
$$\PP_s := \{ (t, T ) \in P_3 \mid  \text{ dom}(t) \subseteq s\}.$$ 
%
We have $(t, T) {\upharpoonright} \PP_s := \bigl(t {\upharpoonright} s, \{ u {\upharpoonright} s \mid u \in T\}\bigr) \in P_3$ for all $(t,T) \in P_3$ and $s\in I$ \cite[Lemma 2.4]{MR576462}. 
The next lemma shows how $\PP_s$ can be factored. 

\begin{lemma}\cite[Theorem 2.5]{MR3274975}
\label{two step}
For any $s\in I$ and any strongly compact $\kappa_\xi \in s$, $\PP_s$ is forcing equivalent\footnote{I.e., the Boolean completions are isomorphic.} to a forcing of the form $\PP_{s \cap \kappa_{\xi}} \ast \dot{\QQ}$, where $\PP_{s \cap \kappa_{\xi}}$ forces that $\dot{\QQ}$ does not add any bounded subset of $\kappa_\xi$. 
\end{lemma}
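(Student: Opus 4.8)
The plan is to realize $\dot{\QQ}$ as the quotient of $\PP_s$ by a projection onto $\PP_{s'}$, where $s':=s\cap\kappa_\xi$, and then to establish the clause about bounded subsets by a Prikry-style argument carried out in the $\PP_{s'}$-extension.

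First I would check that the restriction map $\pi\colon\PP_s\to\PP_{s'}$, $\pi(t,T):=(t,T){\upharpoonright}\PP_{s'}=\bigl(t{\upharpoonright}s',\{u{\upharpoonright}s'\mid u\in T\}\bigr)$, is well defined and is a projection of forcings. Well-definedness is Gitik's restriction lemma \cite[Lemma 2.4]{MR576462}, and here one uses that $s$ is closed under $\cfdash$, so that the coherence requirement (a) of Definition~\ref{def P2} survives restriction to $s'$. That $\pi$ is order preserving is immediate; for the lifting property one takes $(t,T)\in\PP_s$ and $(r,R)\leq\pi(t,T)$ in $\PP_{s'}$, amalgamates $r$ with the part of $t$ living on coordinates $\geq\kappa_\xi$, and intersects the trees to produce $(t^*,T^*)\leq(t,T)$ with $\pi(t^*,T^*)\leq(r,R)$. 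Letting $G_0$ be the $\PP_{s'}$-generic filter induced by a $\PP_s$-generic $G$, I define $\dot{\QQ}$ as the canonical $\PP_{s'}$-name for the quotient $\PP_s/G_0=\{(t,T)\in\PP_s\mid\pi(t,T)\in G_0\}$ with the inherited order; since $\pi$ is a projection, standard two-step-iteration theory then shows $\PP_s$ is forcing equivalent to $\PP_{s'}\ast\dot{\QQ}$.

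The substantive part is $\one_{\PP_{s'}}\Vdash$ ``$\dot{\QQ}$ adds no bounded subset of $\kappa_\xi$''. Working in $V[G_0]$, I introduce the direct-extension (Prikry) order $\leq^*$ on $\QQ$: $(t',T')\leq^*(t,T)$ iff $(t',T')\leq(t,T)$ and $t'=t$, i.e.\ the stem is unchanged and only the tree is thinned. The argument rests on two facts. \emph{Closure:} $(\QQ,\leq^*)$ is ${<}\kappa_\xi$-closed, since a $\leq^*$-decreasing sequence of length $\delta<\kappa_\xi$ has a lower bound obtained by intersecting the trees node by node, and this intersection is again a legitimate condition because at every relevant node the successor set lies in an ultrafilter ($\Phi_\alpha$ or $\Phi_{\alpha,\nu}$, for $\alpha\in s\setminus\kappa_\xi$) that is $\kappa_\xi$-complete, so an intersection of fewer than $\kappa_\xi$ measure-one sets stays measure one. \emph{Prikry property:} for every $(t,T)\in\QQ$ and every sentence $\varphi$ of the forcing language there is $(t,T^*)\leq^*(t,T)$ deciding $\varphi$; this is the usual level-by-level thinning of the tree, replacing each successor set of the active coordinate by the large subset on which the decision is homogeneous and taking the resulting (tree-like, coherent) tree, the process terminating because the stems are finite.

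Granting these, suppose $(t,T)\Vdash_{\QQ}\dot g\colon\delta\to 2$ with $\delta<\kappa_\xi$. Using the Prikry property together with closure I build a $\leq^*$-decreasing sequence of conditions of length $\delta$, the $\beta$th deciding $\dot g(\beta)$, and take its lower bound to obtain a single $(t,T^*)\leq^*(t,T)$ deciding $\dot g(\beta)$ for all $\beta<\delta$; hence $(t,T^*)$ forces $\dot g$ to equal a function already in $V[G_0]$, proving that no new bounded subset of $\kappa_\xi$ appears. I expect the main obstacle to lie in the completeness bookkeeping underlying Closure, particularly for tail coordinates $\alpha$ of type $2$: there one must verify, using that $\kappa^\alpha_\nu>\cfdash(\alpha)$, that $\alpha'=\sup_\nu\kappa^\alpha_\nu\geq\kappa_\xi$, and (crucially) the assumption that there is no regular limit of strongly compacts, that each ultrafilter $\Phi_{\alpha,\nu}$ actually governing a node of the tail tree is $\kappa_\xi$-complete. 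Matching this completeness to $\kappa_\xi$ at every node, simultaneously across all coordinates of $\dom(s)\setminus\kappa_\xi$ while preserving tree-likeness and the $\cfdash$-coherence of Definition~\ref{def P2}, is the delicate step; the finiteness of stems and the ${<}\kappa_\xi$-closure are exactly what make the simultaneous thinning terminate in a genuine condition.
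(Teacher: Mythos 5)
The paper does not prove this lemma at all: it is quoted as \cite[Theorem 2.5]{MR3274975} (originally due to Gitik), so there is no internal proof to compare against. Your skeleton --- restriction map as a projection, $\dot{\QQ}$ as the quotient, then a Prikry property plus ${<}\kappa_\xi$-closure of the direct-extension order --- is indeed the standard route to this result, and the projection/quotient part is essentially routine given Gitik's restriction lemma.

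The genuine gap is exactly at the point you defer to ``completeness bookkeeping,'' and the claim you would need there is false as stated. You assert that every ultrafilter governing a node of the tail tree is $\kappa_\xi$-complete. Consider a type-2 coordinate $\alpha \in s \setminus \kappa_\xi$ with $\cfdash(\alpha) = \cf(\alpha') < \kappa_\xi$ (this happens: $\alpha' > \kappa_\xi$ is forced, but its cofinality can be small, even $\omega$). The ultrafilter used at the node $u$ with active point $(\alpha, m)$ is $\Phi_{\alpha,\nu}$ with $\nu = u(\cfdash(\alpha))(m)$, and $\Phi_{\alpha,\nu}$ is only guaranteed to be $\kappa^\alpha_\nu$-complete where $\kappa^\alpha_\nu > \cfdash(\alpha)$ --- for small $\nu$ this can be far below $\kappa_\xi$. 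The hypothesis that there is no regular limit of strongly compacts does not repair this; it only ensures $\alpha' < \alpha$ so that the sequence $\langle \kappa^\alpha_\nu \rangle$ exists. Consequently your ``Closure'' step fails verbatim: intersecting $\delta$-many trees node by node does not keep the successor sets large at such nodes. The actual argument must first pass to a direct extension that shrinks the successor sets at the coordinate $\cfdash(\alpha)$ (or uses the $G_0$-generic $\omega$-sequence there) so that all \emph{future} indices $\nu$ satisfy $\kappa^\alpha_\nu \geq \kappa_\xi$, and must handle separately the finitely many positions whose index is already frozen in the stem (where only a finite amount of information, absorbable into a stem extension, is at stake). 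This interaction between the $s \cap \kappa_\xi$-part and the tail --- the tail tree's measures are indexed by values living below $\kappa_\xi$ --- is also why $\pi$ being a projection and the quotient being well-behaved are not as automatic as your first paragraph suggests. Without carrying out this step, the proof establishes only the easy half of the lemma; the Prikry-property paragraph is likewise stated at a level of generality (``the usual level-by-level thinning'') that hides the same issue.
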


Let $D:=\{ (\alpha,n,\beta) \in \text{Reg} \times \omega \times\Ord \mid \beta<\alpha  \}$. 
In the next definition, we work with proper class functions to simplify the notation. 
One can obtain a formally correct definition by restricting each function to its support. 

\begin{definition}\ 
\begin{enumerate-(1)}
\item 
Let $\mathcal{G}$ be the group of permutations 
$g\colon D \rightarrow D$\footnote{We use the domain $D$ instead of $\text{Reg} \times \omega \times\Ord$ to ensure that $g$ is supported on a set.} such that there exists a sequence of permutations $g_{\alpha}$ of $\alpha\in \Reg$ with: 
\begin{enumerate-(a)}
\item 
$g (\alpha , n , \beta ) = \bigl(\alpha , n , g_{\alpha} (\beta) \bigr)$ for all $\alpha\in \Reg$, $\beta < \alpha$ and $n\in\omega$. 

\item 
$\supp(g_\alpha):= \{ \beta <\alpha \mid g_\alpha(\beta)\neq\beta \}$ is finite for all $\alpha\in \Reg$.   

\item 
$\supp(g):=\{\alpha\in \Reg \mid g_\alpha \neq \id\}$ is finite. 
\end{enumerate-(a)}

\item 
For each $g \in \mathcal{G}$, let $\PP^g \subseteq P_3$ be the set of all $(t,T) \in P_3$ such that $\dom(t) \supseteq \supp(g)$ and for all $\alpha \in \dom(t)$: 
\begin{enumerate-(a)}
\item 
$\dom\bigl( t(\alpha) \bigr) = \text{dom} \bigl(t(\cfdash (\alpha)) \bigr)$. 

\item 
If $\alpha \ge \kappa_0$, then 
$
\ran \bigl( t(\alpha) \bigr) \supseteq \{ \beta \in \text{supp}(g_{\alpha})  \mid \exists r \in T \, \bigl(\beta \in \ran ( r(\alpha) ) \bigr) \}.
$
\end{enumerate-(a)}

\item 
For each $g \in \mathcal{G}$ and $(t,T) \in P_3$, let  
$ g'(t, T) := ( g[t], \{ g[t'] \mid t' \in T\})$. 

\end{enumerate-(1)}
\end{definition}

\begin{lemma} \cite[Lemma 3.2]{MR576462} 
For each $g \in \mathcal{G}$, 
$\PP^g$ is a dense subclass of $\PP$ and 
$g' \colon \PP^g \to \PP^g$ is an automorphism of $\PP^g$. 
\end{lemma}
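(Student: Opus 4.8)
The plan is to prove the two assertions in turn, after reducing both to statements about how the coordinate-wise action of $g$ interacts with the tree structure. Since every $g\in\mathcal{G}$ permutes only the third coordinate of $D$ and fixes the first two, $g'$ is a bijection of $P_3$ with inverse $(g^{-1})'$, and it preserves the partial order in both directions: the relation $(r,R)\le(t,T)$ is defined purely by the inclusions $r{\upharpoonright}\kappa_0\supseteq t{\upharpoonright}\kappa_0$, $R{\upharpoonright}(\dom(t)\setminus\kappa_0)\subseteq T$ and $\dom(r)\supseteq\dom(t)$, all of which commute with applying $g$ pointwise. Hence everything reduces to showing (i) $g'[\PP^g]=\PP^g$ and (ii) $\PP^g$ is dense in $\PP$.

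For density I would fix $(s,S)\in P_3$ and strengthen it to a member of $\PP^g$ in three moves. First, enlarge $\dom(s)$ to a finite $\cfdash$-closed set $a\supseteq\dom(s)\cup\supp(g)$; this is possible because $\supp(g)$ is finite and, since $\cfdash\alpha\le\alpha$ with equality exactly at the fixed points, iterating $\cfdash$ on a finite set strictly decreases until it stabilizes and so yields a finite $a\in I$. Second, extend the stem so that all coordinates $\alpha\in a$ share a common index set, so in particular $\dom(t(\alpha))=\dom(t(\cfdash\alpha))$; this is clause (a) and only requires filling finitely many missing coordinates. Third, thin the tree to obtain clause (b): at each node $t$ whose branching coordinate $\alpha(t)$ satisfies $\alpha(t)\ge\kappa_0$, delete from $\mathrm{Suc}_T(t)$ the finitely many ordinals that lie in $\supp(g_{\alpha(t)})$ but not in $\ran(t(\alpha(t)))$. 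Each relevant ultrafilter is non-principal (it is either a uniform ultrafilter on $\alpha(t)$ or the push-forward of a fine ultrafilter), so removing finitely many points keeps every successor set in its ultrafilter, the stem branch is untouched, and the thinned tree is a legal $P_3$-tree. The resulting $(t,T)$ lies in $\PP^g$ and refines $(s,S)$.

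For $g'[\PP^g]=\PP^g$ it suffices, applying the same argument to $g^{-1}$ and noting $\PP^{g^{-1}}=\PP^g$, to check $g'(t,T)\in\PP^g$ for $(t,T)\in\PP^g$. The requirement $\dom(g[t])\supseteq\supp(g)$ and clause (a) are immediate because $g$ fixes the first two coordinates and hence every index set $\dom(t(\alpha))$. Clause (b) is preserved because $g_\alpha$ maps $\supp(g_\alpha)$ bijectively onto itself: applying $g_\alpha$ to both sides of $\ran(t(\alpha))\supseteq\{\beta\in\supp(g_\alpha)\mid\exists r\in T\ \beta\in\ran(r(\alpha))\}$ gives the same inclusion for the stem $g[t]$ and the tree $g[T]$.

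The main obstacle is verifying that $g'(t,T)$ still satisfies the ultrafilter clauses of $P_3$. At a node $t'$ with branching coordinate $\alpha=\alpha(t')$ one has $\mathrm{Suc}_{g[T]}(g[t'])=g_\alpha[\mathrm{Suc}_T(t')]$. For a type-$1$ coordinate the new demand $g_\alpha[\mathrm{Suc}_T(t')]\in\Phi_\alpha$ follows instantly, since $g_\alpha$ has finite support and $\Phi_\alpha$ is non-principal, so a set and its image under $g_\alpha$ differ by a finite set. For a type-$2$ coordinate, however, the ultrafilter index is itself moved: from $\mathrm{Suc}_T(t')\in\Phi_{\alpha,\gamma}$ with $\gamma=t'(\cfdash\alpha)(m(t'))$ one must deduce
\[ g_\alpha[\mathrm{Suc}_T(t')]\in\Phi_{\alpha,\,g_{\cfdash\alpha}(\gamma)}. \]
This compatibility between the two different ultrafilters $\Phi_{\alpha,\gamma}$ and $\Phi_{\alpha,g_{\cfdash\alpha}(\gamma)}$ is the crux, and it is exactly where clause (b) and Gitik's coherent choice of the families $\langle\Phi_{\alpha,\nu}\rangle$ enter: clause (b) forces an index $\gamma$ actually moved by $g_{\cfdash\alpha}$ to be recorded in the stem, so that the transport of $\Phi_{\alpha,\gamma}$ to $\Phi_{\alpha,g_{\cfdash\alpha}(\gamma)}$ induced by $g_\alpha$ is built into the construction, up to the finite modification absorbed by non-principality. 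I would isolate this transport property as a separate lemma about the chosen ultrafilters; once it is in hand, all $P_3$-clauses are preserved and the automorphism property follows.
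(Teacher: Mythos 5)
The paper gives no proof of this lemma: it is quoted verbatim from Gitik \cite[Lemma 3.2]{MR576462}. So I am judging your argument on its own terms. Your treatment of order-preservation, of reducing $g'[\PP^g]=\PP^g$ to one inclusion via $\PP^{g^{-1}}=\PP^g$, and of density (close $\dom(s)\cup\supp(g)$ under $\cfdash$, synchronize the stem lengths to get clause (a), thin the successor sets by the finite sets $\supp(g_{\alpha})$ using non-principality of the ultrafilters) is sound in outline.

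The gap is exactly at the point you flag as the crux. For a type-2 coordinate $\alpha$ you reduce the automorphism property to transporting $\Phi_{\alpha,\gamma}$ to $\Phi_{\alpha,g_{\cfdash\alpha}(\gamma)}$ and appeal to a ``coherent choice of the families $\langle\Phi_{\alpha,\nu}\rangle$''. No such coherence exists: the $\Phi_{\alpha,\nu}$ are \emph{arbitrary} fine (resp.\ uniform) ultrafilters with different completeness degrees $\kappa^{\alpha}_{\nu}$, and for $\nu\neq\nu'$ there is in general no relation at all between $\Phi_{\alpha,\nu}$ and $\Phi_{\alpha,\nu'}$; the ``transport lemma'' you propose to isolate cannot be extracted from the stated construction. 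Your intermediate observation --- that clause (b) forces a moved index to be recorded in the stem --- is correct but leads nowhere, since $g'$ moves the stem as well. The actual resolution is that no transport is needed because the index is never moved. If $t'\supseteq t$ is a splitting node of $T$ with $\alpha=\alpha(t')$, $m=m(t')$ and $\gamma=t'(\cfdash(\alpha))(m)$, then clause (a) of $\PP^g$ gives $\dom(t(\cfdash(\alpha)))=\dom(t(\alpha))$, so $m\notin\dom(t(\cfdash(\alpha)))$ and the triple $(\cfdash(\alpha),m,\gamma)$ was added inside the tree, not taken from the stem; if $\gamma\in\supp(g_{\cfdash(\alpha)})$, clause (b) at coordinate $\cfdash(\alpha)$ would force $\gamma=t(\cfdash(\alpha))(m')$ for some $m'\in\dom(t(\cfdash(\alpha)))$, and since $m\neq m'$ this contradicts injectivity of $t'(\cfdash(\alpha))$. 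Hence $g_{\cfdash(\alpha)}(\gamma)=\gamma$. (When $\cfdash(\alpha)<\kappa_0$, clause (a) makes the ultrafilter clause vacuous at such nodes, since the part of a condition below $\kappa_0$ is constant along the tree.) The same injectivity argument shows $\mathrm{Suc}_T(t')\cap\supp(g_{\alpha})=\emptyset$, so the successor sets are fixed \emph{pointwise} by $g_{\alpha}$ and all ultrafilter clauses are preserved verbatim; the finite-modification/non-principality argument is only needed for the density half, not for the automorphism half.
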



Since the restriction of $\PP$ to any ordinal is a complete subforcing, 
the Boolean algebra $\BB:=\BB(\PP)$ of regular open subsets of $\PP$ is complete.\footnote{A class Boolean algebra is called \emph{complete} if every subset has a supremum.} 
Let $\iota\colon \PP\rightarrow \BB$ denote the canonical $\leq$- and $\perp$-homomorphism. 
It is easy to see that every $g\in \mathcal{G}$ induces an automorphism $\bar{g}$ of $\BB$ defined by $\bar{g}(U)=g'[U\cap \PP^g]_{\mathrm{down}}$, where $U\in \BB$ is a regular open subset of $\PP$ and $U'_{\mathrm{down}}$ denotes the downward closure of any $U'\subseteq \PP$. 
Using density of $\PP^g$, it can be show that one can recover $g$ from $\bar{g}$ and $\overline{g\circ h}= \bar{g} \circ \bar{h}$ for all $g,h \in \mathcal{G}$. 
Thus $\bar{\mathcal{G}}:=\{ \bar{g} \mid g\in \mathcal{G}\}\cong \mathcal{G}$. 


\begin{definition} \ 
\begin{enumerate-(1)}
\item 
Let $\mathrm{fix}(s) := \{ \bar{g}\in \bar{\mathcal{G}} \mid g\in \mathcal{G},\ \forall \alpha \in s \ g_\alpha = \id_\alpha\}$ for $s\in I$. 
\item Let $\bar{\mathcal{F}}$ be the normal filter of subgroups of $\bar{\mathcal{G}}$ generated by $\mathrm{fix}(s)$ for all $s\in I$. 
\end{enumerate-(1)}
\end{definition}

As usual, for any $\BB$-name $\dot{x}$, let 
$$
\mathrm{sym} (\dot{x}) := \{ \bar{g} \in \bar{\mathcal{G}} \mid \bar{g}(\dot{x}) = \dot{x} \}.
$$
A $\BB$-name $\dot{x}$ is called \emph{symmetric} if $\mathrm{sym}(\dot{x})$ is in $\bar{\mathcal{F}}$.
Let $\mathrm{HS}_{\bar{\mathcal{F}}}$ be the class of all hereditarily symmetric $\BB$-names. 
We say that $s\in I$ \emph{supports} a name $\dot{x} \in \mathrm{HS}_{\bar{\mathcal{F}}}$ if $\mathrm{fix}(s) \subseteq \mathrm{sym} (\dot{x})$. 
The next lemma is important in the proof. 

\begin{lemma}\cite[Lemma 3.3]{MR576462}
Suppose that $\varphi$ is a formula with $n$ free variables 
and $\mathrm{fix}(s)$ supports $\dot{x}_0, \ldots , \dot{x}_{n-1} \in \mathrm{HS}_{\bar{\mathcal{F}}}$, where $s\in I$. 
Then for every $(t, T) \in P_3$: 
$$
(t,T)_\iota \Vdash \varphi (\dot{x}_0 , \ldots , \dot{x}_{n-1}) \iff ((t,T) {\upharpoonright} \PP_s)_\iota \Vdash \varphi (\dot{x}_0 , \ldots , \dot{x}_{n-1}). 
$$
\end{lemma}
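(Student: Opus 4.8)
The plan is to run the standard symmetric-forcing argument for the group $\bar{\mathcal{G}}$, using that $\mathrm{fix}(s)$ supports every $\dot{x}_i$. The engine is the symmetry lemma for Boolean automorphisms: for any $\bar g\in\bar{\mathcal{G}}$, any $\BB$-names $\dot y_0,\dots,\dot y_{n-1}$ and any $u\in\BB$, one has $u\Vdash\varphi(\dot y_0,\dots,\dot y_{n-1})$ iff $\bar g(u)\Vdash\varphi(\bar g(\dot y_0),\dots,\bar g(\dot y_{n-1}))$, proved by induction on $\varphi$ from the fact that $\bar g$ is an automorphism of $\BB$. Since $s$ supports each $\dot x_i$ we have $\mathrm{fix}(s)\subseteq\mathrm{sym}(\dot x_i)$, so $\bar g(\dot x_i)=\dot x_i$ for $\bar g\in\mathrm{fix}(s)$; hence the truth of $u\Vdash\varphi(\dot x_0,\dots,\dot x_{n-1})$ is invariant under $\mathrm{fix}(s)$, i.e. $u\Vdash\varphi \iff \bar g(u)\Vdash\varphi$ for all $\bar g\in\mathrm{fix}(s)$. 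The implication $(\Leftarrow)$ is then just monotonicity: $(t,T)\leq(t,T){\upharpoonright}\PP_s$ in $\PP$ gives $(t,T)_\iota\leq((t,T){\upharpoonright}\PP_s)_\iota$ in $\BB$, so if the restriction forces $\varphi$ then so does $(t,T)_\iota$.

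For $(\Rightarrow)$ I argue by contradiction. Assume $(t,T)_\iota\Vdash\varphi$ but $((t,T){\upharpoonright}\PP_s)_\iota\not\Vdash\varphi$, and, using that $\iota$ has dense range, fix $(r,R)\leq(t,T){\upharpoonright}\PP_s$ with $(r,R)_\iota\Vdash\neg\varphi$. I claim there is $g\in\mathcal{G}$ with $\bar g\in\mathrm{fix}(s)$ such that $\bar g((t,T)_\iota)$ is compatible with $(r,R)_\iota$. Granting this, the first paragraph yields $\bar g((t,T)_\iota)\Vdash\varphi$, while any common extension of $\bar g((t,T)_\iota)$ and $(r,R)_\iota$ would force both $\varphi$ and $\neg\varphi$ --- a contradiction. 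Note that, dually, this claim is exactly what is needed to show $((t,T){\upharpoonright}\PP_s)_\iota=\bigvee_{\bar g\in\mathrm{fix}(s)}\bar g((t,T)_\iota)$, since each such $\bar g$ fixes the $\PP_s$-restriction and hence $\bar g((t,T)_\iota)\leq((t,T){\upharpoonright}\PP_s)_\iota$; from that identity together with $\bar g((t,T)_\iota)\Vdash\varphi$ one reads off $((t,T){\upharpoonright}\PP_s)_\iota\Vdash\varphi$ directly.

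The crux, and the step I expect to be hardest, is producing $g$. I set $g_\alpha=\id$ for $\alpha\in s$ (so $\bar g\in\mathrm{fix}(s)$) and for $\alpha\notin\dom(t)$, so that $\supp(g)\subseteq\dom(t)\setminus s$. On coordinates in $s$ the condition $(r,R)$ already refines $(t,T){\upharpoonright}\PP_s$ and so is coherent with $\bar g$ applied to $(t,T)$; the only possible clashes are on the finitely many $\alpha\in\dom(t)\setminus s$ where both stems are defined. For each such $\alpha$, since $t(\alpha)$ and $r(\alpha)$ are finite injective partial functions into the infinite ordinal $\alpha$, I choose a finitely supported permutation $g_\alpha$ of $\alpha$ carrying $\ran(t(\alpha))$ off $\ran(r(\alpha))$; this leaves the domain pattern of Definition \ref{def P2} unchanged, so $g'(t,T)$ remains a legal stem that amalgamates with $r$. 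That the shifted tree still satisfies the $\Phi$-largeness clauses is precisely the earlier lemma that $g'\colon\PP^g\to\PP^g$ is an automorphism, which builds in the invariance of the ultrafilters $\Phi_\alpha$ and $\Phi_{\alpha,\nu}$ under finite permutations; the two trees then amalgamate because their successor sets lie in one and the same ultrafilter and hence meet in a large set. The remaining work is bookkeeping --- absorbing $(t,T)$ and $(r,R)$ into the dense subclass $\PP^g$ so that $\bar g((t,T)_\iota)=(g'(t,T))_\iota$, and checking that $\cfdash$-coherence survives because $s\in I$ is closed under $\cfdash$ --- but this is where the genuine care lies rather than in any new idea.
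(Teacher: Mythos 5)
Your overall architecture is the right one, and it is in fact the standard one: the paper does not reprove this lemma but cites Gitik, whose argument is exactly the combination you describe of the symmetry lemma for the automorphisms $\bar g$ with a homogeneity claim, namely that for every $(r,R)\leq (t,T){\upharpoonright}\PP_s$ there is $\bar g\in\mathrm{fix}(s)$ such that $g'(t,T)$ is compatible with $(r,R)$. The $(\Leftarrow)$ direction via $(t,T)\leq (t,T){\upharpoonright}\PP_s$ and the reduction of $(\Rightarrow)$ to this homogeneity claim are both correct.

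The crux step, however, is wrong as you state it. You choose $g_\alpha$ ``carrying $\ran(t(\alpha))$ off $\ran(r(\alpha))$'' for $\alpha\in\dom(t)\setminus s$. But a common extension $(u,U)$ of $g'(t,T)$ and $(r,R)$ must satisfy $u(\alpha)\supseteq \bigl(g_\alpha\circ t(\alpha)\bigr)\cup r(\alpha)$ as a \emph{function}, and by Definition \ref{def P2} \ref{def P2 2} the domains $\dom(t(\alpha))$ and $\dom(r(\alpha))$ are nonempty initial segments of $\omega$ for $\alpha\geq\kappa_0$, so they always share the argument $0$. Making the ranges disjoint therefore forces $g_\alpha(t(\alpha)(0))\neq r(\alpha)(0)$, which makes $g'(t,T)$ and $(r,R)$ \emph{incompatible} and destroys the contradiction you are after. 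The correct choice is the opposite one: first extend $(r,R)$ so that $\dom(r)\supseteq\dom(t)$ and each $r(\alpha)$ is at least as long as $t(\alpha)$, then take $g_\alpha$ to be a finitely supported permutation with $g_\alpha\bigl(t(\alpha)(m)\bigr)=r(\alpha)(m)$ for all $m\in\dom(t(\alpha))$ (a well-defined finite partial injection, since both stems are injective), i.e.\ move $t(\alpha)$ \emph{onto} the corresponding part of $r(\alpha)$ rather than off it. With that change the stems amalgamate, and the rest of your outline --- invariance of the ultrafilters under finitely supported permutations, intersecting the successor sets, and absorbing both conditions into the dense class $\PP^g$ --- goes through; just note that $g$ must be fixed \emph{after} the stems have been lengthened, since membership in $\PP^g$ constrains the stem in terms of $\supp(g)$, so the ``bookkeeping'' and the choice of $g$ cannot be fully separated.
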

Fix a $\BB$-generic filter $G$ over $V$.
The symmetric model is defined as $V(G) = \{ \dot{x}^G \mid \dot{x} \in \text{HS}_{\bar{\mathcal{F}}} \}$. 
Note that for any $s\in I$, 
$G {\upharpoonright} \PP_s := \{ (t,T ) {\upharpoonright} \PP_s \mid (t,T)_\iota \in G\}$ is a $\PP_s$-generic filter over $V$. 

\begin{theorem}\ 
\label{Gitik thm} 
\begin{enumerate-(1)} 
\item 
\label{approximation}
\cite[Lemma 2.4]{MR3274975}
For any set of ordinals $X \in V(G)$, there exists some $s\in I$ with $X \in V[G \upharpoonright \PP_s]$.

\item 
\label{Gitik main}
\cite[Corollary 2.10]{MR3274975}
$V(G)$ is a model of $\ZF$ where every infinite cardinal has cofinality $\omega$. 
Moreover, the closure of $\vec{\kappa}$ equals the class of all uncountable cardinals of $V(G)$.  
\end{enumerate-(1)} 
\end{theorem}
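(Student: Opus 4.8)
The plan is to prove the two parts in turn, treating part~\ref{approximation} (the approximation lemma) as the main tool and then deriving the cardinal structure of part~\ref{Gitik main} from it together with the factoring Lemma~\ref{two step}.

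\emph{Part (1).} Fix a set of ordinals $X \in V(G)$ and a hereditarily symmetric name $\dot X \in \mathrm{HS}_{\bar{\mathcal{F}}}$ with $\dot X^G = X$. Since $\mathrm{sym}(\dot X) \in \bar{\mathcal{F}}$, there is some $s \in I$ with $\mathrm{fix}(s) \subseteq \mathrm{sym}(\dot X)$, i.e.\ $s$ supports $\dot X$. For each ordinal $\alpha$ the name $\check\alpha$ is fixed by all of $\bar{\mathcal{G}}$, so $\mathrm{fix}(s)$ supports both $\check\alpha$ and $\dot X$. Applying the symmetry lemma \cite[Lemma 3.3]{MR576462} to the formula $x_0 \in x_1$ gives, for every $(t,T) \in P_3$,
\[(t,T)_\iota \Vdash \check\alpha \in \dot X \iff \bigl((t,T){\upharpoonright}\PP_s\bigr)_\iota \Vdash \check\alpha \in \dot X.\]
Since $\PP_s$ is a complete subforcing of $\PP$ and $G{\upharpoonright}\PP_s$ is $\PP_s$-generic over $V$, it follows by the truth lemma that $\alpha \in X$ iff some $q \in G{\upharpoonright}\PP_s$ satisfies $q \Vdash_{\PP_s} \check\alpha \in \dot X$. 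The right-hand side is evaluated inside $V[G{\upharpoonright}\PP_s]$ from the ground-model object $\dot X$ and the generic $G{\upharpoonright}\PP_s$, so $X \in V[G{\upharpoonright}\PP_s]$, as required.

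\emph{Part (2), the theory.} That $V(G) \models \ZF$ is the general fact that a symmetric submodel of a forcing extension determined by a normal filter of subgroups models $\ZF$; one checks the usual closure properties of $\mathrm{HS}_{\bar{\mathcal{F}}}$ (closure under the G\"odel operations, containment of $V$), the only extra care being that $\PP$ is a proper class. This is handled because the restriction of $\PP$ to any ordinal is a complete subforcing and every name in $\mathrm{HS}_{\bar{\mathcal{F}}}$ is supported by a set $s \in I$, so instances of Separation and Replacement reduce to the set-sized forcings $\PP_s$; equivalently, by part~\ref{approximation} every set of ordinals of $V(G)$ already lives in a set-generic extension $V[G{\upharpoonright}\PP_s]$.

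\emph{Part (2), cofinalities and cardinals.} Here I would combine part~\ref{approximation} with Lemma~\ref{two step}. Each strongly compact $\kappa_\xi$ remains a cardinal in $V(G)$: by part~\ref{approximation} it suffices that no $\PP_s$ collapse $\kappa_\xi$, and by Lemma~\ref{two step} one writes $\PP_s \cong \PP_{s \cap \kappa_\xi} \ast \dot\QQ$ with $\dot\QQ$ adding no bounded subset of $\kappa_\xi$, while the lower factor lives below $\kappa_\xi$ and the $\kappa_\xi$-completeness and fineness of the measures $\Phi_\alpha,\Psi_\alpha$ building the trees yield a Prikry-type property preserving $\kappa_\xi$. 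Conversely every $\alpha \in \Reg^V$ acquires cofinality $\omega$ in $V(G)$: the tree-coordinate at $\alpha$, together with its $\cfdash$-companion, produces by genericity against the measure-one successor sets an $\omega$-sequence cofinal in $\alpha$, and the relevant witness survives into $V(G)$ because it is supported by a finite set in $I$; the type-$2$ apparatus then pushes this down recursively so that every closure point of $\vec\kappa$ also has cofinality $\omega$. Finally one checks that every ordinal strictly between two consecutive members of the closure of $\vec\kappa$ is either collapsed or singularized to cofinality $\omega$, so that the closure of $\vec\kappa$ is exactly the class of uncountable cardinals of $V(G)$.

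\emph{Main obstacle.} The genuinely hard step is this last cofinality and cardinal analysis: one must simultaneously singularize every $V$-regular cardinal to cofinality $\omega$ while preserving all the $\kappa_\xi$ as cardinals. This is the core of Gitik's construction and rests on the Prikry property for the tree forcings, which uses the completeness and fineness of the measures $\Phi_\alpha,\Psi_\alpha$ attached to each coordinate; the symmetry apparatus ($\bar{\mathcal{F}}$, $\mathrm{fix}(s)$, and the symmetry lemma) is precisely what prevents these singularizing sequences from being assembled into a global wellordering, thereby keeping $V(G)$ a model of $\ZF$ rather than of $\ZFC$.
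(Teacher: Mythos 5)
The paper does not prove this theorem at all: both parts are imported from the literature (Gitik's original paper and \cite{MR3274975}), so there is no in-paper argument to compare yours against. Judged on its own terms, your proof of part~\ref{approximation} is correct and essentially complete: taking a hereditarily symmetric name $\dot X$ with support $s\in I$, applying the quoted symmetry lemma to the formula $\check\alpha\in\dot X$, and using that $G{\upharpoonright}\PP_s$ is $\PP_s$-generic over $V$ to read off $X=\{\alpha\mid \exists q\in G{\upharpoonright}\PP_s\ q_\iota\Vdash\check\alpha\in\dot X\}$ inside $V[G{\upharpoonright}\PP_s]$ is exactly the standard approximation argument for symmetric extensions, and it only uses facts the paper has already recorded (density of $\PP^g$, genericity of the restricted filter, the restriction map landing in $P_3$). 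The one cosmetic slip is writing $q\Vdash_{\PP_s}\check\alpha\in\dot X$ where $\dot X$ is a $\BB$-name rather than a $\PP_s$-name; what you actually use, and what the symmetry lemma gives, is the full forcing relation evaluated at the restricted condition, which is still definable in $V$ and hence computable in $V[G{\upharpoonright}\PP_s]$.

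Part~\ref{Gitik main}, however, contains a genuine gap that you yourself flag: the entire cofinality and cardinal analysis is asserted rather than proved. Preservation of the $\kappa_\xi$ is plausibly reduced, via part~\ref{approximation} and Lemma~\ref{two step}, to the claim that $\dot\QQ$ adds no bounded subsets of $\kappa_\xi$ while $|\PP_{s\cap\kappa_\xi}|$ is small (this is exactly how the paper itself argues in Lemma~\ref{Gitik c}), but the statement of Lemma~\ref{two step} is itself a black box whose proof is the Prikry-type analysis you defer. More seriously, the claim that every $V$-regular $\alpha$ acquires cofinality $\omega$ in $V(G)$ requires (i) showing the generic tree coordinate at $\alpha$ really does produce a cofinal $\omega$-sequence (for type-1 inaccessible $\alpha$ the ultrafilter lives on $\pow_{\kappa_\xi}(\alpha)$, so what genericity plus fineness gives is a countable covering of $\alpha$ by sets of size ${<}\kappa_\xi$, not directly a cofinal $\omega$-sequence, and the type-2 coordinates interleave the $\cfdash$-companion in a way that needs a careful induction), and (ii) verifying that the witnessing object is hereditarily symmetric, i.e., supported by some finite $s\in I$, so that it survives into $V(G)$. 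Neither step is carried out, and the final claim that the closure of $\vec\kappa$ is \emph{exactly} the class of uncountable cardinals of $V(G)$ additionally requires showing that no cardinal of $V$ strictly between consecutive closure points survives, which is not addressed. Since the paper treats all of this as a citation, your sketch is a reasonable roadmap, but it is not a proof of part~\ref{Gitik main}.
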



\subsubsection{Absoluteness fails over Gitik's model}
\label{cons of abs for ckappa} 

Recall from Section~\ref{generic absoluteness} the definitions of the Hartogs number $\aleph (x)$ for a set $x$ and of $\kappa^-$ for  $\kappa \in \Card$. Also we wrote $\aleph$ for $\aleph (2^{\omega})$. 
The following variants of $\aleph^-$ will play a key role. 
For any cardinal $\kappa$, we write $\cc_{\kappa}$ for $\bigl(\aleph (\kappa^{\omega})\bigr)^-$. Then $\cc_{\omega} = \cc_2 = \aleph^- $ and  
\begin{align*}
\cc_{\kappa} = \sup \{ \lambda\in \Card \mid \lambda \leq_i \kappa^\omega \}. 
\end{align*} 
We will see that each of $\GA_{\CC^*}$ and $\GA_{\RR_*}$ implies that $\cc_{\kappa}>\kappa$ for some infinite cardinal $\kappa$, while in Gitik's model, $\cc_{\kappa}=\kappa$ for all infinite cardinals $\kappa$. 
We write $\cc_{\omega}^{\CC^{\kappa}}$ for the cardinal $\lambda$ with $\one_{\CC^{\kappa}}\Vdash \lambda = \cc_{\omega}$. 

\begin{lemma}
\label{lem:transition}
$\cc_{\kappa} =\cc_{\omega}^{\CC^{\kappa}}$ for all infinite cardinals $\kappa$. 
\end{lemma}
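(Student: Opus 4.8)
The plan is to prove the two inequalities $\cc_\kappa \leq \cc_\omega^{\CC^\kappa}$ and $\cc_\omega^{\CC^\kappa} \leq \cc_\kappa$ separately. Throughout I will use that $\CC^\kappa$ preserves all cardinals (Corollary \ref{iteration linked}), so that $\Card^V = \Card^{V[G]}$ and the two suprema range over the same ordinals, and that $\CC^\kappa$ is weakly homogeneous, so that $\one$ decides the statement ``$\lambda \leq_i 2^\omega$'' for each ordinal $\lambda$; this is what makes $\cc_\omega^{\CC^\kappa}$ well defined. I will also use the harmless ZF-fact that $2^\omega$ and $\omega^\omega$ are equinumerous (via Cantor--Schr\"oder--Bernstein in ZF), so that $\cc_\omega$, defined via $\omega^\omega$, agrees with the version of $\cc$ via $2^\omega$ appearing in Definition \ref{def nice}.

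For $\cc_\kappa \leq \cc_\omega^{\CC^\kappa}$, fix a $\CC^\kappa$-generic $G$ and let $\langle c_\alpha \mid \alpha < \kappa\rangle$ be the induced sequence of Cohen reals, where $c_\alpha(n) = G(\alpha, n)$; these are pairwise distinct by genericity. I would then define, in $V[G]$, an injection $\Phi \colon (\kappa^\omega)^V \to (2^\omega)^{V[G]}$ by interleaving, e.g. $\Phi(f)(\p(n,m)) = c_{f(n)}(m)$: if $f \neq g$ they differ at some $n$, whence $c_{f(n)} \neq c_{g(n)}$ and $\Phi(f) \neq \Phi(g)$. Given any cardinal $\lambda \leq_i \kappa^\omega$ in $V$, composing a witnessing injection $\lambda \to (\kappa^\omega)^V$ (which lies in $V \subseteq V[G]$) with $\Phi$ shows $V[G] \models \lambda \leq_i 2^\omega$, so $\lambda \leq \cc_\omega^{\CC^\kappa}$. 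Taking the supremum over all such $\lambda$ yields the inequality.

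For $\cc_\omega^{\CC^\kappa} \leq \cc_\kappa$, let $\lambda$ be any cardinal with $\one \Vdash \lambda \leq_i 2^\omega$. Since $\CC^\kappa$ is wellorderable, hence locally complete, and locally $\omega^{[+]}$-c.c., it is nice by Lemma \ref{narrow small}, so $\lambda \leq_i (\CC^\kappa)^\omega$ in $V$. It then remains to observe $(\CC^\kappa)^\omega \leq_i \kappa^\omega$: a condition in $\CC^\kappa$ is a finite subset of $\kappa \times \omega \times 2$, and since $|\kappa \times \omega \times 2| = \kappa$ and $[\kappa]^{<\omega} \leq_i \kappa$ are both ZF-provable via Gödel pairing, we obtain $\CC^\kappa \leq_i \kappa$ and hence $(\CC^\kappa)^\omega \leq_i \kappa^\omega$. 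Thus $\lambda \leq_i \kappa^\omega$, i.e. $\lambda \leq \cc_\kappa$; the supremum over all such $\lambda$ gives the second inequality, and the two together give equality.

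I do not expect a serious obstacle: the conceptual content is precisely the pairing of the Cohen-coding injection in one direction with the niceness of $\CC^\kappa$ in the other. The only points requiring care are the choice-free bookkeeping — verifying that the injections $\kappa^\omega \to (2^\omega)^{V[G]}$, $\CC^\kappa \to \kappa$ and $(\CC^\kappa)^\omega \to \kappa^\omega$ are genuinely definable in ZF, appealing only to the well-ordered Gödel pairing and never to a wellordering of $2^\omega$ or to $\kappa \cdot \kappa = \kappa$ for singular $\kappa$ — and confirming via weak homogeneity that $\cc_\omega^{\CC^\kappa}$ is unambiguous. Both are routine.
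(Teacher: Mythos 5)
Your proof is correct and follows the same two-step strategy as the paper: the inequality $\cc_\omega^{\CC^\kappa}\leq\cc_\kappa$ via niceness of $\CC^\kappa$ together with $(\CC^\kappa)^\omega\leq_i\kappa^\omega$, and the converse via an explicit injection of $(\kappa^\omega)^V$ into the reals of the extension. The only (harmless, and in fact slightly cleaner) deviation is in the latter direction: the paper uses only the first digit of each Cohen real and therefore must first thin $\kappa^\omega$ to a same-sized family of injective functions that pairwise differ at infinitely many coordinates before genericity yields distinct reals, whereas your map $\Phi(f)(\p(n,m))=c_{f(n)}(m)$ interleaves the entire Cohen reals, so a single point of disagreement between $f$ and $g$ already forces $\Phi(f)\neq\Phi(g)$ and no thinning is needed.
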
 
\begin{proof} 
To show $\cc_\omega^{\CC^{\kappa}} \le \cc_{\kappa}$, 
suppose that $\dot{f}$  is a $\CC^\kappa$-name and $p\in \CC^\kappa$ forces that 
$\dot{f} \colon \gamma \to 2^{\omega}$ is injective. 
Since $\CC^\kappa$ is nice, 
we have $\gamma\leq_i \kappa^\omega$ and thus $\gamma \leq \cc_{\kappa}$, as desired. 
To show $\cc_\omega^{\CC^{\kappa}} \le \cc_{\kappa}$, it suffices to construct an injective function from $\kappa^\omega \cap V$ into the set of Cohen reals over $V$ in a $\CC^\kappa$-generic extension $V[G]$. 
Let $\langle i_\alpha \mid \alpha<\kappa\rangle$ denote the sequence of the first digits of the Cohen reals added by $G$. 
Let $S$ denote a set of size $\kappa^\omega$ of injective functions in $\kappa^\omega$ such that for any $f,g\in S$ with $f\neq g$, there exist infinitely many $n\in\omega$ with $f(n)\neq g(n)$. 
For each $f\in S$, consider the Cohen real $x_f$ over $V$ defined by $x_f(n)=i_{f(n)}$. 
These reals are pairwise distinct by the choice of $S$. 
\end{proof} 


\begin{lemma}
\label{Gitik c}
In Gitik's model, 
$\cc_{\kappa} = \kappa$ for all infinite cardinals $\kappa$. 
\end{lemma}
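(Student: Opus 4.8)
The plan is to prove both inequalities, the lower bound being immediate and the upper bound being the substance. For the lower bound, the map sending each $\alpha<\kappa$ to the constant sequence with value $\alpha$ witnesses $\kappa\li\kappa^\omega$, so $\cc_\kappa\geq\kappa$. For the upper bound it suffices to show that no cardinal $\lambda>\kappa$ satisfies $\lambda\li\kappa^\omega$; since any such $\lambda$ would give $\kappa^+\li\lambda\li\kappa^\omega$, it is enough to prove $\kappa^+\not\li\kappa^\omega$ in $V(G)$, where $\kappa^+$ denotes the successor cardinal of $\kappa$ in $V(G)$. The first step is to identify $\kappa^+$: since $\kappa$ is the largest cardinal of $V(G)$ below $\kappa^+$, the cardinal $\kappa^+$ is not a limit of cardinals, hence not a limit point of $\vec{\kappa}$; as the uncountable cardinals of $V(G)$ form exactly the closure of $\vec{\kappa}$ by Theorem \ref{Gitik thm}\ref{Gitik main}, $\kappa^+$ must be a non-limit member of $\vec{\kappa}$. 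Thus $\kappa^+=\kappa_\xi$ is strongly compact in $V$ for some $\xi$.

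Next I would argue by contradiction. Suppose $f\colon\kappa_\xi\rightarrow\kappa^\omega$ is injective in $V(G)$. Coding $f$ as the function $(\alpha,n)\mapsto f(\alpha)(n)$ from $\kappa_\xi\times\omega$ to $\kappa$, that is, as a set of ordinals, Theorem \ref{Gitik thm}\ref{approximation} yields some $s\in I$ with $f\in V[G{\upharpoonright}\PP_s]$. Since $\kappa_\xi$ is of type $1$ we have $\cfdash(\kappa_\xi)=\kappa_\xi$, so $s\cup\{\kappa_\xi\}$ is again a finite subset of $\Reg$ closed under $\cfdash$; enlarging $s$ only enlarges $V[G{\upharpoonright}\PP_s]$, so we may assume $\kappa_\xi\in s$ while still $f\in V[G{\upharpoonright}\PP_s]$.

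With $\kappa_\xi\in s$ secured, I would apply Lemma \ref{two step}: $V[G{\upharpoonright}\PP_s]$ is a generic extension of the $\ZFC$-model $V_0:=V[G{\upharpoonright}\PP_{s\cap\kappa_\xi}]$ by a forcing that adds no bounded subset of $\kappa_\xi$. Every element of $\kappa^\omega$ codes a bounded subset of $\kappa_\xi$, because $\kappa<\kappa_\xi$ and a function $\omega\rightarrow\kappa$ codes to a subset of $\kappa$; hence $\ran(f)\subseteq V_0$. Now $\PP_{s\cap\kappa_\xi}$ has size ${<}\kappa_\xi$, so $\kappa_\xi$ remains inaccessible in $V_0$, and therefore $\mu:=|\kappa^\omega|^{V_0}\leq(2^{|\kappa|})^{V_0}<\kappa_\xi$. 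Since $(\kappa^\omega)^{V_0}$ carries a wellorder lying in $V_0\subseteq V(G)$, its subset $\ran(f)$ is wellorderable in $V(G)$ of cardinality at most $\mu$. But $f$ is a bijection of $\kappa_\xi$ onto $\ran(f)$, so $\kappa_\xi$ would have $V(G)$-cardinality at most $\mu<\kappa_\xi$, contradicting that $\kappa_\xi$ is a cardinal of $V(G)$. Hence $\kappa^+\not\li\kappa^\omega$ and $\cc_\kappa=\kappa$.

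The main obstacle is the interplay of the three models $V(G)$, $V[G{\upharpoonright}\PP_s]$ and $V_0$: one must check that the hypothetical injection, though it lives in $V(G)$, has its range trapped in $V_0$ via the distributivity supplied by Lemma \ref{two step}, and that cardinalities computed in the $\ZFC$-model $V_0$ transfer correctly to $V(G)$ through a ground-model wellorder. The identification of $\kappa^+$ with a strongly compact $\kappa_\xi$ is exactly what makes Lemma \ref{two step} applicable and the strong-limit bound on $|\kappa^\omega|^{V_0}$ available.
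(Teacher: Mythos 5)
Your proof is correct and follows the same strategy as the paper's: identify $(\kappa^+)^{V(G)}$ with a strongly compact $\kappa_\xi$ of $V$ via Theorem \ref{Gitik thm}\ref{Gitik main}, approximate the hypothetical injection in some $V[G\upharpoonright\PP_s]$ with $\kappa_\xi\in s$ via Theorem \ref{Gitik thm}\ref{approximation}, and exploit the factorization of Lemma \ref{two step} together with the smallness of $\PP_{s\cap\kappa_\xi}$. The execution differs in one respect. The paper fixes an auxiliary inaccessible $\lambda$ with $\max(s\cap\kappa_\xi)<\lambda<\kappa_\xi$, checks that $\lambda$ remains inaccessible in $V[G\upharpoonright\PP_s]$, and concludes $\gamma<\lambda<\kappa_\xi$ inside that $\ZFC$ model --- a statement about ordinals that transfers to $V(G)$ for free. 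You instead trap $\ran(f)$ inside $V_0=V[G\upharpoonright\PP_{s\cap\kappa_\xi}]$ (observing that elements of $\kappa^\omega$ code bounded subsets of $\kappa_\xi$) and count there using inaccessibility of $\kappa_\xi$ itself; this is a clean variant, but to derive the contradiction you then need the $V_0$-wellorder of $(\kappa^\omega)^{V_0}$ to be available in $V(G)$, i.e., $V_0\subseteq V(G)$. That inclusion does hold --- every $g$ with $g_\alpha=\id_\alpha$ for $\alpha\in s'$ fixes $\PP_{s'}$ pointwise, so the canonical name for $G\upharpoonright\PP_{s'}$ is symmetric with support $s'$ --- but it is an extra fact you should state explicitly, and it is precisely what the paper's ordinal-only formulation sidesteps. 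Apart from this, the two arguments rest on the same ingredients and are equally general.
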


\begin{proof}
Suppose that $\kappa$ is an infinite cardinal in $V(G)$ 
and $f \colon \gamma \to {}^{\omega}\kappa$ is an injective function in $V(G)$. 
It suffices to show $\gamma < (\kappa^+)^{V(G)}$. 
By Theorem \ref{Gitik thm}~\ref{Gitik main}, $(\kappa^+)^{V(G)}=\kappa_\xi$ for some $\xi$, where $\kappa_\xi$ is a strongly compact cardinal in $V$. 
We will show $\gamma < \kappa_\xi$. 
By Theorem \ref{Gitik thm}~\ref{approximation}, there is some $s\in I$ with $f\in V[G {\upharpoonright} \PP_s]$. We may assume $\kappa_\xi \in s$. 
By Lemma~\ref{two step}, $\PP_s$ is forcing equivalent to a forcing of the form $\PP_{s \cap \kappa_{\xi}} \ast \dot{\QQ}$, where $\PP_{s \cap \kappa_{\xi}}$ forces that $\dot{\QQ}$ does not add new bounded subsets of $\kappa_\xi$. 
Let $\lambda$ be an inaccessible cardinal in $V$ with $\max(s \cap \kappa_{\xi}) < \lambda < \kappa_\xi$. 
Since $|\PP_{s \cap \kappa_{\xi}}|<\lambda$ and $\PP_{s \cap \kappa_{\xi}}$ forces that $\dot{\QQ}$ does not add new bounded subsets of $\kappa_\xi$, $\lambda$ remains inaccessible in $V[G {\upharpoonright} \PP_s]$. 
Since $f\in V[G {\upharpoonright} \PP_s]$, we have $\gamma < \lambda < \kappa_\xi= (\kappa^+)^{V(G)}$, as desired. 
\end{proof}

Note that $\cc_\omega=\omega$ implies that no cardinal characteristics of the reals exist. 
Using the previous lemma, we obtain the failure of the above absoluteness principles in Gitik's model. 

\begin{theorem} 
\label{A fails Gitik} 
$\GA_{\CC^*}$, $\GA_{\RR_*}$ and $\GA_{\HH_*}$ fail in Gitik's model. 
\end{theorem}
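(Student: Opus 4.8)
The plan is to contradict each of the three principles by exploiting the rigidity of Gitik's model expressed by $\cc_\kappa=\kappa$ (Lemma~\ref{Gitik c}). Write $V(G)$ for Gitik's model and assume, toward a contradiction, that one of the principles holds there. I begin with $\GA_{\CC^*}$ and $\GA_{\RR_*}$. Since every $\CC^\kappa$ lies in $\CC^*$ and every $\RR_\kappa$ in $\RR_*$, these principles yield $\GA_{\vec\PP}$ for $\vec\PP=\langle\CC^\kappa\mid\kappa\in\Card\rangle$ and for $\vec\PP=\langle\RR_\kappa\mid\kappa\in\Card\rangle$ respectively, each of which satisfies Assumption~\ref{cond seq} (as recorded after that assumption). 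The strategy is to produce an $\omega$-strong limit cardinal $\kappa$ in $V(G)$, apply Theorem~\ref{A implies singular}\ref{A implies singular 1} to get $\one_{\PP_\kappa}\Vdash\cc>\kappa$, and then use niceness to transfer this back to $V(G)$ as $\cc_\kappa>\kappa$, contradicting Lemma~\ref{Gitik c}.

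The key intermediate step is the claim that $\kappa:=\kappa_1$ is an $\omega$-strong limit in $V(G)$; in fact the same holds for every $\kappa_\xi$ with $\xi\geq 1$. Suppose $g\colon\nu^\omega\to\kappa_\xi$ is a surjection in $V(G)$ with $\nu<\kappa_\xi$. By Theorem~\ref{Gitik thm}\ref{approximation} (coding $g$ as a set of ordinals) there is $s\in I$, which we may take to contain $\kappa_\xi$, with $g\in V[G\upharpoonright\PP_s]$; since $\dom(g)=(\nu^\omega)^{V(G)}$ is then an element of $V[G\upharpoonright\PP_s]$, all its members lie in that model, so $(\nu^\omega)^{V(G)}=(\nu^\omega)^{V[G\upharpoonright\PP_s]}$. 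Using Lemma~\ref{two step} to factor $\PP_s\cong\PP_{s\cap\kappa_\xi}\ast\dot\QQ$ with $\dot\QQ$ adding no bounded subsets of $\kappa_\xi$, and picking an inaccessible $\lambda$ of $V$ with $\max(s\cap\kappa_\xi),\nu<\lambda<\kappa_\xi$, one checks exactly as in Lemma~\ref{Gitik c} that $\lambda$ stays inaccessible in $V[G\upharpoonright\PP_s]$, whence $|\nu^\omega|^{V[G\upharpoonright\PP_s]}<\lambda$. But then in $V[G\upharpoonright\PP_s]$ the cardinal $\kappa_\xi$ is a surjective image of a wellorderable set of size $<\lambda<\kappa_\xi$, contradicting that $\kappa_\xi$ remains a cardinal in the inner model $V[G\upharpoonright\PP_s]$. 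This is the argument of Lemma~\ref{Gitik c} with $\nu^\omega$ in place of $\kappa^\omega$ and surjections in place of injections.

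With $\kappa=\kappa_1$ an $\omega$-strong limit, Theorem~\ref{A implies singular}\ref{A implies singular 1} gives $\one_{\PP_\kappa}\Vdash\cc>\kappa$, hence $\one_{\PP_\kappa}$ forces $\kappa^+\leq_i 2^\omega$. As $\PP_\kappa$ is nice (Assumption~\ref{cond seq}\ref{cond seq 3}), this yields $\kappa^+\leq_i\PP_\kappa^\omega$ in $V(G)$. By Assumption~\ref{cond seq}\ref{cond seq 1} and the injections $\pow_{\omega^{[+]}}(\kappa)^\omega\leq_i\pow_{\omega^{[+]}}(\kappa)\leq_i\kappa^\omega$ (the last one sending a locally countable set to its canonical enumeration computed in its $\HOD$, using $\omega\cdot\kappa=\kappa$), we obtain $\kappa^+\leq_i\kappa^\omega$ in $V(G)$, i.e. $\cc_\kappa\geq\kappa^+>\kappa$, contradicting Lemma~\ref{Gitik c}. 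For the Cohen case one may instead read $\cc^{\CC^\kappa}=\cc_\kappa=\kappa$ directly off Lemma~\ref{lem:transition}.

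For the Hechler principle the plan is to rerun the comparison of bounding and dominating numbers from Corollary~\ref{abs H singular}. By Corollary~\ref{iteration linked} every Hechler iteration preserves cardinals and cofinalities, and $\aleph_\omega$ is multiplicatively closed of countable cofinality in $V(G)$, so Theorem~\ref{bounding} gives that $\HH^{(\aleph_\omega)}$ forces $\bb=\omega_1$ while $\dd\geq\aleph_\omega$ or $\dd$ fails to exist; either way $\HH^{(\aleph_\omega)}$ forces the sentence ``$\bb=\dd$'' to be false. It then remains to exhibit a Hechler iteration forcing $\bb=\dd$, which together with $\GA_{\HH^{(*)}}$ is the contradiction. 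The natural candidate is $\HH^{(\omega_1)}$, which forces $\bb=\omega_1$ by Theorem~\ref{bounding}\ref{bounding 1}; the point to verify is that it also forces $\dd=\omega_1$. I expect this to be the main obstacle: because $\omega_1$ is singular in $V(G)$, reals can depend on cofinally many coordinates, so the generic reals need not be dominating and Theorem~\ref{bounding regular} is unavailable, and one must produce a dominating family of size $\omega_1$ directly (for instance from the fact that $\HH^{(\omega_1)}$ keeps $\cc=\omega_1$, so the reals of the extension are suitably covered). Alternatively, one can bypass the dominating number altogether by noting that $\HH^{(\kappa)}$ is $\omega$-narrow and $\omega_1$-c.c. with locally countable conditions, hence fits the $\omega_1$-c.c. variant of the niceness framework indicated in the footnote to Lemma~\ref{narrow small}, and then applying the $\cc_\kappa$-argument of the preceding paragraphs verbatim.
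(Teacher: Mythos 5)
Your treatment of $\GA_{\CC^*}$ and $\GA_{\RR_*}$ follows the paper's route: Theorem \ref{A implies singular}\ref{A implies singular 1} gives that $\PP_\kappa$ forces $\cc>\kappa$ over $V(G)$ for an $\omega$-strong limit $\kappa$, niceness (or Lemma \ref{lem:transition} in the Cohen case) pulls this back to $\cc_\kappa>\kappa$ in $V(G)$, and Lemma \ref{Gitik c} supplies the contradiction. One caveat: your verification that $\kappa_1$ is an $\omega$-strong limit in $V(G)$ applies Theorem \ref{Gitik thm}\ref{approximation} to a surjection $g\colon\nu^\omega\to\kappa_\xi$, but that lemma is stated for sets of ordinals, and $g$ has domain $\nu^\omega$, which is not wellorderable in $V(G)$ (by Lemma \ref{Gitik c} together with Cantor's theorem), so $g$ cannot be coded as a set of ordinals in the obvious way and this step does not go through as written. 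The detour is also unnecessary: $\omega$-strong limit cardinals exist provably in $\ZF$ (close off under the map sending $\nu$ to the least cardinal that is not a surjective image of $\nu^\omega$), and Lemma \ref{Gitik c} gives $\cc_\kappa=\kappa$ for \emph{every} infinite cardinal, so any $\omega$-strong limit works; the paper does not single out $\kappa_1$.

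The Hechler case, however, has a genuine gap, which you acknowledge. You correctly extract from Theorem \ref{bounding} that $\HH^{(\aleph_\omega)}$ forces $\bb=\omega_1$, but you then set out to find a second iteration forcing $\bb=\dd$, which requires computing $\dd$ in $\HH^{(\omega_1)}$-extensions of $V(G)$ --- blocked, as you note, because $\omega_1$ is singular there and Theorem \ref{bounding regular} is unavailable; your fallback via the footnote to Lemma \ref{narrow small} is only gestured at and not carried out. No comparison with $\dd$ is needed. Since $\HH^{(\aleph_\omega)}$ forces the first-order sentence \lq\lq the bounding number exists and equals $\omega_1$'', the principle $\GA_{\HH^{(*)}}$ transfers this sentence to $V(G)$ itself. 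But in $V(G)$ the bounding number does not exist: $\cc_\omega=\omega$ by Lemma \ref{Gitik c}, so no uncountable cardinal injects into $\omega^\omega$, while every countable family in $\omega^\omega$ is dominated by a diagonal function. This is exactly how the paper closes the Hechler case, and it repairs your argument with the pieces you already have.
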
 
\begin{proof} 
Each of $\GA_{\CC^*}$ and $\GA_{\RR_*}$ implies $\cc_{\kappa}>\kappa$ for all $\omega$-strong limit cardinals $\kappa$ by Theorem \ref{A implies singular} \ref{A implies singular 1} and Lemma \ref{lem:transition}. 
But in Gitik's model, $\cc_{\kappa}=\kappa$ for all such $\kappa$ by Lemma \ref{Gitik c}. 
Moreover, $\GA_{\HH_*}$ implies that the bounding number equals $\omega_1$ by Theorem \ref{bounding}. 
But in Gitik's model, the bounding number does not exist, since $\cc_\omega=\omega$  by Lemma \ref{Gitik c}. 
\end{proof} 

\section{Open problems}\label{section: open problems} 

The absoluteness principle $\GA_{\Col(\omega,*)}$ for the class 
of collapse forcings $\Col(\omega,\kappa)$ for arbitrary cardinals $\kappa$ is consistent even with $\ZFC$ \cite{golshani2016question}.  
Our main open problem is: 

\begin{problem} 
Are the principles $\GA_{\CC^*}$, $\GA_{\RR_*}$ and $\GA_{\HH^{(*)}}$ consistent? 
\end{problem} 

Our results in Sections \ref{generic absoluteness}-\ref{section Gitiks model} indicate some properties that a model of these principles must have. 
In particular, we argued that they fail in Gitik's model \cite{MR576462}. 
One could aim to show that they also fail in all set generic extensions of this model. 
For $\GA_{\CC^*}$ and $\GA_{\RR_*}$, it suffices that set forcing preserves $(\aleph^-)_\lambda=\lambda$ for sufficiently large cardinals $\lambda$, but the argument for Lemma~\ref{lem:transition} does not work for forcings  which are not wellordered. 
Can the construction of Gitik's model \cite{MR576462} or Gitik's alternative construction of a model where all uncountable cardinals are singular from an almost huge cardinal \cite{MR787954} be adapted to obtain models of the above absoluteness principles? 
If these principles are consistent, we would like to understand the structure of their models. 
For instance, does the $\HOD$ of a model of $\GA_{\CC^*}$ contain large cardinals? 

Section \ref{switches} shows that Cohen and random extensions have different theories. 
This suggests to study the same problem for other classical c.c.c. forcings. 
For instance: 

\begin{problem} 
Do extensions by sufficiently large forcings in $\CC^*$ and $\HH^{(*)}$ necessarily have different theories? 
\end{problem} 

The problem of differentiating theories for different forcing extensions is interesting from the viewpoint of the modal logic of forcing \cite{hamkins2008modal}. 
Switches $\varphi_0, \dots, \varphi_n$ are called \emph{independent} if any choice of truth values for these sentences can be realised in the relevant generic extensions. 

\begin{problem} 
Is it provable in $\ZF$ for each natural number $n$ that there exist $n$ independent switches? 
\end{problem} 
 
Regarding preservation of cardinals in Section \ref{section card pres}, 
we ask whether all $(\theta,1)$-narrow forcings already preserve $\theta^+$.\footnote{A recent argument of Karagila, Schilhan and the second-listed author shows that this is the case for $\theta=\omega$. Furthermore, in July 2024, Elliot Glazer gave an argument showing that every $(\theta,1)$-narrow forcing is $\theta$-narrow. Hence by Lemma~\ref{narrow pres card}~\ref{narrow pres card 2}, all $(\theta, 1)$-narrow forcings preserve $\theta^+$.}  
This would improve Lemma \ref{narrow pres card}. 
Is every $(\theta,1)$-narrow forcing $\theta$-narrow?\footnote{Every $(\theta,1)$-narrow forcing is $\theta$-narrow by an argument of Elliot Glazer in 2024.} 
Is every $\theta$-narrow forcing uniformly $\theta$-narrow?\footnote{Elliot Glazer gave an example of an $\omega$-narrow forcing which is not uniformly $\omega$-narrow in a Solovay model.} 
Is every wellordered $\theta^+$-c.c. forcing uniformly $\theta$-narrow?\footnote{The example in the previous footnote witnesses that not every wellordered c.c.c. forcing is uniformly $\omega$-narrow.} 
Cardinal preservation is also a problem for many classical forcings. 
For example, we do not know whether Sacks forcing preserves $\omega_1$ even if one assumes that $\omega_1$ is inaccessible to reals. 
The problem is to show that every new real has a name in $H_{\omega_1}$. 
If this is the case, then one can show that $\omega_1$ is preserved using capturing as in \cite{castiblanco2021preserving,schschsch}. 
Regarding random algebras in Section \ref{section random forcing}, note that we would have a shortcut for some of our results if $\RR_{\kappa}$ is $\CC^{\kappa}$-linked. 
However, we do not know whether even $\ZFC$ provides a negative answer to the next problem. 

\begin{problem} 
Is $\RR_{\omega_1}$ $\sigma$-linked? 
\end{problem} 

A further natural question regarding $\RR_\kappa$ is whether it is provable in $\ZF$ that $\RR_\kappa$ does not have uncountable antichains for any infinite cardinal $\kappa$. 
Regarding Section \ref{section Cohen and collapses}, 
one can ask for a finer understanding of the effect of $\Add(\kappa,1)$. 
For instance, is it consistent that $\Add(\omega_3,1)$ collapses $\omega_1$ and $\omega_3$ while preserving $\omega_2$? 
It would further be interesting to extend the results about Hechler forcing in Section \ref{section bounding} to obtain a precise characterisation of the bounding number in finite support iterations of Hechler forcing or arbitrary ordinal length even in $\ZFC$. 

Gitik's model in Section \ref{section Gitiks model} is a useful test case to study forcing over choiceless models. 
What are the answers to the following questions for Gitik's model: 
Does every nonatomic $\sigma$-closed forcing collapse $\omega_1$? 
Does the dominating number exist in some generic extension? 
Is the bounding number $\omega_1$ in any generic extension where it exists? 
Does Sacks forcing preserve $\omega_1$? 
Can one increase $\HOD$ by forcing?




\bibliographystyle{amsplain}
\bibliography{daisuke}

\def\polhk#1{\setbox0=\hbox{#1}{\ooalign{\hidewidth
  \lower1.5ex\hbox{`}\hidewidth\crcr\unhbox0}}}
\providecommand{\bysame}{\leavevmode\hbox to3em{\hrulefill}\thinspace}
\providecommand{\MR}{\relax\ifhmode\unskip\space\fi MR }
\providecommand{\MRhref}[2]{%
  \href{http://www.ams.org/mathscinet-getitem?mr=#1}{#2}
}
\providecommand{\href}[2]{#2}
\begin{thebibliography}{10}

\bibitem{MR3274975}
Arthur~W. Apter, Ioanna~M. Dimitriou, and Peter Koepke, \emph{The first
  measurable cardinal can be the first uncountable regular cardinal at any
  successor height}, Mathematical Logic Quarterly \textbf{60} (2014), no.~6,
  471--486. \MR{3274975}

\bibitem{Bartoszynski-Judah}
Tomek Bartoszy{\'n}ski and Haim Judah, \emph{Set theory. {O}n the structure of
  the real line}, A. K. Peters Ltd., Wellesley, MA, 1995. \MR{MR1350295
  (96k:03002)}

\bibitem{Nairian-models}
Douglas Blue, Paul Larson, and Grigor Sargsyan, \emph{Nairian models},
  Preprint. Available at \url{https://arxiv.org/abs/2501.18958}, 2025.

\bibitem{failure-of-squares}
\bysame, \emph{The failure of square at all uncountable cardinals is weaker
  than a {W}oodin limit of {W}oodin cardinals}, Preprint. Available at
  \url{https://arxiv.org/abs/2602.13077}, 2026.

\bibitem{busche2009strength}
Daniel Busche and Ralf Schindler, \emph{The strength of choiceless patterns of
  singular and weakly compact cardinals}, Annals of Pure and Applied Logic
  \textbf{159} (2009), no.~1-2, 198--248.

\bibitem{castiblanco2021preserving}
Fabiana Castiblanco and Philipp Schlicht, \emph{Preserving levels of projective
  determinacy by tree forcings}, Annals of Pure and Applied Logic \textbf{172}
  (2021), no.~4, 102918.

\bibitem{cummings2010iterated}
James Cummings, \emph{Iterated forcing and elementary embeddings}, Handbook of
  set theory, Springer, 2010, pp.~775--883.

\bibitem{di1998perfect}
Carlos~Augusto Di~Prisco and Stevo Todorcevic, \emph{Perfect-set properties in
  ${L}(\mathbb{R})[{U}]$}, Advances in Mathematics \textbf{139} (1998), no.~2,
  240--259.

\bibitem{MR576462}
Moti Gitik, \emph{All uncountable cardinals can be singular}, Israel Journal of
  Mathematics \textbf{35} (1980), no.~1-2, 61--88. \MR{576462 (81h:03096)}

\bibitem{MR787954}
\bysame, \emph{Regular cardinals in models of {${\rm ZF}$}}, Transactions of
  the American Mathematical Society \textbf{290} (1985), no.~1, 41--68.
  \MR{787954}

\bibitem{golshani2016question}
Mohammad Golshani and William Mitchell, \emph{On a question of {H}amkins and
  {L}\"owe on the modal logic of collapse forcing}, Preprint. Available at
  \url{https://arxiv.org/abs/1609.02633}, 2016.

\bibitem{hamkins2008modal}
Joel Hamkins and Benedikt L{\"o}we, \emph{The modal logic of forcing},
  Transactions of the American Mathematical Society \textbf{360} (2008), no.~4,
  1793--1817.

\bibitem{holschturwel2022}
Christopher Henney-Turner, Peter Holy, Philipp Schlicht, and Philip Welch,
  \emph{Asymmetric cut and choose games}, Bull. Symb. Log. \textbf{29} (2023),
  no.~4, 588--625. \MR{4712655}

\bibitem{MR0396271}
Thomas Jech, \emph{The axiom of choice}, North-Holland Publishing Co.,
  Amsterdam-London; American Elsevier Publishing Co., Inc., New York, 1973,
  Studies in Logic and the Foundations of Mathematics, Vol. 75. \MR{0396271}

\bibitem{Jech}
\bysame, \emph{Set theory}, Springer Monographs in Mathematics,
  Springer-Verlag, Berlin, 2003, The third millennium edition, revised and
  expanded. \MR{MR1940513 (2004g:03071)}

\bibitem{karsch2021}
Asaf Karagila and Jonathan Schilhan, \emph{Sequential and distributive forcings
  without choice}, Canad. Math. Bull. \textbf{66} (2023), no.~3, 868--880.
  \MR{4651641}

\bibitem{karagila2020have}
Asaf Karagila and Philipp Schlicht, \emph{How to have more things by forgetting
  how to count them}, Proceedings of the Royal Society A \textbf{476} (2020),
  no.~2239, 20190782.

\bibitem{karagila2022choiceless}
Asaf Karagila and Noah Schweber, \emph{Choiceless chain conditions}, European
  Journal of Mathematics \textbf{8} (2022), no.~2, 393--410.

\bibitem{Kunen}
Kenneth Kunen, \emph{Set theory. {A}n introduction to independence proofs},
  Studies in Logic and the Foundations of Mathematics, vol. 102, North-Holland
  Publishing Co., Amsterdam, 1980. \MR{MR597342 (82f:03001)}

\bibitem{laflamme1989forcing}
Claude Laflamme, \emph{Forcing with filters and complete combinatorics}, Annals
  of pure and applied logic \textbf{42} (1989), no.~2, 125--163.

\bibitem{larson2020geometric}
Paul~B. Larson and Jindrich Zapletal, \emph{Geometric set theory}, vol. 248,
  American Mathematical Soc., 2020.

\bibitem{monro1983generic}
G.~P. Monro, \emph{On generic extensions without the axiom of choice}, The
  Journal of Symbolic Logic \textbf{48} (1983), no.~1, 39--52.

\bibitem{schschsch}
Jonathan Schilhan, Philipp Schlicht, and Johannes Sch\"urz, \emph{Determinacy
  and regularity in iterated forcing extensions}, Preprint, 2026.

\bibitem{steel1982two}
John~R. Steel and Robert Van~Wesep, \emph{Two consequences of determinacy
  consistent with choice}, Transactions of the American Mathematical Society
  \textbf{272} (1982), no.~1, 67--85.

\bibitem{truss1983noncommutativity}
John~Kenneth Truss, \emph{The noncommutativity of random and generic
  extensions}, The Journal of Symbolic Logic \textbf{48} (1983), no.~4,
  1008--1012.

\bibitem{usuba2018choiceless}
Toshimichi Usuba, \emph{Choiceless {L}{\"o}wenheim--{S}kolem property and
  uniform definability of grounds}, Symposium on Advances in Mathematical
  Logic, Springer, 2018, pp.~161--179.

\bibitem{viale2016useful}
Matteo Viale, \emph{Useful axioms}, Preprint. Available at
  \url{https://arxiv.org/abs/1610.02832}, 2016.

\bibitem{woodin2013axiom}
W.~Hugh Woodin, \emph{The axiom of determinacy, forcing axioms, and the
  nonstationary ideal}, de Gruyter, 2013.

\end{thebibliography}

\end{document}